\DeclareFontFamily{U}{rsfs}{\skewchar\font"7F}
\DeclareFontShape{U}{rsfs}{m}{n}{
<-6> rsfs5
<6-8> rsfs7
<8-> rsfs10
}{}
\DeclareMathAlphabet{\mathscr}{U}{rsfs}{m}{n}
\DeclareMathAlphabet{\mathbbb}{U}{bbold}{m}{n}
\bmdefine{\aaa}{a}
\bmdefine{\bbb}{b}
\bmdefine{\ccc}{c}
\bmdefine{\ddd}{d}
\bmdefine{\fff}{f}
\bmdefine{\mmm}{m}
\bmdefine{\ppp}{p}
\bmdefine{\qqq}{q}
\bmdefine{\ttt}{t}
\bmdefine{\uuu}{u}
\bmdefine{\vvv}{v}
\bmdefine{\www}{w}
\bmdefine{\eee}{e}
\bmdefine{\sss}{s}
\bmdefine{\xxx}{x}
\bmdefine{\yyy}{y}
\bmdefine{\zzz}{z}
\bmdefine{\zerovec}{0}
\bmdefine{\onevec}{1}
\newcommand{\NNN}{\mathbb{N}}
\newcommand{\CCC}{\mathbb{C}}
\newcommand{\RRR}{\mathbb{R}}
\newcommand{\QQQ}{\mathbb{Q}}
\newcommand{\III}{\mathbb{I}}
\newcommand{\KKK}{\mathbb{K}}
\newcommand{\LLL}{\mathbb{L}}
\newcommand{\MMM}{\mathbb{M}}
\newcommand{\VVV}{\mathbb{V}}
\newcommand{\ZZZ}{\mathbb{Z}}
\newcommand{\AAAAA}{{\mathcal A}}
\newcommand{\CCCCC}{{\mathcal C}}
\newcommand{\MMMMM}{{\mathcal M}}
\newcommand{\OOOOO}{{\mathcal O}}
\newcommand{\XXXXX}{{\mathcal X}}
\newcommand{\msAAA}{\mathscr{A}}
\newcommand{\msCCC}{\mathscr{C}}
\newcommand{\msHHH}{\mathscr{H}}
\newcommand{\msIII}{\mathscr{I}}
\newcommand{\msJJJ}{\mathscr{J}}
\newcommand{\msOOO}{\mathscr{O}}
\newcommand{\msPPP}{\mathscr{P}}
\newcommand{\msQQQ}{\mathscr{Q}}
\newcommand{\msSSS}{\mathscr{S}}
\newcommand{\msTTT}{\mathscr{T}}
\newcommand{\msUUU}{\mathscr{U}}
\newcommand{\msVVV}{\mathscr{V}}
\newcommand{\GL}{{\mathrm{GL}}}
\newcommand{\glin}{{\mathrm{GL}}}
\newcommand{\diag}{{\mathrm{Diag}}}
\newcommand{\rank}{\qopname\relax o{rank}}
\newcommand{\grank}{{\mathrm{grank}}}
\newcommand{\trank}{{\mathrm{trank}}}
\newcommand{\trdeg}{\mathrm{tr.deg}}
\newcommand{\height}{\mathrm{ht}}
\newcommand{\grade}{\mathrm{grade}}
\newcommand{\depth}{\mathrm{depth}}
\newcommand{\supp}{\mathrm{supp}}
\newcommand{\cof}{\mathrm{Cof}}
\newcommand{\chara}{\mathrm{char}}
\newcommand{\define}{\mathrel{:=}}
\newcommand\fl{\mathrm{fl}}
\newcommand\lf{\mathrm{lf}}
\newcommand\lexeq{\mathrel{\leq_{\mathrm{lex}}}}
\newcommand\Aind{\msIII}
\newcommand{\cm}{Cohen-Macaulay}
\newcommand{\qdeg}{\mathrm{qdeg}\,}
\newcommand{\rone}{(${R}_1$)}
\newcommand{\stwo}{(${S}_2$)}
\newcommand{\ronestwo}{{\rm (${R}_1$)+(${S}_2$)}}
\newcommand{\ri}{(${R}_i$)}
\newcommand{\si}{(${S}_i$)}
\newcommand{\PP}{{P\!P}}
\newcommand{\afcr}{{absolutely full column rank}}
\newcommand{\Afcr}{{Absolutely full column rank}}
\newcommand{\transpose}{^\top}
\def\zerovec{{\mathbbb 0}}
\def\RRR{\mathbb{R}}
\def\RP{\mathbb{RP}}
\newcommand{\dom}{{\mathrm{dom}}}
\newcommand{\interior}{\qopname\relax o{int}}
\newtheorem{thm}{Theorem}
\newtheorem{example}[thm]{Example}
\newtheorem{lemma}[thm]{Lemma}
\newtheorem{cor}[thm]{Corollary}
\newtheorem{definition}[thm]{Definition}
\newtheorem{prop}[thm]{Proposition}
\newtheorem{remark}[thm]{Remark}
\numberwithin{thm}{section}
\newcounter{enumtemp}
\numberwithin{thm}{section}
\numberwithin{equation}{section}
\newcommand{\bigzerou}{\smash{\lower1.7ex\hbox{\bg 0}}}
\newcommand{\bigastu}{\smash{\lower1.7ex\hbox{\bg *}}}
\def\typicalrankR{\trank}
\def\SeidenbergCite#1{\textcolor{green}{\cite[point #1]{Seidenberg:1974}}}
\def\SeidenbergCite#1{{\cite[part #1]{Seidenberg:1974}}}
\def\delete{\textcolor{magenta}\bgroup\hbox{\rm ---delete---}}
\def\enddelete{\hbox{\rm ---end of delete---}\egroup}
\def\replace#1\by{\textcolor{green}{\hbox{\rm ---replace---}}\footnote{#1}}
\def\inserts{\textcolor{blue}\bgroup}
\def\endinserts{\egroup}
\newcommand{\comment}[1]{\textcolor{green}{[#1]}}
\def\mylabel#1{\label{#1}}
\renewcommand{\comment}[1]{}
\newcommand{\mysloppy}{\tolerance 9999 \hfuzz .5\p@ \vfuzz .5\p@}
\title{%
Typical ranks for $3$-tensors, nonsingular bilinear maps and determinantal ideals%
}
\author{Toshio Sumi, Mitsuhiro Miyazaki and Toshio Sakata}
\date{Version of \today}
\date{}
\begin{document}

\mysloppy


\maketitle

\begin{abstract}
Let $m,n\geq 3$, $(m-1)(n-1)+2\leq p\leq mn$, and $u=mn-p$.
The set $\RRR^{u\times n\times m}$ of all real tensors with size 
$u\times n\times m$ is one to one corresponding to the set of bilinear maps 
$\RRR^m\times \RRR^n\to \RRR^u$.
We show that $\RRR^{m\times n\times p}$ has plural typical ranks $p$ and $p+1$ if and only if
there exists a nonsingular bilinear map $\RRR^m\times\RRR^n\to\RRR^{u}$.
We show that there is a dense open subset $\msOOO$ of $\RRR^{u\times n\times m}$ such that
for any $Y\in\msOOO$, the ideal of maximal minors of a matrix defined by $Y$ in a certain way
is a prime ideal and the real radical of that is the irrelevant maximal ideal
if that is not a real prime ideal.
Further, we show that there is a dense open subset
$\msTTT$ of $\RRR^{ n\times  p \times m}$ and 
continuous surjective open 
maps $\nu\colon\msOOO\to\RRR^{u\times p}$  
and $\sigma\colon\msTTT\to\RRR^{u\times p}$,
where $\RRR^{u \times p}$ is the set of $u\times p$ matrices with entries in $\RRR$,
 such that 
if $\nu(Y)=\sigma(T)$, then
$\rank T=p$ if and only if 
the ideal of maximal minors of the matrix defined by 
$Y$ is a real prime ideal.
\end{abstract}

\section{Introduction}

For positive integers $m$, $n$, and $p$, we consider an $m\times n\times p$ tensor which is an element of the tensor product of $\RRR^m$, $\RRR^n$, and $\RRR^p$ with standard basis.  This tensor can be identified with a $3$-way array $(a_{ijk})$ where
$1\leq i\leq m$, $1\leq j\leq n$ and $1\leq k\leq p$.
We denote by $\RRR^{m\times n\times p}$ the set of all $m\times n\times p$ tensors.  This set is a topological space with Euclidean topology.
Hitchcock \cite{Hitchcock:1927} defined the rank of a tensor.
An integer $r$ is called a typical rank of $\RRR^{m\times n\times p}$ if
the set of tensors with rank $r$ is a semi-algebraic set of dimension $mnp$.
In the other words, $r$ is a typical rank of $\RRR^{m\times n\times p}$ if
the set of tensors with rank $r$ contains an open set of $\RRR^{m\times n\times p}$.
In this paper we discuss the typical ranks of $3$-tensors and connect between plurality of typical ranks and existence of a nonsingular bilinear map.

Let $n\leq p$.
A typical rank of $\RRR^{1\times n\times p}$ is equal to an $n\times p$ 
matrix full rank, that is, $n$.
If $n\geq 2$, then 
the set of typical ranks of $\RRR^{2\times n\times p}$ is equal to 
$\{n,n+1\}$ if $n=p$ and otherwise ${\min\{p,2n\}}$ \cite{tenBerge-Kiers:1999}. 
This is also obtained from the equivalent class: almost all $2\times n\times p$
tensors are equivalent to $((E_n,O_{n\times (p-n)});(O_{n\times (p-n)},E_n))$
which has rank ${\min\{p,2n\}}$ if $n<p$ (see \cite{JaJa:1979b} or \cite{Sumi-etal:2009}),
see Section \ref{sec:nonsingular} for notation. 
Suppose that $n\geq m\geq 3$.
The set of typical ranks of $\RRR^{m\times n\times p}$ 
is equal to ${\min\{p,mn\}}$ if $(m-1)n<p$ \cite{tenBerge:2000}.
If $p=(m-1)n$ then the set of typical ranks of $\RRR^{m\times n\times p}$ 
depends on the existence of a nonsingular bilinear map $\RRR^m\times\RRR^n\to\RRR^n$: It is equal to $\{p\}$ if there is no nonsingular bilinear map $\RRR^m\times\RRR^n\to\RRR^n$ and $\{p,p+1\}$ otherwise \cite{Sumi-etal:2015}.
Here, a bilinear map $f\colon\RRR^m\times \RRR^n\to\RRR^r$ 
is called nonsingular if $f(\xxx,\yyy)=\zerovec$ implies
$\xxx=\zerovec$ or $\yyy=\zerovec$.

Suppose that $(m-1)(n-1)+1\leq p\leq (m-1)n$.
A typical rank of $\RRR^{m\times n\times p}$ is unknown except a few cases.
First, $p$ is a minimal typical rank, since $p$ is a generic rank of $\CCC^{m\times n\times p}$ 
{\cite{Catalisano-Geramita-Gimigliano:2002}}.  
The authors
\cite{Sumi-etal:2013} showed that the Hurwitz-Radon function gives a condition that $\RRR^{m\times n\times (m-1)n}$ has plural typical ranks.
We \cite{Miyazaki-etal:2012a} also showed that $\RRR^{m\times n\times p}$ has plural typical ranks for some $(m,n,p)$ by using the concept of absolutely full column rank tensors.
We let $m\# n$ be the minimal integer $r$ such that there is
a nonsingular bilinear map $\RRR^m\times \RRR^n\to\RRR^r$. 
Then $m\# n\leq m+n-1$ (see Section~\ref{sec:nonsingular}).
The set $\RRR^{r\times m\times n}$ of $r\times m\times n$ tensors
is one to one corresponding to the set of bilinear maps
$\RRR^m\times \RRR^n\to \RRR^r$. 
By this map the set of \afcr{} tensors is one to one corresponding to
the set of nonsingular bilinear maps.

\begin{thm}\mylabel{thm:main0}
Let $m,n\geq3$ and $(m-1)(n-1)+1\leq p\leq mn$.
\begin{enumerate}
\item
\mylabel{item:01}
If there exists a nonsingular bilinear map $\RRR^m\times\RRR^n\to\RRR^{mn-p}$,
then $\RRR^{m\times n\times p}$ has plural typical ranks.
\item
\mylabel{item:02}
If $p\geq (m-1)(n-1)+2$ and $\RRR^{m\times n\times p}$ has plural typical ranks,
then there exists a nonsingular bilinear map $\RRR^m\times\RRR^n\to\RRR^{mn-p}$.
\end{enumerate}
\end{thm}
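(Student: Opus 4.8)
The plan is to prove the two implications by quite different routes: part~\ref{item:01} by an explicit construction, and part~\ref{item:02} by invoking, in contrapositive form, the determinantal results stated in the abstract. Throughout I may assume $n\geq m$, since transposing the first two modes affects neither the typical ranks nor the existence of a nonsingular bilinear map.

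\emph{Part~\ref{item:01}.} The argument rests on a dictionary between $p$-dimensional subspaces of $\RRR^{m\times n}$ and bilinear maps. If $Z\in\RRR^{m\times n\times p}$ has frontal slices $Z_1,\dots,Z_p$ and $W=\operatorname{span}(Z_1,\dots,Z_p)$, then $\rank Z\geq\dim W$ always, and when $\dim W=p$ one has $\rank Z=p$ iff $W$ is spanned by rank-one matrices; hence $\rank Z\geq p+1$ whenever $W$ is $p$-dimensional and contains no nonzero rank-one matrix. On the other side, a bilinear map $g\colon\RRR^m\times\RRR^n\to\RRR^u$ can be written $g(v,w)=(v^\top B_1w,\dots,v^\top B_uw)$ with $B_1,\dots,B_u\in\RRR^{m\times n}$, and we may take the $B_i$ linearly independent (pass to a maximal independent subset, obtaining a nonsingular map into a smaller space, and re-pad with arbitrary further independent matrices; nonsingularity is preserved since the zero locus only shrinks). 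Then for $W:=\operatorname{span}(B_1,\dots,B_u)^{\perp}$ with respect to the trace form, so $\dim W=mn-u=p$, a nonzero rank-one matrix $vw^\top$ lies in $W$ exactly when $g(v,w)=\zerovec$ with $v$ and $w$ both nonzero. Thus a nonsingular bilinear map $\RRR^m\times\RRR^n\to\RRR^u$ yields a $p$-dimensional $W$ containing no nonzero real rank-one matrix, and any tensor $Z_0$ whose slices form a basis of $W$ has $\rank Z_0\geq p+1$. Finally, within the open set of tensors with linearly independent slices, the set of $Z$ whose slice-span contains a nonzero real rank-one matrix is closed: it is the image, under the proper projection along the compact factor $\RP^{m-1}\times\RP^{n-1}$, of the closed incidence set $\{(Z,[v],[w]):vw^\top\in\operatorname{span}(Z_1,\dots,Z_p)\}$. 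So a nonempty open set of tensors has rank $>p$, whence some $r>p$ is a typical rank; since $p$ is also a typical rank in this range — it equals the generic rank of $\CCC^{m\times n\times p}$ \cite{Catalisano-Geramita-Gimigliano:2002}, the least typical rank over $\RRR$ — the space $\RRR^{m\times n\times p}$ has plural typical ranks.

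\emph{Part~\ref{item:02}.} I prove the contrapositive: suppose $p\geq(m-1)(n-1)+2$ and that no bilinear map $\RRR^m\times\RRR^n\to\RRR^u$ is nonsingular; I claim $p$ is the unique typical rank of $\RRR^{m\times n\times p}$. If $u<n$, i.e. $p>(m-1)n$, this is \cite{tenBerge:2000}, which gives the single typical rank $\min\{p,mn\}=p$; so assume $u\geq n$, hence $u\geq n\geq m$. Let $\msOOO$, $\msTTT$, and the continuous open surjections $\nu\colon\msOOO\to\RRR^{u\times p}$, $\sigma\colon\msTTT\to\RRR^{u\times p}$ be as in the abstract, and for $Y\in\msOOO$ let $I_Y$ be the ideal of maximal minors of the associated $u\times m$ matrix of linear forms $M(Y)$, which satisfies $M(Y)(w)v=f_Y(v,w)$ for the bilinear map $f_Y$ attached to $Y$. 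Since $f_Y$ is singular for every $Y$, there are nonzero $v,w$ with $M(Y)(w)v=\zerovec$, so the real variety $V_\RRR(I_Y)$ strictly contains $\{\zerovec\}=V_\RRR(\mathfrak m)$, whence $\sqrt[\RRR]{I_Y}\neq\mathfrak m$; since $\msOOO$ is chosen so that $I_Y$ is either a real prime or has real radical $\mathfrak m$, we conclude that $I_Y$ is a real prime for \emph{every} $Y\in\msOOO$. Now for any $T\in\msTTT$, surjectivity of $\nu$ gives $Y\in\msOOO$ with $\nu(Y)=\sigma(T)$, so $\rank T=p$. Hence $\{T:\rank T\neq p\}$ lies in the complement of the dense open set $\msTTT$, has empty interior, and no rank other than $p$ is typical — no plurality (using the mode permutation that identifies $\RRR^{n\times p\times m}$ with $\RRR^{m\times n\times p}$).

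The real difficulty, where I expect most of the work to lie, is in the two determinantal inputs behind part~\ref{item:02}: (a) that the ideal of maximal minors of the matrix attached to a generic $Y$ has the expected codimension and is prime — a genericity statement about linearly parametrised determinantal ideals — and (b), the more delicate point, that when this ideal fails to be a real prime its real radical is the irrelevant maximal ideal. Statement (b) amounts to a dichotomy for the real determinantal variety: either $M(Y)(w)$ has corank exactly one at some $w\neq\zerovec$, in which case $V_\RRR(I_Y)$ is full-dimensional and $I_Y$ is real, or there is no nonzero real point at all; proving it requires controlling the codimension of the locus where the corank is at least two. Constructing the compatible continuous open surjections $\nu$ and $\sigma$ — which encode $\rank T=p$ as the real solvability of the determinantal system associated with $Y$, through an explicit normal form for $T$ — is the other substantial ingredient. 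Granting (a), (b) and this construction, Theorem~\ref{thm:main0} follows as above, the two halves joined by the elementary observation that nonsingularity of $f_Y$ is precisely the statement that $I_Y$ has no nonzero real zero.
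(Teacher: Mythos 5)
Your proof is correct, but its two halves relate to the paper quite differently. For part \ref{item:01} you give a genuinely different and more elementary argument than the paper's: you pass from the nonsingular bilinear map $g(v,w)=(v^\top B_1w,\dots,v^\top B_uw)$ to the $p$-dimensional subspace $W=\operatorname{span}(B_1,\dots,B_u)^{\perp}$ of $\RRR^{m\times n}$, observe that nonsingularity of $g$ is exactly the absence of nonzero rank-one matrices in $W$, and then use the classical fact that a tensor with $p$-dimensional slice span has rank $p$ iff that span is spanned by rank-one matrices, together with a properness argument over the compact factor $\RP^{m-1}\times\RP^{n-1}$ to get a nonempty open set of rank $>p$. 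The paper instead reaches the same conclusion through Lemma \ref{lem:equivalent condition for afr}, the map $\sigma$, the set $\MMMMM$, Theorem \ref{thm:equiv cond} (rank $p$ iff $\iota(\sigma(X))\in\MMMMM$) and the inclusion $\MMMMM\subset\CCCCC$ (Theorem \ref{thm:afr exist}); your route avoids that machinery entirely and is self-contained, at the price of not producing the explicit parametrization the paper needs for its other theorems. For part \ref{item:02} your argument is, in substance, the paper's own, read in contrapositive: the paper shows $\msTTT_1\cup\msTTT_2$ is dense with $\rank=p$ on $\msTTT_1$, so plural typical ranks force $\msTTT_2\neq\emptyset$, hence $\msOOO_4\neq\emptyset$ and an \afcr{} tensor exists; your version runs this through the dichotomy of Theorem \ref{thm:it real} ($I_t(M(\xxx,Y))$ real versus $\III(\VVV(I_t(M(\xxx,Y))))=(x_1,\dots,x_m)$, the latter impossible when every bilinear map is singular) and the surjectivity of $\nu$. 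You correctly identify that all the real content sits in the determinantal statements (Theorems \ref{thm:it real} and \ref{thm:rank}), which you take as given; as a derivation of Theorem \ref{thm:main0} from them, your reduction matches the paper's, including the appeal to \cite{tenBerge:2000} for $p>(m-1)n$. (Minor slip: $M(\xxx,Y)$ is a $u\times n$ matrix whose entries are linear forms in the $m$ variables, not a $u\times m$ matrix.)
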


\ref{item:01} of Theorem \ref{thm:main0}
is an extension of one of \cite{Miyazaki-etal:2012a}.
Furthermore, we completely determine the set $\trank(m,n,p)$ of typical ranks of $\RRR^{m\times n\times p}$ for $p{\geq}(m-1)(n-1)+2$ by the number $m\# n$.

\begin{thm} \mylabel{thm:main1}
Let $m,n\geq3$, $k\geq 2$, and $p=(m-1)(n-1)+k$. 
The set of typical ranks of $\RRR^{m\times n \times p}$ is given as follows.
$$\typicalrankR(m,n,p)=\begin{cases}
\{p,p+1\}, & 2\leq k\leq m+n-1-(m\# n) \\
\{p\}, & \max\{2,(m+n)-(m\# n)\}\leq k\leq m+n-2 \\
\{mn\}, & k\geq m+n-1. \\
\end{cases}$$
\end{thm}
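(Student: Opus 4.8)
The plan is to split the range $k\ge 2$ into the three intervals of the statement according to the position of $p=(m-1)(n-1)+k$ relative to the thresholds $(m-1)n$ (reached at $k=m$) and $mn$ (reached at $k=m+n-1$), and to use Theorem~\ref{thm:main0} to decide plurality. Two inputs are needed on the interval $(m-1)(n-1)+1\le p\le(m-1)n$, equivalently $2\le k\le m-1$. First, (A): the complex generic rank of $\CCC^{m\times n\times p}$ equals $p$ \cite{Catalisano-Geramita-Gimigliano:2002}, so no real rank below $p$ is typical and $p$ is the minimal typical rank of $\RRR^{m\times n\times p}$. Second, (B): a generic tensor in $\RRR^{m\times n\times p}$ has rank at most $p+1$, so every typical rank of $\RRR^{m\times n\times p}$ lies in $\{p,p+1\}$; this is the geometric face of the determinantal part of the paper, since for $u=mn-p$ and a generic $u\times n\times m$ tensor $Y$ the ideal of maximal minors of the matrix attached to $Y$ is prime with real radical equal to the irrelevant maximal ideal whenever it is not a real prime, and, carried through the maps $\nu$ and $\sigma$, this says that for generic $T$ one has $\rank T=p$ exactly when that ideal is a real prime and $\rank T=p+1$ otherwise.

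Granting (A) and (B), I would first treat $2\le k\le m-1$. By Theorem~\ref{thm:main0}, $\RRR^{m\times n\times p}$ has plural typical ranks if and only if there is a nonsingular bilinear map $\RRR^m\times\RRR^n\to\RRR^{mn-p}$, i.e.\ iff $m\# n\le mn-p=(m+n-1)-k$, i.e.\ iff $k\le (m+n-1)-(m\# n)$. If this holds, then (A) and (B) force the set of typical ranks to be a subset of $\{p,p+1\}$ with at least two elements, hence to equal $\{p,p+1\}$; moreover $m\# n\ge\max\{m,n\}$ (a nonsingular bilinear map restricts, at each fixed nonzero argument, to an injection $\RRR^m\hookrightarrow\RRR^{m\# n}$ and $\RRR^n\hookrightarrow\RRR^{m\# n}$) gives $(m+n-1)-(m\# n)\le m-1$, so this subcase is exactly the first case of the theorem. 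If instead $k\ge (m+n)-(m\# n)$ — equivalently, inside the present range, $\max\{2,(m+n)-(m\# n)\}\le k\le m-1$, using $(m+n)-(m\# n)\le m$ — then there is a unique typical rank, which by (A) is $p$. Since $(m+n-1)-(m\# n)$ and $(m+n)-(m\# n)$ are consecutive integers, these two subcases exhaust $2\le k\le m-1$.

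It then remains to handle $k\ge m$, i.e.\ $(m-1)n<p$, where ten Berge's theorem \cite{tenBerge:2000} gives that the set of typical ranks of $\RRR^{m\times n\times p}$ is $\{\min\{p,mn\}\}$. For $m\le k\le m+n-2$ this equals $\{p\}$ and, since $m\ge\max\{2,(m+n)-(m\# n)\}$, completes the middle case; for $k\ge m+n-1$, i.e.\ $p\ge mn$, it equals $\{mn\}$, which is the third case. Assembling the three pieces proves the theorem.

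The main obstacle is input (B): the bound that every typical rank of $\RRR^{m\times n\times p}$ is at most $p+1$ on the interval $(m-1)(n-1)+1\le p\le(m-1)n$, which amounts to the generic primality over $\CCC$ of the ideal of maximal minors of the matrix built from the complementary tensor, together with the identification of its real radical with the irrelevant maximal ideal in the case where it fails to be a real prime. This is exactly what the determinantal-ideal machinery of the paper is designed to supply; once (B) is available, the rest of Theorem~\ref{thm:main1} is bookkeeping around Theorem~\ref{thm:main0} and the cited classical results on typical ranks.
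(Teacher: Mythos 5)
Your proposal is correct and follows essentially the same route as the paper: ten Berge's result disposes of $k\ge m$, the upper bound (B) is exactly the paper's Theorem~\ref{thm:upperbound}, the minimality of $p$ comes from the complex generic rank, and Theorem~\ref{thm:main0} decides between $\{p\}$ and $\{p,p+1\}$ on $2\le k\le m-1$. The only difference is that you spell out the boundary bookkeeping (e.g.\ $(m+n-1)-(m\# n)\le m-1$ via $m\# n\ge\max\{m,n\}$) a bit more explicitly than the paper does.
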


Consider the case where $p=(m-1)(n-1)+1$, 
Friedland \cite{Friedland:2012} showed that $\RRR^{n\times n\times ((n-1)^2+1)}$ has plural typical ranks. We extend this result.

\begin{thm}\mylabel{thm:(m-1)(n-1)+1}
Let $m,n\geq3$ and $p=(m-1)(n-1)+1$.
$\RRR^{m\times n\times p}$ has plural typical ranks if 
$m-1$ and $n-1$ are not bit-disjoint.
\end{thm}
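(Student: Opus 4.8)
The plan is to deduce the theorem from \ref{item:01} of Theorem~\ref{thm:main0}. With $p=(m-1)(n-1)+1$ one has $mn-p=m+n-2$, so by that part of Theorem~\ref{thm:main0} it is enough to exhibit a nonsingular bilinear map $\RRR^m\times\RRR^n\to\RRR^{m+n-2}$, that is, to prove $m\# n\le m+n-2$ whenever $m-1$ and $n-1$ are not bit-disjoint. I would first restate the hypothesis arithmetically: by Kummer's theorem the $2$-adic valuation of $\binom{m+n-2}{m-1}=\binom{(m-1)+(n-1)}{m-1}$ equals the number of carries occurring in the base-$2$ addition of $m-1$ and $n-1$, so $\binom{m+n-2}{m-1}$ is odd exactly when $m-1$ and $n-1$ are bit-disjoint; hence the hypothesis is equivalent to the evenness of $\binom{m+n-2}{m-1}$.

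The core of the argument is then the construction, under this parity condition, of a nonsingular bilinear map $\RRR^m\times\RRR^n\to\RRR^{m+n-2}$. Ordinary polynomial multiplication, sending a pair of real polynomials of degrees $<m$ and $<n$ to their product (of degree $\le m+n-2$), is nonsingular because $\RRR[t]$ is a domain, but this yields only the trivial bound $m\# n\le m+n-1$; moreover no such compression is possible over $\CCC$, since a dimension count on the determinantal variety shows that every $m$-dimensional linear subspace of $\CCC^{(m+n-2)\times n}$ contains a nonzero matrix of rank $<n$. Saving one coordinate over $\RRR$ is exactly what the classical Stiefel--Hopf construction provides: evenness of $\binom{m+n-2}{m-1}$ is precisely the Stiefel--Hopf condition that $\binom{m+n-2}{i}$ be even for $m-2<i<m$ (which here reduces to the single index $i=m-1$), under which the target of polynomial multiplication can be pushed down by one dimension while preserving nonsingularity, and I would invoke this construction, as developed in Section~\ref{sec:nonsingular}, to obtain the required map. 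I expect this step to be the main obstacle: a naive collapse of a single coordinate of polynomial multiplication already fails when $m$ and $n$ are both odd, so the argument must genuinely use the mod $2$ combinatorics, while the remaining steps are routine.

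As a consistency check, note that when $m=n$ one has $m-1=n-1\ge 2>0$, so $m-1$ and $n-1$ are automatically not bit-disjoint; thus Theorem~\ref{thm:(m-1)(n-1)+1} recovers Friedland's result \cite{Friedland:2012} that $\RRR^{n\times n\times((n-1)^2+1)}$ has plural typical ranks for every $n\ge 3$.
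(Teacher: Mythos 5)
Your overall architecture is the same as the paper's: reduce to producing a nonsingular bilinear map $\RRR^m\times\RRR^n\to\RRR^{m+n-2}$ and then apply \ref{item:01} of Theorem~\ref{thm:main0} (equivalently Theorem~\ref{thm:afr exist} with Corollary~\ref{cor:afr}), and your Kummer-theorem reformulation of ``not bit-disjoint'' as the evenness of $\binom{m+n-2}{m-1}$ is correct (though not needed). The gap is in the step you yourself flag as the main obstacle: there is no ``classical Stiefel--Hopf construction'' that converts the parity condition into an actual bilinear map. The Stiefel--Hopf criterion $\msHHH(r,s,n)$ is a \emph{necessary} condition for the existence of a nonsingular biskew map, which is exactly how Section~\ref{sec:nonsingular} presents it: it yields only the lower bound $r\circ s\leq r\# s$, and the question of when $r\circ s=r\# s$ is in general open and tied to the immersion problem for projective spaces. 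So knowing that $\msHHH(m,n,m+n-2)$ holds does not let you ``push down'' polynomial multiplication by one coordinate; your own observation that a naive collapse fails is symptomatic of the fact that no soft argument is available here.

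What actually closes the gap is Proposition~\ref{prop:bit-disjoint}: its proof of the direction you need sets $r=h+1$, $k=r+s-2$ in Lam's theorem \cite{Lam:1968a}, which explicitly constructs a nonsingular bilinear map $\RRR^{h+1}\times\RRR^{k-h+\tau(k,h)}\to\RRR^{k}$, and uses that $\tau(r+s-2,r-1)\geq 1$ precisely when $r-1$ and $s-1$ are not bit-disjoint, giving $r\# s\leq r+s-2$. Lam's construction is indeed a combinatorial modification of polynomial multiplication (via intercalate matrices), so your intuition about where the map should come from is sound, but it is a genuinely nontrivial 1968 theorem rather than a consequence of the binomial criterion. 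If you replace your appeal to the ``Stiefel--Hopf construction'' by a citation of Proposition~\ref{prop:bit-disjoint}, your argument coincides with the paper's proof, which is exactly ``Proposition~\ref{prop:bit-disjoint} plus Theorem~\ref{thm:afr exist}.'' Your closing consistency check against Friedland's result for $m=n$ is correct.
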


This article is organized as follows.
Sections~\ref{sec:nonsingular}--\ref{sec:cor} are preparation to show the above theorems.  
In Section~\ref{sec:nonsingular}, we set notations and discuss the number $m\# n$.
In Section~\ref{sec:afcr tensors}, we study \afcr{} tensors. 
Since the set of \afcr{} tensors is an open set, there exists a special form of an \afcr{} tensor if an \afcr{} tensor exists.
In Section~\ref{sec:ideals}, we state the other notions and deal with ideals of minors of matrices. 
Theorem~\ref{thm:it real} in Section~\ref{sec:ideals} which corresponds with the real radical ideals is quite interesting in its own right. 
We show that for integers with $0<t\leq{\min\{u,n\}}$ and $m\geq (u-t+1)(n-t+1)+2$,
there exist open subsets $\msOOO_1$ and $\msOOO_2$ of $\RRR^{u\times n\times m}$ such that the union of them is dense, $\III(\VVV(I_t(M(\xxx,Y))))=I_t(M(\xxx,Y))$ for $Y\in \msOOO_1$ and $\III(\VVV(I_t(M(\xxx,Y))))=(x_1, \ldots, x_m)$
for $Y\in \msOOO_2$,
where $I_t(M(\xxx,Y))$ is the ideal generated by all $t$-minors of the $u\times n$ matrix $M(\xxx,Y)=\sum_{k=1}^m x_kY_k$ given by the indeterminates 
$x_1,\ldots,x_m$ and $Y=(Y_1;\ldots;Y_m)\in\RRR^{u\times n\times m}$.
From this, we can give a subset of $m\times n\times p$ tensors
with rank $p$ for $3\leq m\leq n$ and $(m-1)(n-1)+2\leq p\leq (m-1)n$.
In Section~\ref{sec:mpo} we discuss a property for the determinantal ideals by using monomial 
{preorder}.  This property plays an important role for proving Theorem~\ref{thm:main0}.
We characterize $m\times n\times p$ tensors with rank $p$ in Section~\ref{sec:rank p}.
In Section~\ref{sec:cor}, we show that the existence of an \afcr{} tensor
with suitable size implies that $p+1$ is a typical rank of 
$\RRR^{m\times n\times p}$.
Moreover there exist a nonempty open subset $\msTTT_1$ consisting of tensors with rank $p$ and a possibly empty open subset $\msTTT_2$ consisting of tensors with rank greater than $p$, corresponding $\msOOO_1$ and $\msOOO_2$ respectively, such that the union of them is a dense subset of $\RRR^{m\times n\times p}$ (see Theorem~\ref{thm:rank}).
Finally, in Section~\ref{sec:upperbound}, we show that $p+2$ is not a typical rank of $\RRR^{m\times n\times p}$ and complete proofs of the above theorems.

\section{Nonsingular bilinear maps \mylabel{sec:nonsingular}}

We first recall some basic facts and establish terminology.

\begin{notation}
\begin{enumerate}
\item We denote by $\RRR^n$ (resp. $\RRR^{1\times n}$) the set of
$n$-dimensional column (resp. row) real vectors and by $E_n$ the $n\times n$ identity matrix. Let
$\eee_j$\index{$\eee_j$} be the $j$-th column vector of an identity matrix.

\item
For a tensor $x\in\RRR^n\otimes\RRR^p\otimes\RRR^m$ with
$x=\sum_{ijk}a_{ijk}\eee_i\otimes\eee_j\otimes\eee_k$,
we identify $x$ with
$T=(a_{ijk})_{1\leq i\leq n,1\leq j\leq p,1\leq k\leq m}$
and denote it by $(A_1;\ldots;A_m)$\index{$(A_1;\ldots;A_m)$},
where $A_k=(a_{ijk})_{1\leq i\leq n,1\leq j\leq p}$ for $k=1, \ldots, m$ 
is an $n\times p$ matrix,
and call $(A_1;\ldots;A_m)$ a tensor.

\item
We denote the set of $n\times p\times m$ tensors by
$\RRR^{n\times p\times m}$ and the set of typical ranks by $\typicalrankR(n,p,m)$\index{$\typicalrankR(n,p,m)$}.

\item
For an $n\times p\times m$ tensor $T=(T_1;\ldots;T_m)$,
an $l\times n$ matrix $P$
and an $k\times p$ matrix $Q$,
we denote by $PT$\index{$PT$} the $l\times p\times m$ tensor
$(PT_1;\ldots;PT_p)$ and by $TQ\transpose$\index{$TQ\transpose$} the $n\times k\times m$ tensor
$(T_1Q\transpose;\ldots;T_pQ\transpose)$. 

\item
For $n\times p$ matrices $A_1, \ldots, A_m$,
we denote by $(A_1, \ldots, A_m)$\index{$(A_1, \ldots, A_m)$} the $n\times mp$ matrix obtained by aligning
$A_1, \ldots, A_m$ horizontally.

\item
We set $\diag(A_1,A_2,\ldots,A_t)\index{$\diag(A_1,A_2,\ldots,A_t)$}=
\begin{pmatrix} A_1&&&O \\ 
  & A_2\cr &&\ddots \\
  O &&& A_t \end{pmatrix}
$ 
for matrices $A_1, A_2, \ldots, A_t$.

\item
For an $m\times n$ matrix $M$,
we denote by $M_{\leq j}$\index{$M_{\leq j}$} (resp.\ ${}_{j<}M$\index{${}_{j<}M$}) the $m\times j$ matrix consisting of the first $j$ (resp.\ last $n-j$) columns of $M$.
We denote by $M^{\leq i}$\index{$M^{\leq i}$} (resp.\ ${}^{i<}M$\index{${}^{i<}M$}) the $i\times n$ 
(resp.\ $(m-i)\times n$) matrix consisting of the first $i$ 
(resp.\ last $m-i$) rows of $M$.
We put $M^{<i}=M^{\leq i-1}$\index{$M^{<i}$} $M_{<i}=M_{\leq i-1}$\index{$M_{<i}$}, and $M^{=i}={}^{i-1<}(M^{\leq i})$\index{$M^{=i}$} 
which is the $i$-th row vector of $M$.

\item \mylabel{def:h fl sigma}
We set
$\fl_1(T)=(T_1,\ldots,T_m)$ and $\fl_2(T)=\begin{pmatrix} T_1\\ \vdots\\ T_m\end{pmatrix}$ for a tensor $T=(T_1;\ldots;T_m)$.

%
\end{enumerate}
\end{notation}

\begin{definition}\rm
\mylabel{def:nonsing bilin}
A bilinear map $f\colon \RRR^m\times \RRR^n\to\RRR^l$ is called
nonsingular if $f(\xxx,\yyy)=\zerovec$ implies $\xxx=\zerovec$ or $\yyy=\zerovec$.
For positive integers $m$ and $n$, we set
$$
m\#n\define\min\{l\mid \text{there exists a nonsingular bilinear map 
$\RRR^m\times \RRR^n\to\RRR^l$}\}.
$$\index{$m\#n$}
\end{definition}

Let $g\colon \RRR^{1\times u}\times \RRR^{1\times v}\to\RRR^{1\times (u\#v)}$ be a nonsingular bilinear map.
For positive integers $m$ and $n$,  let $f\colon \RRR^{1\times mu}\times\RRR^{1\times nv}\to\RRR^{1\times (m+n-1)(u\#v)}$
be a map defined by
$f((\aaa_1,\ldots,\aaa_m),(\bbb_1,\ldots, \bbb_n))
=(g(\aaa_1,\bbb_1),g(\aaa_1,\bbb_2)+g(\aaa_2,\bbb_1), \ldots,
\sum_{i+j=k}g(\aaa_i,\bbb_j), \ldots, g(\aaa_m,\bbb_n))$.
It is easily verified that $f$ is a nonsingular bilinear map.
Thus we have the following:

\begin{lemma}
\mylabel{lem:nonsing comp}
$(mu)\#(nv)\leq (m+n-1)(u\#v)$.
\end{lemma}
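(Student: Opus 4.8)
The plan is to take the explicitly constructed map $f$ and verify directly that it is a nonsingular bilinear map; once this is done the inequality follows at once from the definition of $\#$, since the target of $f$ is $\RRR^{1\times(m+n-1)(u\#v)}$.

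First I would record that $f$ is bilinear. The $k$-th coordinate block of $f((\aaa_1,\ldots,\aaa_m),(\bbb_1,\ldots,\bbb_n))$ is $\sum_{i+j=k}g(\aaa_i,\bbb_j)$; since $g$ is bilinear, each summand $g(\aaa_i,\bbb_j)$ is linear in $(\aaa_1,\ldots,\aaa_m)$ for fixed $(\bbb_1,\ldots,\bbb_n)$ and linear in $(\bbb_1,\ldots,\bbb_n)$ for fixed $(\aaa_1,\ldots,\aaa_m)$, and a finite sum of bilinear maps is bilinear. Hence $f$ is bilinear.

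The crux is nonsingularity, which I would prove by a ``lowest-order coefficient'' argument, treating the index $k$ of $g(\aaa_i,\bbb_j)$ as a degree in a polynomial product. Suppose $f(\aaa,\bbb)=\zerovec$ with $\aaa=(\aaa_1,\ldots,\aaa_m)$ and $\bbb=(\bbb_1,\ldots,\bbb_n)$, and assume for contradiction that $\aaa\neq\zerovec$ and $\bbb\neq\zerovec$. Set $i_0=\min\{i : \aaa_i\neq\zerovec\}$ and $j_0=\min\{j : \bbb_j\neq\zerovec\}$ and look at the $(i_0+j_0)$-th coordinate block of $f(\aaa,\bbb)$, namely $\sum_{i+j=i_0+j_0}g(\aaa_i,\bbb_j)$. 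In each term with $i<i_0$ we have $\aaa_i=\zerovec$, so $g(\aaa_i,\bbb_j)=\zerovec$; in each term with $i>i_0$ we have $j=i_0+j_0-i<j_0$, so $\bbb_j=\zerovec$ and again $g(\aaa_i,\bbb_j)=\zerovec$. Thus this block reduces to the single term $g(\aaa_{i_0},\bbb_{j_0})$, which must then equal $\zerovec$. But $\aaa_{i_0}\neq\zerovec$ and $\bbb_{j_0}\neq\zerovec$, contradicting nonsingularity of $g$. Hence $\aaa=\zerovec$ or $\bbb=\zerovec$, so $f$ is nonsingular.

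Since $f$ is a nonsingular bilinear map $\RRR^{1\times mu}\times\RRR^{1\times nv}\to\RRR^{1\times(m+n-1)(u\#v)}$, Definition~\ref{def:nonsing bilin} gives $(mu)\#(nv)\leq(m+n-1)(u\#v)$. There is no real obstacle here; the only point requiring care is the index bookkeeping that isolates the term $g(\aaa_{i_0},\bbb_{j_0})$ in the $(i_0+j_0)$-th block, which is precisely the analogue of the statement that the lowest-degree coefficient of the product of two nonzero polynomials is the product of their lowest-degree coefficients.
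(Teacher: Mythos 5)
Your proof is correct and follows the same route as the paper: the paper constructs exactly this convolution-type map $f$ and simply asserts that it is ``easily verified'' to be a nonsingular bilinear map, which is precisely the lowest-index-block argument you spell out. No issues.
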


By applying this lemma to nonsingular bilinear maps
obtained by
multiplications of $\RRR$, $\CCC$, quaternions and octanions respectively,
we have the following:

\begin{prop}[{cf. \cite[Proposition 12.12 (3)]{Shapiro:2000}}] 
\mylabel{prop:4m-4n}
For $k=1$, $2$, $4$ and $8$,
it holds that $km\# kn \leq k(m+n-1)$.
\end{prop}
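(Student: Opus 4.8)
Proposition (cf. [Shapiro, Prop. 12.12 (3)]): For $k=1$, $2$, $4$, $8$, it holds that $km \# kn \le k(m+n-1)$.

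The plan is to specialize Lemma~\ref{lem:nonsing comp} to the diagonal case $u=v=k$. That lemma then gives
$$(mk)\#(nk)\leq(m+n-1)(k\#k),$$
so everything reduces to checking that $k\#k\leq k$ for $k=1$, $2$, $4$, $8$; that is, to producing a nonsingular bilinear map $\RRR^k\times\RRR^k\to\RRR^k$ in each of these four dimensions.

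For this I would invoke the classical normed division algebras: $\RRR$, $\CCC$, the quaternions $\mathbb{H}$ and the octonions $\mathbb{O}$, of real dimensions $1$, $2$, $4$ and $8$ respectively. Each such algebra $\mathbb{A}$ carries a multiplicative Euclidean norm, $|ab|=|a|\,|b|$. Choosing an $\RRR$-linear isomorphism $\mathbb{A}\cong\RRR^k$, multiplication becomes an $\RRR$-bilinear map $\mu\colon\RRR^k\times\RRR^k\to\RRR^k$, and $\mu(x,y)=\zerovec$ forces $0=|xy|=|x|\,|y|$, hence $x=\zerovec$ or $y=\zerovec$. So $\mu$ is nonsingular and $k\#k\leq k$. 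Feeding this back into the displayed inequality yields $km\#kn=(mk)\#(nk)\leq(m+n-1)k=k(m+n-1)$, which is the assertion.

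There is essentially no obstacle here: the only ingredient beyond Lemma~\ref{lem:nonsing comp} is the existence (not the uniqueness) of the real division algebras $\RRR$, $\CCC$, $\mathbb{H}$, $\mathbb{O}$, and the one point to keep straight is that nonsingularity of $\mu$ is literally the absence of zero divisors, which the multiplicative norm supplies at once.
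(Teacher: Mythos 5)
Your proof is correct and follows exactly the paper's intended argument: the paper likewise obtains the result by applying Lemma~\ref{lem:nonsing comp} with $u=v=k$ and using the multiplications of $\RRR$, $\CCC$, the quaternions and the octonions to get $k\#k\leq k$. Nothing to add.
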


Let $\msHHH(r, s, n)$\index{$\msHHH(r, s, n)$} be 
the condition on the binomial coefficients, called the Stiefel-Hopf criterion, 
that the binomial coefficient $\binom{n}{k}$ is even whenever $n-s<k<r$.
If there exists a continuous, nonsingular, biskew map $\RRR^r\times\RRR^s\to\RRR^n$ then the Stiefel-Hopf criterion $\msHHH(r, s, n)$ holds.
Put 
$$
r \circ s = \min\{n \mid \msHHH(r, s, n) \text{ holds}\}.
\index{$r \circ s = \min\{n \mid \msHHH(r, s, n) \text{ holds}\}$}
$$
We have
$$\max\{r, s\} \leq r \circ s \leq r\# s \leq r + s - 1.$$
Putting $n^\ast=\lceil \frac{n}{2}\rceil$ for $n \in \ZZZ$,
the number $r\circ s$ is easily obtained by the formula
$$r\circ s =\begin{cases} 2(r^\ast \circ s^\ast)-1 & 
\text{if $r$, $s$ are both odd and $r^\ast\circ s^\ast=r^\ast+s^\ast-1$},\\
2(r^\ast\circ s^\ast) & \text{otherwise}
\end{cases}$$
(cf. \cite[Proposition~12.9]{Shapiro:2000}).

For a positive integer $n$, we put integers $\alpha_j(n)=0,1$, $j\geq 0$
such that $n=\sum_{j=0}^\infty \alpha_j(n)2^j$ is the dyadic expansion of
$n$ and let $\alpha(n)\define\sum_{j=0}^\infty \alpha_j(n)$\index{$\alpha(n)\define\sum_{j=0}^\infty \alpha_j(n)$} be the number of ones in the dyadic expansion of $n$.
Two integers $m$ and $n$ are bit-disjoint if
$\{j\mid \alpha_j(m)=1\}$ and $\{j\mid \alpha_j(n)=1\}$ are disjoint. 
For $k>h$, let $\tau(k,h)$\index{$\tau(k,h)=\#\{j\geq 0\mid \alpha_j(k-h)=0, \alpha_j(k)\neq\alpha_j(h)\}$} be a nonnegative number defined as
$$\tau(k,h)=\#\{j\geq 0\mid \alpha_j(k-h)=0, \alpha_j(k)\neq\alpha_j(h)\}.$$

\begin{prop} \mylabel{prop:bit-disjoint}
$r\# s =r+s-1$ if and only if $r-1$ and $s-1$ are bit-disjoint.
\end{prop}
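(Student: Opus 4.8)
The plan is to reduce the assertion about $r\# s$ to the corresponding assertion about $r\circ s$, exploiting the inequalities $\max\{r,s\}\le r\circ s\le r\# s\le r+s-1$ recorded above. So I would first prove the purely combinatorial fact that \emph{$r\circ s=r+s-1$ if and only if $r-1$ and $s-1$ are bit-disjoint}. Granting this, the implication ``$r-1,s-1$ bit-disjoint $\Rightarrow r\# s=r+s-1$'' is immediate, since then $r+s-1=r\circ s\le r\# s\le r+s-1$ forces equality. The reverse implication is then handled by its contrapositive: if $r-1$ and $s-1$ are \emph{not} bit-disjoint I must exhibit a nonsingular bilinear map $\RRR^r\times\RRR^s\to\RRR^{r+s-2}$; this last construction is the substantive part.

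For the combinatorial fact I would induct on $r+s$ using the quoted recursion for $r\circ s$ in terms of $r^\ast=\lceil r/2\rceil$ and $s^\ast=\lceil s/2\rceil$, splitting by the parities of $r$ and $s$. If $r=1$ or $s=1$ the claim is trivial: one of $r-1,s-1$ is $0$, so they are bit-disjoint, and $r\circ s=\max\{r,s\}=r+s-1$. In the inductive step, if $r$ and $s$ are both even then $r=2r^\ast$, $s=2s^\ast$, the recursion gives $r\circ s=2(r^\ast\circ s^\ast)$, which is even hence $\neq r+s-1$, while $r-1$ and $s-1$ are both odd and share the bit $2^0$; so both sides of the equivalence fail. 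If exactly one, say $s$, is odd, then $r-1=2r^\ast-1=1+2(r^\ast-1)$ and $s-1=2(s^\ast-1)$, so $r-1,s-1$ are bit-disjoint exactly when $r^\ast-1,s^\ast-1$ are; and the recursion gives $r\circ s=2(r^\ast\circ s^\ast)$, which equals $r+s-1=2(r^\ast+s^\ast-1)$ precisely when $r^\ast\circ s^\ast=r^\ast+s^\ast-1$, i.e.\ (by induction) when $r^\ast-1,s^\ast-1$ are bit-disjoint. If both are odd, then $r-1=2(r^\ast-1)$, $s-1=2(s^\ast-1)$, so again $r-1,s-1$ are bit-disjoint iff $r^\ast-1,s^\ast-1$ are; the target value $r+s-1=2(r^\ast+s^\ast-1)-1$ is odd, forcing the ``$2(r^\ast\circ s^\ast)-1$'' branch of the recursion, which holds (and then gives exactly $r+s-1$) iff $r^\ast\circ s^\ast=r^\ast+s^\ast-1$, i.e.\ iff $r^\ast-1,s^\ast-1$ are bit-disjoint. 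This closes the induction.

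It remains to build, when $r-1$ and $s-1$ share a bit, a nonsingular bilinear map into $\RRR^{r+s-2}$. I would do this by induction on $r+s$ as well. For the base cases $\max\{r,s\}\le 8$ one uses the real division algebras $\RRR$, $\CCC$, $\mathbb{H}$, $\mathbb{O}$: restricting the multiplication of a $2^k$-dimensional composition algebra to subspaces shows $r\# s\le 2^{\lceil\log_2\max\{r,s\}\rceil}$, and when a dimension is even one may instead use the $\CCC$- (or $\mathbb H$-, $\mathbb O$-) module structure, e.g.\ $\RRR^{2t}\cong\CCC^t$ gives $2\# 2t\le 2t$; a finite check shows that in every non-bit-disjoint case with $\max\{r,s\}\le 8$ one of these yields target dimension $\le r+s-2$. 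For the inductive step I would again separate by parity: if $r,s$ are both even, Proposition~\ref{prop:4m-4n} with $k=2$ already gives $r\# s\le r+s-2$; otherwise the shared bit sits in higher positions, and, paralleling the $r\circ s$ recursion, one passes to $(r^\ast,s^\ast)$ (which are again non-bit-disjoint after the shift) and lifts a nonsingular map for $(r^\ast,s^\ast)$ back up via a composition/padding argument built on Lemma~\ref{lem:nonsing comp}. The main obstacle is precisely this lifting step: Lemma~\ref{lem:nonsing comp} applied naively to $m=n=2$ loses a factor $3$ rather than $2$, so one must combine it with the complex-module trick and the $2^k\!\le 8$ composition algebras to keep the target dimension at $r+s-2$; getting the bookkeeping of these reductions to cover all parity and divisibility patterns of $(r,s)$ is the delicate point of the proof.
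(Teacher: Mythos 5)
Your forward direction is sound: proving combinatorially from the quoted recursion that $r\circ s=r+s-1$ exactly when $r-1$ and $s-1$ are bit-disjoint, and then squeezing with $r\circ s\le r\# s\le r+s-1$, is essentially what the paper does (it simply cites Shapiro for the combinatorial fact). Your parity induction for that step checks out.

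The converse, however, has a genuine gap, and it sits exactly where you flagged the "delicate point." The paper does not attempt any doubling induction: it invokes Lam's theorem, which constructs a nonsingular bilinear map $\RRR^{h+1}\times\RRR^{k-h+\tau(k,h)}\to\RRR^{k}$ via intercalate matrices; with $h=r-1$, $k=r+s-2$ and $\tau\ge 1$ in the non-bit-disjoint case this immediately gives $r\#s\le r+s-2$. Your proposed toolkit (restriction of the division algebras $\RRR,\CCC,\mathbb{H},\mathbb{O}$, Lemma~\ref{lem:nonsing comp}, Proposition~\ref{prop:4m-4n}, Hurwitz--Radon-type module structures) cannot reproduce this. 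Concretely, take $r=s=17$: since $16$ and $16$ share a bit, the proposition demands $17\#17\le 32$. But $17$ is odd and prime, so Lemma~\ref{lem:nonsing comp} gives at best $17\#17\le 33$; Proposition~\ref{prop:4m-4n} applies only after padding to even arguments and yields $17\#17\le 18\#18\le 34$; there is no $32$-dimensional composition algebra to restrict; and $\rho(32)=10<17$, so no Hurwitz--Radon family of the needed size exists. There is also no general "doubling" principle $a\#b\le c\Rightarrow 2a\#2b\le 2c$ obtainable from complexification (complexifying destroys nonsingularity, since over $\CCC$ a system of $c$ bilinear equations on $\mathbb{P}^{a-1}\times\mathbb{P}^{b-1}$ with $a+b-2\ge c$ always has solutions). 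So your inductive lifting step cannot be carried out with the stated means; the substantive construction really does require Lam's theorem (or an equivalent Adem/Milgram-type construction), which is what the paper cites.
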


\begin{proof}
If $r-1$ and $s-1$ are bit-disjoint, then 
$r\circ s=r\# s=r+s-1$ (cf. \cite[p. 257]{Shapiro:2000}).
Moreover, $\tau(k,h)=0$ if and only if $h$ and $k-h$ are bit-disjoint.
There is a nonsingular bilinear map  $\RRR^{h+1}\times\RRR^{k-h+\tau(k,h)}\to\RRR^{k}$ for $k>h\geq 0$ \cite{Lam:1968a} and thus
$(h+1)\# (k-h+\tau(k,h))\leq k$.
Putting $r=h+1$ and $k=r+s-2$, we have $r\# (s-1+\tau(r+s-2,r-1))\leq r+s-2$.
In particular, if $r-1$ and $s-1$ are not bit-disjoint then
$r\# s\leq r+s-2$.
\end{proof}

Let $\rho$ be the Hurwitz-Radon function defined as
$\rho(n)=2^b+8c$ for nonnegative integers $a,b,c$ 
such that $n=(2a+1)2^{b+4c}$ and $0\leq b<4$.
There is a nonsingular bilinear map $\RRR^n\times \RRR^{\rho(n)}\to \RRR^n$ \cite{Hurwitz:1922,Radon:1922}
and is no nonsingular bilinear map $\RRR^n\times \RRR^{\rho(n)+1}\to \RRR^n$
for any $n\geq 1$ \cite{Adams:1962}.
Therefore, $n\#\rho(n)\leq n$ and $n\#(\rho(n)+1)>n$.

\begin{cor} \mylabel{cor:n-by-n}
$n\# n\leq 2n-2$.
In particular, the equality $n\# n = 2n-2$ holds for $n=2^a+1$.
\end{cor}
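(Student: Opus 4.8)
The plan is to derive the bound $n\#n\le 2n-2$ directly from Proposition~\ref{prop:bit-disjoint}, and then, in the case $n=2^a+1$, to obtain a matching lower bound through the Stiefel--Hopf number $r\circ s$, using $r\circ s\le r\#s$ together with the recursion for $r\circ s$ recalled above.

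For the inequality, assume $n\ge 2$. Then $n-1\ge 1$, so the set $\{j\mid\alpha_j(n-1)=1\}$ of positions of ones in the dyadic expansion of $n-1$ is nonempty, and therefore $n-1$ is not bit-disjoint from itself. Applying Proposition~\ref{prop:bit-disjoint} with $r=s=n$ gives $n\#n\ne 2n-1$, and since $r\#s\le r+s-1$ always holds this forces $n\#n\le 2n-2$.

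For the equality when $n=2^a+1$, in view of $n\circ n\le n\#n\le 2n-2$ it is enough to show $n\circ n=2n-2$, and I would prove the sharper statement $n\circ n=2^{a+1}$ (which equals $2n-2$) by induction on $a\ge 0$. In the base case $a=0$ we have $n=2$, and from $2=\max\{2,2\}\le 2\circ 2\le 2\#2$ together with $2\#2\le 2$ (complex multiplication is a nonsingular bilinear map $\RRR^2\times\RRR^2\to\RRR^2$; alternatively, Proposition~\ref{prop:4m-4n} with $k=2$, $m=n=1$) we get $2\circ 2=2=2^{1}$. For the inductive step with $a\ge 1$, $n=2^a+1$ is odd and $n^\ast=\lceil n/2\rceil=2^{a-1}+1$, which is precisely the value of ``$n$'' at the previous stage; hence $n^\ast\circ n^\ast=2^{a}$ by the induction hypothesis. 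Since $2^a\ne 2^a+1=n^\ast+n^\ast-1$, the first branch of the recursion for $r\circ s$ does not apply, so $n\circ n=2(n^\ast\circ n^\ast)=2^{a+1}=2n-2$. Combining with the upper bound yields $n\#n=2n-2$.

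The one place demanding care is the branch selection in the recursion for $r\circ s$: at each stage one must check $n^\ast\circ n^\ast\ne n^\ast+n^\ast-1$, and it is exactly here that the special form $n=2^a+1$ is used, the two sides being $2^a$ and $2^a+1$. Had the first (``doubling minus one'') branch applied, the value would be $2^{a+1}-1<2n-2$ and the lower bound would fail; the content of the computation is precisely that $n-1=2^a$ is a power of two, which makes $n^\ast-1$ again a power of two and keeps the induction in the ``otherwise'' branch throughout.
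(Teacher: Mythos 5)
Your proof is correct. The upper bound is handled exactly as in the paper (via Proposition~\ref{prop:bit-disjoint}, since $n-1$ is not bit-disjoint from itself for $n\geq 2$), but your lower bound takes a genuinely different route. The paper establishes $(2m+1)\#(2m+1)\geq 4m$ for $m=2^{a-1}$ topologically: it invokes the equivalence between immersions $\RP^{n}\to\RRR^{n+k}$ and nonsingular biskew maps $\RRR^{n+1}\times\RRR^{n+1}\to\RRR^{n+1+k}$ together with Levine's non-immersion theorem for $\RP^{2m}\to\RRR^{4m-2}$ (handling $a=0,1$ separately via the Hurwitz--Radon function). You instead use only the combinatorial lower bound $n\circ n\leq n\# n$ and the recursion $r\circ s=2(r^\ast\circ s^\ast)$ or $2(r^\ast\circ s^\ast)-1$, showing by induction that $(2^a+1)\circ(2^a+1)=2^{a+1}$; your branch analysis is right, since $n^\ast=2^{a-1}+1$ reproduces the previous stage and $n^\ast\circ n^\ast=2^a\neq n^\ast+n^\ast-1=2^a+1$ keeps you in the ``otherwise'' branch throughout. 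Your argument is more self-contained, relying only on facts already recalled in Section~\ref{sec:nonsingular} (the chain $\max\{r,s\}\leq r\circ s\leq r\# s\leq r+s-1$ and the recursion for $r\circ s$) rather than on external non-immersion results; the paper's topological route is what one would need if the Stiefel--Hopf bound $n\circ n$ were not sharp, but for $n=2^a+1$ it is, so nothing is lost.
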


\begin{proof}
The inequality $n\# n\leq 2n-2$ is clear by Proposition~\ref{prop:bit-disjoint} since $n-1$ and $n-1$ are not bit-disjoint.

There is an immersion
$\RP^{n}\to\RRR^{n+k}$ if and only if
there is a nonsingular biskew map $\RRR^{n+1}\times\RRR^{n+1}\to\RRR^{n+1+k}$ (cf. \cite{Adem:1968,Shapiro:2000}).
Note that $\rho(2)=2$ and $\rho(4)=4$.
Then $2\#2=2$ and $3\#3=4$ which follows from $4\#4=4$.
Suppose that $a\geq 2$.
Put $m=2^{a-1}$.  Since there is no immersion
$\RP^{2m}{\to}\RRR^{4m-2}$ (cf. \cite{Levine:1963}), we have $4m=(2m+1)\#(2m+1)$.
\end{proof}

Many estimations for $m\#n$ are known from immersion problem for manifolds, as projective spaces. 
For example, the existence of a nonsingular bilinear map $\RRR^{n+1}\times\RRR^{n+1}\to\RRR^{n+1+k}$ implies that $\RP^n$ immerses in $\RRR^{n+k}$ \cite{Ginsburg:1963}. 

\begin{prop}
\begin{enumerate}
\item $(n+1)\#(n+1)\leq 2n-\alpha(n)+1$ {\rm\cite{Cohen:1985}}.
\item $(2n+\alpha(n))\#(2n+\alpha(n))\geq 4n-2\alpha(n)+2$ {\rm\cite{Davis:1984}}.
\item $(8n+9)\#(8n+9)\geq 16n+6$ and $(16n+12)\#(16n+12)\geq 32n+14$ if $\alpha(n)=2$ {\rm\cite{Davis-Mahowald:2008,Singh:2004}}.
\item $(8n+10)\#(8n+10)\geq 16n+1$ and $(8n+11)\#(8n+11)\geq 16n+4$ if $\alpha(n)=3$ {\rm\cite{Davis:2011,Davis-Mahowald:2008}}.
\item $(n+1)\#(m+1)\leq n+m+1-(\alpha(n)+\alpha(n-m)+\min\{k(n),k(m)\})$ if $m$, $n$ are odd and $n\geq m$, where $k(n)$ is a nonnegative function depending only in the mod 8 residue class of $n$ with
$k(8a+1) = 0$, $k(8a+3) = k(8a+5) = 1$ and $k(8a+7) = 4$ {\rm\cite{Milgram:1967}}.
\item $d(h+1)\#(d(k-h)+\tau(k,h))\leq dk$ for $k>h\geq 0$ and $d=1,2,4,8$ {\rm\cite{Lam:1968a}}.
\item $(n+1)\# (n+\tau(2n,n))\leq 2n$.
\end{enumerate}
\end{prop}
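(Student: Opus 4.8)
The plan is to treat the seven assertions in two groups. Assertions (1)--(6) are not new: each is a direct quotation of a result in the cited papers, so the only work is to check that the numerology there matches the $m\#n$ notation used here. Concretely, (1) is Cohen's upper bound \cite{Cohen:1985}; (2) is Davis's lower bound \cite{Davis:1984}; (3) and (4) are the lower bounds of Davis--Mahowald, Singh and Davis \cite{Davis-Mahowald:2008,Singh:2004,Davis:2011}; (5) is Milgram's upper bound \cite{Milgram:1967}; and (6) is Lam's explicit construction \cite{Lam:1968a}. The lower bounds in (2)--(4) are obstructions coming from the non-existence of certain immersions of real projective spaces, transported to obstructions for nonsingular bilinear maps through the correspondence recalled just before Proposition~\ref{prop:bit-disjoint} (Ginsburg \cite{Ginsburg:1963}); the upper bounds in (1), (5), (6) come from explicit bilinear constructions in the respective sources. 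For these six I would simply cite, without reproducing any argument.

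For assertion (7) the plan is to specialize assertion (6). Setting $d=1$ in (6) gives
$$(h+1)\#\bigl((k-h)+\tau(k,h)\bigr)\le k\qquad\text{for all }k>h\ge 0,$$
which is exactly the instance of Lam's construction already used in the proof of Proposition~\ref{prop:bit-disjoint}. Now substitute $k=2n$ and $h=n$. Since $2n>n\ge 1$ the hypothesis $k>h\ge 0$ holds, and $k-h=n$, so the inequality becomes $(n+1)\#\bigl(n+\tau(2n,n)\bigr)\le 2n$, which is assertion (7).

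I do not expect a real obstacle here; the substance of the proposition lies entirely in the cited works, and (7) is a one-line specialization. The only points needing a moment's care are bookkeeping: verifying $k>h$ for the chosen values (it holds for every $n\ge 1$) and confirming that the second slot of $\#$ in (6) evaluates to $(k-h)+\tau(k,h)=n+\tau(2n,n)$ rather than some other combination of $n$ and $\tau$. Should a self-contained treatment of the $d=1$ case of (6) be desired, one would recall Lam's explicit nonsingular bilinear map $\RRR^{h+1}\times\RRR^{k-h+\tau(k,h)}\to\RRR^{k}$, but since that map is already invoked earlier in the paper I would not repeat it.
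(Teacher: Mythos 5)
Your proposal is correct and matches the paper's treatment: the paper supplies no proof for this proposition, items (1)--(6) being direct citations, and item (7) (the only uncited one) is indeed just the $d=1$, $k=2n$, $h=n$ specialization of Lam's inequality in (6), exactly as you derive it.
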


\section{\Afcr{} tensors\mylabel{sec:afcr tensors}}

For a tensor $T$ of $\RRR^n\otimes\RRR^p\otimes\RRR^m$, we define the {\em rank} 
of $T$, denoted by $\rank T$\index{$\rank T$}, the minimal number $r$ so that there exist $\aaa_i\in\RRR^n$, $\bbb_i\in\RRR^p$, and $\ccc_i\in\RRR^m$ for $i=1, \ldots, r$ such that 
$$T=\sum_{i=1}^r\aaa_i\otimes\bbb_i\otimes\ccc_i.$$
The set $\RRR^{n\times p\times m}$ has an action of $\GL(m)\times \GL(p)\times \GL(n)$ as
$$(A,B,C)\cdot \sum_{i=1}^r \aaa_i\otimes\bbb_i\otimes\ccc_i=\sum_{i=1}^r A\aaa_i\otimes B\bbb_i\otimes C\ccc_i.
$$
For tensors $T_1,T_2\in\RRR^{m\times n\times p}$, $T_1$ and $T_2$
are said to be {\em equivalent} if
$T_1=(A,B,C)\cdot T_2$ for some $(A,B,C)\in\GL(n)\times\GL(p)\times\GL(m)$.
The equivalence relation preserves the rank.
For a subset $\msUUU$ and an open semi-algebraic subset $\msSSS$ of $\RRR^{m\times n\times p}$, we say that almost all tensors in $\msSSS$ are equivalent to tensors in $\msUUU$ if 
there exists a semi-algebraic subset $\msSSS_0$ of $\msSSS$ with $\dim \msSSS_0<mnp$ such that any tensor of $\msSSS\setminus \msSSS_0$ is equivalent to a tensor of $\msUUU$. 
In particular, for a given tensor $T_0$, if almost all tensors in $\RRR^{m\times n\times p}$ are equivalent to $\{T_0\}$, then we say that any tensor is {\em generically equivalent} to $T_0$.

An integer $r$ is called a {\em typical rank}
 of $n\times p\times m$-tensors if there is a nonempty open subset $\msOOO$ of 
$\RRR^{n\times p\times m}$ such that $\rank X=r$ for $X\in \msOOO$.
Over the complex number field $\CCC$, it is known that there is a unique typical rank, called the 
generic rank, of $n\times p\times m$-tensors for any $n$, $p$ and $m$.
The set of typical ranks of $n\times p\times m$-tensors over $\RRR$ is denoted by
$\trank(n,p,m)$ and the generic rank of $n\times p\times m$-tensors over $\CCC$ is
denoted by $\grank(n,p,m)$.

We recall the following facts.

\begin{thm}[{\cite[Theorem 7.1]{Friedland:2012}}]
\mylabel{thm:fri 7.1}
The space $\RRR^{m_1\times m_2\times m_3}$, 
$m_1$, $m_2$, $m_3\in\NNN$, 
contains a finite number of open connected
disjoint semi-algebraic sets $\msOOO_1$, \ldots, $\msOOO_M$ satisfying the following properties.
\begin{enumerate}
\item
$\RRR^{m_1\times m_2\times m_3}\setminus \bigcup_{i=1}^M\msOOO_i$
 is a closed semi-algebraic set $\RRR^{m_1\times m_2\times m_3}$ of dimension strictly less than
$m_1m_2m_3$.
\item
Each $T\in \msOOO_i$ has rank $r_i$ for $i = 1$, \ldots, $M$.
\item
${\min\{r_1,\ldots, r_M\}} = \grank(m_1,m_2,m_3)$.
\item
$\trank(m_1,m_2,m_3)=\{r\in\ZZZ \mid 
{\min\{r_1,\ldots,r_M\}}\leq r\leq {\max\{r_1,\ldots,r_M\}}\}$.
\end{enumerate}
\end{thm}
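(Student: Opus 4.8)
The plan is to obtain all four assertions from the Tarski--Seidenberg theorem, the finiteness of the connected components of a semialgebraic set, and a comparison between the real and complex pictures via the (real) submersion theorem. Write $N\define m_1m_2m_3$, and for $r\ge 0$ let $R_{\le r}\subseteq\RRR^N$ be the set of tensors of rank at most $r$. This is the image of the polynomial map $\Phi_r\colon(\RRR^{m_1}\times\RRR^{m_2}\times\RRR^{m_3})^r\to\RRR^N$ sending $((\aaa_i,\bbb_i,\ccc_i))_i$ to $\sum_i\aaa_i\otimes\bbb_i\otimes\ccc_i$, so $R_{\le r}$ is semialgebraic, and hence so is $R_{=r}\define R_{\le r}\setminus R_{\le r-1}$. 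Since ranks are bounded (by $\min\{m_1m_2,m_1m_3,m_2m_3\}$, say), the sets $R_{=r}$ form a finite semialgebraic partition of $\RRR^N$. I would take $\msOOO_1,\dots,\msOOO_M$ to be the connected components of the open semialgebraic sets $\interior(R_{=r})$, over all $r$: these are finitely many, open, connected, pairwise disjoint, semialgebraic, and the rank is the constant $r_i$ on $\msOOO_i$, which is (2). For (1), note that $\RRR^N\setminus\bigcup_i\msOOO_i$ is closed and semialgebraic, and has empty interior --- a ball inside it would meet some $R_{=r}$ in a set of nonempty interior, hence would meet $\interior(R_{=r})$ and so some $\msOOO_i$ --- so it has dimension $<N$.

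For (3), I would set $g\define\grank(m_1,m_2,m_3)$. First, a Euclidean-generic real tensor $T$ satisfies $\rank T\ge\rank_\CCC T=g$, because $\{\rank_\CCC\le g-1\}$ lies in the $(g-1)$-st secant variety of the Segre variety, a proper subvariety of $\CCC^N$, whose real points form a lower-dimensional real-algebraic subset of $\RRR^N$ (as $\RRR^N$ is Zariski-dense in $\CCC^N$); hence $r_i\ge g$ for all $i$. For the reverse, the complex map $\Phi_g^{\CCC}$ is dominant because $g$ is the generic rank, so some $N\times N$ minor of its Jacobian is a nonzero polynomial with real coefficients, and therefore nonvanishing at some real point $x_0$; there the real Jacobian of $\Phi_g$ has rank $N$. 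By the submersion theorem $\Phi_g$ carries a neighbourhood of $x_0$ onto an open set $U\subseteq\RRR^N$ consisting of tensors of real rank $\le g$, and combining with the preceding remark, a generic tensor of $U$ has rank exactly $g$; thus $g$ is typical and $\min_ir_i=g$.

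Finally, for (4), I would show that every integer $r$ with $g\le r\le r_{\max}\define\max_ir_i$ is typical, by downward induction from $r_{\max}$, which is typical by definition. Assume $r>g$ is typical, put $W\define\interior(R_{=r})\ne\emptyset$, and let $S^{\circ}$ be the smooth locus of the Segre cone (the nonzero decomposable tensors). The set $Z\define\{(w_1,\dots,w_r)\in(S^{\circ})^r:\sum_iw_i\in W\}$ is open and nonempty, since any tensor of $W$ admits a rank-$r$ decomposition into nonzero decomposables. For each $(w_i)\in Z$ the tensor $\sum_{i=1}^{r-1}w_i$ has rank exactly $r-1$, as rank $\le r-2$ would force $\sum_{i=1}^rw_i$ to have rank $<r$, contradicting membership in $W$. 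The map $\psi\colon Z\to\RRR^N$, $(w_i)\mapsto\sum_{i=1}^{r-1}w_i$, then has image inside $R_{=r-1}$; and since $r-1\ge g$, Terracini's lemma shows that for a Zariski-generic choice of $w_1,\dots,w_{r-1}$ the differential of $\psi$ has rank $N$, a Zariski-dense condition which therefore meets the nonempty open set $Z$, so $\psi$ is submersive somewhere and $R_{=r-1}$ has nonempty interior. Hence $r-1$ is typical, completing the induction; since the typical ranks are exactly the $r_i$ and none lies below $g$ or above $r_{\max}$, this gives $\trank(m_1,m_2,m_3)=\{r\in\ZZZ:\min_ir_i\le r\le\max_ir_i\}$, which is both the remaining part of (3) and all of (4). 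I expect the principal difficulty to be the real-versus-complex passage in (3) --- extracting a real point of full Jacobian rank from dominance of the complex map --- together with pinning down the genericity and openness claims of (4) carefully within the semialgebraic category.
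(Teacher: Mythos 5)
The paper does not prove this statement at all: it is quoted verbatim from Friedland and justified only by the citation \cite[Theorem 7.1]{Friedland:2012}, so there is no in-paper argument to compare yours against. Taken on its own terms, your proof is essentially correct and follows the standard route. Parts (1)--(2) are a clean application of Tarski--Seidenberg plus the finiteness and semialgebraicity of connected components, together with the fact that a semialgebraic subset of $\RRR^{N}$ has dimension $N$ if and only if it has nonempty interior (which you should cite explicitly, e.g.\ from Bochnak--Coste--Roy). Part (3) correctly combines the inequality $\rank_\RRR\geq\rank_\CCC$, the properness of the $(g-1)$-st secant variety, and generic smoothness of the dominant map $\Phi_g^\CCC$ to produce a real submersion point; the only point to make explicit is that the nonvanishing minor has real (indeed integer) coefficients, which you do. Part (4), the downward induction via dropping one summand from a rank-$r$ decomposition of a tensor in $\interior(R_{=r})$ and checking that the truncated sum lands submersively in $R_{=r-1}$, is the key idea and is sound; the two places that deserve more care are (i) the passage from surjectivity of the complex differential at a Zariski-generic point of $(S^\circ_\CCC)^{r-1}$ (Terracini, or just generic smoothness of the dominant addition map) to surjectivity at a Euclidean-dense set of real points, which needs the Zariski density of the real points of the Segre cone (available here because the cone has a real polynomial parametrization), and (ii) the remark that the rank-$N$ condition on $(w_1,\ldots,w_{r-1})$ pulls back to a dense open subset of $(S^\circ)^{r}$ under the open projection, so that it meets $Z$. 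Both are routine but should be stated. With those citations supplied, your argument is a complete self-contained proof of the quoted theorem.
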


Let $T=(A_1;\ldots;A_p)$
be an $m\times n\times p$ tensor over $\RRR$.
The tensor $T$ is called an {\em \afcr} tensor if
$$
\rank(\sum_{j=1}^p y_j A_j)=n
$$
for any $(y_1,\ldots,y_p)\transpose\in\RRR^p\setminus\{\zerovec\}$.


From the definition of the \afcr\ property, we see the following fact.

\begin{lemma}
\mylabel{lem:afr stable}
Let $T$
be an $m\times n\times p$ tensor over $\RRR$ and $P\in\glin(m,\RRR)$.
Then $T$ is \afcr\ if and only if so is $PT$.
\end{lemma}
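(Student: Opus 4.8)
The plan is to reduce the claim to the elementary fact that left multiplication by an invertible matrix preserves matrix rank. Write $T=(A_1;\ldots;A_p)$ with each $A_j$ an $m\times n$ matrix, so that by the definition of the action recalled in the Notation we have $PT=(PA_1;\ldots;PA_p)$. For any $\yyy=(y_1,\ldots,y_p)\transpose\in\RRR^p$, the linearity of matrix multiplication gives
$$\sum_{j=1}^p y_j(PA_j)=P\Bigl(\sum_{j=1}^p y_jA_j\Bigr).$$

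Next I would use that $P\in\glin(m,\RRR)$ is invertible, so that $\rank(PM)=\rank M$ for every $m\times n$ matrix $M$: indeed $P$ maps the column space of $M$ isomorphically onto the column space of $PM$, so the two have the same dimension. Applying this with $M=\sum_{j=1}^p y_jA_j$ shows that, for a fixed $\yyy$, the equality $\rank\bigl(\sum_j y_j(PA_j)\bigr)=n$ holds if and only if $\rank\bigl(\sum_j y_jA_j\bigr)=n$ holds.

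Finally, since this equivalence is valid for every $\yyy\in\RRR^p\setminus\{\zerovec\}$, the defining condition of the \afcr{} property for $PT$ is met exactly when it is met for $T$, which is the assertion; applying the same argument to $P^{-1}$ and $PT$ gives the converse direction and makes the ``if and only if'' transparent. There is no genuine obstacle here: the only point requiring care is to match the definition of $PT$ from the Notation with the chosen presentation $T=(A_1;\ldots;A_p)$ so that the displayed identity is literally correct, after which everything rests on the rank-invariance of $M\mapsto PM$ for $P\in\glin(m,\RRR)$.
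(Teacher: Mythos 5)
Your argument is correct and is exactly the reasoning the paper has in mind: the paper states this lemma without proof, remarking only that it follows from the definition, and the intended justification is precisely that $\sum_j y_j(PA_j)=P(\sum_j y_jA_j)$ together with the invariance of rank under left multiplication by an invertible matrix. Nothing further is needed.
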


\begin{lemma}[{see Corollary \ref{cor:msa open} or \cite[Theorem~3.6]{Miyazaki-etal:2012a}}]
\mylabel{lem:afr open}
The set of $m\times n\times p$ \afcr{} tensors is an open subset of $\RRR^{m\times n\times p}$.
\end{lemma}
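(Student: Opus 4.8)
The plan is to prove the equivalent statement that the set of non-\afcr{} tensors is closed, using compactness of the unit sphere. (If $m<n$ the set of \afcr{} tensors is empty and there is nothing to prove, so assume $m\geq n$.)

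First I would reduce the defining condition to a quantification over a compact set. Writing $M_T(y):=\sum_{j=1}^p y_j A_j$ for a tensor $T=(A_1;\ldots;A_p)$ and $y=(y_1,\ldots,y_p)\transpose$, the rank of $M_T(y)$ is invariant under rescaling $y$ by a nonzero scalar; hence $T$ is \afcr{} if and only if $\rank M_T(y)=n$ for every $y$ in the unit sphere $S^{p-1}\subset\RRR^p$. This trades the quantifier over the noncompact set $\RRR^p\setminus\{\zerovec\}$ for one over the compact set $S^{p-1}$, which is the essential maneuver.

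Next I would consider
\[
Z=\{(T,y)\in\RRR^{m\times n\times p}\times S^{p-1}\mid\rank M_T(y)<n\}.
\]
The entries of $M_T(y)$ are bilinear, hence continuous, in $(T,y)$, and $\rank M_T(y)<n$ is equivalent to the simultaneous vanishing of all $n\times n$ minors of $M_T(y)$, each of which is a polynomial in those entries; so $Z$ is a common zero set of finitely many continuous functions and is therefore closed. The set of non-\afcr{} tensors is precisely the image $\pi(Z)$ under the projection $\pi\colon\RRR^{m\times n\times p}\times S^{p-1}\to\RRR^{m\times n\times p}$. Because $S^{p-1}$ is compact, $\pi$ is a closed map (the tube lemma: projection along a compact factor is closed), so $\pi(Z)$ is closed, and its complement, the set of \afcr{} tensors, is open.

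I do not expect a genuine obstacle here; the only step requiring care is precisely the passage to the compact sphere, since over $\RRR^p\setminus\{\zerovec\}$ the analogous projection need not be closed. A more quantitative variant avoids the closed-map lemma: fixing an \afcr{} tensor $T_0$, the continuous function $g(T,y):=\det\bigl(M_T(y)\transpose M_T(y)\bigr)$, which by Cauchy--Binet equals $\sum_{|S|=n}\bigl(\det M_T(y)_S\bigr)^2$, is strictly positive on the compact set $\{T_0\}\times S^{p-1}$, hence bounded below there by some $c>0$; uniform continuity of $g$ on $\{T\mid\|T-T_0\|\leq1\}\times S^{p-1}$ then provides $\delta>0$ such that $\|T-T_0\|<\delta$ forces $g(T,y)>0$ for all $y\in S^{p-1}$, i.e.\ $T$ is again \afcr{}. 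Either route yields the lemma; it also follows from Corollary~\ref{cor:msa open}.
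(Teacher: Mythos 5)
Your proof is correct and rests on the same essential idea as the paper's: the paper proves this via Corollary~\ref{cor:msa open}, which also passes to the compact unit sphere and then applies Lemma~\ref{lem:min cont} (the minimum over a compact factor of a continuous function is continuous) to exhibit the \afcr{} set as the preimage of $(0,\infty)$ under a continuous function. Your closed-projection (tube lemma) formulation and your quantitative Cauchy--Binet variant are just alternative packagings of the same compactness argument.
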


Let $T$ be an $m\times n\times p$-tensor.
We define $f_T\colon \RRR^n\times\RRR^p\to\RRR^m$ 
as 
$$f_T(\xxx,\yyy)=\sum_{j=1}^p y_j A_j\xxx,\index{$f_T(\xxx,\yyy)=\sum_{j=1}^p y_j A_j\xxx$}$$
where $\yyy=(y_1,\ldots,y_p)\transpose$.
Then $f_T$ is a bilinear map.
This assignment $T\mapsto f_T$ induces a bijection from $\RRR^{m\times n\times p}$ to the set of all bilinear maps $\RRR^n\times\RRR^p\to\RRR^m$.
It is easily verified that $f_T\colon \RRR^n\times\RRR^p\to\RRR^m$ is nonsingular
if and only if $T$ is \afcr.
Therefore

\begin{cor}
\mylabel{cor:afr}
There is an $m\times n\times p$ \afcr{} tensor if and only if 
there is a nonsingular bilinear map $\RRR^n\times\RRR^p\to\RRR^m$, i.e.,
$n\#p\leq m$. 
\end{cor}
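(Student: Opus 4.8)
The plan is to unwind the definitions, since the essential constructions are already in place: by the discussion preceding the corollary, $T\mapsto f_T$ is a bijection from $\RRR^{m\times n\times p}$ onto the set of bilinear maps $\RRR^n\times\RRR^p\to\RRR^m$, so it suffices to verify that $T$ is \afcr{} precisely when $f_T$ is nonsingular, and then to relate the existence of a nonsingular bilinear map $\RRR^n\times\RRR^p\to\RRR^m$ to the inequality $n\#p\leq m$.

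First I would record the elementary linear-algebra fact that an $m\times n$ matrix $B$ satisfies $\rank B=n$ if and only if $B\xxx=\zerovec$ forces $\xxx=\zerovec$. Applying this with $B=\sum_{j=1}^p y_jA_j$, the tensor $T=(A_1;\ldots;A_p)$ is \afcr{} if and only if, for every $\yyy=(y_1,\ldots,y_p)\transpose\in\RRR^p\setminus\{\zerovec\}$ and every $\xxx\in\RRR^n$, the equality $\bigl(\sum_{j=1}^p y_jA_j\bigr)\xxx=\zerovec$ implies $\xxx=\zerovec$. Since $f_T(\xxx,\yyy)=\sum_{j=1}^p y_jA_j\xxx$, this is exactly the assertion that $f_T(\xxx,\yyy)=\zerovec$ implies $\xxx=\zerovec$ or $\yyy=\zerovec$, i.e.\ that $f_T$ is nonsingular. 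Transporting this equivalence through the bijection $T\mapsto f_T$, an $m\times n\times p$ \afcr{} tensor exists if and only if some bilinear map $\RRR^n\times\RRR^p\to\RRR^m$ is nonsingular.

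It then remains to identify the latter condition with $n\#p\leq m$. If a nonsingular bilinear map $\RRR^n\times\RRR^p\to\RRR^m$ exists, then $n\#p\leq m$ directly from the definition of $n\#p$. Conversely, if $n\#p=l\leq m$, choose a nonsingular bilinear map $g\colon\RRR^n\times\RRR^p\to\RRR^l$ and compose it with the standard coordinate inclusion $\iota\colon\RRR^l\hookrightarrow\RRR^m$; as $\iota$ is injective, $\iota\circ g$ is still nonsingular, now with target $\RRR^m$. This completes the argument.

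I do not expect a genuine obstacle here: the statement is a bookkeeping consequence of the definitions of \afcr{} tensor, nonsingular bilinear map, and $n\#p$, combined with the trivial monotonicity of the existence of a nonsingular bilinear map in the target dimension. The only point deserving an explicit word, rather than none at all, is that last monotonicity — i.e.\ padding a map into $\RRR^l$ by zero coordinates to land in $\RRR^m$ for $l\leq m$.
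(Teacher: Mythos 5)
Your proposal is correct and follows exactly the route the paper takes: the paper derives the corollary directly from the preceding observation that $T\mapsto f_T$ is a bijection onto bilinear maps and that $f_T$ is nonsingular precisely when $T$ is \afcr. You merely make explicit the rank characterization and the monotonicity in the target dimension (padding into $\RRR^m$), both of which the paper leaves implicit.
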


\begin{lemma} \mylabel{lem:equivalent condition for afr}
Let $n$, $m$, and $u$ be positive integers with $u\leq mn$.
Set $p=mn-u$.
Then the following conditions are equivalent.
\begin{enumerate}
\item \mylabel{item:bilinear} $n\# m \leq u$.
\item \mylabel{item:afr} There is a $u\times n\times m$ \afcr{} tensor.
\item \mylabel{item:afr-sp} There is a $u\times n\times m$ \afcr{} tensor $Y$
such that
${}_{p<}\fl_1(Y)=-E_{u}$.
\end{enumerate}
\end{lemma}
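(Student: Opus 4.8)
The plan is as follows. The equivalence $\ref{item:bilinear}\Leftrightarrow\ref{item:afr}$ is immediate from Corollary~\ref{cor:afr}: applying that corollary with the first, second and third tensor dimensions taken to be $u$, $n$ and $m$ shows that a $u\times n\times m$ \afcr{} tensor exists if and only if there is a nonsingular bilinear map $\RRR^n\times\RRR^m\to\RRR^u$, i.e.\ $n\# m\leq u$. The implication $\ref{item:afr-sp}\Rightarrow\ref{item:afr}$ is trivial. So the only real content is $\ref{item:afr}\Rightarrow\ref{item:afr-sp}$, and I would prove it by a normalization/genericity argument.

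First I would record the transformation behaviour. The map $\fl_1\colon\RRR^{u\times n\times m}\to\RRR^{u\times nm}$ is a linear homeomorphism, and since $p=mn-u$ the matrix ${}_{p<}\fl_1(Y)$ is precisely the $u\times u$ (square!) submatrix formed by the last $u$ columns of $\fl_1(Y)$. For $P\in\glin(u,\RRR)$ one has $\fl_1(PY)=P\,\fl_1(Y)$, hence ${}_{p<}\fl_1(PY)=P\cdot{}_{p<}\fl_1(Y)$; and by Lemma~\ref{lem:afr stable}, $PY$ is \afcr{} if and only if $Y$ is. Consequently, if I can produce a $u\times n\times m$ \afcr{} tensor $Y^{0}$ for which ${}_{p<}\fl_1(Y^{0})$ is invertible, then taking $P=-\bigl({}_{p<}\fl_1(Y^{0})\bigr)^{-1}\in\glin(u,\RRR)$ makes $Y=PY^{0}$ an \afcr{} tensor with ${}_{p<}\fl_1(Y)=-E_{u}$, which is exactly \ref{item:afr-sp}.

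It remains to find such a $Y^{0}$. Assuming \ref{item:afr}, the set $\msUUU$ of $u\times n\times m$ \afcr{} tensors is nonempty, and by Lemma~\ref{lem:afr open} it is open. On the other hand, $\det\bigl({}_{p<}\fl_1(Y)\bigr)$ is a polynomial in the entries of $Y$ that is not identically zero (it equals $1$ on the tensor whose flattening has last $u$ columns equal to $E_{u}$), so the set $\msWWW=\{Y:\det\bigl({}_{p<}\fl_1(Y)\bigr)\neq 0\}$ is open and dense in $\RRR^{u\times n\times m}$. An open dense set meets every nonempty open set, hence $\msUUU\cap\msWWW\neq\emptyset$; any $Y^{0}\in\msUUU\cap\msWWW$ works, completing $\ref{item:afr}\Rightarrow\ref{item:afr-sp}$ and the lemma.

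I do not anticipate a serious obstacle here: the argument is bookkeeping together with the standard fact that a nonvanishing polynomial condition is generic. The one thing that needs care is keeping the roles of $u$, $n$, $m$, $p$ straight when invoking Corollary~\ref{cor:afr} and Lemma~\ref{lem:afr stable}, and checking that ${}_{p<}\fl_1(Y)$ is genuinely square — which is exactly where the hypothesis $p=mn-u$ is used. One could instead try to bring $\fl_1(Y^{0})$ to the desired shape by column operations within $\glin(nm)$, but that is more delicate since one must simultaneously preserve the \afcr{} property, so the open-dense argument is the cleaner route.
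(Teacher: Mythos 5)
Your proposal is correct and follows essentially the same route as the paper: \ref{item:bilinear}$\Leftrightarrow$\ref{item:afr} via Corollary~\ref{cor:afr}, and for \ref{item:afr}$\Rightarrow$\ref{item:afr-sp} one uses the openness of the \afcr{} set (Lemma~\ref{lem:afr open}) to find an \afcr{} tensor with ${}_{p<}\fl_1$ nonsingular and then normalizes by left-multiplication with $-({}_{p<}\fl_1)^{-1}$. Your write-up just makes the open-dense intersection explicit (and correctly includes the inverse, which the paper's displayed formula omits as an apparent typo).
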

\begin{proof}
\ref{item:bilinear} $\Leftrightarrow$ \ref{item:afr}
follows from Corollary \ref{cor:afr}.

It is clear that \ref{item:afr-sp} $\Rightarrow$ \ref{item:afr}.

\ref{item:afr} $\Rightarrow$ \ref{item:afr-sp}: 
Let $X=(X_1;\ldots;X_m)$ be a $u\times n\times m$ \afcr{} tensor.
By Lemma \ref{lem:afr open}, we may assume that 
${}_{p<}\fl_1(X)$ is nonsingular.
Set $Y=-{}_{p<}\fl_1(X)X$.
Then $Y$ satisfies the required conditions.
\end{proof}

\section{Ideals of minors\mylabel{sec:ideals}}
\mylabel{sec:im}

In this section, we state some results on  ideals of minors,
which we use in the following of this paper and interesting in its own right.

First we recall the definition of normality of a ring.

\begin{definition}[{see \cite[Section 9]{Matsumura:1989}}]
\rm
Let $R$ be a commutative ring.
We say that $R$ is normal if $R_P$ is an integrally closed integral domain
for any prime ideal $P$ of $R$.
\end{definition}

\begin{remark}
\rm
\mylabel{rem:normal basic}
\begin{enumerate}
\item
A Noetherian integral domain is normal if and only if it is integrally closed.
\item
If $R$ is a Noetherian normal ring, then 
$R\simeq R/P_1\times \cdots\times R/P_r$,
where $P_1$, \ldots, $P_r$ are associated prime ideals of $R$.
\end{enumerate}
\end{remark}

We recall a criterion of normality in terms of Serre's condition.

\begin{definition}[{\cite[page 183]{Matsumura:1989}}]\rm
\mylabel{def:serre cond}
Let $R$ be a Noetherian ring and $i$ a {nonnegative} integer.
\begin{enumerate}
\item
We say that $R$ satisfies \ri\ if $R_P$ is regular for any prime ideal $P$
of $R$ with $\height P\leq i$.
\item
We say that $R$ satisfies \si\ if $\depth R_P\geq\min\{i, \height P\}$
for any prime ideal $P$ of $R$.
\end{enumerate}
\end{definition}
\
\begin{lemma}[{\cite[Theorem 23.8]{Matsumura:1989}}]
\mylabel{lem:r1s2}
Let $R$ be a Noetherian ring.
Then $R$ is normal if and only if $R$ satisfies \ronestwo.
\end{lemma}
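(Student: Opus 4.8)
\proof
The plan is to prove this criterion of Serre in its two directions; the backbone of both arguments is the structure of nonzero principal ideals in codimension at most one.

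\textbf{Necessity.} Assume $R$ is normal and fix a prime $P$, noting that \rone{} and \stwo{} are assertions about the single local ring $R_P$. If $\height P\le 1$, then $R_P$ is a Noetherian integrally closed local domain (first part of Remark~\ref{rem:normal basic}) of dimension at most $1$, hence a field or a discrete valuation ring; in particular $R_P$ is regular and $\depth R_P=\height P$, which is exactly what \rone{} and \stwo{} demand at $P$. If $\height P\ge 2$, set $A=R_P$; I must exhibit a regular sequence of length $2$ in $\mathfrak m_A$. Here I invoke the classical fact that in a Noetherian normal local domain every associated prime of a nonzero principal ideal has height one: if $\mathfrak m_A\in\operatorname{Ass}(A/xA)$ for a nonzero non-unit $x$, then $\mathfrak m_A y\subseteq A$ for some $y\in\operatorname{Frac}(A)\setminus A$, and the subcase $\mathfrak m_A y\subseteq\mathfrak m_A$ would make $y$ integral over $A$ (determinant trick with the faithful module $\mathfrak m_A$), hence in $A$ by normality, a contradiction; so $\mathfrak m_A y=A$, which forces $\mathfrak m_A$ to be principal and $\dim A=1$. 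Consequently, for any nonzero $x\in\mathfrak m_A$ the maximal ideal is not in $\operatorname{Ass}(A/xA)$, so by prime avoidance some $y\in\mathfrak m_A$ is a nonzerodivisor on $A/xA$; then $x,y$ is the desired regular sequence, and $\depth R_P\ge 2$.

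\textbf{Sufficiency.} Assume $R$ satisfies \rone{} and \stwo{}. Since \rone{} implies $(R_0)$, \stwo{} implies $(S_1)$, and a Noetherian ring satisfying $(R_0)$ and $(S_1)$ is reduced, the ring $R$ is reduced; hence its total quotient ring $Q$ equals $\prod_{i=1}^r\kappa(\mathfrak p_i)$ over the minimal primes $\mathfrak p_1,\dots,\mathfrak p_r$. I first claim $R$ is integrally closed in $Q$. Let $\xi\in Q$ be integral over $R$, write $\xi=c/s$ with $s$ a nonzerodivisor, so that $\mathfrak a\define(R:_R\xi)=(sR:_R c)$, and suppose $\xi\notin R$, i.e.\ $c\notin sR$, i.e.\ $\mathfrak a\subsetneq R$. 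Multiplication by $c$ embeds $R/\mathfrak a$ into $R/sR$, so every $\mathfrak q\in\operatorname{Ass}(R/\mathfrak a)$ lies in $\operatorname{Ass}(R/sR)$; fix such a $\mathfrak q$, and note it contains $\mathfrak a$, hence $s$, so it is not a minimal prime. If $\height\mathfrak q\ge 2$, then \stwo{} gives $\depth R_\mathfrak q\ge 2$, hence $\depth(R_\mathfrak q/sR_\mathfrak q)\ge 1$ because $s$ is a nonzerodivisor, contradicting $\mathfrak q\in\operatorname{Ass}(R/sR)$; therefore $\height\mathfrak q=1$. Then \rone{} makes $R_\mathfrak q$ a regular local ring of dimension $1$, i.e.\ a discrete valuation ring, which is integrally closed in $\operatorname{Frac}(R_\mathfrak q)$. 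The image of $\xi$ in $\operatorname{Frac}(R_\mathfrak q)$ is integral over $R_\mathfrak q$, hence lies in $R_\mathfrak q$, which says $c\in sR_\mathfrak q$; but $\mathfrak a\subseteq\mathfrak q$ gives $(sR_\mathfrak q:_{R_\mathfrak q}c)=\mathfrak a R_\mathfrak q\ne R_\mathfrak q$, i.e.\ $c\notin sR_\mathfrak q$ --- a contradiction. So $R$ is integrally closed in $Q$. Finally, each idempotent $e_i\in Q$ (the unit of the factor $\kappa(\mathfrak p_i)$) is integral over $R$, hence lies in $R$, so $R\cong\prod_{i=1}^r R_i$ where $R_i$ is a domain with total quotient ring the field $\kappa(\mathfrak p_i)$ and is integrally closed there; thus each $R_i$ is a normal domain, every localization $R_P$ is a localization of some $R_i$ and hence an integrally closed domain, and $R$ is normal (this also recovers the decomposition in Remark~\ref{rem:normal basic}).

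I expect the essential difficulty to lie in the sufficiency step, precisely in forcing the associated prime $\mathfrak q$ of the denominator ideal to have height exactly one: this is the one point at which \stwo{} is genuinely used (to exclude $\height\mathfrak q\ge 2$ via the depth computation modulo the regular element $s$), with \rone{} then closing the argument at $\height\mathfrak q=1$ through the discrete valuation ring. The auxiliary inputs --- the reducedness criterion $(R_0)+(S_1)$ and the height-one property of associated primes of principal ideals in a normal domain --- are standard, and the passage through idempotents at the end is purely formal.
\endproof
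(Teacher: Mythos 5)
Your proposal is correct. The paper gives no proof of this lemma at all---it is quoted verbatim from \cite[Theorem 23.8]{Matsumura:1989}---and your argument is essentially the standard one found there: for necessity, the fact that in a normal Noetherian local domain of dimension $\geq 2$ the maximal ideal is not associated to any nonzero principal ideal (via the determinant trick and the principality dichotomy), and for sufficiency, reducedness from $(R_0)+(S_1)$ followed by integral closedness in the total quotient ring, forced through height-one localizations by the \stwo{}-depth computation modulo the nonzerodivisor denominator.
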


The condition \ronestwo\ is restated as follows.

\begin{lemma}
\mylabel{lem:depth1}
Let $R$ be a Noetherian ring.
Then $R$ satisfies \ronestwo\ if and only if the following condition is 
satisfied:
if $P$ is a prime ideal of $R$ with $\depth R_P\leq 1$, then $R_P$ is 
regular.
\end{lemma}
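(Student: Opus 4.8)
The plan is to establish both implications by a short case analysis on $\height P$, relying only on two standard facts about Noetherian local rings: the inequality $\depth R_P \le \dim R_P = \height P$, and the fact that a regular local ring is Cohen-Macaulay, so that $\depth R_P = \dim R_P$ whenever $R_P$ is regular. (Both are available from \cite{Matsumura:1989}.)

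First I would treat the forward direction. Assuming $R$ satisfies \ronestwo, take a prime $P$ with $\depth R_P \le 1$. If $\height P \ge 2$, then \stwo\ gives $\depth R_P \ge \min\{2,\height P\} = 2$, contradicting $\depth R_P \le 1$; hence $\height P \le 1$, and now \rone\ yields that $R_P$ is regular. This is the easy half.

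For the converse I would assume that, for every prime $P$, the inequality $\depth R_P \le 1$ forces $R_P$ regular, and then verify \rone\ and \stwo\ separately. Condition \rone\ is immediate: if $\height P \le 1$ then $\depth R_P \le \dim R_P \le 1$, so $R_P$ is regular by hypothesis. For \stwo, fix a prime $P$; the desired inequality is trivial when $\height P = 0$, so assume $\height P \ge 1$ and suppose toward a contradiction that $\depth R_P < \min\{2,\height P\}$. Then in particular $\depth R_P \le 1$, so $R_P$ is regular, hence Cohen-Macaulay, whence $\depth R_P = \dim R_P = \height P \ge \min\{2,\height P\}$, a contradiction.

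I do not anticipate a genuine obstacle: the lemma is a bookkeeping reformulation of \ronestwo\ and follows purely from the two cited facts. The only point that warrants a sentence of care is the observation that $R_P$ is a Noetherian local ring, so that $\depth R_P$ is a well-defined nonnegative integer and all the comparisons above are legitimate; with that in place the argument is just the case split described.
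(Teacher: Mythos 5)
Your argument is correct and coincides with the paper's own proof: the forward direction uses \stwo\ to force $\height P\leq 1$ and then \rone, and the converse verifies \rone\ from $\depth R_P\leq\height P$ and \stwo\ by the same two-case split on $\depth R_P$ (the paper phrases the \stwo\ step directly rather than by contradiction, but the content is identical). No gaps.
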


\begin{proof}
First assume that $R$ satisfies \ronestwo.
Let $P$ be a prime ideal of $R$ with $\depth R_P\leq 1$.
Since $R$ satisfies \stwo, we see that 
$\depth R_P\geq\min\{\height P, 2\}$.
Therefore, $\height P\leq 1$.
Thus by \rone, we see that $R_P$ is regular.

Conversely, assume that $R_P$ is regular for any prime ideal $P$ of
$R$ with $\depth R_P\leq 1$.
First we show that $R$ satisfies \rone.
If $P$ is a prime ideal with $\height P\leq 1$, then 
$\depth R_P\leq \height P \leq 1$.
Thus by assumption, we see that $R_P$ is regular.
Next we show that $R$ satisfies \stwo.
Let $P$ be an arbitrary prime ideal of $R$.
If $\depth R_P\leq 1$, then by assumption, $R_P$ is regular.
Thus $\depth R_P=\height P=\min\{\height P, 2\}$.
If $\depth R_P\geq 2$, then $\depth R_P\geq\min\{\height P,2\}$
holds trivially.
\end{proof}

Next we  state notations and definitions used in this section.

\begin{definition}\rm
\mylabel{def:unmtv}
We denote by $u$, $n$, $m$, and $t$ positive integers with $t\leq \min\{u,n\}$
and set 
$v=(u-t+1)(n-t+1)$.
Let $M=(m_{ij})$ be a $u\times n$ matrix with entries in a commutative ring $A$.
We denote by $I_t(M)A$, or simply $I_t(M)$, the ideal of $A$ generated by $t$-minors of $M$.
For $\alpha(1)$, \ldots, $\alpha(t)\in\{1$, \ldots, $u\}$ and
$\beta(1)$, \ldots, $\beta(t)\in\{1$, \ldots, $n\}$, we set
$[\alpha(1), \ldots, \alpha(t)\mid\beta(1),\ldots, \beta(t)]_M\define
\det(m_{\alpha(i)\beta(j)})$,
and if $u\geq n$ and 
$\alpha(1)$, \ldots, $\alpha(n)\in\{1$, \ldots, $u\}$, we set
$[\alpha(1), \ldots, \alpha(n)]_M\define\det(m_{\alpha(i)j})$.
For a tensor $T=(T_1;\ldots;T_m)$ and $\aaa=(a_1,\ldots, a_m)$
we set $M(\aaa,T)\define \sum_{i=1}^m a_iT_i$ and we define $\Gamma(u\times n)=\{[a_1,\ldots, a_n]\mid 1\leq a_1<\cdots<a_n\leq u$, $a_i\in\ZZZ\}$.
For $\gamma=[a_1, \ldots, a_n]\in\Gamma(u\times n)$,
we set $\supp\gamma=\{a_1, \ldots, a_n\}$.
If $B$ is a ring, $A$ is a subring of $B$ and $T$ is a tensor (resp.\ matrix, vector) 
with entries in $B$, we denote by $A[T]$
the subring of $B$ generated by the entries of $T$ over $A$.
If moreover, $B$ is a field, we denote by $A(T)$ the subfield of $B$ 
generated by the entries of $T$
over $A$.
If the entries of a tensor (resp. matrix, vector) $T$ are 
independent indeterminates, we say that $T$ is a tensor (resp. matrix, vector) of indeterminates.
\end{definition}

Here we note the following fact,
which is verified by using \cite[Chapter 1 Exercise 2]{Atiyah-Macdonald:1969}
 or \cite[(6.13)]{Nagata:1993}.

\begin{lemma}
\mylabel{lem:det nzd}
Let $A$ be a commutative ring, $X$ a square matrix of indeterminates.
Then $\det X$ is a non-zerodivisor of $A[X]$.
\end{lemma}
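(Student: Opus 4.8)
The plan is to reduce the claim to the classical fact --- essentially McCoy's theorem, which is the content of \cite[Chapter 1, Exercise 2]{Atiyah-Macdonald:1969} and \cite[(6.13)]{Nagata:1993} --- that a polynomial over a commutative ring $A$, in any finite set of indeterminates, is a zerodivisor in the polynomial ring if and only if it is annihilated by a single nonzero element of $A$. Granting this, since $A[X]$ is precisely the polynomial ring over $A$ in the $n^2$ entries of $X$, it suffices to prove that $c\cdot\det X=0$ with $c\in A$ forces $c=0$.

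For that I would use the Leibniz formula $\det X=\sum_{\sigma}\operatorname{sgn}(\sigma)\prod_{i}x_{i\sigma(i)}$, the sum being over all permutations $\sigma$ of $\{1,\dots,n\}$. Because the entries of $X$ are independent indeterminates, distinct $\sigma$ yield distinct monomials $\prod_i x_{i\sigma(i)}$, so this is exactly the monomial expansion of $\det X$ with every coefficient equal to $\pm 1_A$. Comparing, in the identity $c\cdot\det X=0$, the coefficient of the diagonal monomial $x_{11}x_{22}\cdots x_{nn}$ immediately gives $c=c\cdot 1_A=0$. Hence no nonzero element of $A$ annihilates $\det X$, and the reduction above finishes the proof.

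I do not anticipate a genuine obstacle; the only subtlety is that the most commonly quoted form of McCoy's theorem (as in \cite[Chapter 1, Exercise 2]{Atiyah-Macdonald:1969}) is stated for a single indeterminate, whereas here there are $n^2$. If one prefers not to invoke the several-variable refinement directly, the statement can be bootstrapped from the one-variable case by induction on $n$: writing $X'$ for the $(n-1)\times(n-1)$ matrix obtained from $X$ by deleting the first row and column, the cofactor expansion along the first row exhibits $\det X$ as a polynomial of degree one in $x_{11}$, with coefficients in the polynomial subring on the remaining entries and with leading coefficient $\det X'$. Since a non-zerodivisor remains a non-zerodivisor after adjoining polynomial indeterminates, and a polynomial whose leading coefficient is a non-zerodivisor is itself a non-zerodivisor, the inductive hypothesis applied to $\det X'$ yields the claim; the base case $n=1$ is the trivial fact that $x_{11}$ is a non-zerodivisor of $A[x_{11}]$.
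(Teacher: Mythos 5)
Your proposal is correct and follows exactly the route the paper intends: the paper gives no proof, merely noting that the lemma "is verified by using \cite[Chapter 1 Exercise 2]{Atiyah-Macdonald:1969} or \cite[(6.13)]{Nagata:1993}," and you have supplied precisely the missing details (McCoy's criterion plus the observation that the diagonal monomial of $\det X$ has coefficient $1$). Your remark about the one-variable versus several-variable form of McCoy's theorem, and the inductive fallback via cofactor expansion, are both sound.
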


Next we recall the following fact.

\begin{lemma} [{\cite[Theorem 1 and Corollaries 3 and 4]{Hochster-Eagon:1971} 
see also \cite[(6.3) Theorem]{Bruns-Vetter:1988}}]
\mylabel{lem:det ideal}
Let $A$ be a Noetherian ring and $X$ a $u\times n$ matrix of indeterminates.
\begin{enumerate}
\item
$\height (I_t(X)A[X])=\grade (I_t(X)A[X])=v$.
\item
If $A$ is a domain, then $I_t(X)A[X]$ is a prime ideal of $A[X]$.
\item
If $A$ is a normal domain, then so is $A[X]/I_t(X)A[X]$.
\end{enumerate}
\end{lemma}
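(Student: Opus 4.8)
The three assertions are proved in turn; the only deep ingredient is the straightening law for the determinantal ring, needed for (2). \emph{For~(1):} The bound $\height(I_t(X)A[X])\le v$ is the classical Eagon--Northcott inequality for the height of an ideal of $t$-minors of a matrix over a Noetherian ring (see~\cite{Bruns-Vetter:1988}), and $\grade\le\height$ always, so it remains to prove $\grade(I_t(X)A[X])\ge v$. For this I would exploit that $A[X]$ is free, hence flat, over $A$ with polynomial-ring fibres: for a prime $\mathfrak p\supseteq I_t(X)A[X]$ with $\mathfrak q:=\mathfrak p\cap A$, the flat base-change formula for depth gives $\depth A[X]_{\mathfrak p}=\depth A_{\mathfrak q}+\depth\bigl(A[X]_{\mathfrak p}\otimes_A\kappa(\mathfrak q)\bigr)$, and $A[X]_{\mathfrak p}\otimes_A\kappa(\mathfrak q)$ is a localization of the polynomial ring $\kappa(\mathfrak q)[X]$ at a prime containing $I_t(X)\kappa(\mathfrak q)[X]$, hence a regular local ring of dimension at least $\height\bigl(I_t(X)\kappa(\mathfrak q)[X]\bigr)=v$ --- the last equality being the classical codimension of the generic determinantal variety over a field, visible from the parametrization $(U,W)\mapsto UW$ of a $u\times(t-1)$ times a $(t-1)\times n$ matrix, whose image has dimension $un-v$. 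Thus $\depth A[X]_{\mathfrak p}\ge v$ for every such $\mathfrak p$, so $\grade(I_t(X)A[X])\ge v$, and all three quantities equal $v$.

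\emph{For~(2):} Let $A$ be a domain; we may assume $t\ge 2$, since for $t=1$ the ideal $I_1(X)A[X]=(x_{ij})_{i,j}$ is trivially prime. Introduce matrices of indeterminates $U=(u_{ik})$ of size $u\times(t-1)$ and $W=(w_{kj})$ of size $(t-1)\times n$ over $A$, and let $\varphi\colon A[X]\to A[U,W]$ be the $A$-algebra homomorphism with $\varphi(x_{ij})=(UW)_{ij}$. Every $t\times t$ submatrix of $UW$ has rank $\le t-1$, so all $t$-minors of $X$ map to $0$ and $\varphi$ factors through $R:=A[X]/I_t(X)A[X]$, giving $\bar\varphi\colon R\to A[U,W]$. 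It suffices to show $\bar\varphi$ is \emph{injective}: then $R$ is a subring of the domain $A[U,W]$, hence a domain and $I_t(X)A[X]$ is prime. To prove injectivity I would invoke the straightening law --- using the relations of $I_t(X)$ and the Laplace--Sylvester identities among minors, every element of $R$ is an $A$-linear combination of standard monomials in the minors of $X$ of size $<t$, so these span $R$ over $A$; a triangularity argument for a suitable monomial order then shows their images under $\bar\varphi$ are $A$-linearly independent in $A[U,W]$, and a spanning set that maps to a linearly independent set must map injectively. (Alternatively, invert the corner $(t-1)$-minor $\delta=[1,\ldots,t-1\mid 1,\ldots,t-1]_X$; the vanishing of the $t$-minors lets one solve rationally for the lower $u-t+1$ rows of $X$ in terms of its first $t-1$ rows and its lower-left $(u-t+1)\times(t-1)$ block, so $R_\delta$ is a localized polynomial ring over $A$, hence a domain, and since the straightening law makes $\delta$ a non-zerodivisor on $R$, $R$ embeds in $R_\delta$.) The standard-monomial basis also makes $R$ a \emph{free $A$-module}, a fact used below.

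\emph{For~(3):} Let $A$ be a normal domain, so $R=A[X]/I_t(X)A[X]$ is a domain by (2) and a free --- hence flat --- $A$-module. By Lemma~\ref{lem:r1s2} it is enough to show $R$ satisfies \ronestwo. The fibres of $A\to R$ are the rings $\kappa(\mathfrak p)[X]/I_t(X)$, generic determinantal rings over fields, which are Cohen--Macaulay (the classical field case; see \cite{Hochster-Eagon:1971,Bruns-Vetter:1988}), hence satisfy \stwo, and whose singular locus, by the Jacobian criterion, lies in $\VVV(I_{t-1}(X))$ and therefore --- by (1) applied with $t-1$ in place of $t$ --- has codimension $(u-t+2)(n-t+2)-(u-t+1)(n-t+1)=(u-t+1)+(n-t+1)+1\ge 3$ inside the fibre (here $t\le\min\{u,n\}$), so the fibres satisfy \rone. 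Since $A$, being a normal domain, satisfies \ronestwo\ by Lemma~\ref{lem:r1s2}, and Serre's conditions \rone\ and \stwo\ ascend along a flat homomorphism with fibres satisfying the corresponding conditions (\cite{Matsumura:1989}), $R$ satisfies \ronestwo, hence is normal by Lemma~\ref{lem:r1s2}.

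\emph{Main obstacle.} The one genuinely hard step is the injectivity of $\bar\varphi$ in (2) --- equivalently, the straightening law, i.e.\ the construction of the standard-monomial $A$-basis of $R$ (which also yields the freeness used in (3)). This is the combinatorial core of the Hochster--Eagon theorem; the flat-fibre arguments for the grade in (1) and for the ascent of Serre's conditions in (3) are comparatively formal once that is available.
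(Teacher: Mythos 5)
There is nothing in the paper to compare your argument against: Lemma~\ref{lem:det ideal} is stated as a recalled classical fact and is simply cited to Hochster--Eagon and to Bruns--Vetter, with no proof given. Judged on its own, your sketch is a correct outline of the standard proof from those references. The flat-base-change computation of the grade in~(1) (reducing to the field case via the regular fibres of $A\to A[X]$, with the field-case codimension read off from the parametrization $(U,W)\mapsto UW$) is sound, as is the deduction of~(3) from~(2) via Matsumura's ascent of \rone\ and \stwo\ along a flat map whose fibres are \cm\ determinantal rings with singular locus inside $\VVV(I_{t-1}(X))$ of codimension~$\geq 2$. You are also right to isolate the genuine content: the injectivity of $R\to A[U,W]$ in~(2), equivalently the standard-monomial $A$-basis of $R$ (which you also need for the freeness/flatness and for $\delta$ being a non-zerodivisor in your alternative localization argument), is exactly the straightening law, and your proposal defers it rather than proves it. That is acceptable for a result of this provenance --- it is the combinatorial core of \cite{Bruns-Vetter:1988} --- but be aware that without it parts (2) and (3) are not yet proved; Hochster--Eagon's own route avoids straightening by an induction on ``principal radical systems,'' which is the other standard way to close this gap.
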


We also recall the following fact.

\begin{lemma}[{\cite[Theorem 1 and Corollaries 2 and 4]{Hochster-Eagon:1971} see also
\cite[(2.1) Theorem]{Bruns-Vetter:1988}}]
\mylabel{lem:ht max}
Let $A$ be a Noetherian commutative ring and $M$ a $u\times n$ matrix with entries in $A$.
If $I_t(M)\neq A$, then $\height I_t(M)\leq v$.
Moreover, if $A$ is \cm\  and $\height I_t(M)=v$, then $I_t(M)$ is height unmixed.
\end{lemma}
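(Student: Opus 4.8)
The plan is to deduce both assertions from the generic case of Lemma~\ref{lem:det ideal} by specialisation. For the height bound I would localise $A$ at a suitable minimal prime of $I_t(M)$ so as to assume that $A$ is local with maximal ideal $\mathfrak m$, that $I_t(M)$ is $\mathfrak m$-primary, and that $\height I_t(M)=\dim A$; the claim becomes $\dim A\le v$. Over the residue field $M$ has some rank $s\le t-1$; choosing an $s\times s$ submatrix whose determinant is a unit and performing invertible row and column operations over $A$ — which alter neither $I_t(M)$ nor $v$ — I may assume $M=\diag(E_s,M')$ with $I_t(M)=I_{t-s}(M')$ and every entry of $M'$ in $\mathfrak m$; since replacing $(u,n,t)$ by $(u-s,n-s,t-s)$ leaves $v$ unchanged, I may further assume that all entries of $M$ lie in $\mathfrak m$.

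Next I would adjoin a $u\times n$ matrix of indeterminates $X$ and work in $B=A[X]$, comparing there the determinantal ideal $I_t(X)$, which has height $v$ by Lemma~\ref{lem:det ideal}, with the ideal $K=(x_{ij}-m_{ij}\mid i,j)$, which has $un$ generators and satisfies $B/K\cong A$. Since $B/K$ carries $I_t(X)+K$ onto the $\mathfrak m$-primary ideal $I_t(M)$, the only prime of $B$ containing $I_t(X)+K$ is the maximal ideal $\mathfrak M=\mathfrak m B+I_1(X)$, whose height equals $\dim A+un$ by the dimension formula for polynomial rings; on the other hand, adjoining the $un$ generators of $K$ to $I_t(X)$ raises the height by at most $un$, so $\dim A+un\le v+un$ and hence $\dim A\le v$. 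For the unmixedness statement, if $A$ is \cm\ and $\height I_t(M)=v$, then $\grade I_t(M)=\height I_t(M)=v$, the generic value of the grade; so by the Hochster--Eagon generic perfection theorem underlying Lemma~\ref{lem:det ideal} the ideal $I_t(M)$ is perfect, $A/I_t(M)$ is \cm, and since $A$ is \cm\ (hence catenary and locally equidimensional) every associated prime of $I_t(M)$ has height $v$.

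The step I expect to be the main obstacle is the height comparison $\height(I_t(X)+K)\le\height I_t(X)+un$: over an arbitrary Noetherian base, heights need not add along chains, so this requires the extra input that $I_t(X)$ is height-unmixed — again part of the generic-perfection package — after which it follows from Krull's height theorem; for the base rings relevant to this paper, namely localisations of polynomial rings over a field, all such subtleties disappear. As the statement in the claimed generality is exactly \cite[Theorem~1 and Corollaries~2 and~4]{Hochster-Eagon:1971} (see also \cite[(2.1) Theorem]{Bruns-Vetter:1988}), in the end I would simply cite those sources and include the sketch above for the reader's convenience.
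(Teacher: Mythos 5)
The paper does not actually prove this lemma: it is recalled verbatim with the citations to Hochster--Eagon and Bruns--Vetter, so your closing fallback of simply citing those sources is exactly what the paper does, and the statement itself is not in doubt. The reductions in your sketch are fine as far as they go: localising at a minimal prime, splitting off an $s\times s$ unit block (which leaves $v$ unchanged and puts all entries of $M$ into $\mathfrak m$), identifying the unique prime over $I_t(X)+K$ as $\mathfrak M=\mathfrak m B+I_1(X)$ of height $\dim A+un$, and the unmixedness argument via generic perfection when $A$ is \cm{} are all correct.

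The genuine gap is precisely in the step you single out, and your proposed repair does not work. Height-unmixedness of $I_t(X)$ does not yield $\height(I_t(X)+K)\le\height I_t(X)+un$ over an arbitrary Noetherian base: in a non-catenary Noetherian local domain (Nagata's examples) there is a prime $P$ --- trivially height-unmixed --- with $\height P=1$, $\dim R/P=1$ and $\dim R=3$, so $P$ together with one further element is primary to the maximal ideal while $\height P+1<\dim R$. The difficulty is that a maximal chain up to $\mathfrak M$ need not pass through any minimal prime of $I_t(X)$, and Krull's theorem only controls $\height(\mathfrak M/I_t(X))$, not $\height\mathfrak M$. The ingredient that actually closes the specialisation argument is not unmixedness but the freeness of $\mathbb{Z}[X]/I_t(X)$ over $\mathbb{Z}$, hence the flatness of $B/I_t(X)B$ over $A$: the dimension formula for flat local homomorphisms gives $\dim\bigl(B/I_t(X)B\bigr)_{\mathfrak M}=\dim A+(un-v)$, and comparing this with the Krull bound $\height(\mathfrak M/I_t(X))\le un$ yields $\dim A\le v$. (Alternatively one runs the Eagon--Northcott induction on $t$.) Note also that your remark that only localisations of polynomial rings over a field occur as base rings is not accurate for this paper: Lemma \ref{lem:12.4+} invokes the present lemma over $A[T]$ for an arbitrary Noetherian ring $A$, so the full generality is really used --- which is all the more reason to lean on the cited sources rather than on the sketch.
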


The following Lemma is a generalization of \cite[(12.4) Lemma]{Bruns-Vetter:1988}.

\begin{lemma}
\mylabel{lem:12.4+}
Let $u$, $n$, $m$, $t$ and $v$ be as in Definition \ref{def:unmtv},
$A$ a commutative Noetherian ring,
$T=(t_{ijk})$ a $u\times n\times m$ tensor of indeterminates
and $f_1$, \ldots, $f_m$ elements of $A$.
Suppose that $(f_1$, \ldots, $f_m)\neq A$.
Set $g=\grade(f_1, \ldots, f_m)A$,
$\fff=(f_1,\ldots, f_m)$ and
$M=M(\fff,T)=(m_{ij})$.
\begin{enumerate}
\item
\mylabel{item:grade}
$\grade\, I_t(M)A[T]={\min\{g,v\}}$.
\item
\mylabel{item:prime}
If $g\geq v+1$ and $A$ is a domain, then
$I_t(M)A[T]$ is a prime ideal.
\item
\mylabel{item:normal}
If $g\geq v+2$ and $A$ is a \cm\ normal domain, then  $A[T]/I_t(M)A[T]$ is
a normal domain.
\end{enumerate}
\end{lemma}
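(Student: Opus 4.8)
\quad Write $R=A[T]$, $\mathfrak a=(f_1,\dots,f_m)A$, and $J=I_t(M)R$; since every entry $m_{ij}=\sum_k f_kt_{ijk}$ of $M$ lies in $\mathfrak aR$, we have $J\subseteq\mathfrak aR\subsetneq R$. The plan is to prove~\ref{item:grade} first, and then to extract~\ref{item:prime} and~\ref{item:normal} from it by means of Serre's criterion (Lemma~\ref{lem:r1s2}). The inequality $\grade J\le\min\{g,v\}$ in~\ref{item:grade} is immediate: $J\subseteq\mathfrak aR$ gives $\grade J\le\grade\mathfrak aR=\grade\mathfrak a=g$ (grade is unaffected by adjoining indeterminates), while Lemma~\ref{lem:ht max} applied to $M$ over the Noetherian ring $R$ gives $\height J\le v$, hence $\grade J\le v$.

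For the reverse inequality I would use $\grade J=\inf\{\depth R_P: P\ \text{prime},\ J\subseteq P\}$ and distinguish two cases for such a prime $P$. If $\mathfrak aR\subseteq P$, then $\depth R_P\ge\grade\mathfrak aR=g\ge\min\{g,v\}$. If $\mathfrak aR\not\subseteq P$, fix $k$ with $f_k\notin P$ and let $B_k$ be the polynomial ring over $A[f_k^{-1}]$ generated by the entries of the slices $T_\ell$ with $\ell\ne k$; one checks that the $un$ elements $\xi_{ij}:=t_{ijk}+f_k^{-1}\sum_{\ell\ne k}f_\ell t_{ij\ell}$ are algebraically independent over $B_k$, that $R[f_k^{-1}]=B_k[\Xi]$ is the polynomial ring over $B_k$ in the generic $u\times n$ matrix $\Xi=(\xi_{ij})$, and that $M=f_k\Xi$ in $B_k[\Xi]$. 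Hence $JR[f_k^{-1}]=f_k^{\,t}I_t(\Xi)B_k[\Xi]=I_t(\Xi)B_k[\Xi]$, which by Lemma~\ref{lem:det ideal}\,(1) has grade $v$ over the Noetherian ring $B_k$; therefore $\depth R_P=\depth(R[f_k^{-1}])_{PR[f_k^{-1}]}\ge v\ge\min\{g,v\}$. Thus $\grade J\ge\min\{g,v\}$, which finishes~\ref{item:grade}.

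For~\ref{item:prime} and~\ref{item:normal}, assume $g\ge v+1$. By~\ref{item:grade}, $\grade J=v$, the largest value allowed by Lemma~\ref{lem:ht max}, so $J$ is a perfect ideal of $R$ of grade $v$ by the theory of generically perfect ideals applied to ideals of $t$-minors (\cite{Hochster-Eagon:1971}; see also \cite{Bruns-Vetter:1988}); in particular $R/J$ has a free $R$-resolution of length $v$, so $\operatorname{pd}_R(R/J)=v$, and the Auslander--Buchsbaum formula gives $\depth(R/J)_P\ge\depth R_P-v$ for every prime $P\supseteq J$. When $\mathfrak aR\subseteq P$ this yields $\depth(R/J)_P\ge g-v\ge1$; when $\mathfrak aR\not\subseteq P$ and $f_k\notin P$, the ring $(R/J)_P$ is a localization of $B_k[\Xi]/I_t(\Xi)$, which by Lemma~\ref{lem:det ideal}\,(2),(3) is a domain when $A$ is, a normal domain when $A$ is, and \cm\ when $A$ is. Now for~\ref{item:prime} ($A$ a domain): no minimal prime of $J$ contains $\mathfrak aR$ (such a prime would give a zero-dimensional, hence zero-depth, localization of $R/J$, contradicting $\depth(R/J)_P\ge1$), so every minimal prime of $J$ lies in some $D(f_k)$, where $J$ localizes to the prime $I_t(\Xi)B_k[\Xi]$; as the sets $D(f_k)\cap\{P: J\subseteq P\}$ are irreducible and pairwise meet (using $I_t(\Xi)B_k[\Xi]\cap B_k=0$), $J$ has a unique minimal prime $P^\ast$, $R/J$ has no embedded primes by the depth bound, so $J$ is $P^\ast$-primary, and since $JR_{P^\ast}=I_t(\Xi)B_k[\Xi]R_{P^\ast}=P^\ast R_{P^\ast}$ we conclude $J=P^\ast$ is prime. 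For~\ref{item:normal} ($A$ a \cm\ normal domain, $g\ge v+2$): by~\ref{item:prime}, $J$ is prime of height $v$; since $R$ is \cm, $\dim(R/J)_P=\height P-v$, which together with $\depth(R/J)_P\ge\height P-v$ shows $R/J$ is \cm, hence $(S_2)$ holds; moreover any prime $P$ with $\height(P/J)\le1$ satisfies $\height P\le v+1<g\le\height\mathfrak aR$, hence $\mathfrak aR\not\subseteq P$, so $(R/J)_P$ is a localization of the normal domain $B_k[\Xi]/I_t(\Xi)$ of dimension at most $1$ and therefore regular; thus $(R_1)$ holds, and Lemma~\ref{lem:r1s2} gives that $R/J$ is a normal domain.

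The step I expect to be the main obstacle is the passage, in~\ref{item:prime} and~\ref{item:normal}, from the open locus where some $f_k$ is invertible---on which $M$ is, up to the unit $f_k$, a generic matrix and everything is controlled by Lemmas~\ref{lem:det ideal} and~\ref{lem:ht max}---to the closed locus where all the $f_k$ vanish: the whole argument leans on the hypothesis that $\height\mathfrak aR\ge g$ exceeds $v$ by enough that generic perfection together with Auslander--Buchsbaum forces sufficient depth on that closed locus to verify the remaining Serre conditions there. The other points are routine: that $B_k$ is Noetherian and inherits ``domain'', ``normal domain'', and ``Cohen-Macaulay'' from $A$; that $I_t(\Xi)B_k[\Xi]\cap B_k=0$ (which gives both $JR[f_k^{-1}]\ne R[f_k^{-1}]$ and the needed nonempty intersections); and that $R[f_k^{-1}]=B_k[\Xi]$ is genuinely an isomorphism of polynomial rings.
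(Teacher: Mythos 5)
Your proof is correct and follows essentially the same route as the paper's: part \ref{item:grade} is the identical two-case argument (primes containing $(f_1,\dots,f_m)$ handled by the grade hypothesis, all other primes handled by passing to $R[f_k^{-1}]$, where $M$ becomes a generic matrix up to the unit $f_k$), and parts \ref{item:prime} and \ref{item:normal} rest on the same mechanism, namely that $g>v$ forces every associated prime of $I_t(M)$ off $V(f_1,\dots,f_m)$ and into a chart governed by Lemma \ref{lem:det ideal}. The only difference is packaging: you route the unmixedness and depth information through generic perfection and Auslander--Buchsbaum and verify $(R_1)$ and $(S_2)$ separately, where the paper cites the grade-unmixedness and Cohen--Macaulayness corollaries of Hochster--Eagon directly and invokes its Lemma \ref{lem:depth1}.
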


\begin{remark}\rm
If $g\geq v$, then
$\grade\, I_t(M)=\height\, I_t(M)=v$
by Lemma \ref{lem:12.4+} \ref{item:grade} and
\cite[Theorem 1 and Corollary 4]{Hochster-Eagon:1971}.
\end{remark}

\begin{proofof}{Lemma \ref{lem:12.4+}}
Set $R=A[T]$.

First we prove \ref{item:grade}.
Set $v'={\min\{g,v\}}$.
Since $I_t(M)\subset (f_1, \ldots, f_m)R$, we see that 
$\grade I_t(M)R\leq \grade(f_1,\ldots, f_m)R=g$.
Thus we see by Lemma \ref{lem:ht max}, $\grade I_t(M)R\leq v'$.

To prove the converse inequality, it is {enough} to show that 
if $P$ be a prime ideal of $R$ with $P\supset I_t(M)$,
then $\depth R_P\geq v'$.
Since if $P\supset (f_1, \ldots, f_m)R$, then $\depth R_P\geq g\geq v'$,
we may assume that $P\not \supset (f_1, \ldots, f_m)R$.
Take $l$ with $f_l\not\in P$.
Then $M$ is essentially a matrix of indeterminates over 
$A[f_l^{-1}][t_{ijk}\mid k\neq l]$.
Thus $\grade(I_t(M)R[f_l^{-1}])=v$ by Lemma \ref{lem:det ideal}.
Since $R_P$ is a localization of $R[f_l^{-1}]$, we see that $\depth R_P\geq v\geq v'$.

Next we prove \ref{item:prime}.
We may assume $f_1$, \ldots, $f_m\neq 0$.
Set $B=R/I_t(M)R$.
Since $I_t(M)R$ is grade unmixed by \ref{item:grade} and 
\cite[Theorem 1 Corollaries 2 and 4]{Hochster-Eagon:1971} (see also
\cite[Corollary of Theorem 1.2]{Rees:1957} or \cite[Exercise 16.3]{Matsumura:1989}),
we see that  every associated prime ideal of $I_t(M)R$
is of grade $v$.
In particular any associated prime ideal of $I_t(M)R$ does not
contain $(f_1, \ldots, f_m)R$,
since $g>v$ by assumption.
Thus $(\bar f_1, \ldots, \bar f_m)B$ has grade at least 1, where $\bar f_k$ denote the
natural image of $f_k$ in $B$ for $1\leq k\leq m$.

Since $A[f_l^{-1}][t_{ijk}\mid k\neq l]$ is an integral domain and $M$ is 
essentially a matrix of indeterminates over $A[f_l^{-1}][t_{ijk}\mid k\neq l]$,
we see that $B[\bar{f_l}^{-1}]=R[f_l^{-1}]/I_t(M)R[f_l^{-1}]$ 
is an integral domain for any $l$.
Thus we see that $\bar f_l$ is contained in all associated prime ideals of $B$
but one.
We denote this prime ideal by $P_l$.
Since $B[(\bar{f_l}\bar f_{l'})^{-1}]=R[(f_l f_{l'})^{-1}]/I_t(M)R[(f_lf_{l'})^{-1}]$ 
is not a zero ring by the same reason as above,
we see that
$P_l=P_{l'}$ for any $l$ and $l'$.
In particular, $P_l=P_1$ for any $l$ with $1\leq l\leq m$.
Since $\grade (\bar f_1, \ldots, \bar f_m)B\geq 1$ and any associated prime of $B$
other than $P_1$ contains $(\bar f_1, \ldots, \bar f_m)B$, we see that $P_1$ is
the only associated prime ideal of $B$.

Therefore, $B\subset B[\bar{f_1}^{-1}]$ and we see that $B$ is a domain.

Finally we prove \ref{item:normal}.
Assume that $P$ is a prime ideal of $B$ with $\depth B_P\leq 1$.
Since $B$ is \cm\ by \ref{item:grade} and
\cite[Theorem 1 and Corollary 4]{Hochster-Eagon:1971}
and $\height (\bar f_1, \ldots, \bar f_m)B\geq 2$, we see that 
$P\not\supset (\bar f_1, \ldots, \bar f_m)B$.

Take $l$ with $\bar f_l\not \in P$.
Then $B[\bar f_l^{-1}]$ is a normal domain by Lemma \ref{lem:det ideal}
and the same argument as above.
Since $B_P$ is a localization of $B[\bar f_l^{-1}]$, we see that $B_P$ is
regular by Lemmas \ref{lem:r1s2} and \ref{lem:depth1}.
Thus $B$ is normal by Lemmas \ref{lem:r1s2} and \ref{lem:depth1}.
\end{proofof}

Here we note the following fact, which can be verified by considering the
associated prime ideals of $I$ and using \cite[Theorems 15.5, 15.6]{Matsumura:1989}.

\begin{lemma}
\mylabel{lem:dim tr deg}
Let $\KKK$ be a field, $\xxx=(x_1$, \ldots, $x_m)$ a vector of indeterminates and 
$I$ a proper ideal of $\KKK[\xxx]$.
Then
$$
\dim \KKK[\xxx]/I=\max\{r\mid \exists i_1, \ldots, i_r;
\bar x_{i_1}, \ldots, \bar x_{i_r} \mbox{ are algebraically independent over $\KKK$}
\},
$$
where $\bar x_i$ denote the natural image of $x_i$ in $\KKK[\xxx]/I$.
\end{lemma}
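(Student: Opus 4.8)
The plan is to reduce to the case of a prime ideal, compute the dimension of the resulting affine domain via transcendence degree, and then patch the answers together over the minimal primes of $I$.

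First I would recall that the minimal primes of $\KKK[\xxx]/I$ are precisely the images of the minimal primes $P_1,\ldots,P_s$ of $I$, and that the Krull dimension of any ring is the supremum of the dimensions of its quotients by minimal primes. Hence
\[
\dim \KKK[\xxx]/I=\max_{1\le j\le s}\dim \KKK[\xxx]/P_j .
\]
(Only minimal primes contribute, though one may equally run the argument with all associated primes, as the hint suggests.)

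Next, fix a prime $P\supseteq I$ and write $\bar x_i$ for the image of $x_i$ in the affine domain $D:=\KKK[\xxx]/P$. Since $D$ is a finitely generated $\KKK$-algebra which is a domain, \cite[Theorems 15.5 and 15.6]{Matsumura:1989} give $\dim D=\trdeg_\KKK \mathrm{Frac}(D)=\trdeg_\KKK \KKK(\bar x_1,\ldots,\bar x_m)$. As this field is generated over $\KKK$ by $\bar x_1,\ldots,\bar x_m$, a maximal algebraically independent subset of $\{\bar x_1,\ldots,\bar x_m\}$ is automatically a transcendence basis (by maximality each $\bar x_i$ is algebraic over the subfield it generates), so $\dim \KKK[\xxx]/P$ equals the largest $r$ for which some $r$ of the $\bar x_i$ are algebraically independent over $\KKK$ modulo $P$.

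It then remains to compare these numbers with the quantity $d$ attached to $I$ itself, namely the largest $r$ with $r$ of the $\bar x_i$ algebraically independent in $\KKK[\xxx]/I$. If $x_{i_1},\ldots,x_{i_r}$ are algebraically independent modulo some $P_j\supseteq I$, they are a fortiori algebraically independent modulo $I$; hence $\dim \KKK[\xxx]/P_j\le d$ for each $j$, so $\dim \KKK[\xxx]/I\le d$. For the reverse inequality, suppose $x_{i_1},\ldots,x_{i_d}$ are algebraically independent modulo $I$, yet for every $j$ there is a nonzero $f_j\in\KKK[x_{i_1},\ldots,x_{i_d}]$ with $f_j\in P_j$. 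Then $f_1\cdots f_s\in P_1\cap\cdots\cap P_s=\sqrt I$, so $(f_1\cdots f_s)^N\in I$ for some $N$; this is a nonzero element of the polynomial subring $\KKK[x_{i_1},\ldots,x_{i_d}]$ (a domain), contradicting algebraic independence modulo $I$. So some $P_j$ keeps $x_{i_1},\ldots,x_{i_d}$ algebraically independent, whence $\dim \KKK[\xxx]/I\ge \dim \KKK[\xxx]/P_j\ge d$. Combining the two inequalities completes the proof. The only point requiring care — and the reason for the detour through minimal primes plus the product-of-witnesses argument in the last step — is that the affine-domain dimension formula cannot be applied to $\KKK[\xxx]/I$ directly, since $I$ need not be prime.
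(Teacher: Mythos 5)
Your proof is correct and follows exactly the route the paper sketches for this lemma (reduction to the minimal/associated primes of $I$ together with the affine-domain dimension formula of \cite[Theorems 15.5, 15.6]{Matsumura:1989}); the product-of-witnesses argument correctly supplies the one nontrivial patching step that the paper leaves to the reader.
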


\begin{lemma}
\mylabel{lem:elim ideal}
Let $\KKK$ be a field,
$T=(t_{ijk})$ a $u\times n\times m$-tensor of indeterminates,
and $\xxx=(x_1,\ldots,x_m)$ a vector of indeterminates.
Set $R=\KKK[T]$, $\LLL=\KKK(T)$, $M=M(\xxx,T)$ and $v'={\min\{m,v\}}$.
Then
$$
\LLL[x_1,\ldots, x_{m-v'}]\cap I_t(M)\LLL[\xxx]=(0)
,
\quad
R[x_1,\ldots, x_{m-v'}]\cap I_t(M)R[\xxx]=(0),
$$
$\LLL[\xxx]/I_t(M)\LLL[\xxx]$ is algebraic over the natural image of 
$\LLL[x_1, \ldots, x_{m-v'}]$ in $\LLL[\xxx]/I_t(M)\LLL[\xxx]$
and $R[\xxx]/I_t(M)R[\xxx]$ is algebraic over the natural image of 
$R[x_1, \ldots, x_{m-v'}]$ in $R[\xxx]/I_t(M)R[\xxx]$.
\end{lemma}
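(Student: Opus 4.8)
The statement is a standard "generic linear section lowers dimension by the expected amount, and the eliminated variables stay algebraically independent" lemma. The plan is to transfer the situation to a pure matrix-of-indeterminates problem via a linear change of coordinates among the $x_k$, so that Lemma \ref{lem:det ideal} and the grade/height information from Lemma \ref{lem:12.4+} apply directly, and then read off the conclusions about intersections and algebraicity from the dimension count. Throughout, it is enough to do everything over $\LLL=\KKK(T)$: the statements over $R=\KKK[T]$ follow by intersecting with $R[\xxx]$, since $R[\xxx]\subset\LLL[\xxx]$ and $I_t(M)R[\xxx]=I_t(M)\LLL[\xxx]\cap R[\xxx]$ (because $I_t(M)\LLL[\xxx]$ is generated by elements of $R[\xxx]$ and $R[\xxx]\to\LLL[\xxx]$ is flat, being a localization followed by nothing—more concretely one just clears denominators).

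**Key steps.** First I would record the dimension of $B:=\LLL[\xxx]/I_t(M)\LLL[\xxx]$. Over $\LLL$, the matrix $M=M(\xxx,T)=\sum_k x_k T_k$ is, after the $\LLL$-linear automorphism of $\LLL[\xxx]$ sending $\xxx$ to $\xxx'$ so that the first $v'$ new coordinates are generic linear forms in the $x_k$ with coefficients involving the $t_{ijk}$, essentially a specialization of a $u\times n$ matrix of indeterminates. The cleanest route: by Lemma \ref{lem:12.4+}\ref{item:grade} applied with $A=\LLL[x_1,\ldots,x_m]$... but more simply, one notes that $\LLL[\xxx][M]\cong$ a polynomial ring in the $x_k$ over $\LLL$, and inside it $I_t(M)$ has height $\min\{m,v\}=v'$ by Lemma \ref{lem:ht max} together with the fact (Lemma \ref{lem:det ideal}, after inverting one $t_{ijk}$ and using that $M$ becomes a matrix of indeterminates over a smaller polynomial ring, exactly as in the proof of Lemma \ref{lem:12.4+}\ref{item:grade}) that the bound $v'$ is attained. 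Hence $\dim B=m-v'$.

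Next, by Lemma \ref{lem:dim tr deg} there are $m-v'$ of the $\bar x_i$ that are algebraically independent over $\LLL$; I want to show that $\bar x_1,\ldots,\bar x_{m-v'}$ themselves work. Here I would invoke the standard genericity of the first $m-v'$ coordinates: one checks that the natural map $\LLL[x_1,\ldots,x_{m-v'}]\to B$ is injective, equivalently $\LLL[x_1,\ldots,x_{m-v'}]\cap I_t(M)\LLL[\xxx]=(0)$. This is where I expect the main work to lie. The argument: localize at a single $t_{ijk}$ (say the coefficient of $x_m$ in some entry $m_{11}$) so that over $\LLL':=\LLL[t_{ijk}^{-1}]$ one may solve for $x_m$ and iterate, exhibiting $M$ over the localization as genuinely a matrix of indeterminates in which $x_1,\ldots,x_{m-v'}$ appear among the free parameters—more precisely, after the invertible linear substitution the entries of $M$ together with $x_1,\ldots,x_{m-v'}$ form a transcendence basis of the fraction field over $\LLL'$. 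Then $I_t(M)$ is a prime ideal (Lemma \ref{lem:det ideal}\,(2)) contracting to $(0)$ in $\LLL'[x_1,\ldots,x_{m-v'}]$ because that subring consists of free variables disjoint from the entries of $M$; intersecting back down to $\LLL[x_1,\ldots,x_{m-v'}]$ preserves the zero contraction. Once injectivity is established, $B$ is a finitely generated algebra over $\LLL[\bar x_1,\ldots,\bar x_{m-v'}]$ of the same dimension $m-v'$ as that polynomial subring, so $B$ is algebraic over it; that gives the algebraicity claims, and the contraction statements are exactly the injectivity just proved, over $\LLL$ and then over $R$.

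**Main obstacle.** The delicate point is the linear-algebra bookkeeping in "after a generic $\LLL$-linear change of the $x_k$, the matrix $M$ becomes a matrix of indeterminates over $\LLL$ in $v$ of the entries with $x_1,\ldots,x_{m-v'}$ surviving as independent free parameters." One must be careful that when $m>v$ only $v$ linear combinations of the $x_k$ can be absorbed into the generic entries, leaving exactly $m-v'=m-v$ genuinely free, and that the chosen first $m-v'$ coordinates can be arranged to be among the free ones (this uses that the coefficient tensor $T$ is itself generic, so the relevant $v\times m$ coefficient matrix has generic rank $v$, hence any $m-v$ columns can be completed to a basis). Handling the edge case $m\le v$, where $v'=m$ and both displayed intersections are trivially $(0)$ and $B$ is a finite-dimensional... no, where $B$ is algebraic over $\LLL$ itself, is immediate and should be dispatched first.
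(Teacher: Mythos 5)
Your overall skeleton agrees with the paper's: apply Lemma \ref{lem:12.4+} (with $A=\KKK[\xxx]$, $f_k=x_k$) to get $\height I_t(M)=v'$, hence $\dim \LLL[\xxx]/I_t(M)\LLL[\xxx]=m-v'$, so by Lemma \ref{lem:dim tr deg} \emph{some} $m-v'$ of the $\bar x_i$ are algebraically independent over $\LLL$, and the algebraicity claims then follow from the dimension count. The gap is in the step you yourself flag as carrying the main work: upgrading ``some $m-v'$ of the $\bar x_i$'' to ``the first $m-v'$.'' Your proposed mechanism --- a linear change of the $x_k$ after which $M$ becomes a matrix of indeterminates over $\LLL$ with $x_1,\ldots,x_{m-v'}$ as free parameters disjoint from its entries --- is not available. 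Over $\LLL=\KKK(T)$ the coefficients $t_{ijk}$ lie in the base field, so the $un$ entries of $M$ are linear forms in only $m$ variables; whenever $un>m$ (the case relevant to this paper, where $t=n$ and $u>n\geq m$) they are linearly dependent over $\LLL$, so $M$ is not a matrix of indeterminates over any subring of $\LLL[\xxx]$ and Lemma \ref{lem:det ideal} cannot be invoked for primality or for the contraction. Moreover the $t$-minors of $M$ are polynomials involving all of $x_1,\ldots,x_m$, so there is no subring of ``free variables disjoint from the entries of $M$.'' The localization trick that does work (inverting $x_m$ in $R[\xxx]=\KKK[\xxx][T]$, as in the proof of Lemma \ref{lem:12.4+}, which makes $M$ a matrix of indeterminates over $\KKK[\xxx][x_m^{-1}][t_{ijk}\mid k\neq m]$) only yields that no nonzero polynomial in $x_1,\ldots,x_{m-v'}$ with coefficients in $\KKK$ lies in $I_t(M)$; the lemma asserts this for coefficients in $\LLL$ (resp.\ $R$), which is strictly stronger and does not follow, since $\KKK(T)$ is not contained in that base ring once $t_{ijm}$ is traded for an entry of $M$.

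The paper closes this gap with the one idea absent from your proposal: symmetry. A permutation $\sigma$ of $\{1,\ldots,m\}$ induces an automorphism of $\KKK[T][\xxx]$ via $t_{ijk}\mapsto t_{ij\sigma(k)}$, $x_k\mapsto x_{\sigma(k)}$; it fixes $M=\sum_k x_kT_k$ and hence $I_t(M)$, and it maps $\LLL$ onto $\LLL$. Since these automorphisms permute $(m-v')$-element subsets of $\{\bar x_1,\ldots,\bar x_m\}$ transitively, the existence of one algebraically independent subset of size $m-v'$ forces $\bar x_1,\ldots,\bar x_{m-v'}$ themselves to be algebraically independent over $\LLL$ (and the algebraicity statement transfers the same way). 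This is precisely why the lemma is stated for a tensor of indeterminates; for a specific tensor $Y$ the analogous claim requires the separate Zariski-open-set argument of Lemma \ref{lem:ht max open}. Without the symmetry step (or an equivalent substitute), your proof does not establish the displayed intersections being $(0)$.
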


\begin{proof}
Since $I_t(M)\LLL[\xxx]$ is generated by homogeneous polynomials
of positive degree with respect to  $x_1$, \ldots, $x_m$,
we see that
$\LLL\cap I_t(M)\LLL[\xxx]=(0)$.

By Lemma~\ref{lem:12.4+}, we see that $I_t(M)$
is an ideal of height $v'$. 
Thus by {Lemma} \ref{lem:dim tr deg}, we see
$$\trdeg_\LLL\,\LLL[\xxx]/I_t(M)\LLL[\xxx]=m-v'.$$
Thus there is a permutation $i_1$, \ldots, $i_n$ of $1$, \ldots, $n$ such that
$\bar x_{i_1}$, \ldots, $\bar x_{i_{m-v'}}$ are algebraically independent over
$\LLL$ and 
$\LLL[\xxx]/I_t(M)\LLL[\xxx]$ is algebraic over 
$\LLL[\bar x_{i_1}, \ldots, \bar x_{i_{m-v'}}]$,
where $\bar x_i$ denote the natural image of $x_i$ in 
$\LLL[\xxx]/I_t(M)\LLL[\xxx]$.
By symmetry, we see that 
$\bar x_1$, \ldots, $\bar x_{m-v'}$ are algebraically independent over $\LLL$
and 
$\LLL[\xxx]/I_t(M)\LLL[\xxx]$ is algebraic over 
$\LLL[\bar x_1, \ldots, \bar x_{m-v'}]$.
We also see that 
$R[\xxx]/I_t(M)R[\xxx]$ is algebraic over 
$R[\bar x_1, \ldots, \bar x_{m-v'}]$.

Since $\bar x_1$, \ldots, $\bar x_{m-v'}$ are algebraically independent
over $\LLL$, we see that 
$$
\LLL[x_1,\ldots, x_{m-v'}]\cap I_t(M)\LLL[\xxx]=(0)
$$
and therefore
$R[x_1,\ldots, x_{m-v'}]\cap I_t(M)R[\xxx]=(0)
$.
\end{proof}

\begin{lemma}
\mylabel{lem:itm prime}
Let $\LLL/\KKK$ be a field extension with $\chara \KKK=0$,
$\xxx=(x_1, \ldots, x_m)$ a vector of {indeterminates} and $Y$
a $u\times n\times m$ tensor with entries in $\LLL$.
Set $M=M(\xxx,Y)$.
Suppose that the entries of $Y$ are algebraically independent over $\KKK$.
Then the followings hold.
\begin{enumerate}
\item
\mylabel{item:cap0}
If $m\geq v+1$, then 
$\LLL[x_1,\ldots, x_{m-v}]\cap I_t(M)\LLL[\xxx]=(0)$
and
$\LLL[\xxx]/I_t(M)\LLL[\xxx]$ is algebraic over the natural image of $\LLL[x_1, \ldots, x_{m-v}]$.
\item
\mylabel{item:itmprime}
If $m\geq v+2$, then 
$\LLL[\xxx]/I_t(M)\LLL[\xxx]$ is a normal domain.
In particular, $I_t(M)\LLL[\xxx]$ is a prime ideal of $\LLL[\xxx]$ of 
height $v$.
\end{enumerate}
\end{lemma}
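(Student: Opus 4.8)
The plan is to deduce everything from the generic determinantal facts of Lemmas~\ref{lem:12.4+} and~\ref{lem:elim ideal} by treating the entries of $Y$ as indeterminates over $\KKK$ and then extending scalars from $\KKK(Y)$ to $\LLL$. Since the entries of $Y$ are algebraically independent over $\KKK$ (so also over $\KKK[\xxx]:=\KKK[x_1,\dots,x_m]$, the $x_k$ being indeterminates), Lemma~\ref{lem:12.4+} applies with $A=\KKK[\xxx]$ (a Cohen--Macaulay normal domain, as $\chara\KKK=0$), with its tensor of indeterminates taken to be $Y$, and with $f_k=x_k$; then $M(\fff,T)=\sum_k x_kY_k=M$, $A[T]=\KKK[\xxx,Y]:=\KKK[x_1,\dots,x_m,\{(Y_k)_{ij}\}]$, and $g=\grade(x_1,\dots,x_m)\KKK[\xxx]=m$. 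So by the remark after Lemma~\ref{lem:12.4+}, $\grade\,I_t(M)\KKK[\xxx,Y]=\height\,I_t(M)\KKK[\xxx,Y]=v$ when $m\ge v$; by Lemma~\ref{lem:12.4+}\ref{item:prime}, $I_t(M)\KKK[\xxx,Y]$ is prime when $m\ge v+1$; and by Lemma~\ref{lem:12.4+}\ref{item:normal}, $\KKK[\xxx,Y]/I_t(M)$ is a normal domain when $m\ge v+2$. Moreover Lemma~\ref{lem:elim ideal}, applied with its tensor of indeterminates taken to be $Y$ and with $v'=v$ (as $m\ge v+1$), gives in particular $\KKK[Y]\cap I_t(M)\KKK[\xxx,Y]=(0)$, so localizing at $\KKK[Y]\setminus\{0\}$ transports each of these statements to $\KKK(Y)[\xxx]/I_t(M)\KKK(Y)[\xxx]$, where $\KKK(Y)$ is the fraction field of the polynomial ring $\KKK[Y]$, a subfield of $\LLL$.

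For part~\ref{item:cap0} ($m\ge v+1$) I would then argue exactly as in the proof of Lemma~\ref{lem:elim ideal}: from $\height\,I_t(M)\KKK(Y)[\xxx]=v$ and Lemma~\ref{lem:dim tr deg} one gets $\trdeg_{\KKK(Y)}\KKK(Y)[\xxx]/I_t(M)=m-v$, and by the symmetry among $x_1,\dots,x_m$ the images $\bar x_1,\dots,\bar x_{m-v}$ are algebraically independent over $\KKK(Y)$ with $\KKK(Y)[\xxx]/I_t(M)$ algebraic over $\KKK(Y)[\bar x_1,\dots,\bar x_{m-v}]$; hence $\KKK(Y)[x_1,\dots,x_{m-v}]\cap I_t(M)\KKK(Y)[\xxx]=(0)$. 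Because the $t$-minors of $M$ have coefficients in $\KKK[Y]\subseteq\KKK(Y)$ and $\LLL$ is flat over $\KKK(Y)$, we have $I_t(M)\LLL[\xxx]=\LLL\otimes_{\KKK(Y)}I_t(M)\KKK(Y)[\xxx]$; tensoring the inclusion $\KKK(Y)[x_1,\dots,x_{m-v}]\hookrightarrow\KKK(Y)[\xxx]/I_t(M)\KKK(Y)[\xxx]$ with $\LLL$ over $\KKK(Y)$ yields $\LLL[x_1,\dots,x_{m-v}]\cap I_t(M)\LLL[\xxx]=(0)$, and each $\bar x_j$ with $j>m-v$ satisfies over $\LLL[x_1,\dots,x_{m-v}]$ the dependence relation it already satisfied over $\KKK(Y)[x_1,\dots,x_{m-v}]$, giving the stated algebraicity.

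For part~\ref{item:itmprime} ($m\ge v+2$) the remaining and more delicate issue is to propagate ``normal domain'' across the scalar extension $\KKK(Y)\subseteq\LLL$, which is not automatic. First, replacing $\LLL$ by an algebraic closure is harmless: $\LLL[\xxx]/I_t(M)\LLL[\xxx]\to\bar\LLL[\xxx]/I_t(M)\bar\LLL[\xxx]$ is faithfully flat, and both ``domain'' and ``normal'' descend along faithfully flat ring maps, so assume $\LLL$ algebraically closed; then the relative algebraic closure $\KKK_0$ of $\KKK$ in $\LLL$ is an algebraically closed field over which the entries of $Y$ remain algebraically independent, and we replace $\KKK$ by $\KKK_0$. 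Now $B_0:=\KKK[\xxx,Y]/I_t(M)$ is a normal domain finitely generated over the algebraically closed field $\KKK$, hence geometrically normal and geometrically integral over $\KKK$, so its localization $B_1:=\KKK(Y)[\xxx]/I_t(M)\KKK(Y)[\xxx]$ at $\KKK[Y]\setminus\{0\}$ is a normal domain of dimension $m-v$, and what one must still check is that $B_1$ is geometrically normal and geometrically integral over $\KKK(Y)$. Here the idea is that inverting any $\bar x_l$ exhibits $B_1[\bar x_l^{-1}]$ as a Laurent extension of $\KKK(Y)[z_k:k\ne l]/I_t\!\bigl(Y_l+\sum_{k\ne l}z_kY_k\bigr)$, and that after the substitution turning the entries of $N:=Y_l+\sum_{k\ne l}z_kY_k$ into the entries of a generic $u\times n$ matrix over the normal domain $\KKK[z_k:k\ne l,\ (Y_k)_{ij}:k\ne l]$, this ring becomes a localization of an honest generic determinantal ring of the form $\KKK[z_k:k\ne l,\ (Y_k)_{ij}:k\ne l,\ \{n_{ij}\}]/I_t(N)$; by Lemma~\ref{lem:det ideal} such a ring is a prime quotient, and a normal quotient, over every (normal) domain, hence is geometrically normal and geometrically integral; a covering argument over $\mathrm{Spec}\,B_1$ by the $D(\bar x_l)$ (whose union is dense, since $\grade(\bar x_1,\dots,\bar x_m)B_1=m-v\ge2$, and which therefore contains every associated prime of any base change $B_1\otimes_{\KKK(Y)}F$) then assembles these local facts into the statement for $B_1$. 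It follows that $\LLL[\xxx]/I_t(M)\LLL[\xxx]=\LLL\otimes_{\KKK(Y)}B_1$ is a normal domain, necessarily of height $v$ since $\dim$ is unchanged by the field extension $\KKK(Y)\subseteq\LLL$.

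The main obstacle is exactly this last step: a normal domain over $\KKK(Y)$ need not stay a normal domain under an arbitrary field extension, so one really has to pin down the geometric normality and geometric integrality of $B_1$ over $\KKK(Y)$. Arranging $\LLL$, and hence $\KKK$, to be algebraically closed is what makes this feasible, and the engine is that the genuinely generic determinantal rings of Lemma~\ref{lem:det ideal} are geometrically normal and integral (Lemma~\ref{lem:det ideal} being valid over \emph{every} normal domain, in particular after every field extension); but the bookkeeping needed to recognize $B_1[\bar x_l^{-1}]$ as built out of such rings, and to carry out the covering argument, is where the care is required. Everything else is a routine combination of Lemmas~\ref{lem:12.4+}, \ref{lem:elim ideal}, \ref{lem:det ideal}, \ref{lem:dim tr deg} and~\ref{lem:ht max} with Serre's criterion through Lemmas~\ref{lem:r1s2} and~\ref{lem:depth1}.
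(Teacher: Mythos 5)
Your part \ref{item:cap0} is correct and follows the paper's own route (Lemma \ref{lem:elim ideal} over $\KKK(Y)$, then flat base change to $\LLL$ together with the height/transcendence-degree count from Lemmas \ref{lem:ht max} and \ref{lem:dim tr deg}), so there is nothing to add there.

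For part \ref{item:itmprime}, however, the step you yourself flag as the main obstacle does not go through as proposed. You want to show that $B_1=\KKK(Y)[\xxx]/I_t(M)$ is geometrically normal and geometrically integral \emph{over $\KKK(Y)$}, and your mechanism is that on each chart $D(\bar x_l)$ the ring is a localization of the generic determinantal ring $A[N]/I_t(N)$ with $A=\KKK[z_k:k\neq l,\,(Y_k)_{ij}:k\neq l]$ and $N=Y_l+\sum_{k\neq l}z_kY_k$ a generic matrix over $A$. That identification is correct, but it exhibits geometric niceness of the chart \emph{over $\KKK$} (equivalently, over the coefficient ring $A$), not over $\KKK(Y)$. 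The base change you actually need is $-\otimes_{\KKK(Y)}F$, i.e.\ $-\otimes_{\KKK[Y]}F$, and under it the entries of $Y_l$ become elements of the base field $F$, so $N$ turns into a specific (non-generic) $u\times n$ matrix of linear forms in the $z_k$ over $F$; its $un$ entries cannot remain algebraically independent over any subring of $F[z]$. Lemma \ref{lem:det ideal} therefore no longer applies, and asking whether $F[z]/I_t(N)$ is a normal domain for $F\supset\KKK(Y)$ is essentially the statement of the present lemma for the extension $F/\KKK$ — the argument becomes circular at exactly the point where the work has to be done. The covering argument itself (associated primes avoid $V(\bar x_1,\dots,\bar x_m)$ since that locus has codimension $\geq 2$ in a Cohen--Macaulay ring) is fine; it is the local input on each chart that is missing.

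The paper closes this gap differently: it factors $\LLL/\KKK(Y)$ as a purely transcendental extension $\KKK(Y)(S)/\KKK(Y)$ followed by an algebraic (hence, in characteristic $0$, separable) extension. The transcendental step is handled by observing that $A[S]$ is a polynomial ring over the normal domain $A=\KKK(Y)[\xxx]/I_t(M)$ and $C=\KKK(Y)(S)[\xxx]/I_t(M)$ is a localization of it; the separable step is handled by noting that $B=\LLL[\xxx]/I_t(M)\LLL[\xxx]$ is flat over $C$ with $0$-dimensional reduced, hence regular, fibers, so that regularity at primes with $\depth\leq 1$ lifts from $C$ to $B$ by \cite[Theorem 23.7]{Matsumura:1989}, and normality follows from Serre's criterion (Lemmas \ref{lem:r1s2} and \ref{lem:depth1}). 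Crucially, the domain property is then \emph{not} obtained from geometric integrality of the generic fibre at all: $B$ is a nonnegatively graded normal ring whose degree-$0$ part is a field, hence connected, hence a domain by Remark \ref{rem:normal basic}. If you want to keep your outline, you must either import this graded-connectedness argument for integrality and replace the chart-by-chart genericity claim by an argument that actually survives the base change $\KKK(Y)\to F$ (for instance the regular-fibre argument above), or find an independent proof that $F[z]/I_t(Y_l+\sum_{k\neq l}z_kY_k)$ is a normal domain for every field $F\supset\KKK(Y)$ — which is the lemma itself in disguise.
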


\begin{proof}
Since the entries of $Y$ are algebraically independent over $\KKK$,
we see by Lemma \ref{lem:elim ideal} that
$\KKK(Y)[\xxx]/I_t(M)\KKK(Y)[\xxx]$ is algebraic over 
$\KKK(Y)[x_1, \ldots, x_{m-v}]$.
Thus 
$\LLL[\xxx]/I_t(M)\LLL[\xxx]$ is algebraic over 
$\LLL[x_1, \ldots, x_{m-v}]$ since 
$\LLL[\xxx]/I_t(M)\LLL[\xxx]=
(\KKK(Y)[\xxx]/I_t(M)\KKK(Y)[\xxx])\otimes_{\KKK(Y)}\LLL$.
On the other hand, since 
$\trdeg_\LLL \LLL[\xxx]/I_t(M)\LLL[\xxx]
=\dim\LLL[\xxx]/I_t(M)\LLL[\xxx]
\geq m-v$,
by Lemmas \ref{lem:ht max} and \ref{lem:dim tr deg}, we see that 
$\bar x_1$, \ldots, $\bar x_{m-v}$ are algebraically independent over
$\LLL$, where $\bar x_i$ denote the natural image of $x_i$ in 
$\LLL[\xxx]/I_t(M)\LLL[\xxx]$.
Thus
$
\LLL[x_1, \ldots, x_{m-v}]\cap I_t(M)\LLL[\xxx]=(0)
$.
This proves \ref{item:cap0}.

Next we prove \ref{item:itmprime}.
Take a transcendence basis $S$ of $\LLL/\KKK(Y)$
and put
\begin{eqnarray*}
A&=&\KKK(Y)[\xxx]/I_t(M)\KKK(Y)[\xxx],\\
C&=&\KKK(Y)(S)[\xxx]/I_t(M)\KKK(Y)(S)[\xxx] \mbox{ and }\\
B&=&\LLL[\xxx]/I_t(M)\LLL[\xxx].
\end{eqnarray*}
By Lemma \ref{lem:12.4+} \ref{item:normal}, we see that $A$ is a normal domain.

Since
$$
\KKK(Y)[S][\xxx]/I_t(M)\KKK(Y)[S][\xxx]=
(\KKK(Y)[\xxx]/I_t(M)\KKK(Y)[\xxx])\otimes_{\KKK(Y)}\KKK(Y)[S]=
A[S]
$$
is a polynomial ring (with possibly infinitely many variables) over $A$,
it is an integrally closed integral domain.
Since $C$ is a localization of the above ring, $C$ is a normal domain.

Now let $P$ be a prime ideal of $B$ with $\depth B_P\leq 1$.
By Lemmas \ref{lem:r1s2} and \ref{lem:depth1}, 
it is enough to show that $B_P$ is regular.
Set $Q=C\cap P$.
Then since $B=C\otimes_{\KKK(Y)(S)}\LLL$ is flat over $C$, we see
that $\depth C_Q\leq 1$ by \cite[Theorem 23.3 Corollary]{Matsumura:1989}.
Thus $C_Q$ is regular since $C$ is normal.
The fiber ring $C_Q/QC_Q\otimes_{C}B_P$ is a localization of
$$
C_Q/QC_Q\otimes_{C}B=C_Q/QC_Q\otimes_{\KKK(Y)(S)}\LLL
$$
which is a 0-dimensional reduced ring, thus regular,
since $\LLL$ is {separably} algebraic over $\KKK(Y)(S)$.
(Note that $\LLL$ is an inductive limit of finitely generated algebraic
extension fields of $\KKK(Y)(S)$.
Or see \cite[Theorem 3.2.6 and Theorem 3.2.8 (i)]{Nagata:1993}
and note the assumption of the existence of a field containing $M$ and $N$ is
not used in the proof of \cite[Theorem 3.2.8 (i)]{Nagata:1993}.)
Thus by \cite[Theorem 23.7]{Matsumura:1989}, we see that $B_P$ is regular.

Thus, $B$ is a normal ring.
Since $B$ is a nonnegatively graded ring whose degree 0 component is a field,
$B$ is not a direct product of 2 or more rings.
Therefore, $B$ is a domain by Remark \ref{rem:normal basic}.
Moreover, we see by \ref{item:cap0}, that $\height I_t(M)\LLL[\xxx]=v$.
\end{proof}

\begin{definition}\rm
\mylabel{def:mu aind psi}
Let $u$, $n$, $m$, $t$ and $v$ be as in Definition \ref{def:unmtv}.
We set
$$\begin{array}{l}
\msAAA_t^{u\times n\times m}\define
\{Y\in \RRR^{u\times n\times m}\mid I_t(M(\aaa,Y))\neq(0)\mbox{ for any }
\aaa\in\RRR^{1\times m}\setminus\{\zerovec\} \}, \\
\msCCC_t^{u\times n\times m}\define \RRR^{u\times n\times m}\setminus
\msAAA_t^{u\times n\times m}, \\
\Aind\define\{Y\in\RRR^{u\times n\times m} \mid \mbox{the entries of $Y$
 are algebraically independent over $\QQQ$}\},
\end{array}
$$
and
for $Y=(Y_1;\ldots;Y_m)\in\RRR^{u\times n\times m}$ and for integers
 $k$, $k^\prime$ with $t\leq k\leq u$ and $t\leq k^\prime\leq n$, we set
$$
\mu_{k,k^\prime}(\xxx,Y)\define[1,\ldots,t-1,k\mid 1,\ldots,t-1,k^\prime]_{M(\xxx,Y)},
$$
where $\xxx$ is a vector of indeterminates.
We also define
$$
\begin{array}{l}
J_t(\xxx,Y)=\displaystyle\frac{\partial(\mu_{tt}, \mu_{t,t+1},\ldots, \mu_{tn}, \mu_{t+1,t}, \ldots, \mu_{t+1,n},\ldots,\mu_{u,t},\ldots,\mu_{un})}
{\partial(x_{m-v+1}, \ldots, x_m)}(\xxx,Y), \\
S_t(Y)\define \left\{\aaa\in\RRR^{1\times m} \mid \begin{array}{l}
\det M(\aaa,Y)^{<t}_{<t}\neq 0, \ 
J_t(\aaa,Y)\neq 0 
\text{ and } I_t(M(\aaa,Y))=(0) \end{array}\right\}, \\
\msPPP_t\define\{Y\in\RRR^{u\times n\times m}\mid S_t(Y)\neq\emptyset \}.
\end{array}$$
\end{definition}

\begin{remark}\rm
\mylabel{rem:char pt}
\begin{enumerate}
\item
$
\msAAA_1^{u\times n\times m}\supset\msAAA_2^{u\times n\times m}
\supset\cdots\supset\msAAA_{{\min\{u,n\}}}^{u\times n\times m}
$.
\item
$\msAAA_t^{u\times n\times m}$ is stabel under the action of $\glin(u,\RRR)$
 for any $t$.
\item
$\msCCC_t^{u\times n\times m}\neq\emptyset$
for any $t$.
\item
$\msPPP_t$ is a subset of $\msCCC^{u\times n\times m}_t$ and
$$
S_t(Y)\define \left\{\aaa\in\RRR^{1\times m} \left| \begin{array}{l}
\det M(\aaa,Y)^{<t}_{<t}\neq 0, \ 
J_t(\aaa,Y)\neq 0 \\
\text{and there exist linearly independent vectors } \bbb_t, \ldots, \bbb_n\in \RRR^n  \\
\text{such that }
M(\aaa,Y)\bbb_j=\zerovec
\text{ for any $t\leq j\leq n$} \end{array}\right.\right\}
$$
\end{enumerate}
\end{remark}

\begin{lemma}
\mylabel{lem:itm0}
Let $u$, $n$ and $t$ be as in Definition \ref{def:unmtv},
$A$ an integral domain and
$M$ a $u\times n$ matrix with entries in $A$.
Suppose that $\det M^{<t}_{<t}\neq 0$
and
$[1, \ldots, t-1,k\mid 1,\ldots, t-1,k']_M=0$ for any integer with
$t\leq k\leq u$ and $t\leq k'\leq n$.
Then $I_t(M)=(0)$.
In particular,
$$
S_t(Y)\define \left\{\aaa\in\RRR^{1\times m} \left| \begin{array}{l}
\det M(\aaa,Y)^{<t}_{<t}\neq 0, \ 
J_t(\aaa,Y)\neq 0 \\
\text{and $[1, \ldots, t-1,k\mid 1,\ldots, t-1,k']_M=0$ for any integer with} \\
\text{$t\leq k\leq u$ and $t\leq k'\leq n$.} \\
\end{array}\right.\right\}
$$
\end{lemma}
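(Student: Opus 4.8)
The plan is to reduce the statement to the classical fact that a matrix having a nonsingular $(t-1)\times(t-1)$ submatrix, all of whose bordered $t\times t$ minors vanish, has rank at most $t-1$. First I would dispose of the case $t=1$: the hypothesis then says that every entry of $M$ is $0$, so $I_1(M)=(0)$; hence assume $t\geq 2$. Since $A$ is an integral domain and every $t$-minor of $M$ is an element of $A$, it suffices to show that they all vanish in the field of fractions $K$ of $A$; so I work over $K$. Write $M$, over $K$, in block form
$$M=\begin{pmatrix} D & B\\ C & G\end{pmatrix},$$
where $D=M^{<t}_{<t}$ is the submatrix on rows $1,\ldots,t-1$ and columns $1,\ldots,t-1$, $B$ is on rows $1,\ldots,t-1$ and columns $t,\ldots,n$, $C$ is on rows $t,\ldots,u$ and columns $1,\ldots,t-1$, and $G$ is the remaining $(u-t+1)\times(n-t+1)$ block. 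By hypothesis $D$ is invertible over $K$.

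The core step is a Schur-complement computation. For $t\leq k\leq u$ and $t\leq k'\leq n$, the $t\times t$ matrix whose determinant is $[1,\ldots,t-1,k\mid 1,\ldots,t-1,k']_M$ is exactly $\begin{pmatrix} D & \bbb\\ \ccc & g\end{pmatrix}$, where $\ccc$ is the $(k-t+1)$-th row of $C$, $\bbb$ the $(k'-t+1)$-th column of $B$, and $g$ the $(k-t+1,k'-t+1)$ entry of $G$; hence, by the formula $\det\begin{pmatrix} D & \bbb\\ \ccc & g\end{pmatrix}=\det(D)\,(g-\ccc D^{-1}\bbb)$,
$$[1,\ldots,t-1,k\mid 1,\ldots,t-1,k']_M=\det(D)\cdot\bigl(G-CD^{-1}B\bigr)_{k-t+1,\,k'-t+1}.$$
Since $\det D\neq 0$ and all these bordered minors vanish, I conclude $G=CD^{-1}B$, so
$$M=\begin{pmatrix} D & B\\ C & CD^{-1}B\end{pmatrix}=\begin{pmatrix} E_{t-1}\\ CD^{-1}\end{pmatrix}\begin{pmatrix} D & B\end{pmatrix}$$
is a product of a $u\times(t-1)$ matrix and a $(t-1)\times n$ matrix. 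Therefore $M$ has rank at most $t-1$ over $K$ (equivalently, by Cauchy--Binet every $t$-minor of this product is a sum of products of a $t$-minor of the left factor, which has only $t-1$ columns, with a $t$-minor of the right factor, and so vanishes), whence all $t$-minors of $M$ are $0$ and $I_t(M)=(0)$. The only point requiring care is the index bookkeeping in the Schur-complement identity, but this is routine once $\det D\neq 0$ permits division, so I foresee no real obstacle.

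For the final ``in particular'' assertion, I would apply the displayed result with $A=\RRR$ and $M=M(\aaa,Y)$: for $\aaa\in\RRR^{1\times m}$ with $\det M(\aaa,Y)^{<t}_{<t}\neq 0$, the condition $I_t(M(\aaa,Y))=(0)$ is equivalent to the vanishing of all the minors $[1,\ldots,t-1,k\mid 1,\ldots,t-1,k']_{M(\aaa,Y)}$ with $t\leq k\leq u$ and $t\leq k'\leq n$ --- one implication being trivial because these are among the $t$-minors of $M(\aaa,Y)$, the other being precisely the lemma. Hence the displayed description of $S_t(Y)$ agrees with the one in Definition \ref{def:mu aind psi}.
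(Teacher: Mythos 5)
Your proof is correct, but it takes a different route from the paper's. You pass to the fraction field $K$ of $A$ and use the Schur-complement identity $\det\begin{pmatrix} D & \bbb\\ \ccc & g\end{pmatrix}=\det(D)\,(g-\ccc D^{-1}\bbb)$ to deduce $G=CD^{-1}B$, which exhibits $M$ as a product of a $u\times(t-1)$ matrix with a $(t-1)\times n$ matrix and hence forces $\rank M\leq t-1$. The paper instead works directly over $A$: it defines, for each $l$ with $t\leq l\leq n$, an explicit vector $\xi_l\in A^n$ whose entries are signed $(t-1)$-minors of $M$ on rows $1,\ldots,t-1$ (essentially cofactor vectors), observes that the $k$-th entry of $M\xi_l$ is exactly the bordered minor $[1,\ldots,t-1,k\mid 1,\ldots,t-1,l]_M$ (which vanishes automatically for $k<t$ by row repetition and by hypothesis for $k\geq t$), and that $\xi_t,\ldots,\xi_n$ are linearly independent because their common entry in the distinguished position is $\det M^{<t}_{<t}\neq 0$. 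The two arguments are of course cousins — your $CD^{-1}$ is the adjugate construction in disguise — but the paper's version buys something concrete: it produces $n-t+1$ explicit, $A$-valued, linearly independent kernel vectors of $M$, and these are exactly the vectors $\bbb_t,\ldots,\bbb_n$ invoked in Remark \ref{rem:char pt} and in the proof of Lemma \ref{lem:o1 dense}. Your factorization argument is cleaner for the bare rank statement but does not hand you those kernel vectors in closed form. Your treatment of the ``in particular'' clause and of the degenerate case $t=1$ is fine.
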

\begin{proof}
Set
$$
\xi_l\define
\begin{pmatrix}
(-1)^{t+1}[1, \ldots, t-1\mid 2, \ldots, t-1, l]_{M}\\
(-1)^{t+2}[1, \ldots, t-1\mid 1,3, \ldots, t-1, l]_{M}\\
\vdots\\
(-1)^{2t-1}[1, \ldots, t-1\mid 1, \ldots, t-2, l]_{M}\\
\zerovec \\
(-1)^{2t}[1, \ldots, t-1\mid 1, \ldots, t-2, t-1]_{M}\\
\zerovec
\end{pmatrix}\in A^n
$$
for each $l$ with $t\leq l\leq n$
($(-1)^{2t}[1, \ldots, t-1\mid 1, \ldots, t-2, t-1]_{M}$ in the $l$-th position).
Then, since $[1,\ldots, t-1\mid 1\ldots, t-1]_M=\det M^{<t}_{<t}\neq 0$,
we see that 
$\xi_t$, \ldots, $\xi_n$ are linearly independent over $A$.
Since the $k$-th entry of $M\xi_l$ is $[1, \ldots, t-1,k\mid1,\ldots, t-1,l]_M$,
we see, by assumption, that
$M\xi_l=\zerovec$ for $t\leq l\leq n$.
Thus, $\rank M< t$ and we see that $I_t(M)=(0)$.
\end{proof}

It is verified the following fact, since $\QQQ$ is a countable field.

\begin{lemma}
\mylabel{lem:aind dense}
$\Aind$ is a dense subset of $\RRR^{u\times n\times m}$.
\end{lemma}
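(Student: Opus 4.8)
The plan is to deduce density from the Baire category theorem after expressing the complement of $\Aind$ as a countable union of nowhere dense sets. Identify $\RRR^{u\times n\times m}$ with $\RRR^N$, where $N=umn$, via the entries of the tensor, and let $z_1,\ldots,z_N$ be the corresponding coordinate functions.

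First I would note that, by the very definition of algebraic independence, a tensor $Y$ fails to lie in $\Aind$ precisely when there is a nonzero polynomial $f\in\QQQ[z_1,\ldots,z_N]$ with $f(Y)=0$; in other words
$$
\RRR^{u\times n\times m}\setminus\Aind=\bigcup_{f}V(f),\qquad V(f)\define\{Y\in\RRR^N\mid f(Y)=0\},
$$
the union running over all nonzero $f\in\QQQ[z_1,\ldots,z_N]$. Since $\QQQ$ is countable and there are only countably many monomials in $z_1,\ldots,z_N$, the indexing set of this union is countable.

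Next, for each nonzero $f\in\QQQ[z_1,\ldots,z_N]$ the set $V(f)$ is closed in $\RRR^N$ because $f$ is continuous, and it has empty interior because a polynomial vanishing on a nonempty open subset of $\RRR^N$ must be the zero polynomial. Hence $\RRR^N\setminus V(f)$ is a dense open subset of $\RRR^N$, and
$$
\Aind=\bigcap_{f}\bigl(\RRR^N\setminus V(f)\bigr)
$$
is a countable intersection of dense open sets. As $\RRR^N$ is a complete metric space, the Baire category theorem yields that $\Aind$ is dense (indeed residual) in $\RRR^{u\times n\times m}$, as claimed.

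There is essentially no obstacle in this argument; the only ingredients are the elementary observations that $\QQQ[z_1,\ldots,z_N]$ is countable and that a nonzero real polynomial cannot vanish on a nonempty open set. (One could equally well avoid Baire category by enumerating the relevant polynomials and choosing the coordinates of a point in a prescribed ball one at a time so as to avoid their zero sets, but the phrasing above is the cleanest.)
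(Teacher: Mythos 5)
Your proof is correct and follows essentially the argument the paper intends: the paper merely remarks that the lemma holds ``since $\QQQ$ is a countable field,'' and your Baire-category argument (complement $=$ countable union of nowhere dense real zero sets of nonzero rational polynomials) is the standard way to make that remark precise.
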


We also see that $\msAAA_t^{u\times n\times m}$ is an open subset of $\RRR^{u\times n\times m}$.
First note the following fact, which is easily verified.

\begin{lemma}
\mylabel{lem:min cont}
Let $X$ and $Y$ be topological spaces with $X$ compact and
$f\colon X\times Y \to \RRR$ is a continuous map.
Set $g\colon Y\to \RRR$ by 
$g(y)\define \min_{x\in X} f(x,y)$.
Then, $g$ is a continuous map.
\end{lemma}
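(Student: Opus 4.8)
The plan is to show that $g$ is simultaneously upper semicontinuous and lower semicontinuous at each point $y_0\in Y$, which together yield continuity; the compactness of $X$ is used only in the lower semicontinuous half. First I would record that $g$ is well defined: for fixed $y$, the map $x\mapsto f(x,y)$ is continuous on the compact space $X$, hence attains its minimum.

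For upper semicontinuity, fix $y_0\in Y$ and $\varepsilon>0$, and pick $x_0\in X$ with $f(x_0,y_0)=g(y_0)$. The set $\{(x,y)\mid f(x,y)<g(y_0)+\varepsilon\}$ is open and contains $(x_0,y_0)$, so it contains a basic open set $U\times V$ with $x_0\in U$ and $y_0\in V$. Then for every $y\in V$ we get $g(y)\le f(x_0,y)<g(y_0)+\varepsilon$. (No compactness is needed here.)

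For lower semicontinuity I would argue as follows, and I expect this to be the only substantive step. Fix $y_0$ and $\varepsilon>0$. For each $x\in X$ we have $f(x,y_0)\ge g(y_0)>g(y_0)-\varepsilon$, so the open set $\{(x',y')\mid f(x',y')>g(y_0)-\varepsilon\}$ contains a basic open neighborhood $U_x\times V_x$ of $(x,y_0)$. The family $\{U_x\}_{x\in X}$ covers $X$; by compactness choose a finite subcover $U_{x_1},\ldots,U_{x_k}$ and put $V'=\bigcap_{i=1}^k V_{x_i}$, an open neighborhood of $y_0$. For $y\in V'$ and an arbitrary $x\in X$, choose $i$ with $x\in U_{x_i}$; then $(x,y)\in U_{x_i}\times V_{x_i}$, so $f(x,y)>g(y_0)-\varepsilon$. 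Taking the minimum over $x\in X$ gives $g(y)\ge g(y_0)-\varepsilon$.

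Finally, applying both arguments at the same $y_0$ and $\varepsilon$ and intersecting the two neighborhoods $V$ and $V'$, every $y$ in $V\cap V'$ satisfies $-\varepsilon\le g(y)-g(y_0)<\varepsilon$, hence $\lvert g(y)-g(y_0)\rvert\le\varepsilon$; since $y_0$ and $\varepsilon$ were arbitrary, $g$ is continuous. The main (and really the only) obstacle is organizing the compactness argument so as to produce a single neighborhood of $y_0$ that controls $f(x,y)$ uniformly in $x$; the rest is a routine $\varepsilon$-estimate.
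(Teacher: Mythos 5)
Your proof is correct and complete. The paper does not actually supply a proof of this lemma (it is introduced with ``which is easily verified''), so there is nothing to compare against; your argument --- upper semicontinuity from a single minimizing point $x_0$, lower semicontinuity via a tube-lemma style finite subcover of $X$, and intersection of the two neighborhoods --- is exactly the standard verification the authors had in mind, and it correctly isolates compactness as the ingredient needed only for the lower semicontinuous half.
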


\begin{cor}
\mylabel{cor:msa open}
$\msAAA_t^{u\times n\times m}$ is an open subset of $\RRR^{u\times n\times m}$.
\end{cor}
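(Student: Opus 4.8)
The plan is to exhibit $\msAAA_t^{u\times n\times m}$ as the locus where a continuous real-valued function is positive, the function being produced from the $t$-minors by the minimization procedure of Lemma \ref{lem:min cont}.

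First I would note that for $Y\in\RRR^{u\times n\times m}$ and $\aaa\in\RRR^{1\times m}$ the ideal $I_t(M(\aaa,Y))$ of the field $\RRR$ is either $(0)$ or $\RRR$, and it equals $(0)$ exactly when every $t$-minor $[\alpha(1),\ldots,\alpha(t)\mid\beta(1),\ldots,\beta(t)]_{M(\aaa,Y)}$ vanishes. Hence, defining
$$
f(\aaa,Y)\define\sum_{\substack{1\leq\alpha(1)<\cdots<\alpha(t)\leq u\\ 1\leq\beta(1)<\cdots<\beta(t)\leq n}}\bigl([\alpha(1),\ldots,\alpha(t)\mid\beta(1),\ldots,\beta(t)]_{M(\aaa,Y)}\bigr)^2,
$$
one has $I_t(M(\aaa,Y))\neq(0)$ if and only if $f(\aaa,Y)>0$. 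Since $M(\aaa,Y)=\sum_{i=1}^m a_iY_i$, each $t$-minor, and therefore $f$, is a polynomial in the entries of $\aaa$ and of $Y$; in particular $f\colon\RRR^{1\times m}\times\RRR^{u\times n\times m}\to\RRR$ is continuous.

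Next I would let $\Sigma\define\{\aaa\in\RRR^{1\times m}\mid\sum_{i=1}^m a_i^2=1\}$, which is compact, and apply Lemma \ref{lem:min cont} with $X=\Sigma$ and $Y=\RRR^{u\times n\times m}$ to the restriction of $f$ to $\Sigma\times\RRR^{u\times n\times m}$. This yields a continuous map $g\colon\RRR^{u\times n\times m}\to\RRR$, $g(Y)\define\min_{\aaa\in\Sigma}f(\aaa,Y)$. The claim is $\msAAA_t^{u\times n\times m}=\{Y\mid g(Y)>0\}$, which finishes the proof, since then $\msAAA_t^{u\times n\times m}=g^{-1}\bigl((0,\infty)\bigr)$ is open. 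For the claim, observe that $M(\lambda\aaa,Y)=\lambda M(\aaa,Y)$, so each $t$-minor of $M(\lambda\aaa,Y)$ equals $\lambda^t$ times the corresponding minor of $M(\aaa,Y)$; hence for $\lambda\neq0$ one has $I_t(M(\aaa,Y))\neq(0)$ if and only if $I_t(M(\lambda\aaa,Y))\neq(0)$. Consequently $Y\in\msAAA_t^{u\times n\times m}$ if and only if $f(\aaa,Y)>0$ for every $\aaa\in\Sigma$, and since $\Sigma$ is compact the latter holds if and only if the minimum $g(Y)$ is strictly positive.

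The argument is essentially routine once Lemma \ref{lem:min cont} is at hand; the only mild subtlety is reducing the condition over all nonzero $\aaa$ to a condition over the compact sphere $\Sigma$, which rests on the homogeneity of the $t$-minors in $\aaa$ together with the attainment of the minimum over $\Sigma$, so that positivity of $f$ on $\Sigma$ upgrades to a strictly positive lower bound.
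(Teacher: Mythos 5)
Your proof is correct and follows essentially the same route as the paper: both restrict to the compact unit sphere in $\RRR^{1\times m}$ and apply Lemma \ref{lem:min cont} to a continuous function that is positive exactly when some $t$-minor of $M(\aaa,Y)$ is nonzero (the paper uses the maximum of the absolute values of the $t$-minors where you use their sum of squares, an immaterial difference). Your explicit homogeneity argument justifying the reduction from all nonzero $\aaa$ to the sphere is a detail the paper leaves implicit.
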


\begin{proof}
Since $\msAAA_t^{u\times n\times m}$ is the set consisting of
$Y\in \RRR^{u\times n\times m}$ such that
$$\min_{\aaa\in S^{m-1}}(\text{the maximum of the absolute values of 
$t$-minors of } M(a,Y))>0,$$
we see the result by the previous lemma.
\end{proof}

\begin{lemma}
\mylabel{lem:o1 dense}
If $v<m$, then
$\msPPP_t$ is a dense subset of $\msCCC_t^{u\times n\times m}$.
In particular, $\msPPP_t\neq\emptyset$.
\end{lemma}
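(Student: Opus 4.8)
The plan is to realise $\msPPP_t$ as containing the image, under the coordinate projection $\pi\colon(\aaa,Y)\mapsto Y$, of a dense open subset of the incidence set
$$
W:=\bigl\{(\aaa,Y)\in(\RRR^{1\times m}\setminus\{\zerovec\})\times\RRR^{u\times n\times m}\mid \rank M(\aaa,Y)=t-1,\ \det M(\aaa,Y)^{<t}_{<t}\neq0\bigr\}.
$$
If $(\aaa,Y)\in W$ and $J_t(\aaa,Y)\neq0$, then $\aaa\in S_t(Y)$: we have $\det M(\aaa,Y)^{<t}_{<t}\neq0$, and $\rank M(\aaa,Y)=t-1<t$ gives $I_t(M(\aaa,Y))=(0)$. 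Hence $\pi\bigl(W\cap\{J_t\neq0\}\bigr)\subseteq\msPPP_t\subseteq\msCCC_t^{u\times n\times m}$, and $\msCCC_t^{u\times n\times m}$ is closed, being the complement of the open set $\msAAA_t^{u\times n\times m}$ (Corollary~\ref{cor:msa open}). So it suffices to prove (a) $\overline{\pi(W)}=\msCCC_t^{u\times n\times m}$ and (b) that $W\cap\{J_t\neq0\}$ is dense in $W$. Indeed $W\cap\{J_t\neq0\}$ is also open in $W$, so by (b) its image under the continuous map $\pi$ is dense in $\pi(W)$, whence $\msCCC_t^{u\times n\times m}=\overline{\pi(W)}\subseteq\overline{\msPPP_t}\subseteq\msCCC_t^{u\times n\times m}$; and $\msPPP_t\neq\emptyset$ since $\msCCC_t^{u\times n\times m}\neq\emptyset$ by Remark~\ref{rem:char pt}(3).

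For (a), fix $Y_0\in\msCCC_t^{u\times n\times m}$, choose $\aaa_0\neq\zerovec$ with $\rank M(\aaa_0,Y_0)\le t-1$, and choose $i_0$ with $a_{0,i_0}\neq0$. A routine perturbation shows that every $u\times n$ matrix of rank $\le t-1$ is a Euclidean limit of matrices of rank exactly $t-1$ whose top-left $(t-1)\times(t-1)$ block is invertible; pick such an $M_0'$ close to $M(\aaa_0,Y_0)$. Since $Z\mapsto M(\aaa_0,Z)$ is linear and surjective onto $\RRR^{u\times n}$, the small increment $M_0'-M(\aaa_0,Y_0)$ can be realised by altering only the $i_0$-th slice of $Y_0$, giving $Y_0'$ near $Y_0$ with $M(\aaa_0,Y_0')=M_0'$ and hence $(\aaa_0,Y_0')\in W$. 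Thus $\pi(W)$ is dense in the closed set $\msCCC_t^{u\times n\times m}$, which is (a).

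For (b), note that $(\aaa,Y)\mapsto(\aaa,M(\aaa,Y))$ is a submersion, so $W$ is a smooth manifold fibering, with connected affine fibers, over $W':=\{(\aaa,M):\aaa\neq\zerovec,\ \rank M=t-1,\ \det M^{<t}_{<t}\neq0\}$; and through $M=\begin{pmatrix}A&B\\ C&CA^{-1}B\end{pmatrix}$ one has $W'\cong(\RRR^{1\times m}\setminus\{\zerovec\})\times\GL(t-1,\RRR)\times\RRR^{(t-1)(n-t+1)}\times\RRR^{(u-t+1)(t-1)}$. Since $v=(u-t+1)(n-t+1)\ge1$ forces $m\ge2$, $W$ has exactly two connected components (one if $t=1$), distinguished by the sign of $\det M(\aaa,Y)^{<t}_{<t}$. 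As $J_t$ is a polynomial, hence real-analytic on $W$, it is enough to produce in each component a point at which $J_t\neq0$; then $\{J_t=0\}\cap W$ is nowhere dense. Take $\aaa=(1,0,\dots,0)$ and let $Y_1$ be the $u\times n$ matrix $\diag(\varepsilon,1,\dots,1,0,\dots,0)$ with $t-1$ nonzero diagonal entries, $\varepsilon\in\{1,-1\}$; then $M(\aaa,Y)=Y_1$ has rank $t-1$ with invertible top-left block, so $(\aaa,Y)\in W$. Writing $N^{(k,k')}$ for the $t\times t$ submatrix of a matrix on rows $1,\dots,t-1,k$ and columns $1,\dots,t-1,k'$, one checks $\operatorname{adj}\bigl(Y_1^{(k,k')}\bigr)=\varepsilon E_{tt}$, so Jacobi's formula gives $\partial\mu_{k,k'}/\partial x_j(\aaa,Y)=\operatorname{tr}\bigl(\operatorname{adj}(Y_1^{(k,k')})\,(Y_j)^{(k,k')}\bigr)=\varepsilon\,(Y_j)_{kk'}$ for $t\le k\le u$, $t\le k'\le n$. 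Because $v<m$ we have $m-v+1\ge2$, so the slices $Y_j$ with $m-v+1\le j\le m$ are distinct from $Y_1$ and may be chosen freely; letting them run bijectively over the matrix units supported in the bottom-right $(u-t+1)\times(n-t+1)$ block turns the $v\times v$ Jacobian matrix $\bigl(\partial\mu_{k,k'}/\partial x_j(\aaa,Y)\bigr)_{j,(k,k')}$ into $\varepsilon$ times a permutation matrix, so $J_t(\aaa,Y)=\pm1\neq0$ for each sign $\varepsilon$.

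The crux is (b): ruling out that the zero locus of $J_t$ contains a whole component of $W$, which the explicit diagonal examples settle once the two-component structure of $W$ is in hand. The other ingredients — the rank perturbation in (a), the fibration and component count for $W$, and the topological bookkeeping linking $W$, $\msPPP_t$ and $\msCCC_t^{u\times n\times m}$ — are routine, but one must take some care because $\msPPP_t$ is neither open nor invariant under the obvious group actions on $\RRR^{u\times n\times m}$.
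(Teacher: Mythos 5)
Your proposal is correct, but it takes a genuinely different route from the paper's proof. The paper argues locally: given $Y\in\msCCC_t^{u\times n\times m}$ and a neighborhood $\msUUU$, it fixes a witness $\aaa$ and kernel vectors $\bbb_t,\ldots,\bbb_n$ of $M(\aaa,Y)$ (normalized so that the first entry of $\aaa$ is nonzero and ${}^{t\leq}(\bbb_t,\ldots,\bbb_n)$ is invertible), and adds an explicit perturbation tensor $P(e)$, polynomial in one parameter $e$, whose free entries are chosen so that $M(\aaa,Y+P(e))\bbb_j=\zerovec$ is preserved exactly; it then notes that $\det M(\aaa,Y+P(e))^{<t}_{<t}$ and $J_t(\aaa,Y+P(e))$ are polynomials in $e$ that are nonzero for $e\gg0$, hence nonzero for some small $e_0$ with $Y+P(e_0)\in\msUUU\cap\msPPP_t$. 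You instead globalize: you introduce the incidence manifold $W$, prove $\overline{\pi(W)}=\msCCC_t^{u\times n\times m}$ (which plays the role of the paper's initial normalization plus the rank-stratum perturbation), and then dispose of the condition $J_t\neq0$ by a connectedness-plus-real-analyticity argument, reducing to one explicit witness per component of $W$; your diagonal witness with the matrix-unit slices making the Jacobian a signed permutation matrix is the same computational kernel that the paper's choice of the entries $p_{ijk}=e$ at positions $(t+l,t+l',m-v+1+l+l'(u-t+1))$ encodes. The paper's route stays entirely elementary (one-variable polynomials, no topology of determinantal strata) and keeps the perturbation inside the fiber over the fixed pair $(\aaa,\{\bbb_j\})$, whereas yours buys a cleaner conceptual statement at the cost of the component count of $W$ via the $\GL(t-1,\RRR)$ graph parametrization and the identity theorem for real-analytic functions; both costs are ones you discharge correctly, including the edge cases $t=1$ and the requirement $m-v+1\geq2$ ensuring the differentiation slices avoid $Y_1$.
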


\begin{proof}
Let $Y\in\msCCC_t^{u\times n\times m}$ and $\msUUU$ 
an open neighborhood of $Y$ in $\RRR^{u\times n\times m}$. 
In order to prove the first assertion,
it suffices to show that $\msPPP_t\cap \msUUU\neq\emptyset$.

There exist a nonzero vector $\aaa\in\RRR^{1\times m}$ and linearly independent vectors $\bbb_t$, \ldots, $\bbb_n\in\RRR^n$ such that 
$M(\aaa,Y)\bbb_j=\zerovec$ for $t\leq j\leq n$.
Let $g_3\in \GL(m)$ and $g_2\in \GL(n)$ such that the first entry of $g_3\transpose\aaa$ is nonzero, 
${}^{t\leq}(g_2\transpose\bbb_t, \ldots, g_2\transpose\bbb_n)$ is nonsingular
and sufficiently close to $E_m$ and $E_n$ respectively
so that $(1,g_2^{-1},g_3^{-1})\cdot Y\in\msUUU$.
By replacing $Y$, $\aaa$  and $\bbb_t$, \ldots, $\bbb_n$ by $(1,g_2^{-1},g_3^{-1})\cdot Y$, 
$g_3\transpose\aaa$
and $g_2\transpose\bbb_t$, \ldots, $g_2\transpose\bbb_n$ respectively,  
we may assume that the first entry of $\aaa$ is nonzero and
${}^{t\leq}(\bbb_t, \ldots, \bbb_n)$ is nonsingular.

Let $e\in\RRR$.
We take a tensor $P(e)=(p_{ijk})\in\RRR^{u\times n\times m}$ as follows.
For any $i$, $j$, $k$ with $j<t$ or $k\neq 1$, we put
$$p_{ijk}=\begin{cases}
e^{ij} & (k=1, i<t, j<t,) \\
e & ((i,j,k)=(t+l,t+l^\prime,m-v+1+l+l^\prime(u-t+1)),\\
&\hskip 10mm 0\leq l\leq u-t, 0\leq l^\prime\leq n-t) \\
0 & (\text{otherwise}) \\
\end{cases}$$
and take $p_{ij1}$ for $1\leq i\leq u$ and $t\leq j\leq n$ so that $M(\aaa,P(e))\bbb_j=\zerovec$
for $t\leq j\leq n$.
Note that we can take such $p_{ij1}$ since the first entry of $\aaa$ is nonzero and
${}^{t\leq}(\bbb_t, \ldots, \bbb_n)$ is nonsingular.

Then we have
$$
\det M(\aaa, Y+P(e))^{<t}_{<t}\neq 0
\quad
\mbox{and}
\quad
J_t(\aaa,Y+P(e))\neq 0
$$
for $e\gg 0$.

Therefore, 
since the entries of $P(e)$ are polynomials of $e$, we see that
for a real number $e_0\neq 0$ which is sufficiently closed to $0$, 
\begin{eqnarray}
\mylabel{eq:cd3}
\det M(\aaa, Y+P(e_0))^{<n}_{<n}\neq 0, \\
\mylabel{eq:cd4}
J_t(\aaa,Y+P(e_0))\neq 0, \\
Y+P(e_0)\in \msUUU.
\end{eqnarray}
\eqref{eq:cd3}, \eqref{eq:cd4} and the fact 
$M(\aaa, Y+P(e_0))\bbb_j=M(\aaa,Y)\bbb_j+M(\aaa,P(e_0))\bbb_j=\zerovec$
for $t\leq j\leq n$ imply that $\aaa\in S(Y+P(e_0))$.
Thus we have $Y+P(e_0)\in\msPPP_t$ and we see that $\msPPP_t\cap\msUUU\neq\emptyset$.

The latter assertion follows from Remark \ref{rem:char pt}.
\end{proof}

\begin{lemma}
\mylabel{lem:imp func}
Suppose that $v<m$.
Then the set $\msPPP_t$ is an open subset of $\RRR^{u\times n\times m}$ and
for any $Y\in \msPPP_t$ and 
$\aaa=(\aaa_1,\aaa_2)\in S_t(Y)$, where
$\aaa_1\in \RRR^{1\times (m-v)}$ and $\aaa_2\in\RRR^{1\times v}$,
there exists an open neighborhood $O(\aaa,Y)$ of $\aaa_1\in\RRR^{1\times (m-v)}$
such that
for any $\bbb_1\in O(\aaa,Y)$, there exists
$\bbb_2\in \RRR^{1\times v}$ such that $(\bbb_1,\bbb_2)\in S_t(Y)$.
\end{lemma}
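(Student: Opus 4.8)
The plan is to deduce both assertions from a single application of the implicit function theorem, using the reformulation of $S_t(Y)$ provided by Lemma~\ref{lem:itm0}. By that lemma, for $Z\in\RRR^{u\times n\times m}$ a vector $\aaa\in\RRR^{1\times m}$ belongs to $S_t(Z)$ if and only if $\det M(\aaa,Z)^{<t}_{<t}\neq 0$, $J_t(\aaa,Z)\neq 0$, and $\mu_{k,k'}(\aaa,Z)=0$ for all $t\leq k\leq u$ and $t\leq k'\leq n$; here the first two conditions are open in $(\aaa,Z)$ and the third is the vanishing of $v$ polynomials. Splitting the indeterminates as $\xxx=(\xxx_1,\xxx_2)$ with $\xxx_1$ the first $m-v$ coordinates and $\xxx_2$ the last $v$, the quantity $J_t(\xxx,Z)$ is by definition exactly the Jacobian determinant of the map $\xxx_2\mapsto(\mu_{k,k'}((\xxx_1,\xxx_2),Z))_{k,k'}$, with the components listed in the order of Definition~\ref{def:mu aind psi}.

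Fix $Y\in\msPPP_t$ and $\aaa=(\aaa_1,\aaa_2)\in S_t(Y)$, and consider the polynomial map
$$
G\colon \RRR^{1\times v}\times\bigl(\RRR^{1\times(m-v)}\times\RRR^{u\times n\times m}\bigr)\to\RRR^v,\qquad G(\xxx_2;\xxx_1,Z)=\bigl(\mu_{k,k'}((\xxx_1,\xxx_2),Z)\bigr)_{t\leq k\leq u,\ t\leq k'\leq n}.
$$
Then $G(\aaa_2;\aaa_1,Y)=0$, and the partial Jacobian $\partial G/\partial\xxx_2$ at this point has determinant $J_t(\aaa,Y)\neq 0$, hence is invertible. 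By the implicit function theorem there are an open neighborhood $U$ of $(\aaa_1,Y)$ in $\RRR^{1\times(m-v)}\times\RRR^{u\times n\times m}$ and a continuous (indeed real-analytic) map $\phi\colon U\to\RRR^{1\times v}$ with $\phi(\aaa_1,Y)=\aaa_2$ and $G(\phi(\xxx_1,Z);\xxx_1,Z)=0$ for all $(\xxx_1,Z)\in U$. Since $\phi$ is continuous with $\phi(\aaa_1,Y)=\aaa_2$, and the conditions $\det M((\xxx_1,\phi(\xxx_1,Z)),Z)^{<t}_{<t}\neq 0$ and $J_t((\xxx_1,\phi(\xxx_1,Z)),Z)\neq 0$ are open and hold at $(\aaa_1,Y)$, we may shrink $U$ so that both hold throughout $U$. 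By Lemma~\ref{lem:itm0} this yields $(\xxx_1,\phi(\xxx_1,Z))\in S_t(Z)$ for every $(\xxx_1,Z)\in U$.

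Both assertions now follow by slicing $U$. Taking the slice $Z=Y$, there is an open neighborhood $O(\aaa,Y)$ of $\aaa_1$ with $O(\aaa,Y)\times\{Y\}\subset U$, and for $\bbb_1\in O(\aaa,Y)$ the vector $\bbb_2\define\phi(\bbb_1,Y)$ satisfies $(\bbb_1,\bbb_2)\in S_t(Y)$; this is the local parametrization claim. Taking the slice $\xxx_1=\aaa_1$, there is an open neighborhood $W$ of $Y$ with $\{\aaa_1\}\times W\subset U$, and for every $Z\in W$ we have $(\aaa_1,\phi(\aaa_1,Z))\in S_t(Z)$, so $S_t(Z)\neq\emptyset$ and $Z\in\msPPP_t$; hence $W\subset\msPPP_t$, and since $Y\in\msPPP_t$ was arbitrary, $\msPPP_t$ is open. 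The argument is essentially routine once set up; the only point requiring care is verifying that $J_t$ is precisely the Jacobian determinant of the $v$ functions $\mu_{k,k'}$ with respect to the $v$ variables $\xxx_2$, so that the implicit function theorem applies with no leftover equations, after which one only needs the openness of the two auxiliary conditions and Lemma~\ref{lem:itm0}.
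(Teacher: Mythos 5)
Your proposal is correct and follows essentially the same route as the paper: the characterization of $S_t(Y)$ via Lemma~\ref{lem:itm0}, the implicit function theorem applied to the $v$ equations $\mu_{k,k'}=0$ in the last $v$ variables (whose Jacobian is $J_t$ by definition), shrinking the neighborhood to preserve the two open conditions, and then slicing in each factor to obtain the local parametrization and the openness of $\msPPP_t$. No gaps.
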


\begin{proof}
Assume that $Y\in \msPPP_t$ and 
$\aaa=(\aaa_1,\aaa_2)\in S_t(Y)$.
Then $\mu_{kk^\prime}(\aaa,Y)=0$ for any $t\leq k\leq u$ and $t\leq k^\prime\leq n$.
Thus by implicit function theorem, we see that there is an open neighborhood
$U$ of $(\aaa_1,Y)$ in $\RRR^{1\times (m-v)}\times\RRR^{u\times n\times m}$ 
and a continuous map $\nu \colon U\to \RRR^{1\times v}$ such that
$\nu(\aaa_1,Y)=\aaa_2$, and
$\mu_{kk^\prime}(\bbb,Z)=0$ for any 
$(\bbb_1,Z)\in U$ and any $k$, $k^\prime$ with $t\leq k\leq u$ and $t\leq k^\prime\leq n$, where 
$\bbb\define(\bbb_1, \nu(\bbb_1,Z))$.
By replacing $U$ by a smaller neighborhood if necessary, we may assume that
$\det M(\bbb,Z)^{<t}_{<t}\neq 0$
and
$J_t(\bbb,Z)\neq 0$ for any $(\bbb_1,Z)\in U$.

Assume $(\bbb_1,Z)\in U$. 
Put 
$\bbb=(\bbb_1, \nu(\bbb_1,Z))$.
By Lemma \ref{lem:itm0}, we see that $\bbb\in S_t(Z)$.
Thus it suffices to set $O(\aaa,Y)\define\{\bbb_1\in\RRR^{1\times (m-v)}\mid (\bbb_1,Y)\in U\}$.
Moreover, since $\{Z\in\RRR^{u\times n\times m}\mid (\aaa_1,Z)\in U\}$ is an
open subset of $\msPPP_t$ containing $Y$, 
we see that $\msPPP_t$ is an open subset of $\RRR^{u\times n\times m}$.
\end{proof}

By Corollary~\ref{cor:msa open} and Lemmas \ref{lem:o1 dense} and \ref{lem:imp func}, we see the following:

\begin{cor}
\mylabel{cor:int closure}
If $v<m$, then
$\msPPP_t\subset\interior\msCCC_t^{u\times n\times m}$ and
$\overline\msPPP_t=
\overline{\interior \msCCC_t^{u\times n\times m}}=\msCCC_t^{u\times n\times m}$.
\end{cor}


\begin{definition}\rm
\mylabel{def:tilde p}
We set
$\tilde\msPPP_t\define\{PY\mid P\in\glin(u,\RRR), Y\in\msPPP_t\}$.
\end{definition}

\begin{lemma}
\mylabel{lem:tilde p open}
$\tilde\msPPP_t$ is an open subset of $\RRR^{u\times n\times m}$,
stable under the action of $\glin(u,\RRR)$ 
and $\overline{\tilde\msPPP_t}=\msCCC_t^{u\times n\times m}$.
\end{lemma}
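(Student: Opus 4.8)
The plan is to reduce each of the three assertions—$\glin(u,\RRR)$-stability, openness, and the closure identity—to facts already established in this section, since none of them requires a new idea.

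First I would dispose of the two formal points. Stability is immediate: if $Q\in\glin(u,\RRR)$ and $PY\in\tilde\msPPP_t$ with $P\in\glin(u,\RRR)$ and $Y\in\msPPP_t$, then $Q(PY)=(QP)Y\in\tilde\msPPP_t$ because $QP\in\glin(u,\RRR)$. For openness I would write $\tilde\msPPP_t=\bigcup_{P\in\glin(u,\RRR)}P\msPPP_t$; since $\msPPP_t$ is open by Lemma~\ref{lem:imp func} and each map $Y\mapsto PY$ is a homeomorphism of $\RRR^{u\times n\times m}$, every $P\msPPP_t$ is open, hence so is the union $\tilde\msPPP_t$.

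The substance is the identity $\overline{\tilde\msPPP_t}=\msCCC_t^{u\times n\times m}$, which I would split into two inclusions. For ``$\subseteq$'': by Corollary~\ref{cor:msa open} the set $\msAAA_t^{u\times n\times m}$ is open, so $\msCCC_t^{u\times n\times m}$ is closed; by Remark~\ref{rem:char pt} it is stable under $\glin(u,\RRR)$ and contains $\msPPP_t$. Hence $P\msPPP_t\subseteq\msCCC_t^{u\times n\times m}$ for every $P\in\glin(u,\RRR)$, so $\tilde\msPPP_t\subseteq\msCCC_t^{u\times n\times m}$, and taking closures (the right-hand side being closed) gives $\overline{\tilde\msPPP_t}\subseteq\msCCC_t^{u\times n\times m}$. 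For ``$\supseteq$'': since $\msPPP_t\subseteq\tilde\msPPP_t$ (take $P=E_u$), Corollary~\ref{cor:int closure} yields $\msCCC_t^{u\times n\times m}=\overline{\msPPP_t}\subseteq\overline{\tilde\msPPP_t}$. Combining the two inclusions completes the proof.

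I do not expect any genuine obstacle: the argument is purely formal once the relevant inputs—openness of $\msPPP_t$ and of $\msAAA_t^{u\times n\times m}$, the $\glin(u,\RRR)$-stability of $\msAAA_t^{u\times n\times m}$ (hence of its complement), and the equality $\overline{\msPPP_t}=\msCCC_t^{u\times n\times m}$—are available. The only point to keep track of is that two of these inputs (Lemma~\ref{lem:imp func} and Corollary~\ref{cor:int closure}) were proved under the hypothesis $v<m$, so that hypothesis is implicitly in force here as well.
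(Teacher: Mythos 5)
Your proof is correct and follows essentially the same route as the paper's: openness via the union $\bigcup_P P\msPPP_t$ of homeomorphic images of the open set $\msPPP_t$, stability from the definition, and the closure identity by combining $\tilde\msPPP_t\subset\msCCC_t^{u\times n\times m}$ (from the $\glin(u,\RRR)$-stability of $\msAAA_t^{u\times n\times m}$) with the density of $\msPPP_t$ in $\msCCC_t^{u\times n\times m}$. Your explicit remark that the hypothesis $v<m$ is implicitly in force is a worthwhile observation the paper leaves tacit.
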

\begin{proof}
Since $\tilde \msPPP_t=\bigcup_{P\in\glin(u,\RRR)}P\msPPP_t$ 
and $P\msPPP_t$ is an open subset of $\RRR^{u\times n\times m}$
for any $P\in\glin(u,\RRR)$ by Lemma \ref{lem:imp func} and the fact that
multiplication of a nonsingular matrix is a homeomorphism on $\RRR^{u\times n\times m}$.
Therefore, $\tilde\msPPP_t$ is an open subset of $\RRR^{u\times n\times m}$.
The fact that ${\tilde\msPPP_t}$ is stable under the action of $\glin(u,\RRR)$ 
is clear from the definition of $\tilde \msPPP_t$.
Finally, since $\msAAA_t^{u\times n\times m}$ is stable under the action
of $\glin(u,\RRR)$,
we see,
 by Remark \ref{rem:char pt},
that $\tilde\msPPP_t\subset\msCCC_t^{u\times n\times m}$. 
Therefore, we see that 
$\overline{\tilde\msPPP_t}=\msCCC_t^{u\times n\times m}$
by Lemma \ref{lem:o1 dense}.
\end{proof}

\begin{lemma}
\mylabel{lem:ht max open}
Let $\LLL$ be an infinite field
and $\xxx=(x_1, \ldots, x_m)$ a vector of indeterminates.
Set $v'={\min\{m,v\}}$ and $v''={\min\{m,(u-t+2)(n-t+2)\}}$.
Then there is a Zariski dense open subset 
$\msQQQ_1$ 
of 
$\LLL^{u\times n\times m}$ such that if 
$Y\in\msQQQ_1$,
then $\LLL[\xxx]/I_t(M(\xxx,Y))\LLL[\xxx]$ is algebraic over the natural image of
$\LLL[x_1, \ldots, x_{m-v'}]$,
$\LLL[x_1, \ldots, x_{m-v'}]\cap I_t(M(\xxx,Y))\LLL[\xxx]=(0)$,
$\height I_t(M)\LLL[\xxx]=v'$,
$\LLL[\xxx]/I_{t-1}(M(\xxx,Y))\LLL[\xxx]$ is algebraic over the natural image of
$\LLL[x_1, \ldots, x_{m-v''}]$,
$\LLL[x_1, \ldots, x_{m-v''}]\cap I_{t-1}(M(\xxx,Y))\LLL[\xxx]=(0)$
and
$\height I_{t-1}(M(\xxx,Y))\LLL[\xxx]=v''$.
\end{lemma}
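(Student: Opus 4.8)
\quad
The plan is to verify all the listed conclusions for the \emph{generic} $u\times n\times m$ tensor $T=(t_{ijk})$ of indeterminates over $\LLL$ and then take $\msQQQ_1$ to be the locus of specializations $Y$ that inherit them. Write $R=\LLL[T]$, a polynomial ring over $\LLL$ and hence a Noetherian domain, put $M=M(\xxx,T)$, and for $Y\in\LLL^{u\times n\times m}$ let $\phi_Y\colon R[\xxx]\to\LLL[\xxx]$ be the $\LLL[\xxx]$-algebra homomorphism with $t_{ijk}\mapsto Y_{ijk}$; since $\phi_Y$ carries the $t$-minors (resp. $(t-1)$-minors) of $M$ to those of $M(\xxx,Y)$, one has $\phi_Y\!\big(I_t(M)R[\xxx]\big)=I_t(M(\xxx,Y))\LLL[\xxx]$, and likewise for $I_{t-1}$. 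By Lemma~\ref{lem:elim ideal} (with base field $\LLL$), over $R$ we have $R[x_1,\dots,x_{m-v'}]\cap I_t(M)R[\xxx]=(0)$, the ring $R[\xxx]/I_t(M)R[\xxx]$ is algebraic over the image of $R[x_1,\dots,x_{m-v'}]$, and $\trdeg_{\LLL(T)}\LLL(T)[\xxx]/I_t(M)\LLL(T)[\xxx]=m-v'$; the same holds with $t-1,v''$ in place of $t,v'$. Since $\LLL$ is infinite, $\LLL^{u\times n\times m}=\mathbb{A}^{unm}_\LLL$ is irreducible, so any nonempty Zariski-open subset is dense and a finite intersection of nonempty Zariski-open subsets is again nonempty and Zariski-open; it thus suffices to produce, for each conclusion, a nonempty Zariski-open locus of tensors $Y$ on which it holds, and then to intersect.

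\emph{Vanishing of the elimination ideals.} Let $W=V\!\big(I_t(M)R[\xxx]\big)\subseteq\mathbb{A}^{unm}_\LLL\times\mathbb{A}^{m}_\LLL$ and let $\pi\colon\mathbb{A}^{unm}_\LLL\times\mathbb{A}^{m}_\LLL\to\mathbb{A}^{unm}_\LLL\times\mathbb{A}^{m-v'}_\LLL$ be the projection forgetting $x_{m-v'+1},\dots,x_m$. Since $R[x_1,\dots,x_{m-v'}]\cap I_t(M)R[\xxx]=(0)$, the image $\pi(W)$ is Zariski-dense; by Chevalley's theorem it is constructible, hence contains a dense Zariski-open subset $U$ of $\mathbb{A}^{unm}_\LLL\times\mathbb{A}^{m-v'}_\LLL$. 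The projection $q\colon\mathbb{A}^{unm}_\LLL\times\mathbb{A}^{m-v'}_\LLL\to\mathbb{A}^{unm}_\LLL$ is open, so $q(U)$ is a dense Zariski-open subset of $\mathbb{A}^{unm}_\LLL$. For $Y\in q(U)$ the fibre $U\cap(\{Y\}\times\mathbb{A}^{m-v'}_\LLL)$ is a nonempty — hence dense — open subset of $\mathbb{A}^{m-v'}_\LLL$ contained in the image of $V\!\big(I_t(M(\xxx,Y))\LLL[\xxx]\big)$ under forgetting $x_{m-v'+1},\dots,x_m$, whose closure is $V\!\big(\LLL[x_1,\dots,x_{m-v'}]\cap I_t(M(\xxx,Y))\LLL[\xxx]\big)$; so this closed set is all of $\mathbb{A}^{m-v'}_\LLL$, and, $\LLL[x_1,\dots,x_{m-v'}]$ being reduced, $\LLL[x_1,\dots,x_{m-v'}]\cap I_t(M(\xxx,Y))\LLL[\xxx]=(0)$. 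Running the same construction with $v''$ and $I_{t-1}$ produces another dense Zariski-open set with the analogous property.

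\emph{Algebraicity and the value of the height.} From the algebraicity over $R$ together with the injectivity above, each $\bar x_i$ with $m-v'<i\le m$ is a root of a nonzero polynomial $g_i=\sum_{j=0}^{d_i}c_{ij}x_i^{\,j}\in I_t(M)R[\xxx]$ with $c_{ij}\in R[x_1,\dots,x_{m-v'}]$ and $c_{i,d_i}\ne0$; fix a coefficient $\lambda_i\in R$ of a monomial occurring in $c_{i,d_i}$ with $\lambda_i\ne0$. For the height, for each $(m-v'+1)$-element subset $\{i_1<\dots<i_{m-v'+1}\}$ of $\{1,\dots,m\}$ the transcendence-degree equality forces a nonzero element of $I_t(M)\LLL(T)[\xxx]$ lying in $\LLL(T)[x_{i_1},\dots,x_{i_{m-v'+1}}]$; clearing denominators, we obtain $0\ne d\in R$ and $0\ne f\in R[x_{i_1},\dots,x_{i_{m-v'+1}}]$ with $df\in I_t(M)R[\xxx]$, and fix a monomial of $f$ with nonzero coefficient $\psi\in R$. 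Now let $\msQQQ_1$ be the intersection of the dense Zariski-open set from the previous paragraph (and its $I_{t-1}$-counterpart) with the open sets $\{\lambda_i(Y)\ne0\}$ and $\{d(Y)\ne0,\ \psi(Y)\ne0\}$, over all $i$, all subsets, and the corresponding $I_{t-1}$-data. For $Y\in\msQQQ_1$: applying $\phi_Y$ to $g_i$ shows $\bar x_i$ is a root of a polynomial over the image of $\LLL[x_1,\dots,x_{m-v'}]$ whose leading coefficient $c_{i,d_i}(Y,\cdot)$ is a nonzero element of $\LLL[x_1,\dots,x_{m-v'}]$, which by the previous paragraph has nonzero image; so $\LLL[\xxx]/I_t(M(\xxx,Y))\LLL[\xxx]$ is algebraic over the image of $\LLL[x_1,\dots,x_{m-v'}]$. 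Applying $\phi_Y$ to $df$ and using that $d(Y)$ is a nonzero scalar yields $0\ne f(Y,x_{i_1},\dots,x_{i_{m-v'+1}})\in I_t(M(\xxx,Y))\LLL[\xxx]$ for every subset, so no $(m-v'+1)$ of the $\bar x_j$ are algebraically independent over $\LLL$; by Lemma~\ref{lem:dim tr deg}, $\dim\LLL[\xxx]/I_t(M(\xxx,Y))\LLL[\xxx]\le m-v'$. Since $I_t(M(\xxx,Y))\LLL[\xxx]\subseteq(x_1,\dots,x_m)\ne\LLL[\xxx]$, Lemma~\ref{lem:ht max} and $\height\le m$ give $\height I_t(M(\xxx,Y))\LLL[\xxx]\le v'$, hence $=v'$. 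The same arguments with $t-1,v''$ replacing $t,v'$ give the remaining three conclusions, so all six hold on the dense Zariski-open set $\msQQQ_1$.

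\emph{Main obstacle.} The conclusions of Lemma~\ref{lem:elim ideal} concern only the \emph{generic} fibre, and neither elimination ideals nor the Krull dimension of a determinantal-type ideal behaves well under arbitrary specialization: they can jump on a proper closed set of tensors $Y$. The argument is designed around this: the two positive assertions (algebraicity, and algebraic dependence of each $(m-v'+1)$-tuple of the $x_j$) will be witnessed by \emph{explicit} polynomial identities over $R$, and $\msQQQ_1$ will be cut out by exactly the non-vanishing conditions that keep those identities non-trivial after $\phi_Y$; the remaining, ``negative'' assertion — that the elimination ideal is zero — will instead be extracted from Chevalley's theorem applied to the projection of $V(I_t(M)R[\xxx])$. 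The points that need care are that the leading coefficient of each $g_i$ and the selected coefficient $\psi$ of each $f$ genuinely survive the specialization $\phi_Y$, and that the locus so defined is nonempty — which is where the hypothesis that $\LLL$ is infinite is used.
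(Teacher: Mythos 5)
Your proof reaches all six conclusions and is essentially sound, but the route differs from the paper's in a substantive way. The paper also starts from Lemma~\ref{lem:elim ideal}, but it simply specializes the generic algebraic--dependence relations $f_l(\bar x_l)=0$, takes $\msQQQ_1$ to be the non-vanishing locus of the product of all $\LLL[T]$-coefficients of the $f_l$, and then runs the single chain ``algebraicity $\Rightarrow\height\geq v'\Rightarrow\height=v'$ (by Lemma~\ref{lem:ht max}) $\Rightarrow\trdeg=m-v'\Rightarrow\bar x_1,\ldots,\bar x_{m-v'}$ algebraically independent,'' so that the vanishing of the elimination ideal drops out at the very end as a consequence of the height computation. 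You invert this order: you establish the vanishing of the elimination ideal \emph{first}, via Chevalley's theorem applied to the projection of $V(I_t(M)R[\xxx])$, and only then use it to guarantee that the specialized leading coefficients have nonzero image in the quotient (a point the paper's argument passes over silently); and you certify $\dim\leq m-v'$ by producing, for \emph{every} $(m-v'+1)$-element subset of the variables, an explicit specialized relation, which is precisely the input Lemma~\ref{lem:dim tr deg} requires. What your version buys is a cleaner justification of the algebraicity and dimension steps; what it costs is the extra machinery of constructible images, which the paper's coefficient-tracking avoids entirely.

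One step needs care. Your Chevalley paragraph is valid only if $W$, $\pi(W)$, $U$ and the fibres are read scheme-theoretically (equivalently, over an algebraic closure), with only $Y$ taken to be an $\LLL$-rational point of $q(U)$: then the fibre of $W$ over $Y$ is $\mathrm{Spec}\bigl(\LLL[\xxx]/I_t(M(\xxx,Y))\bigr)$, the closure of its image is cut out by the elimination ideal, and density forces that ideal into the nilradical, which is zero since $\LLL[x_1,\ldots,x_{m-v'}]$ is reduced. Read at the level of $\LLL$-rational points, the argument breaks for a general infinite field (and the lemma is stated for arbitrary infinite $\LLL$, even though the paper only ever applies it with $\LLL$ algebraically closed): the image of a set of rational points under a projection need not be constructible, and a nonempty scheme-theoretic fibre need not contain a rational point --- already $V(x^2+y^2+1)$ over $\RRR$ has zero elimination ideal in $\RRR[x]$ while its set of real points is empty. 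With the scheme-theoretic reading made explicit, your proof is complete.
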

\begin{proof}
Let $T=(t_{ijk})$ be the $u\times n\times m$ tensor of indeterminates.
Then by Lemma
\ref{lem:elim ideal}, we see that 
$\LLL[T][\xxx]/I_t(M(\xxx,T))\LLL[T][\xxx]$ is
algebraic over the natural image of $\LLL[T][x_1, \ldots, x_{m-v'}]$.
Denote the natural image of $x_l$  in $\LLL[T][\xxx]/I_t(M(\xxx,T))\LLL[T][\xxx]$ by $\bar x_l$.
Take a nonzero polynomial $f_l(t)$ with coefficient in $\LLL[T][x_1, \ldots, x_{m-v'}]$ 
such that $f_l(\bar x_l)=0$ for each $l$ with $m-v'+1\leq l\leq m$.
Let $g$ be the product of all nonzero elements of $\LLL[T]$ appearing 
as 
the coefficient of at least one of $f_l$ and set 
$\msQQQ'_1=\LLL^{u\times n\times m}\setminus\VVV(g)$.
Then 
$\msQQQ'_1$ is a Zariski dense open subset of $\LLL^{u\times n\times m}$.

Suppose that $Y\in\msQQQ'_1$.
And let $\tilde x_i$ be the natural image of $x_i$ in $\LLL[\xxx]/I_t(M(\xxx,Y))\LLL[\xxx]$
and $\tilde f_l$ be an element of $\LLL[x_1, \ldots, x_{m-v'}]$
obtained by substituting $Y$ to $T$.
Then $\tilde f_l$ is a nonzero element of $\LLL[x_1, \ldots, x_{m-v'}]$ and
$\tilde f_l(\tilde x_l)=0$ for $m-v'+1\leq l\leq m$.
Therefore, $\LLL[\xxx]/I_t(M(\xxx,Y))\LLL[\xxx]$ is algebraic over 
the natural image of $\LLL[x_1,\ldots, x_{m-v'}]$.
Thus $\height I_t(M(\xxx,Y))\LLL[\xxx]
=m-\dim\LLL[\xxx]/I_t(M)\LLL[\xxx]
=m-\trdeg_\LLL \LLL[\xxx]/I_t(M)\LLL[\xxx]
\geq v'$.
Thus, $\height I_t(M(\xxx,Y))\LLL[\xxx]=v'$ by Lemma \ref{lem:ht max} and we see that
$\trdeg_\LLL \LLL[\xxx]/I_t(M(\xxx,Y))\LLL[\xxx]
= m-v'$.
Therefore
$\tilde x_1$, \ldots, $\tilde x_{m-v'}$ are algebraically independent over $\LLL$,
that is,
$\LLL[x_1,\ldots, x_{m-v'}]\cap I_t(M(\xxx,Y))\LLL[\xxx]=(0)$.

We see by the same way that there is a Zariski dense open subset $\msQQQ''_1$ of
$\LLL^{u\times n\times m}$ such that if $Y\in\msQQQ''_1$, then
$\LLL[\xxx]/I_{t-1}(M(\xxx,Y))\LLL[\xxx]$ is algebraic over the natural image of
$\LLL[x_1, \ldots, x_{m-v''}]$,
$\LLL[x_1, \ldots, x_{m-v''}]\cap I_{t-1}(M(\xxx,Y))\LLL[\xxx]=(0)$
and
$\height I_{t-1}(M(\xxx,Y))\LLL[\xxx]=v''$.
Thus it is enough to set $\msQQQ_1=\msQQQ'_1\cap\msQQQ''_1$.
\end{proof}

Let $\LLL$ be a field, $\xxx=(x_1, \ldots, x_m)$
a vector of indeterminates and $M$ a $u\times n$ matrix with 
entries in $\LLL[\xxx]$.
Suppose that $\height I_t(M)=v$ and $\det(M^{<t}_{<t})\not\in\sqrt{I_t(M)}$.
Then $I_t(M)\LLL[\xxx][\det(M^{<t}_{<t})^{-1}]$
is a proper ideal of $\LLL[\xxx][\det(M^{<t}_{<t})^{-1}]$
and $M$ is
equivalent to the matrix of the following form in
$\LLL[\xxx][(\det(M^{<t}_{<t})^{-1}]$.
$$
\begin{pmatrix}E_{t-1}&O\\O&\ast\end{pmatrix}
$$
In particular, $I_t(M)$ is a complete intersection ideal in
$\LLL[\xxx][(\det(M^{<t}_{<t})^{-1}]$.
By symmetry, we see that if $\height I_{t-1}(M)>v$, then
$I_t(M)$ is a generically complete intersection ideal.

We use the notation of \cite[p.\ 219]{Eisenbud-etal:1992}.
Let $\LLL$ be a field of characteristic 0, $T$ a $u\times n\times m$ tensor of indeterminates 
and $\xxx=(x_1, \ldots, x_m)$ a vector of indeterminates.
Set $\MMM=\LLL(T)$. 
Suppose that $m>v$.
Then 
$I_t(M(\xxx,T))\MMM[\xxx]$ is a prime ideal and 
$\height I_{t-1}(M(\xxx,T))\MMM[\xxx]>v$ by
Lemma \ref{lem:12.4+}.
Thus
$$
I_t(M(\xxx,T)):\msJJJ_{m-v}(I_t(M(\xxx,T)))=I_t(M(\xxx,T))
$$
by \cite[Theorem 2.1]{Eisenbud-etal:1992}
and the argument above.
Thus if we set $I'=I_t(M(\xxx,T))+\msJJJ_{m-v}(I_t(M(\xxx,T)))$,
then $\height I'>v$.
Therefore the natural images of $x_1$, \ldots, $x_{m-v}$ 
in $\MMM[\xxx]/I'$ are algebraically dependent
over $\MMM$ 
by Lemma \ref{lem:dim tr deg}.
Take a transcendence basis $x_{i_1}$, \ldots $x_{i_d}$ of
$\MMM[\xxx]/I'$ over $\MMM$.
By symmetry, we may assume that $i_k=k$ for $1\leq k\leq d$.
Since $\height I'>v$, we see that $d<m-v$.
Take a nonzero polynomial $f(t)$ with coefficients in $\LLL[T][x_1, \ldots, x_d]$
such that $f(x_{m-v})\in I'$
and let $g$ be the product of all nonzero elements of $\LLL[T]$
which appear in some nonzero {coefficient} of $f$.
Set $\msQQQ_2=\LLL^{u\times n\times m}\setminus \VVV(g)$.
Then $\msQQQ_2$ is a Zariski dense open subset 
of $\LLL^{u\times n\times m}$
and if $Y\in\msQQQ_1\cap\msQQQ_2$, where $\msQQQ_1$ is the one
in Lemma \ref{lem:ht max open}, then
$\height (I_t(M(\xxx,Y))+\msJJJ_{m-v}(I_t(M(\xxx,Y))))>v$
since 
$\trdeg_{\LLL}\LLL[\xxx]/(I_{t}(M(\xxx,Y))+\msJJJ_{m-v}(I_t(M(\xxx,Y))))<m-v$.

\medskip

Until the end of this section, assume that $m\geq v+2$ and 
let $U$ be the $m\times m$ matrix of indeterminates,
$T$ the $u\times n\times m$ tensor of indeterminates,
$\xxx=(x_1, \ldots, x_m)$ the vector of indeterminates
and $\LLL$ the algebraic closure of $\RRR(U)$.

Set
$$
\begin{pmatrix}x_1'\\\vdots\\x_m'\end{pmatrix}
=U
\begin{pmatrix}x_1\\\vdots\\x_m\end{pmatrix}.
$$
Then $\LLL(T)[x_1',\ldots, x_m']=\LLL(T)[\xxx]$
and
$\LLL(T)[x_1',\ldots, x_{m-v+1}']\cap I_t(M(\xxx,T))$
is a principal ideal 
generated by a polynomial $F$ called the ground form of $I_t(M(\xxx,T))$,
since $I_t(M(\xxx,T))$ is a prime ideal therefore is unmixed of height $v$.
See
\cite[parts 28 and 29]{Seidenberg:1974}.

Since $\LLL(T)[x_1',\ldots,x_{m-v+1}']\cap I_t(M(\xxx,T))$
is the elimination ideal, $F$ is obtained by the Buchberger's algorithm.
Let $g_3$ be the products of all elements of $\LLL[T]$ which {appear} as a 
numerator or a denominator of a
nonzero
coefficient of at least one polynomial in the process of Buchberger's algorithm
to obtain the reduced Gr\"obner basis of $I_t(M(\xxx,T))$ in $\LLL[T][\xxx]$.
Set $\msQQQ_3=\LLL^{u\times n\times m}\setminus\VVV(g_3)$.
Then $\msQQQ_3$ is a Zariski dense open subset of $\LLL^{u\times n\times m}$
and if $Y\in\msQQQ_3$, 
then the Buchberger's algorithm to obtain the reduced Gr\"obner basis of 
$I_t(M(\xxx,Y))\LLL[\xxx]$ in $\LLL[\xxx]$ is identical with that of
$I_t(M(\xxx,T))\LLL(T)[\xxx]$ in $\LLL(T)[\xxx]$.
In particular,
$\LLL[x_1',\ldots, x_{m-v+1}']\cap I_t(M(\xxx,Y))$ is a principal ideal
generated by $F_Y$, the polynomial obtained by substituting $Y$ in $T$ in
the coefficients of $F$.

Let $d=\deg F$ and  let $P_\LLL(d,m-v+1)$ (resp. $P_\RRR(d,m-v+1)$) 
be the set of homogeneous polynomials
with coefficients in $\LLL$ (resp. $\RRR$) with variables $x_1'$, \ldots, $x_{m-v+1}'$ and
degree $d$.
Since $m-v+1\geq 3$ and $\LLL$ is an algebraically closed field containing $\RRR$,
we see by \cite{Heintz-Sieveking:1981} that
\begin{eqnarray*}
&&
\{G\in P_\RRR(d,m-v+1)\mid \mbox{$G$ is absolutely irreducible}\}\\
&=&P_\RRR(d,m-v+1)\cap
\{G\in P_\LLL(d,m-v+1)\mid \mbox{$G$ is irreducible}\}
\end{eqnarray*}
is a Zariski dense open subset of $P_\RRR(d,m-v+1)$.

\begin{definition}\rm
\mylabel{def:msqqq}
Set 
$$
\msQQQ=\{Y\in\msQQQ_1\cap\msQQQ_2\cap\msQQQ_3\cap\RRR^{u\times n\times m}\mid
\mbox{$F_Y$ is absolutely irreducible}\},
$$
where $\msQQQ_1$ is the one in Lemma \ref{lem:ht max open} and
$\msQQQ_2$, $\msQQQ_3$ and $F_Y$ are the ones defined after the proof
of Lemma \ref{lem:ht max open}.
\end{definition}

\begin{remark}\rm
$\msQQQ$ is a Zariski open subset of $\RRR^{u\times n\times m}$,
since 
the correspondence $Y$ to $F_Y$ is a rational map whose domain contains $\msQQQ_3$.
\end{remark}

Moreover, we see the following fact.

\begin{lemma}
$\msQQQ\supset\Aind\cap\msQQQ_1\cap\msQQQ_2\cap\msQQQ_3$.
In particular, $\msQQQ$ is not an empty set.
\end{lemma}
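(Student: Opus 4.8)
The plan is to derive the absolute irreducibility of $F_Y$ from that of the ``generic'' ground form $F$, by specializing $T$ to $Y$ along a field isomorphism; the hypothesis $Y\in\Aind$ is exactly what supplies such an isomorphism. Since $\Aind\subseteq\RRR^{u\times n\times m}$, a tensor $Y\in\Aind\cap\msQQQ_1\cap\msQQQ_2\cap\msQQQ_3$ already fulfils every condition of Definition~\ref{def:msqqq} except possibly the absolute irreducibility of $F_Y$, so I fix such a $Y$, write $y_{ijk}$ for its entries, and must show $F_Y$ is absolutely irreducible.

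First I would pin down the field of definition of $F$. Written in the coordinates $x_1',\dots,x_m'$ (via $\xxx=U^{-1}(x_1',\dots,x_m')\transpose$), the $t$-minors generating $I_t(M(\xxx,T))$ have coefficients in $\QQQ(U)[T]$, so the Buchberger computation of the reduced Gr\"obner basis over $\LLL(T)$ for an elimination order on $x_{m-v+2}',\dots,x_m'$ never leaves $\QQQ(U)(T)$; hence $F\in\QQQ(U)(T)[x_1',\dots,x_{m-v+1}']$ and $(F)\QQQ(U)(T)[x_1',\dots,x_{m-v+1}']=I_t(M(\xxx,T))\cap\QQQ(U)(T)[x_1',\dots,x_{m-v+1}']$. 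Now I would invoke Lemma~\ref{lem:itm prime}\,\ref{item:itmprime} with $\KKK=\QQQ$, with the tensor of indeterminates $T$ playing the role of the lemma's tensor and the algebraically closed field $\overline{\QQQ(U,T)}$ playing the role of its $\LLL$ (legitimate since $m\ge v+2$ and the $t_{ijk}$ are algebraically independent over $\QQQ$): it gives that $I_t(M(\xxx,T))\overline{\QQQ(U,T)}[\xxx]$ is prime. Because elimination is compatible with extension of the base field, $(F)\overline{\QQQ(U,T)}[x_1',\dots,x_{m-v+1}']$ is the elimination ideal of this prime, hence is itself prime; as $F$ is a nonzero nonunit this means $F$ is irreducible over $\overline{\QQQ(U,T)}$, i.e.\ absolutely irreducible as a polynomial over $\QQQ(U)(T)$.

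Next I would specialize. Since the $y_{ijk}$ are algebraically independent over $\QQQ$ and the entries of $U$ are indeterminates over $\RRR\supseteq\QQQ(y_{ijk})$, the $y_{ijk}$ are algebraically independent over $\QQQ(U)$; thus $t_{ijk}\mapsto y_{ijk}$ is a $\QQQ(U)$-isomorphism $\QQQ(U)(T)\xrightarrow{\ \sim\ }K$ with $K\define\QQQ(U)(y_{ijk})$, which extends to $\overline{\QQQ(U,T)}\xrightarrow{\ \sim\ }\overline K$ and, fixing $x_1',\dots,x_{m-v+1}'$, to an isomorphism of polynomial rings carrying $F$ to $F_Y$ (the substitution makes sense because the denominators of the coefficients of $F$ lie in $\QQQ(U)[T]$ and so do not vanish at $T=Y$). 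Hence $F_Y$ is irreducible over $\overline K$; equivalently $\overline K[x_1',\dots,x_{m-v+1}']/(F_Y)$ is a domain, and being of finite type over the algebraically closed field $\overline K$ it is geometrically integral, so it stays a domain after any field extension of $\overline K$. Since the $y_{ijk}$ are real, $K\subseteq\RRR(U)\subseteq\LLL$ with $\LLL$ algebraically closed; applying this to $\overline K\subseteq\LLL$ shows $F_Y$ is irreducible over $\LLL$, i.e.\ absolutely irreducible, so $Y\in\msQQQ$ and $\msQQQ\supseteq\Aind\cap\msQQQ_1\cap\msQQQ_2\cap\msQQQ_3$. For nonemptiness: $\Aind$ is dense in $\RRR^{u\times n\times m}$ by Lemma~\ref{lem:aind dense}, while $\msQQQ_1\cap\msQQQ_2\cap\msQQQ_3=\LLL^{u\times n\times m}\setminus\VVV(g)$ for some nonzero $g\in\LLL[T]$; expanding $g$ over an $\RRR$-basis of the $\RRR$-span of its coefficients produces a nonzero real polynomial $h$ with $\{g=0\}\cap\RRR^{u\times n\times m}\subseteq\{h=0\}$, so $\msQQQ_1\cap\msQQQ_2\cap\msQQQ_3$ contains the nonempty Euclidean-open set $\{h\ne 0\}\cap\RRR^{u\times n\times m}$, which meets $\Aind$; the inclusion just proved then forces $\msQQQ\ne\emptyset$.

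I expect the main obstacle to be this chain of base-field reductions rather than any single computation: a priori $F$ is only known to be irreducible over $\LLL(T)$ with $\LLL=\overline{\RRR(U)}$ enormous, whereas $Y$ is algebraically independent merely over $\QQQ$ (never over $\RRR$), so it is not evident that specializing $T\to Y$ cannot create spurious factors over $\LLL$. The way around it is precisely the observation that $F$, and the whole elimination, is in fact defined over the small field $\QQQ(U)(T)$ and is \emph{absolutely} prime there by Lemma~\ref{lem:itm prime}; absolute irreducibility then survives both the specialization $\QQQ(U)(T)\cong K$ and the enlargement $\overline K\subseteq\LLL$, the latter by the standard geometric-integrality fact used above.
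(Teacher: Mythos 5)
Your proof is correct, but it takes a noticeably longer route than the paper's for the main inclusion. The paper simply applies Lemma \ref{lem:itm prime} \emph{directly to $Y$} with the field extension $\LLL/\QQQ$: since $Y\in\Aind$ means the entries of $Y$ are algebraically independent over the \emph{small} field $\KKK=\QQQ$ (the lemma never asks for independence over $\RRR$ or $\LLL$), it immediately yields that $I_t(M(\xxx,Y))\LLL[\xxx]$ is prime; hence the elimination ideal $(F_Y)\LLL[x_1',\ldots,x_{m-v+1}']$ is a principal prime, so $F_Y$ is irreducible over the algebraically closed field $\LLL$, i.e.\ absolutely irreducible, and $Y\in\msQQQ$. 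You instead apply the same lemma to the generic tensor $T$ over $\overline{\QQQ(U,T)}/\QQQ$, deduce absolute irreducibility of the generic ground form $F$ after pinning down its field of definition $\QQQ(U)(T)$, and then transfer to $F_Y$ via the specialization isomorphism $t_{ijk}\mapsto y_{ijk}$ followed by an ascent of irreducibility from $\overline{K}$ to $\LLL$. All of these steps check out (the denominators are controlled by $\msQQQ_3$, the $y_{ijk}$ are indeed algebraically independent over $\QQQ(U)$, and geometric integrality is stable under extension of an algebraically closed base field), but the obstacle you identify as "the main obstacle" — that specialization might create spurious factors over the huge field $\LLL$ — is precisely what the two-field formulation of Lemma \ref{lem:itm prime} is designed to bypass, so the detour through the generic $F$ is avoidable. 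Your treatment of nonemptiness is the same argument as the paper's, just with the (welcome) extra detail of replacing the nonzero $g\in\LLL[T]$ by a nonzero real polynomial $h$ before intersecting the resulting dense open set with the dense set $\Aind$.
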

\begin{proof}
Suppose that $Y\in \Aind\cap\msQQQ_1\cap\msQQQ_2\cap\msQQQ_3$.
Then we see, by applying Lemma \ref{lem:itm prime} to $\LLL/\QQQ$, 
that
$I_t(M(\xxx,Y))\LLL[\xxx]$ is a prime ideal.
Thus the elimination ideal is also prime and therefore the 
generator
$F_Y$ of the elimination ideal is an irreducible polynomial in $\LLL[x_1',\ldots, x_{m-v+1}']$.
Therefore, $Y\in\msQQQ$.
Since $\msIII$  is a dense subset of $\RRR^{u\times n\times m}$ by Lemma \ref{lem:aind dense}, 
we see that $\msIII\cap\msQQQ_1\cap\msQQQ_2\cap\msQQQ_3\neq\emptyset$.
Thus, $\msQQQ\neq \emptyset$.
\end{proof}

Thus we see that $\msQQQ$ is a non-empty Zariski open subset of $\RRR^{u\times n\times m}$.
In particular, $\msQQQ$ is dense.

\begin{lemma}
\mylabel{lem:itmy prime}
If $Y\in\msQQQ$, then $I_t(M(\xxx,Y))\RRR[\xxx]$ is a prime ideal of height $v$.
\end{lemma}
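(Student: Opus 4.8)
The plan is to prove the statement over the algebraically closed field $\LLL=\overline{\RRR(U)}$ and then descend to $\RRR[\xxx]$. Since $\LLL\supseteq\RRR$, the ring $\LLL[\xxx]$ is faithfully flat over $\RRR[\xxx]$, so $I_t(M(\xxx,Y))\RRR[\xxx]=I_t(M(\xxx,Y))\LLL[\xxx]\cap\RRR[\xxx]$ is the contraction of $I_t(M(\xxx,Y))\LLL[\xxx]$; hence primeness descends, and the dimension of the quotient --- and so the height --- is unchanged. Thus it is enough to show that $I:=I_t(M(\xxx,Y))\LLL[\xxx]$ is a prime ideal of height $v$, the assertion on the height being immediate from $Y\in\msQQQ_1$ and Lemma~\ref{lem:ht max open} (here $v'=\min\{m,v\}=v$ because $m\geq v+2$).

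The first step is to record that $I$ is unmixed of height $v$. By Lemma~\ref{lem:12.4+}\,\ref{item:grade} and \cite[Theorem~1 and Corollary~4]{Hochster-Eagon:1971}, $I_t(M(\xxx,T))$ is a perfect ideal of grade $v$ in $\LLL[T][\xxx]$. Specializing $T\mapsto Y$ with $Y\in\msQQQ_1$, the grade does not drop because $\height I=\grade I=v$ by Lemma~\ref{lem:ht max open} and the Cohen--Macaulayness of $\LLL[\xxx]$; so, by generic perfection, $I$ is again a perfect ideal of grade $v$. Since $\LLL[\xxx]$ is a polynomial ring over a field, a perfect ideal is unmixed, so $I$ is unmixed of height $v$ and $\LLL[\xxx]/I$ is Cohen--Macaulay. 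Moreover $\height I_{t-1}(M(\xxx,Y))\LLL[\xxx]=\min\{m,(u-t+2)(n-t+2)\}>v$ (again from $Y\in\msQQQ_1$), so no minimal prime of $I$ contains $I_{t-1}(M(\xxx,Y))$; at each such prime a suitable $(t-1)$-minor is invertible and the row reduction recalled before Definition~\ref{def:msqqq} exhibits $I$ locally as a complete intersection, whence $I$ is generically a complete intersection.

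Next, since $Y\in\msQQQ_2$ one has $\height\bigl(I+\msJJJ_{m-v}(I)\bigr)>v$; as $I$ is unmixed, every associated prime of $I$ has height $v$, so $\msJJJ_{m-v}(I)$ lies in none of them and $I:\msJJJ_{m-v}(I)=I$. By \cite[Theorem~2.1]{Eisenbud-etal:1992}, using that $I$ is unmixed and generically a complete intersection, $I$ is a radical ideal, that is, a finite intersection of height-$v$ prime ideals. Finally, $Y\in\msQQQ_3$ ensures that the reduced Gr\"obner basis computation specializes from $T$ to $Y$, so $\LLL[x_1',\dots,x_{m-v+1}']\cap I=(F_Y)$ is principal; since the change of coordinates is given by the matrix of indeterminates $U$, this $(F_Y)$ is the ground form of $I$ for a generic linear change of coordinates in the sense of Seidenberg \cite[parts~28 and 29]{Seidenberg:1974}, whose irreducible factors are the ground forms of the minimal primes of $I$, each occurring with multiplicity the length of the corresponding primary component. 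Since $I$ is radical these multiplicities are all $1$, and since $F_Y$ is absolutely irreducible ($Y\in\msQQQ$) there is exactly one factor; hence $I$ has a unique minimal prime and, being radical, equals it. Therefore $I$ is prime, and by the first paragraph $I_t(M(\xxx,Y))\RRR[\xxx]$ is a prime ideal of height $v$.

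The step I expect to be the main obstacle is that all of this must be carried out for the \emph{specialized} ideal $I_t(M(\xxx,Y))\LLL[\xxx]$ rather than for the generic one; the set $\msQQQ$ is engineered precisely so that this works. Condition $\msQQQ_1$ keeps the grade (hence perfection, hence unmixedness) from dropping and keeps $\height I_{t-1}>v$; condition $\msQQQ_2$ feeds the Jacobian criterion of \cite{Eisenbud-etal:1992} and yields generic reducedness; condition $\msQQQ_3$ transports the generic Gr\"obner/elimination computation over $\LLL(T)$ to $Y$ so that the ground form is exactly $F_Y$; and membership in $\msQQQ$ gives absolute irreducibility of $F_Y$. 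A further subtlety to watch is that Seidenberg's ground form theory genuinely requires a generic change of variables, which is why one works over $\LLL=\overline{\RRR(U)}$ with the substitution given by $U$ rather than over $\RRR$ directly.
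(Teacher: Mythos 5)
Your proposal is correct and follows essentially the same route as the paper: the paper likewise combines the specialization of the Gr\"obner/elimination computation ($\msQQQ_3$) with Seidenberg's ground-form theory and the Jacobian criterion of Eisenbud--Huneke--Vasconcelos fed by $\msQQQ_1\cap\msQQQ_2$. The only cosmetic differences are that the paper descends from $\LLL$ to $\RRR(U)$ and applies Seidenberg directly to $I_t(M(\xxx,Y))\RRR[\xxx]$, concluding ``primary'' from irreducibility of $F_Y$ and then ``prime'' from radicality, whereas you work over $\LLL[\xxx]$ throughout, conclude ``radical with a unique minimal prime,'' and contract at the end.
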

\begin{proof}
Since $Y\in\msQQQ$, 
$\height I_t(M(\xxx,Y))\RRR[\xxx]=v$ and
$\LLL[x'_1, \ldots, x'_{m-v+1}]\cap I_t(M(\xxx,Y))\LLL[\xxx]=
(F_Y)\LLL[x'_1, \ldots, x'_{m-v+1}]$.
Thus
$\RRR(U)[x_1',\ldots, x_{m-v+1}']\cap I_t(M(\xxx,Y))\RRR(U)[\xxx]
=(F_Y)\RRR(U)[x'_1, \ldots, x'_{m-v+1}]$
since $\LLL$ is faithfully flat over $\RRR(U)$.
Thus we see that $F_Y$ is the ground form of $I_t(M(\xxx,Y))\RRR[\xxx]$
\SeidenbergCite{28}.
Since
$F_Y$ is an irreducible polynomial in 
$\LLL[x'_1, \ldots, x'_{m-v+1}]$ and therefore in
$\RRR(U)[x_1',\ldots, x_{m-v+1}']$,
we see by \SeidenbergCite{31}, that $I_t(M(\xxx,Y))\RRR[\xxx]$ 
is a primary ideal.

On the other hand, since $Y\in\msQQQ_1\cap\msQQQ_2$, we see that
$\height (I_t(M(\xxx,Y))+\msJJJ_{m-v}(I_t(M(\xxx,Y))))>v$.
Since $I_t(M(\xxx,Y))$ is a primary ideal of height $v$,
we see that
$$
I_t(M(\xxx,Y)):\msJJJ_{m-v}(I_t(M(\xxx,Y)))=I_t(M(\xxx,Y)).
$$
Therefore, by \cite[Theorem 2.1]{Eisenbud-etal:1992}, we see that
$I_t(M(\xxx,Y))$ is a radical ideal.
Thus $I_t(M(\xxx,Y))$ is a prime ideal.
\end{proof}

Now we show the following result.

\begin{thm}
\mylabel{thm:it real}
Suppose that $m\geq v+2$.
Set $\msOOO_1=\msQQQ\cap\tilde\msPPP_t$ and $\msOOO_2=\msQQQ\cap\msAAA_t^{{u\times n\times m}}$\index{$\msOOO_2=\msQQQ\cap\msAAA_t^{u\times n\times m}$}.
Then the followings hold.
\begin{enumerate}
\item
\mylabel{item:msooo open}
$\msOOO_1$ and $\msOOO_2$ are disjoint open subsets of $\RRR^{u\times n\times m}$ and $\msOOO_1\neq\emptyset$.
\item
\mylabel{item:msooo dense}
$\msOOO_1\cup\msOOO_2$ is a dense subset of $\RRR^{u\times n\times m}$.
\item
\mylabel{item:closure o1}
$\overline\msOOO_1=\msCCC_t^{u\times n\times m}
=\RRR^{u\times n\times m}\setminus\msAAA_t^{u\times n\times m}$.
\item
\mylabel{item:mxy prime}
If $Y\in\msOOO_1\cup\msOOO_2$, then $I_t(M(\xxx,Y))\RRR[\xxx]$ is a prime ideal
of height $v$.
\item
\mylabel{item:real closed}
If $Y\in\msOOO_1$, then
$
\III(\VVV(I_t(M(\xxx,Y))))=I_t(M(\xxx,Y))
$.
\item
\mylabel{item:iv irr}
If $Y\in\msOOO_2$, then 
$
\III(\VVV(I_t(M(\xxx,Y))))=(x_1,\ldots,x_m)
$.
\end{enumerate}
\end{thm}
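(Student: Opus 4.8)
The plan is to derive parts \ref{item:msooo open}--\ref{item:msooo dense}, \ref{item:mxy prime} and \ref{item:iv irr} from the structural results already established, and to put the real work into \ref{item:real closed}. For \ref{item:msooo open}: $\msQQQ$ is a non-empty Zariski-open, hence Euclidean dense open, subset of $\RRR^{u\times n\times m}$ (as noted before Lemma~\ref{lem:itmy prime}), $\tilde\msPPP_t$ is open by Lemma~\ref{lem:tilde p open}, and $\msAAA_t^{u\times n\times m}$ is open by Corollary~\ref{cor:msa open}, so $\msOOO_1,\msOOO_2$ are open; they are disjoint since $\tilde\msPPP_t\subseteq\msCCC_t^{u\times n\times m}=\RRR^{u\times n\times m}\setminus\msAAA_t^{u\times n\times m}$ (Lemma~\ref{lem:tilde p open}, the last equality being definitional); and $\msOOO_1\ne\emptyset$ because $\msPPP_t\ne\emptyset$ (Lemma~\ref{lem:o1 dense}, $v<m$ being part of the hypothesis $m\ge v+2$) makes $\tilde\msPPP_t\supseteq\msPPP_t$ a non-empty open set, which then meets the dense set $\msQQQ$. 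For \ref{item:closure o1}: $\msOOO_1=\msQQQ\cap\tilde\msPPP_t$ is dense in the open set $\tilde\msPPP_t$, so $\overline{\msOOO_1}=\overline{\tilde\msPPP_t}=\msCCC_t^{u\times n\times m}$ by Lemma~\ref{lem:tilde p open}, and $\msOOO_1\subseteq\msCCC_t^{u\times n\times m}$ which is closed. For \ref{item:msooo dense}: $\msOOO_1\cup\msOOO_2=\msQQQ\cap(\tilde\msPPP_t\cup\msAAA_t^{u\times n\times m})$, and $\tilde\msPPP_t\cup\msAAA_t^{u\times n\times m}$ is open with closure containing $\msCCC_t^{u\times n\times m}\cup\msAAA_t^{u\times n\times m}=\RRR^{u\times n\times m}$, hence dense, and the intersection of two dense open sets is dense. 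Part \ref{item:mxy prime} is Lemma~\ref{lem:itmy prime} since $\msOOO_1\cup\msOOO_2\subseteq\msQQQ$. Part \ref{item:iv irr}: for $Y\in\msAAA_t^{u\times n\times m}$ every non-zero $\aaa\in\RRR^{1\times m}$ has $I_t(M(\aaa,Y))\ne(0)$, so $\zerovec$ is the only real zero of $I_t(M(\xxx,Y))$ (it does lie in $\VVV(I_t(M(\xxx,Y)))$, the $t$-minors being homogeneous of degree $t\ge 1$ in $\xxx$), whence $\III(\VVV(I_t(M(\xxx,Y))))=\III(\{\zerovec\})=(x_1,\ldots,x_m)$.

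For \ref{item:real closed}, fix $Y\in\msOOO_1$ and set $I=I_t(M(\xxx,Y))\RRR[\xxx]$, which is prime of height $v$ by \ref{item:mxy prime}. First I would reduce to $Y\in\msPPP_t$: write $Y=PZ$ with $P\in\glin(u,\RRR)$, $Z\in\msPPP_t$ (possible as $Y\in\tilde\msPPP_t$); since $M(\aaa,PZ)=P\,M(\aaa,Z)$ for all $\aaa$ and left multiplication by an invertible matrix preserves the ideal of $t$-minors, the ideals $I_t(M(\xxx,Z))\RRR[\xxx]$ and $I_t(M(\xxx,Y))\RRR[\xxx]$ coincide, with identical real varieties, and every ring-theoretic property of $I$ that I will use --- primeness (Lemma~\ref{lem:itmy prime}) and the fact that $\RRR[\xxx]/I$ is a domain algebraic over the polynomial subring $B=\RRR[\bar x_1,\ldots,\bar x_{m-v}]$, the $\bar x_i$ denoting the images of the $x_i$ (Lemma~\ref{lem:ht max open}, with $v'=v$ as $m\ge v+2$) --- depends only on that ideal, hence transfers to $Z$. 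So assume $Y\in\msPPP_t$.

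Now suppose $\III(\VVV(I))\supsetneq I$ and pick $g\in\III(\VVV(I))\setminus I$; then $J/I$ is a non-zero ideal of the domain $R:=\RRR[\xxx]/I$, where $J=g\RRR[\xxx]+I$. Taking $0\ne h\in J/I$ and a non-zero polynomial relation for $h$ over $B$, which --- $R$ being a domain --- may be taken with non-zero constant term $b\in B$, one gets $b\in hR\subseteq J/I$, so $J\cap\RRR[x_1,\ldots,x_{m-v}]$ contains some non-zero $f$. On the other hand, choose $\bbb=(\bbb_1,\bbb_2)\in S_t(Y)$, which is non-empty as $Y\in\msPPP_t$; by Lemma~\ref{lem:imp func} there is a non-empty open neighbourhood $O(\bbb,Y)$ of $\bbb_1\in\RRR^{1\times(m-v)}$ every point of which extends to a point of $S_t(Y)$. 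Since $f\ne 0$, pick $(a_1,\ldots,a_{m-v})\in O(\bbb,Y)$ with $f(a_1,\ldots,a_{m-v})\ne 0$ and extend it to $\aaa=(a_1,\ldots,a_m)\in S_t(Y)$; then $I_t(M(\aaa,Y))=(0)$, i.e.\ $\aaa\in\VVV(I)$. Therefore $g(\aaa)=0$ and every element of $I$ vanishes at $\aaa$, so $f(\aaa)=0$, contradicting $f(a_1,\ldots,a_{m-v})\ne 0$. Hence $\III(\VVV(I))=I$, which is \ref{item:real closed}.

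The hard part is \ref{item:real closed}: knowing $I$ is prime is not enough, one must see it is \emph{real}. What makes it work is that the ``degeneracy locus'' $S_t(Y)$ --- directions $\aaa$ with $\rank M(\aaa,Y)<t$ at which the $v\times v$ Jacobian $J_t(\cdot,Y)$ does not vanish --- is, by the implicit function theorem (Lemma~\ref{lem:imp func}), large enough that its projection to the first $m-v$ coordinates contains a non-empty open set, and these are exactly the $m-v$ coordinates in which $\RRR[\xxx]/I$ has transcendence degree $m-v$; that is precisely what forces any polynomial vanishing on $\VVV(I)$ into $I$. Equivalently one could exhibit a non-singular real point of $\VVV(I)$: any $\aaa\in S_t(Y)$ works, since $J_t(\aaa,Y)\ne 0$ shows the Jacobian of $I$ at $\aaa$ has rank $v=\height I$, and a prime ideal possessing a smooth real point is real. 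The only delicate bookkeeping outside \ref{item:real closed} is the reduction $\tilde\msPPP_t\rightsquigarrow\msPPP_t$ above, which rests on the $\glin(u,\RRR)$-equivariance of $Y\mapsto I_t(M(\xxx,Y))$ and on the fact that all ring-theoretic inputs depend only on the resulting ideal.
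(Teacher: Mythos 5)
Your proposal is correct and follows essentially the same route as the paper: parts (1)--(4) and (6) from Corollary~\ref{cor:msa open}, Lemmas~\ref{lem:o1 dense}, \ref{lem:tilde p open} and \ref{lem:itmy prime}, and for (5) the same contradiction argument producing a nonzero $f\in J\cap\RRR[x_1,\ldots,x_{m-v}]$ and then a point of $S_t$ (via Lemma~\ref{lem:imp func}) where $f$ does not vanish. Your explicit algebraicity argument for $J\cap\RRR[x_1,\ldots,x_{m-v}]\neq(0)$ and the up-front reduction from $\tilde\msPPP_t$ to $\msPPP_t$ are just more detailed renderings of steps the paper takes implicitly.
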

\begin{proof}
The set $\msAAA_t^{u\times n\times m}$ is an open subset of $\RRR^{u\times n\times m}$ 
by Corollary~\ref{cor:msa open} and $\tilde\msPPP_t$ is a {nonempty} open subset 
of $\RRR^{u\times n\times m}$ with 
$\msAAA_t^{u\times n\times m}\cap\tilde\msPPP_t=\emptyset$ by 
{Lemmas~\ref{lem:o1 dense} and \ref{lem:tilde p open}}.
Further $\msQQQ$ is a Zariski open subset.
Thus \ref{item:msooo open} holds.

\ref{item:msooo dense}:
Since $\msQQQ$ and $\msAAA_t^{{u\times n\times m}}\cup\tilde\msPPP_t$ are 
dense open subsets
of $\RRR^{u\times n\times m}$ by Corollary~\ref{cor:msa open} and 
Lemma \ref{lem:tilde p open}, we see that
$\msOOO_1\cup\msOOO_2\supset\msQQQ\cap(\msAAA_t^{{u\times n\times m}}\cup\msPPP_t)$ is also a
dense open subset of $\RRR^{u\times n\times m}$.

\ref{item:closure o1}:
Since  $\msQQQ$ is a 
dense subset of $\RRR^{u\times n\times m}$,
and $\tilde\msPPP_t$ is an open set, we see that
$\overline\msOOO_1=
\overline{\msQQQ\cap\tilde\msPPP_t}=\overline{\tilde\msPPP_t}=
\msCCC_t^{u\times n\times m}$
by Lemma \ref{lem:tilde p open}.

\ref{item:mxy prime} follows from Lemma \ref{lem:itmy prime}.

\ref{item:real closed}:
Assume the contrary and take $g\in\III(\VVV(I_t(M(\xxx,Y))))$ with 
$g\not\in I_t(M(\xxx,Y))$.
Set
$J=(g)\RRR[\xxx]+I_t(M(\xxx,Y))$.
Then $J\supsetneq I_t(M(\xxx,Y))\RRR[\xxx]$.
Since $I_t(M(\xxx,Y))\RRR[\xxx]$ is a prime ideal of height $v$
by Lemma \ref{lem:itmy prime}, 
we see that
$\height J>v$
and therefore
$
\RRR[x_1,\ldots, x_{m-v}]\cap J\neq (0)
$.

Take $0\neq f\in J\cap \RRR[x_1, \ldots, x_{m-v}]$.
Since $Y\in \tilde\msPPP_t$,
we can take $P\in\glin(u,\RRR)$ such that $PY\in\msPPP_t$.
Take $\bbb\in S_t(PY)$.
Since $O(\bbb,PY)$ defined in Lemma~\ref{lem:imp func}
 is an open set and $f$ is a non-zero polynomial,
we can take 
$(a_1, \ldots, a_{m-v})\in O(\bbb,PY)$ with $f(a_1, \ldots, a_{m-v})\neq 0$.
On the other hand, we see 
that there are
$a_{m-v+1}$, \ldots, $a_m\in\RRR$ such that 
$I_t(M(\aaa,Y))=I_t(PM(\aaa,Y))=I_t(M(\aaa,PY))=(0)$
by Lemma~\ref{lem:imp func},
 where 
$\aaa=(a_1,\ldots,a_m)$.
Thus by assumption, we see that $g(\aaa)=0$.
This contradicts to the fact that
$f\in J=(g)\RRR[\xxx]+I_t(M(\xxx,Y))\RRR[\xxx]$
and $f(a_1, \ldots, a_{m-v})\neq 0$.

Finally, \ref{item:iv irr} is clear from the definition of $\msAAA_t^{{u\times n\times m}}$.
\end{proof}


\section{Monomial preorder}
\mylabel{sec:mpo}

In this section, we introduce the notion of monomial preorder and prove a result
about ideals of minors by using it.

First we recall the notion of preorder.

\begin{definition}\rm
Let $S$ be a nonempty set and $\preceq$ a binary relation on $S$.
We say that $\preceq$ is a {{\em preorder}} on $S$ or $(S,\preceq)$ is a preordered
set if the following two conditions are satisfied.
\begin{enumerate}
\item
$a\preceq a$ for any $a\in S$.
\item
$a\preceq b$, $b\preceq c\Rightarrow a\preceq c$.
\setcounter{enumtemp}{\value{enumi}}\relax
\end{enumerate}
If moreover, 
\begin{enumerate}
\setcounter{enumi}{\value{enumtemp}}\relax
\item
$a\preceq b$ or $b\preceq a$ for any $a$, $b\in S$.
\end{enumerate}
is satisfied, then we say that $(S,\preceq)$ is a {{\em totally preordered set}}
or $\preceq$ is a {{\em total preorder}}.
\end{definition}

\begin{notation}
Let $(S,\preceq)$ be a preordered set.
We denote by $b\succeq a$ the fact $a\preceq b$.
We {denote} by $a\prec b$ or by $b\succ a$ the fact that $a\preceq b$ and $b\npreceq a$.
We also denote by $a\sim b$ the fact that $a\preceq b$ and $b\preceq a$.
\end{notation}

\begin{remark}\rm
The binary relation $\sim$ defined above is an equivalence relation and if
$a\sim a'$ and $b\sim b'$, then
$$
a\preceq b\iff a'\preceq b'.
$$
In particular, we can define a binary relation $\leq$ on the quotient set
$P=S/\sim$ by 
$\bar a\leq \bar b\stackrel{\mathrm{def}}{\iff}a\preceq b$, where $\bar a$ is the 
equivalence class which $a$ belongs to.
It is easily verified that $(P,\leq)$ is a partially ordered set and
$(S,\preceq)$ is a totally preordered set if and only if $(P,\leq)$ is a 
totally ordered set.
As usual, we denote $\bar a>\bar b$ the fact 
$\bar a\geq \bar b$ and $\bar a\neq \bar b$.
\end{remark}

\begin{definition}\rm
\mylabel{def:monom preorder}
Let $x_1$,  \ldots, $x_r$ be indeterminates.
We denote the set of monomials or power products of $x_1$, \ldots, $x_r$
by $\PP(x_1,\ldots, x_r)$.
A {{\em monomial preorder}} on $x_1$, \ldots, $x_r$ is a total preorder $\preceq$ on
$\PP(x_1,\ldots, x_r)$ satisfying the following conditions.
\begin{enumerate}
\item
$1\preceq m$ for any $m\in \PP(x_1, \ldots, x_r)$.
\item
\mylabel{item:second cond}
For $m_1$, $m_2$, $m\in \PP(x_1, \ldots, x_r)$,
$$
m_1\preceq m_2\iff m_1m\preceq m_2m.
$$
\end{enumerate}
Let $\sim$ be the equivalence relation on $\PP(x_1, \ldots, x_r)$ defined by the
monomial preorder $\preceq$.
We denote by $P(x_1, \ldots, x_r)$ the quotient set $\PP(x_1, \ldots, x_r)/\sim$
and by $\qdeg m$ the class of $m$ in $P(x_1,\ldots, x_r)$ and call it the {{\em quasi-degree}} of $m$,
where $m\in \PP(x_1,\ldots, x_r)$.
\end{definition}

\begin{remark}\rm
Our definition of monomial preorder may seem to be different from that of
\cite{Kemper:2014}, but it is identical except we allow $m\sim 1$ for a
monomial $m \neq 1$.
\end{remark}

\begin{example}[c.f.\ {\cite[Example 3.1]{Kemper:2014}}]\rm
\mylabel{ex:kt3.1}
Let $x_1$, \ldots, $x_r$ be indeterminates, $W=(\www_1, \ldots, \www_s)$
an $r\times s$ matrix whose entries are real numbers such that the first
nonzero entry of each row is positive.
If one defines
$$
x^\aaa\preceq x^\bbb\stackrel{\mathrm{def}}{\iff}
(\aaa\cdot\www_1,\ldots, \aaa\cdot\www_s)\lexeq
(\bbb\cdot\www_1,\ldots, \bbb\cdot\www_s),
$$
where $\cdot$ denotes the inner product and $\lexeq$ denotes the lexicographic 
order, 
then $\preceq$ is a monomial preorder.
In fact, one can prove by the same way as \cite{Robbiano:1985}
that every monomial preorder is of this type.
\end{example}

\begin{definition}\rm
Let $K$ be a {field} and $x_1$, \ldots, $x_r$ indeterminates.
If a monomial preorder on $x_1$, \ldots, $x_r$ is defined, we say that 
$K[x_1, \ldots, x_r]$ is a polynomial ring with monomial preorder.
Let $f$ be a nonzero element of $K[x_1, \ldots, x_r]$.
We say that $f$ is a {{\em form}} if all the monomials appearing in $f$ have the
same quasi-degree.
We denote by $\qdeg f$ the quasi-degree of the monomials appearing in $f$.
Let $g$ be a nonzero element of $K[x_1,\ldots, x_r]$.
Then there is a unique expression
$$
g=g_1+g_2+\cdots+g_t
$$
of $g$, where $g_i$ is a form for $1\leq i\leq t$ and
$\qdeg g_1>\qdeg g_2>\cdots>\qdeg g_t$.
We define the leading form of $g$, denoted $\lf(g)$ as $g_1$.
\end{definition}

\begin{remark}\rm
Let $K[x_1,\ldots, x_r]$ be a polynomial ring with monomial preorder 
and $f$, $g$ nonzero elements of $K[x_1, \ldots, x_r]$.
Then $\lf(fg)=\lf(f)\lf(g)$.
\end{remark}

\begin{remark}\rm
It is essential to assume both implications in \ref{item:second cond} of
Definition \ref{def:monom preorder}.
For example, let $x$ and $y$ be indeterminates.
We define total preorder on $\PP(x,y)$ by
$1\prec y\prec x$ and
$m_1\prec m_2$ if the total degree of $m_1$ is less than that of $m_2$.
Then it is easily verified that
\begin{enumerate}
\item
$1\prec m$ for any $m\in \PP(x,y)\setminus\{1\}$.
\item
$m_1\preceq m_2\Rightarrow m_1m\preceq m_2m$.
\end{enumerate}
Let $f=x+y$.
Then $\lf(f)=x$ while $\lf(f^2)=x^2+2xy+y^2\neq x^2=(\lf(f))^2$.
\end{remark}

\begin{definition}\rm
Let $x_1$, \ldots, $x_r$ be indeterminates.
Suppose that a total preorder on $\{x_1, \ldots, x_r\}$
is defined.
Rewrite the set $\{x_1, \ldots, x_r\}$ as follows.
$\{x_1, \ldots, x_r\}=\{y_{11}, \ldots, y_{1s_1}, y_{21}, \ldots, y_{2s_s},
\ldots, y_{t1}, \ldots, y_{ts_t}\}$,
$s_1+\cdots+s_t=r$,
$y_{11}\sim\cdots\sim y_{1s_1}\succ y_{21}\sim\cdots\sim y_{2s_2}\succ\cdots\succ
y_{t1}\sim\cdots\sim y_{ts_t}$.

The lexicographic monomial preorder on $\PP(x_1,\ldots, x_r)$ is defined as follows.
$\prod_{i=1}^t\prod_{j=1}^{s_i}y_{ij}^{a_{ij}}\preceq
\prod_{i=1}^t\prod_{j=1}^{s_i}y_{ij}^{b_{ij}}$ if and only if one of the following
conditions is satisfied.
\begin{itemize}
\item
$\sum_{j=1}^{s_1}a_{1j}<\sum_{j=1}^{s_1}b_{1j}$.
\item
$\sum_{j=1}^{s_1}a_{1j}=\sum_{j=1}^{s_1}b_{1j}$ and
$\sum_{j=1}^{s_2}a_{2j}<\sum_{j=1}^{s_2}b_{2j}$.
\item
$\sum_{j=1}^{s_1}a_{1j}=\sum_{j=1}^{s_1}b_{1j}$,
$\sum_{j=1}^{s_2}a_{2j}=\sum_{j=1}^{s_2}b_{2j}$ and
$\sum_{j=1}^{s_3}a_{3j}<\sum_{j=1}^{s_3}b_{3j}$.
\\
\quad $\vdots$
\item
$\sum_{j=1}^{s_1}a_{1j}=\sum_{j=1}^{s_1}b_{1j}$,
$\sum_{j=1}^{s_2}a_{2j}=\sum_{j=1}^{s_2}b_{2j}$,
$\ldots$,
$\sum_{j=1}^{s_{t-2}}a_{t-2,j}=\sum_{j=1}^{s_{t-2}}b_{t-2,j}$ and
$\sum_{j=1}^{s_{t-1}}a_{t-1,j}<\sum_{j=1}^{s_{t-1}}b_{t-1,j}$.
\item
$\sum_{j=1}^{s_1}a_{1j}=\sum_{j=1}^{s_1}b_{1j}$,
$\sum_{j=1}^{s_2}a_{2j}=\sum_{j=1}^{s_2}b_{2j}$,
$\ldots$,
$\sum_{j=1}^{s_{t-1}}a_{t-1,j}=\sum_{j=1}^{s_{t-1}}b_{t-1,j}$ and
$\sum_{j=1}^{s_{t}}a_{t,j}\leq\sum_{j=1}^{s_{t}}b_{t,j}$.
\end{itemize}
\end{definition}

\begin{remark}\rm
Suppose that $x_1$, \ldots, $x_r$ are indeterminates and total preorder
$\preceq$ on $\{x_1, \ldots, x_r\}$ is defined.
Suppose also that
$$
x_1\sim\cdots\sim x_{m_1}\succ x_{m_1+1}\sim\cdots\sim x_{m_2}\succ\cdots
\succ x_{m_{t-1}+1}\sim\cdots\sim x_{m_t},
$$
$m_t=r$.
Then the lexicographic monomial preorder induced by this preorder on
$\{x_1, \ldots, x_r\}$ is the one defined as in Example \ref{ex:kt3.1}
by the $r\times t$ matrix whose
$j$-th column has 1 in $m_{j-1}+1$ through $m_{j}$-th position and 0
in others, where we set $m_0=0$.
\end{remark}

\begin{definition}\rm
We set
$$
[a_1, \ldots, \overset{k}{a_i}, \ldots, a_n]\define
[a_1, \ldots, a_{i-1}, k, a_{i+1}, \ldots, a_n]
$$
and
$$
[a_1,\ldots, \overset{k}{a_i}, \ldots, \overset{l}{a_j}, \ldots, a_n]
\define
[a_1,\ldots, a_{i-1}, k, a_{i+1}, \ldots, a_{j-1}, l, a_{j+1}, \ldots, a_n].
$$
\end{definition}

\begin{lemma}
\mylabel{lem:lib}
Let $K$ be a field, $K[x_1, \ldots, x_r]$ a polynomial ring with monomial preorder,
$S$ a subset of $\{x_1, \ldots, x_r\}$ and $g_1$,  \ldots, $g_t\in K[x_1, \ldots, x_r]$.
Set $L=K[S]$.
If $\lf(g_1)$, \ldots, $\lf(g_t)$ are linearly independent over $L$,
then $g_1$, \ldots, $g_t$ are linearly independent over $L$.
\end{lemma}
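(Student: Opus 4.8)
The plan is to prove the contrapositive by comparing leading forms: from a nontrivial $L$-linear relation $\sum_i c_ig_i=0$ among $g_1,\ldots,g_t$ I will extract, by isolating the part of highest quasi-degree, a nontrivial $L$-linear relation among $\lf(g_1),\ldots,\lf(g_t)$.

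So suppose $g_1,\ldots,g_t$ are linearly dependent over $L$, say $\sum_{i=1}^t c_ig_i=0$ with $c_1,\ldots,c_t\in L$ not all zero. Discarding the indices with $c_i=0$ and relabelling, we may assume $c_i\neq0$ for every $i$ and $t\geq1$ (the remaining $\lf(g_i)$ are still linearly independent over $L$, so it suffices to reach a contradiction in this case). Each $g_i$ is nonzero, since otherwise $\lf(g_i)$ would be undefined, and $K[x_1,\ldots,x_r]$ is a domain, so $c_ig_i\neq0$; by the product rule $\lf(fg)=\lf(f)\lf(g)$ we have $\lf(c_ig_i)=\lf(c_i)\lf(g_i)$. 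Moreover $\lf(c_i)$ is the sum of the monomials of maximal quasi-degree occurring in $c_i$, each of which is a monomial in the variables of $S$; hence $\lf(c_i)$ is a nonzero element of $K[S]=L$.

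Next, set $\Delta=\max\{\qdeg\lf(c_ig_i)\mid 1\leq i\leq t\}$, which is well defined because the monomial preorder is total, so $P(x_1,\ldots,x_r)$ is totally ordered, and the index set is finite and nonempty; put $I=\{i\mid\qdeg\lf(c_ig_i)=\Delta\}\neq\emptyset$. Decompose each polynomial of $K[x_1,\ldots,x_r]$ as a sum of forms of pairwise distinct quasi-degrees; this decomposition is $K$-linear. For $i\notin I$, every monomial of $c_ig_i$ has quasi-degree $\leq\qdeg\lf(c_ig_i)<\Delta$, so $c_ig_i$ contributes no term of quasi-degree $\Delta$; for $i\in I$, the quasi-degree-$\Delta$ part of $c_ig_i$ is precisely its leading form $\lf(c_i)\lf(g_i)$. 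Extracting the quasi-degree-$\Delta$ component of the identity $\sum_i c_ig_i=0$ therefore gives $\sum_{i\in I}\lf(c_i)\lf(g_i)=0$. Since $I\neq\emptyset$ and each $\lf(c_i)$ with $i\in I$ is a nonzero element of $L$, this is a nontrivial $L$-linear dependence among $\lf(g_1),\ldots,\lf(g_t)$, contradicting the hypothesis.

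The only points requiring care — and the nearest thing to an obstacle — are that taking leading forms does not take us outside $L$ (clear, since the leading form of a polynomial in the variables of $S$ is again such a polynomial) and that the relation obtained above is genuinely nontrivial, i.e.\ all its coefficients are nonzero; both rely on the identity $\lf(fg)=\lf(f)\lf(g)$, and this is exactly where one needs both implications in condition (2) of the definition of a monomial preorder rather than only one of them.
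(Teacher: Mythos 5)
Your proof is correct and follows essentially the same route as the paper's: extract the component of maximal quasi-degree from the relation $\sum_i c_ig_i=0$ to obtain a nontrivial relation $\sum' \lf(c_i)\lf(g_i)=0$ with $\lf(c_i)\in L$, contradicting the independence of the leading forms. You have merely spelled out the details (well-definedness of the maximal quasi-degree, nonvanishing of $\lf(c_i)$, and the role of $\lf(fg)=\lf(f)\lf(g)$) that the paper leaves implicit.
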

\begin{proof}
Assume the contrary and suppose that
$$
\sum_i c_i g_i=0
$$
is a non-trivial relation
where $c_i\in L$ 
and $c_i\neq 0$ for any $i$ which appears in the above sum.
Then
$$
\sum\nolimits '\lf(c_i)\lf(g_i)=0,
$$
where $\sum'$ runs through $i$'s with $\qdeg\lf(c_ig_i)$ are maximal.
Since $\lf(c_i)\in L$ for any $i$, it contradicts the assumption.
\end{proof}

\begin{lemma}[{Pl\"ucker} relations, see e.g.\ {\cite[(4,4) Lemma]{Bruns-Vetter:1988}}]
\mylabel{lem:Pluecker relation}
For every $u \times n$-matrix M, $u \geq n$, with entries
in a commutative ring and all indices $a_1,\ldots,a_k,b_l,\ldots,b_n,c_1,\ldots,c_s \in 
\{1,\ldots,u\}$ such that $s = n - k + l - 1 > n$, $t = n - k > 0$ 
one has
$$
\sum_{\genfrac{}{}{0pt}{2}{i_1<\cdots<i_t}{\genfrac{}{}{0pt}{2}{i_{t+1}<\cdots<i_s}{\{1,\ldots,s\}=\{i_1,\ldots,i_s\}}}}
{\mathrm{sgn}}(i_1,\ldots,i_s)[a_1,\ldots,a_k,c_{i_1},\ldots,c_{i_t}]_M[c_{i_{t+1}},\ldots,c_{i_s},b_l,\ldots,b_n]_M=0,$$
where $\mathrm{sgn}(\sigma)$ is the signature of a permutation $\sigma$
and the notations are defined in Definition~\ref{def:unmtv}.
\end{lemma}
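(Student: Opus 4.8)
This is the classical Grassmann--Pl\"ucker (straightening) relation and can simply be quoted from \cite[(4,4) Lemma]{Bruns-Vetter:1988}; for completeness the plan is to prove it directly inside the exterior algebra of a free module. Let $A$ denote the ground ring, put $V=A^n$, and write $w_i\in V$ for the $i$-th row $M^{=i}$ of $M$. Fixing the identification $\bigwedge^n V\cong A$ that sends $e_1\wedge\cdots\wedge e_n$ to $1$, the Leibniz expansion of the determinant gives $w_{\alpha_1}\wedge\cdots\wedge w_{\alpha_n}=[\alpha_1,\ldots,\alpha_n]_M$ in $A$ for all $\alpha_1,\ldots,\alpha_n\in\{1,\ldots,u\}$ (this is where $u\geq n$ is used, so that the $n$-index bracket is defined). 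Thus each of the two $n$-minors in the statement is the image in $A$ of a decomposable $n$-vector built from the rows $w_i$.

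The key device is the comultiplication $\Delta\colon\bigwedge V\to\bigwedge V\otimes\bigwedge V$, the graded $A$-algebra homomorphism determined by $\Delta(v)=v\otimes1+1\otimes v$ for $v\in V$, where $\bigwedge V\otimes\bigwedge V$ carries the Koszul-sign (graded) multiplication. Set $x=w_{a_1}\wedge\cdots\wedge w_{a_k}$, $y=w_{b_l}\wedge\cdots\wedge w_{b_n}$ and $\theta=w_{c_1}\wedge\cdots\wedge w_{c_s}$. Since $V$ is free of rank $n$ while $s>n$, we have $\bigwedge^s V=0$, hence $\theta=0$, and therefore $(x\otimes1)\,\Delta(\theta)\,(1\otimes y)=0$.

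Next, expand $\Delta(\theta)=\prod_{j=1}^{s}\bigl(w_{c_j}\otimes1+1\otimes w_{c_j}\bigr)=\sum_{Q\subseteq\{1,\ldots,s\}}\varepsilon(Q)\,w_{c_Q}\otimes w_{c_{Q^{c}}}$, where $w_{c_Q}$ is the wedge of the $w_{c_j}$ with $j\in Q$, taken in increasing order of $j$, and $\varepsilon(Q)=\pm1$ is the accompanying shuffle sign. Since the factor $1$ has degree $0$, no extra sign appears when $x$ is multiplied into the left-hand tensor slot and $y$ into the right-hand one, so the $Q$-summand of $(x\otimes1)\,\Delta(\theta)\,(1\otimes y)$ equals $\varepsilon(Q)\,(x\wedge w_{c_Q})\otimes(w_{c_{Q^{c}}}\wedge y)$, lying in $\bigwedge^{k+|Q|}V\otimes\bigwedge^{|Q^{c}|+n-l+1}V$. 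For a summand to be nonzero one needs $k+|Q|\leq n$ and $|Q^{c}|+n-l+1\leq n$; combined with $|Q|+|Q^{c}|=s=(n-k)+(l-1)$ this forces $|Q|=t=n-k$ and $|Q^{c}|=l-1$. For such $Q$ both factors lie in $\bigwedge^n V$, and under $\bigwedge^n V\otimes\bigwedge^n V\cong A$ the summand becomes $\varepsilon(Q)\,[a_1,\ldots,a_k,c_{i_1},\ldots,c_{i_t}]_M\,[c_{i_{t+1}},\ldots,c_{i_s},b_l,\ldots,b_n]_M$ with $\{i_1<\cdots<i_t\}=Q$ and $\{i_{t+1}<\cdots<i_s\}=Q^{c}$. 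Equating the total to $0$ yields exactly the asserted identity, once $\varepsilon(Q)=\mathrm{sgn}(i_1,\ldots,i_s)$ is known.

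The one genuine computation is precisely this sign identification: checking that the sign incurred when the factors of $\prod_j(w_{c_j}\otimes1+1\otimes w_{c_j})$ are reshuffled into the order ``elements of $Q$ first, elements of $Q^{c}$ second'' equals the signature of the permutation $(i_1,\ldots,i_s)$ of $(1,\ldots,s)$ --- both being $(-1)$ to the number of inversions --- which is routine bookkeeping. Everything else is formal: the $\bigwedge^n V\cong A$ dictionary, the vanishing $\bigwedge^s V=0$ for $s>n$, and the degree count isolating $|Q|=t$; in particular no reduction to a generic matrix or extension of the base ring is needed, the argument being valid verbatim over any commutative ring. Should one wish to avoid Hopf-algebra language, the identical proof may be recast as a generalized Laplace expansion of the alternating product $x\wedge\theta\wedge y$, which vanishes simply because it has $2n>n$ factors.
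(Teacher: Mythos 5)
Your proposal is correct. Note, however, that the paper itself gives no proof of this lemma at all: it is stated purely as a citation of \cite[(4.4) Lemma]{Bruns-Vetter:1988}, so there is no in-paper argument to compare against. What you supply is the standard exterior-algebra proof (essentially the one found in the cited source): identify $\bigwedge^{n}A^{n}\cong A$ so that $n$-fold wedges of rows become the bracket minors, observe that $\theta=w_{c_1}\wedge\cdots\wedge w_{c_s}=0$ because $s>n$, expand $(x\otimes 1)\Delta(\theta)(1\otimes y)$ in the graded tensor square, and check that the bidegree constraints isolate exactly the terms with $|Q|=t$, whose shuffle signs are the signatures in the asserted sum. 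All the steps, including the degree count forcing $|Q|=n-k$ and $|Q^{c}|=l-1$ and the absence of extra Koszul signs from the degree-zero factors, are accurate, and the argument is valid over an arbitrary commutative ring as claimed.
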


An element $a$ is called a {\em non-zerodivisor} if
$ab=0$ implies $b=0$.

\begin{lemma}
\mylabel{lem:lc}
Let $A=A_0\oplus A_1\oplus\cdots$ be a graded ring, $X=(x_{ij})$ a $u\times n$ matrix
with $u>n$ and entries in $A_1$ and 
$\yyy=(y_1, \ldots, y_n)\in A_1^{1\times n}$.
Set 
$\tilde X=\begin{pmatrix}X\\\yyy\end{pmatrix}$
and $\Gamma=\Gamma(u\times n)$.
Suppose that
\begin{itemize}
\item
$\delta=[1,2,\ldots, n]_X$ is a non-zerodivisor of $A$,
\item
for any $k_1$ and $k_2$ with $1\leq k_1<k_2\leq n$,
\begin{eqnarray*}
&&\delta_X\gamma_X\quad\mbox{\rm ($\gamma\in\Gamma$),}\\
&&[1, \ldots, \overset{n+1}{k_2}, \ldots, n]_X\gamma_X\quad
\mbox{\rm ($\gamma\in\Gamma\setminus\{\delta\}$) and}\\
&&[1, \ldots, \overset{n+1}{k_1}, \ldots, n]_X\gamma_X\quad\mbox{\rm ($\gamma\in\Gamma$, 
$\supp\gamma\not\supset\{1, \ldots, \hat k_2, \ldots, n\}$)}
\end{eqnarray*}
are linearly independent over $A_0$ and
\item
$I_n(\tilde X)=I_n(X)$,
\end{itemize}
{where the notations are defined in Definition~\ref{def:unmtv}.}
Then, $\yyy$ is an $A_0$-linear combination of rows of $X$.
\end{lemma}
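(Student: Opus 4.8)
The plan is to combine Cramer's rule with the Pl\"ucker relations, using the three linear independence hypotheses to pin down exactly how $\yyy$ lies over the rows $X_1,\ldots,X_u$ of $X$. First I would record a Cramer identity. For $1\leq i\leq n$ let $\mu_i$ be the maximal minor of $\tilde X$ obtained from $\delta=[1,\ldots,n]_X$ by replacing the row $X_i$ by $\yyy$; expanding the determinant of the $(n+1)\times(n+1)$ matrix obtained by adjoining to the $(n+1)\times n$ matrix with rows $X_1,\ldots,X_n,\yyy$ a column equal to one of its own columns yields $\delta\,\yyy=\sum_{i=1}^n\pm\mu_iX_i$ in $A^{1\times n}$. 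Each $\mu_i$ is a maximal minor of $\tilde X$, hence lies in $I_n(\tilde X)=I_n(X)$, and since $\mu_i$ and every $\gamma_X$ ($\gamma\in\Gamma$) are homogeneous of the same degree $n$ (the entries of $X$ and of $\yyy$ lie in $A_1$), we may write $\mu_i=\sum_{\gamma\in\Gamma}r^{(i)}_\gamma\gamma_X$ with $r^{(i)}_\gamma\in A_0$. Substituting and collecting, $\delta\,\yyy=\sum_{\gamma\in\Gamma}\gamma_X\www_\gamma$ with $\www_\gamma:=\sum_i r^{(i)}_\gamma X_i\in A_1^{1\times n}$; isolating $\gamma=\delta$ gives $\delta(\yyy-\www_\delta)=\sum_{\gamma\neq\delta}\gamma_X\www_\gamma$. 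It therefore suffices to show that $\sum_{\gamma\neq\delta}\gamma_X\www_\gamma=\delta\,\zzz$ for some $A_0$-linear combination $\zzz$ of rows of $X$: as $\delta$ is a non-zerodivisor one then cancels $\delta$ to get $\yyy=\www_\delta+\zzz$, and $\www_\delta=\sum_i r^{(i)}_\delta X_i$, so $\yyy$ is an $A_0$-linear combination of rows of $X$.

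Next I would extract relations among the $r^{(i)}_\gamma$, one pair $1\leq k_1<k_2\leq n$ at a time. Writing $u+1$ for the row index of $\yyy$ in $\tilde X$, a three–term Pl\"ucker relation for the $(n+2)\times n$ submatrix of $\tilde X$ on the rows $1,\ldots,n,\,n+1,\,u+1$ (obtained by exchanging the index pairs $\{k_1,k_2\}$ and $\{n+1,u+1\}$; such relations hold over any commutative ring, cf.\ Lemma~\ref{lem:Pluecker relation}) gives, up to sign,
$$\delta\cdot\sigma_{k_1,k_2}=\pm\,\mu_{k_1}\,[1,\ldots,\overset{n+1}{k_2},\ldots,n]_X\ \pm\ [1,\ldots,\overset{n+1}{k_1},\ldots,n]_X\,\mu_{k_2},$$
where $\sigma_{k_1,k_2}$ is the maximal minor of $\tilde X$ on the rows $(\{1,\ldots,n\}\setminus\{k_1,k_2\})\cup\{n+1,u+1\}$; this too is a maximal minor of $\tilde X$, so $\sigma_{k_1,k_2}=\sum_\gamma s_\gamma\gamma_X$ with $s_\gamma\in A_0$. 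Substituting these three expansions and using the Pl\"ucker straightening relations to bring every product of two maximal minors of $X$ that occurs into standard form, one checks that all standard products arising lie in the $A_0$-span of the union of the three families appearing in the hypothesis for this pair, namely $\{\delta\gamma_X:\gamma\in\Gamma\}$, $\{[1,\ldots,\overset{n+1}{k_2},\ldots,n]_X\gamma_X:\gamma\in\Gamma\setminus\{\delta\}\}$ and $\{[1,\ldots,\overset{n+1}{k_1},\ldots,n]_X\gamma_X:\supp\gamma\not\supset\{1,\ldots,\hat k_2,\ldots,n\}\}$. Comparing coefficients with the assumed $A_0$-linear independence of this union forces $r^{(k_1)}_\gamma=r^{(k_2)}_\gamma=0$ for every $\gamma$ not sharing exactly $n-1$ indices with $\delta$, and pairs up the surviving coefficients for those $\gamma$ that do.

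I would then let $(k_1,k_2)$ run over all pairs; combining the relations, $\sum_{\gamma\neq\delta}\gamma_X\www_\gamma$ reorganizes into groups on each of which the Laplace identity $\sum_{k\in S}\pm[S\setminus\{k\}]_X\,X_k=0$ — valid for every $(n+1)$-element subset $S\subseteq\{1,\ldots,u\}$ — applies, each application turning a group into $\delta$ times an $A_0$-multiple of a single row $X_j$ with $j>n$. Summing, $\sum_{\gamma\neq\delta}\gamma_X\www_\gamma=\delta\,\zzz$ with $\zzz$ an explicit $A_0$-linear combination of rows of $X$ — already for $n=2$, $u=3$ this gives $\zzz=-r^{(1)}_{[2,3]}X_3$ — and the first paragraph concludes the proof.

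The substantive part, and the main obstacle, is the bookkeeping in the third paragraph: one must verify that Pl\"ucker straightening really does carry every product of two maximal minors of $X$ that appears in the pair-$(k_1,k_2)$ identity into the $A_0$-span of those three families — the slightly asymmetric cut-off $\supp\gamma\not\supset\{1,\ldots,\hat k_2,\ldots,n\}$ in the third family is precisely what makes this hold — and then that the linear relations so obtained are exactly the ones needed for the Laplace identities to collapse $\sum_{\gamma\neq\delta}\gamma_X\www_\gamma$ to a multiple of $\delta$. Pinning down the signs in the Pl\"ucker relation, and checking that the straightening order is compatible with the three families uniformly in $(k_1,k_2)$, is where the real work lies.
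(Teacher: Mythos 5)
Your plan is, in substance, the paper's own proof. Both arguments expand the maximal minors of $\tilde X$ involving the row $\yyy$ as $A_0$-combinations of the $\gamma_X$ (legitimate by $I_n(\tilde X)=I_n(X)$ plus homogeneity), both derive for each pair $(k_1,k_2)$ the quadratic identity
$\delta\cdot[1,\ldots,\overset{n+1}{k_1},\ldots,\overset{u+1}{k_2},\ldots,n]_{\tilde X}
=[1,\ldots,\overset{n+1}{k_1},\ldots,n]_X\,[1,\ldots,\overset{u+1}{k_2},\ldots,n]_{\tilde X}
-[1,\ldots,\overset{n+1}{k_2},\ldots,n]_X\,[1,\ldots,\overset{u+1}{k_1},\ldots,n]_{\tilde X}$
(the paper obtains it from the product $\tilde X\cof(X^{\leq n})$ rather than by citing Lemma~\ref{lem:Pluecker relation}, but it is exactly your three-term relation), and both finish by Laplace/cofactor identities and cancellation of the non-zerodivisor $\delta$; your Cramer-identity bookends are an equivalent repackaging of the paper's final computation with $\zzz\cof(X^{\leq n})=\zerovec$.

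The problem is that the step you defer as ``bookkeeping'' is the entire content of the lemma, and as stated your proposal neither performs it nor states its outcome correctly. What actually happens is much more targeted than a general straightening: after substituting the expansions, every product already lies in one of the three hypothesized families except the terms $[1,\ldots,\overset{n+1}{k_1},\ldots,n]_X\gamma_X$ with $\supp\gamma\cap\{1,\ldots,n\}=\{1,\ldots,\hat k_2,\ldots,n\}$, i.e.\ $\gamma=\pm[1,\ldots,\overset{l}{k_2},\ldots,n]$ with $l>n$; a single Pl\"ucker exchange rewrites each such product as $\delta\cdot(\cdots)$ plus $[1,\ldots,\overset{n+1}{k_2},\ldots,n]_X[1,\ldots,\overset{l}{k_1},\ldots,n]_X$, which lands in the first and second families. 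Only then does the linear-independence hypothesis apply, and what it yields is \emph{not} ``$r^{(k)}_\gamma=0$ for every $\gamma$ not sharing exactly $n-1$ indices with $\delta$'': it yields that $r^{(k)}_\gamma=0$ unless $\gamma=\delta$ or $\gamma=[1,\ldots,\hat k,\ldots,n,l]$ --- the missing index must be $k$ itself, not an arbitrary index --- together with the sign-adjusted identities $(-1)^{n-k_1}r^{(k_1)}_{[1,\ldots,\hat k_1,\ldots,n,l]}=(-1)^{n-k_2}r^{(k_2)}_{[1,\ldots,\hat k_2,\ldots,n,l]}$, i.e.\ a common constant $c_l$ for each $l>n$. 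Both the restriction to the missing index $k$ and the $k$-independence of $c_l$ are indispensable for your Laplace collapse in the last paragraph (otherwise $\sum_{\gamma\neq\delta}\gamma_X\www_\gamma$ is not a multiple of $\delta$), so until this computation is written out the proof is incomplete. The paper's proof of Lemma~\ref{lem:lc} supplies exactly this verification.
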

\begin{proof}
We denote $\gamma_{\tilde X}$ as $\gamma$ 
and $\displaystyle \sum_{\gamma \in\Gamma}$ as $\displaystyle\sum_\gamma$
for simplicity.
Set
$$
[1,\ldots, \overset{u+1}k, \ldots, n]=\sum_\gamma a_\gamma^{(k)}\gamma
$$
and
$$
[1, \ldots, \overset{n+1}{k_1}, \ldots, \overset{u+1}{k_2}, \ldots, n]
=\sum_\gamma a_\gamma^{(k_1,k_2)}\gamma
$$
where $a_\gamma^{(k)}$, $a_\gamma^{(k_1,k_2)}\in A$.
By considering the degree, we may assume that $a_\gamma^{(k)}$, $a_\gamma^{(k_1,k_2)}\in A_0$.

\begin{eqnarray*}
&&[1,\ldots, \overset{n+1}{k_1}, \ldots, \overset{u+1}{k_2}, \ldots, n]_{\tilde X\cof(X^{\leq n})}
\\
&=&\det(\cof(X^{\leq n}))[1, \ldots, \overset{n+1}{k_1}, \ldots, \overset {u+1}{k_2}, \ldots, n]
\\
&=&\delta^{n-1}\sum_\gamma a_\gamma^{(k_1,k_2)}\gamma,
\end{eqnarray*}
{where $\cof(X^{\leq n})$ denotes the matrix of cofactors of $X^{\leq n}$.} 
On the other hand, since
$$
\tilde X\cof(X^{\leq n})=
\begin{pmatrix}
\delta\\
&\delta\\
&&\ddots\\
&&&\delta\\
[\overset{n+1}1, 2\ldots, n]&[1, \overset{n+1}2, \ldots, n]&\cdots&[1,2,\ldots,\overset{n+1}n]\\
[\overset{n+2}1, 2\ldots, n]&[1, \overset{n+2}2, \ldots, n]&\cdots&[1,2,\ldots,\overset{n+2}n]\\
\vdots&\vdots&\vdots&\vdots\\
[\overset{u+1}1, 2\ldots, n]&[1, \overset{u+1}2, \ldots, n]&\cdots&[1,2,\ldots,\overset{u+1}n]
\end{pmatrix},
$$
we see that
\begin{eqnarray*}
&&[1,\ldots, \overset{n+1}{k_1}, \ldots, \overset{u+1}{k_2}, \ldots, n]_{\tilde X\cof(X^{\leq n})}
\\
&=&\delta^{n-2}
\det
\begin{pmatrix}
[1,\ldots, \overset{n+1}{k_1},\ldots, n]&[1,\ldots, \overset{n+1}{k_2},\ldots, n]\\
[1,\ldots, \overset{u+1}{k_1},\ldots, n]&[1,\ldots, \overset{u+1}{k_2},\ldots, n]
\end{pmatrix}.
\end{eqnarray*}

Since $\delta$ is a non-zerodivisor, we see that 
\begin{eqnarray*}
&&
\sum_\gamma a_\gamma^{(k_1,k_2)}\delta\gamma\\
&=&
[1, \ldots,\overset{n+1}{k_1}, \ldots, n][1,\ldots, \overset{u+1}{k_2}, \ldots, n]
-[1, \ldots,\overset{n+1}{k_2}, \ldots, n][1,\ldots, \overset{u+1}{k_1}, \ldots, n]
\\
&=&\sum_\gamma a_\gamma^{(k_2)}[1,\ldots, \overset{n+1}{k_1},\ldots,n]\gamma
-\sum_\gamma a_\gamma^{(k_1)}[1,\ldots, \overset{n+1}{k_2},\ldots,n]\gamma
\\
&=&
-a_\delta^{(k_1)}\delta[1,\ldots,\overset{n+1}{k_2},\ldots,n]
-\sum_{\gamma\in\Gamma\setminus\{\delta\}} 
a_\gamma^{(k_1)}[1,\ldots, \overset{n+1}{k_2},\ldots,n]\gamma
\\
&&
+a_\delta^{(k_2)}\delta[1,\ldots,\overset{n+1}{k_1},\ldots,n]
+\sum_{\supp\gamma\cap\{1,\ldots,n\}=\{1,\ldots,\hat k_2,\ldots, n\}}
 a_\gamma^{(k_2)}[1,\ldots, \overset{n+1}{k_1},\ldots,n]\gamma
\\
&&+\sum_{\supp\gamma\not\supset\{1,\ldots,\hat k_2, \ldots,n\}}
 a_\gamma^{(k_2)}[1,\ldots, \overset{n+1}{k_1},\ldots,n]\gamma
\end{eqnarray*}

Suppose that $\supp \gamma\cap\{1, \ldots, n\}=\{1,\ldots,\hat k_2, \ldots, n\}$.
Then $\gamma=[1, \ldots, \hat k_2, \ldots, n,l]$ for some $l$ with
$n+1\leq l \leq u$.
Thus $\gamma=(-1)^{n-k_2}[1,\ldots,\overset l {k_2}, \ldots, n]$.
By applying Lemma~\ref{lem:Pluecker relation}
to
$$
[1,\ldots,\overset{l}{k_2},\ldots,n][1,\ldots,\overset{n+1}{k_1},\ldots,n],
$$
by substituting $u$, $n$, $k$, $s$, $l$ 
by $u$, $n$, $k_1-1$, $n+1$, $k_1+1$ respectively,
we see that 
\begin{eqnarray*}
&&[1,\ldots,\overset{l}{k_2},\ldots,n][1, \ldots,\overset{n+1}{k_1},\ldots,n]\\
&=&\delta[1,\ldots,\overset{n+1}{k_1}\ldots,\overset{l}{k_2},\ldots,n]
+[1,\ldots,\overset{n+1}{k_2},\ldots,n][1,\ldots,\overset{l}{k_1},\ldots,n]\\
&=&\delta[1,\ldots,\overset{n+1}{k_1}\ldots,\overset{l}{k_2},\ldots,n]
+(-1)^{n-k_1}[1,\ldots,\overset{n+1}{k_2},\ldots,n][1,\ldots,\hat k_1,\ldots,n,l]
\end{eqnarray*}
Therefore, if we set
$$
b_\gamma^{(k_1)}=
\left\{
\begin{array}{ll}
a_\gamma^{(k_1)}
-(-1)^{k_2-k_1}a_{[1,\ldots,\hat k_2,\ldots,n,l]}^{(k_2)}
\quad&\mbox{if $\gamma=[1,\ldots,\hat k_1\ldots,n,l]$}\\
a_\gamma^{(k_1)}\quad&\mbox{otherwise}
\end{array}
\right.
$$
for $\gamma\in\Gamma\setminus\{\delta\}$,
we see that there are $b_\gamma\in A_0$ such that
\begin{eqnarray*}
&&\sum_\gamma b_\gamma \delta\gamma
-\sum_{\gamma\in\Gamma\setminus\{\delta\}}b_\gamma^{(k_1)}[1,\ldots,\overset{n+1}{k_2},\ldots,n]\gamma\\
&&\quad
+\sum_{\supp\gamma\not\supset\{1,\ldots,\hat k_2,\ldots,n\}}a_\gamma^{(k_2)}[1,\ldots,\overset{n+1}{k_1},\ldots,n]\gamma
=0.
\end{eqnarray*}

Thus we see, by the assumption, that
$$
b_\gamma^{(k_1)}=0\quad\mbox{if $\gamma\in\Gamma\setminus\{\delta\}$}
$$
and
$$
a_\gamma^{(k_2)}=0\quad\mbox{if $\supp\gamma\not\supset\{1,\ldots,\hat k_2, \ldots, n\}$}.
$$
Therefore, by the definition of $b_\gamma^{(k_1)}$, we see that 
$$
(-1)^{n-k_1}a_{[1,\ldots,\hat k_1,\ldots, n,l]}^{(k_1)}=
(-1)^{n-k_2}a_{[1,\ldots,\hat k_2,\ldots, n,l]}^{(k_2)}
$$
and
$$
a_\gamma^{(k_1)}=0\quad\mbox{if $\supp\gamma\not\supset\{1,\ldots,\hat k_1, \ldots, n\}$}.
$$

Since these hold for any $k_1$, $k_2$ with $1\leq k_1<k_2\leq n$, we see that if we set
$$
c_l=a_{[1,\ldots, n-1,l]}^{(n)}
$$
for $l$ with $n+1\leq l\leq u$,
\begin{eqnarray*}
[1,\ldots,\overset{u+1} k, \ldots, n]&=&
a_\delta^{(k)}{\delta}+(-1)^{n-k}\sum_{l=n+1}^u c_l[1,\ldots,\hat k,\ldots,n,l]
\\
&=&a_\delta^{(k)}\delta+\sum_{l=n+1}^u c_l[1, \ldots, \overset l k,\ldots, n]
\end{eqnarray*}
for any $k$ with $1\leq k\leq n$.

Set 
$$
\zzz=\yyy-\sum_{s=1}^n a_\delta^{(s)}X^{=s}-\sum_{l=n+1}^u c_l X^{=l}
$$
and
$$
Z=\begin{pmatrix}X\\\zzz\end{pmatrix}.
$$
Then
\begin{eqnarray*}
&&[1,\ldots,\overset{u+1}k, \ldots,n]_Z\\
&=&[1, \ldots, \overset{u+1}k, \ldots, n]_{\tilde X}
-\sum_{s=1}^n a_\delta^{(s)}[1,\ldots \overset sk, \ldots n]_X
-\sum_{l=n+1}^u c_l[1,\ldots \overset lk, \ldots n]_X
\\
&=&0
\end{eqnarray*}
for any $k$ with $1\leq k\leq n$.
Thus $\zzz\cof(X^{\leq n})=\zerovec$ and we see that $\zzz=\zerovec$ since
$\det\cof(X^{\leq n})=\delta^{n-1}$ is a non-zerodivisor.
\end{proof}

\begin{lemma}
\mylabel{lem:li}
Let $L$ be a field, $n$, $m$ integers with $n\geq m\geq 3$,
$x_1$, \ldots, $x_m$ indeterminates and $\alpha$, $\beta\in L$ with $0\neq \alpha\neq\beta\neq 0$.
Then the following polynomials are linearly independent over $L$.
\begin{eqnarray*}
&&x_1^{2n}\\
&&x_1^{2n-1}(x_2-\beta x_m)\\
&&x_1^{2n-1}x_s\quad\mbox{\rm ($3\leq s\leq m-1$)}\\
&&x_1^n(x_2-\alpha x_m)(x_2-\beta x_m)x_{b_1}\cdots x_{b_{n-2}}\quad
\mbox{\rm ($1\leq b_1\leq \cdots\leq b_{n-2}\leq 2$)}\\
&&x_1^n(x_2-\alpha x_m)x_{b_1}\cdots x_{b_{n-2}}x_s\quad
\mbox{\rm ($1\leq b_1\leq \cdots\leq b_{n-2}\leq 2$, $3\leq s \leq m-1$)}\\
&&x_1^nx_{b_1}\cdots x_{b_{n}}\quad
\mbox{\rm ($1\leq b_1\leq \cdots\leq b_{n}\leq m-1$, $b_{n-1}\geq 3$)}\\
&&x_1^{2n-2}(x_2-\beta x_m)^2\\
&&x_1^{2n-2}(x_2-\beta x_m)x_s\quad\mbox{\rm ($3\leq s\leq m-1$)}\\
&&x_1^{n-1}(x_2-\alpha x_m)(x_2-\beta x_m)^2x_{b_1}\cdots x_{b_{n-2}}\quad
\mbox{\rm ($1\leq b_1\leq \cdots\leq b_{n-2}\leq 2$)}\\
&&x_1^{n-1}(x_2-\alpha x_m)(x_2-\beta x_m)x_{b_1}\cdots x_{b_{n-2}}x_s\quad
\mbox{\rm ($1\leq b_1\leq \cdots\leq b_{n-2}\leq 2$, $3\leq s\leq m-1$)}\\
&&x_1^{n-1}(x_2-\beta x_m)x_{b_1}\cdots x_{b_{n}}\quad
\mbox{\rm ($1\leq b_1\leq \cdots\leq b_{n}\leq m-1$, $b_{n-1}\geq 3$)}\\
&&x_1^{n-2}(x_2-\alpha x_m)^2(x_2-\beta x_m)^2x_{b_1}\cdots x_{b_{n-2}}\quad
\mbox{\rm ($1\leq b_1\leq \cdots\leq b_{n-2}\leq 2$)}\\
&&x_1^{n-2}(x_2-\alpha x_m)^2(x_2-\beta x_m)x_{b_1}\cdots x_{b_{n-2}}x_s\quad
\mbox{\rm ($1\leq b_1\leq \cdots\leq b_{n-2}\leq 2$, $3\leq s\leq m-1$)}\\
&&x_1^{n-2}(x_2-\alpha x_m)(x_2-\beta x_m)x_{b_1}\cdots x_{b_{n}}\quad
\mbox{\rm ($1\leq b_1\leq \cdots\leq b_{n}\leq m-1$, $b_{n-1}\geq 3$)}
\end{eqnarray*}
\end{lemma}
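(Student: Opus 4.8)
The plan is to apply Lemma~\ref{lem:lib}: I fix a monomial preorder on $x_1,\dots,x_m$, pass from the given list to another list spanning the same $L$-subspace, and check that the leading forms of the new list are linearly independent over $L$. With the right preorder all but two of these leading forms turn out to be scalar multiples of pairwise distinct monomials, and the two exceptions get removed by one elementary linear substitution inside the list.

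I would use the lexicographic monomial preorder (Definition~\ref{def:monom preorder}) induced by
$$x_m\succ x_2\succ x_1\succ x_3\sim x_4\sim\cdots\sim x_{m-1}$$
on the variables, i.e.\ quasi-degree compares the exponent of $x_m$, then the exponent of $x_2$, then the exponent of $x_1$, then the total degree in $x_3,\dots,x_{m-1}$. Since $\lf$ is multiplicative and $\alpha,\beta\ne 0$ force $\lf(x_2-\alpha x_m)=-\alpha x_m$ and $\lf(x_2-\beta x_m)=-\beta x_m$, the leading form of any polynomial on the list is a nonzero scalar times $x_m^{p+q}$ times the monomial obtained from its remaining, plain-variable factors, where $p$ (resp.\ $q$) is the number of factors $x_2-\alpha x_m$ (resp.\ $x_2-\beta x_m$) in it. The essential point is that the $x_2$'s occurring inside the tail products $x_{b_1}\cdots x_{b_{n-2}}$ and $x_{b_1}\cdots x_{b_n}$ stay $x_2$'s in the leading form, so the $x_m$-degree of a leading form equals $p+q$; this is what separates the three families built on $(x_2-\alpha x_m)(x_2-\beta x_m)$, on $(x_2-\alpha x_m)(x_2-\beta x_m)^2$, and on $(x_2-\alpha x_m)^2(x_2-\beta x_m)^2$, whereas for a preorder with $x_2\succ x_m$ both $\lf(x_2-\alpha x_m)$ and $\lf(x_2-\beta x_m)$ become $x_2$ and these three families produce overlapping leading monomials.

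A direct comparison of the resulting monomials, organized by the $x_m$-degree of the leading form and within each value by the exponents of $x_1$ and $x_2$ and by which (and how many) of $x_3,\dots,x_{m-1}$ occur, shows that they are pairwise non-proportional with exactly two exceptions: $\lf\bigl(x_1^{2n-2}(x_2-\alpha x_m)(x_2-\beta x_m)\bigr)$ and $\lf\bigl(x_1^{2n-2}(x_2-\beta x_m)^2\bigr)$ are both proportional to $x_1^{2n-2}x_m^2$, and $\lf\bigl(x_1^{2n-2}(x_2-\alpha x_m)x_s\bigr)$ and $\lf\bigl(x_1^{2n-2}(x_2-\beta x_m)x_s\bigr)$ are both proportional to $x_1^{2n-2}x_mx_s$, for each $s$ with $3\le s\le m-1$. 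To remove them I would replace the polynomial $x_1^{2n-2}(x_2-\beta x_m)^2$ on the list by
$$x_1^{2n-2}(x_2-\alpha x_m)(x_2-\beta x_m)-\frac{\alpha}{\beta}x_1^{2n-2}(x_2-\beta x_m)^2=\Bigl(1-\frac{\alpha}{\beta}\Bigr)x_1^{2n-2}x_2(x_2-\beta x_m),$$
and replace $x_1^{2n-2}(x_2-\beta x_m)x_s$ by $\bigl(1-\tfrac{\alpha}{\beta}\bigr)x_1^{2n-2}x_2x_s$; here $\alpha\ne\beta$ is used, and since $x_1^{2n-2}(x_2-\alpha x_m)(x_2-\beta x_m)$ and $x_1^{2n-2}(x_2-\alpha x_m)x_s$ are kept, the new list spans the same $L$-subspace. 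The two replacement polynomials have leading forms proportional to $x_1^{2n-2}x_2x_m$ (of $x_m$-degree $1$) and $x_1^{2n-2}x_2x_s$ (of $x_m$-degree $0$), which by the same bookkeeping differ from all other leading forms. Hence all leading forms of the new list are pairwise distinct monomials up to scalars, so they are linearly independent over $L$, and Lemma~\ref{lem:lib} yields that the new list, and therefore the original one, is linearly independent over $L$.

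The main obstacle is the bookkeeping in the third paragraph: the list splits into about fifteen families, several carrying one or two integer parameters, and one must verify that no two associated leading monomials coincide across families. The point where this really fails — and the reason the linear substitutions cannot be avoided — is the pair $(x_2-\alpha x_m)(x_2-\beta x_m)$ and $(x_2-\beta x_m)^2$ (and their products with $x_s$), which no monomial preorder distinguishes, so this collision has to be broken by hand before Lemma~\ref{lem:lib} can be invoked.
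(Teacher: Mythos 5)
Your proof is correct, but it takes a genuinely different route from the paper's. The paper's own argument introduces the grading $\deg x_1=\deg x_2=\deg x_m=0$, $\deg x_3=\cdots=\deg x_{m-1}=1$, so that the list splits into three homogeneous blocks according to how many factors $x_s$ with $3\leq s\leq m-1$ occur, and then handles each block by substituting first $\beta^{-1}x_2$ and then $\alpha^{-1}x_2$ for $x_m$ to strip off the factors $x_2-\beta x_m$ and $x_2-\alpha x_m$; the degree-$0$ and degree-$1$ blocks are dismissed with ``a similar but more subtle argument.'' You instead stay inside the monomial-preorder framework of Section~\ref{sec:mpo}: with the preorder $x_m\succ x_2\succ x_1\succ x_3\sim\cdots\sim x_{m-1}$ every polynomial on the list has a monomial leading form, and Lemma~\ref{lem:lib} reduces the claim to distinguishing these monomials. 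Your bookkeeping by the invariants (exponent of $x_m$, exponent of $x_2$, exponent of $x_1$, multiset of indices $\geq 3$) does check out, and the only collisions are exactly the two you name, namely $x_1^{2n-2}(x_2-\alpha x_m)(x_2-\beta x_m)$ against $x_1^{2n-2}(x_2-\beta x_m)^2$ and $x_1^{2n-2}(x_2-\alpha x_m)x_s$ against $x_1^{2n-2}(x_2-\beta x_m)x_s$; your basis change (valid precisely because $\alpha\neq\beta$) removes them without creating new ones, since the replacement leading monomials $x_1^{2n-2}x_2x_m$ and $x_1^{2n-2}x_2x_s$ carry a positive $x_2$-exponent that nothing else with the same $x_m$-degree and the same number of factors from $\{x_3,\ldots,x_{m-1}\}$ possesses. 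The paper's grading buys a very short treatment of the block with at least two such factors; your argument buys uniformity, handling all the families with one preorder and making explicit exactly where a pure leading-monomial argument breaks down --- the point the paper's ``more subtle argument'' leaves implicit.
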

\begin{proof}
Set $\deg x_1=\deg x_2=\deg x_m=0$ and
$\deg x_3=\cdots=\deg x_{m-1}=1$.
Then the polynomials under consideration are homogeneous.
Thus it is enough to show that for each integer $d$, the polynomials of degree $d$
in the above list are linearly independent over $L$.

First consider the polynomials with degree more than 1.
They are
\begin{eqnarray*}
&&x_1^nx_{b_1}\cdots x_{b_{n}}\quad
\mbox{($1\leq b_1\leq \cdots\leq b_{n}\leq m-1$, $b_{n-1}\geq 3$)}\\
&&x_1^{n-1}(x_2-\beta x_m)x_{b_1}\cdots x_{b_{n}}\quad
\mbox{($1\leq b_1\leq \cdots\leq b_{n}\leq m-1$, $b_{n-1}\geq 3$)}\\
&&x_1^{n-2}(x_2-\alpha x_m)(x_2-\beta x_m)x_{b_1}\cdots x_{b_{n}}\quad
\mbox{($1\leq b_1\leq \cdots\leq b_{n}\leq m-1$, $b_{n-1}\geq 3$)}.
\end{eqnarray*}
By first substituting $\beta^{-1}x_2$ to $x_m$ and next by substituting $\alpha^{-1}x_2$
to $x_m$ one sees that these polynomials are linearly independent.

Next consider the polynomials with degree 1.
They are
\begin{eqnarray*}
&&x_1^{2n-1}x_s\quad\mbox{($3\leq s\leq m-1$)}\\
&&x_1^n(x_2-\alpha x_m)x_{b_1}\cdots x_{b_{n-2}}x_s\quad
\mbox{($1\leq b_1\leq \cdots\leq b_{n-2}\leq 2$, $3\leq s \leq m-1$)}\\
&&x_1^{2n-2}(x_2-\beta x_m)x_s\quad\mbox{($3\leq s\leq m-1$)}\\
&&x_1^{n-1}(x_2-\alpha x_m)(x_2-\beta x_m)x_{b_1}\cdots x_{b_{n-2}}x_s\quad
\\
&&\hskip20mm
\mbox{($1\leq b_1\leq \cdots\leq b_{n-2}\leq 2$, $3\leq s\leq m-1$)}\\
&&x_1^{n-2}(x_2-\alpha x_m)^2(x_2-\beta x_m)x_{b_1}\cdots x_{b_{n-2}}x_s\quad
\\
&&\hskip20mm
\mbox{($1\leq b_1\leq \cdots\leq b_{n-2}\leq 2$, $3\leq s\leq m-1$)}.
\end{eqnarray*}
By a similar but more subtle argument as above, 
one sees that these polynomials are linearly independent.

Finally consider the polynomials with degree 0.
They are
\begin{eqnarray*}
&&x_1^{2n}\\
&&x_1^{2n-1}(x_2-\beta x_m)\\
&&x_1^n(x_2-\alpha x_m)(x_2-\beta x_m)x_{b_1}\cdots x_{b_{n-2}}\quad
\mbox{($1\leq b_1\leq \cdots\leq b_{n-2}\leq 2$)}\\
&&x_1^{2n-2}(x_2-\beta x_m)^2\\
&&x_1^{n-1}(x_2-\alpha x_m)(x_2-\beta x_m)^2x_{b_1}\cdots x_{b_{n-2}}\quad
\mbox{($1\leq b_1\leq \cdots\leq b_{n-2}\leq 2$)}\\
&&x_1^{n-2}(x_2-\alpha x_m)^2(x_2-\beta x_m)^2x_{b_1}\cdots x_{b_{n-2}}\quad
\mbox{($1\leq b_1\leq \cdots\leq b_{n-2}\leq 2$)}\\
\end{eqnarray*}
By a similar but more subtle argument,
one sees that these polynomials are linearly independent.
\end{proof}

\begin{lemma}
\mylabel{lem:lct}
Let $K$ be a field, $T=(T_1;\cdots;T_m)=(t_{ijk})$ a $u\times n\times m$-tensor of
indeterminates with $u>n\geq m\geq u-n+2$, $\xxx=(x_1$, \ldots, $x_m)$
a vector of indeterminates.
Set
$
X=M(\xxx,T)
$ (c.f.\ Definition \ref{def:unmtv}),
$\Gamma=\Gamma(u\times n)$,
$\delta=\delta_0=[1,\ldots,n]$ and
$A=K[T][\xxx]$.
Then $\delta_X$ is a non-zerodivisor of $A$ and for any $k_1$, $k_2$ with
$1\leq k_1, k_2\leq n$, $k_1\neq k_2$, 
\begin{eqnarray*}
&&\delta_X\gamma_X\quad\mbox{($\gamma\in\Gamma$)}\\
&&[1,\ldots,\overset{n+1} {k_2}, \ldots, n]_X\gamma_X\quad\mbox{\rm ($\gamma\in\Gamma\setminus\{\delta\}$)}\\
&&[1,\ldots,\overset{n+1} {k_1}, \ldots, n]_X\gamma_X\quad\mbox{\rm ($\gamma\in\Gamma$,
$\supp\gamma\not\supset\{1,\ldots,\hat k_2, \ldots,n\}$)}
\end{eqnarray*}
are linearly independent over $K[T]$.
\end{lemma}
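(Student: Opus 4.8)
The plan is to dispatch the first assertion at once and to reduce the second, via leading forms, to Lemma~\ref{lem:li}. For the first assertion: $A=K[T][\xxx]$ is a polynomial ring over the field $K$, hence an integral domain, and $\delta_X=\det\bigl((\sum_{k=1}^m x_kt_{ijk})_{1\le i,j\le n}\bigr)$ is nonzero, since, written as a polynomial in $\xxx$ over $K[T]$, its coefficient of $x_1^n$ is $\det\bigl((t_{ij1})_{1\le i,j\le n}\bigr)\neq 0$. Thus $\delta_X$ is a non-zerodivisor of $A$.

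For the linear independence I would fix $k_1\neq k_2$ and equip $A=K[\{t_{ijk}\},\{x_l\}]$ with the lexicographic monomial preorder associated to a total preorder of the variables in which $x_1\succ x_2\sim x_m\succ x_3\sim\cdots\sim x_{m-1}$ lie strictly above all the $t$-indeterminates, the $t$-indeterminates being ordered --- in a way depending on $k_1$ and $k_2$ --- so that the entries $X_{ij}=\sum_k t_{ijk}x_k$ acquire the leading forms of a prescribed staircase: the top-left $n\times n$ block should look, in leading forms, like an upper-triangular matrix with $x_1$ on the diagonal, while the interaction of the rows $n+1,\ldots,u$ with columns $2$ and $m$ should manufacture the binomials $x_2-\alpha x_m$ and $x_2-\beta x_m$, where $\alpha,\beta$ are appropriate ratios of $t$-indeterminates, so that $0\neq\alpha\neq\beta\neq 0$ in $\LLL:=K(T)$. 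Taking $S$ to be the set of $t$-indeterminates (so $L=K[S]=K[T]$) in Lemma~\ref{lem:lib} and using $\lf(fg)=\lf(f)\lf(g)$, it then suffices to prove that the leading forms of the products in the statement --- which are the products of $\lf(\delta_X)$, $\lf([1,\ldots,\overset{n+1}{k_1},\ldots,n]_X)$, $\lf([1,\ldots,\overset{n+1}{k_2},\ldots,n]_X)$ with the $\lf(\gamma_X)$ that actually occur --- are linearly independent over $K[T]$.

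The heart of the argument is the computation of these leading forms. One should check that $\lf(\delta_X)$ is a unit of $K[T]$ times $x_1^n$; that $\lf([1,\ldots,\overset{n+1}{k_2},\ldots,n]_X)$ and $\lf([1,\ldots,\overset{n+1}{k_1},\ldots,n]_X)$ are units times $x_1^{n-1}(x_2-\beta x_m)$ and $x_1^{n-2}(x_2-\alpha x_m)(x_2-\beta x_m)$; and that, as $\gamma$ ranges over $\Gamma(u\times n)$, the $\lf(\gamma_X)$ are, up to units, pairwise distinct members of the set of degree-$n$ ``atoms''
$$
x_1^n,\quad x_1^{n-1}(x_2-\beta x_m),\quad x_1^{n-1}x_s,\quad (x_2-\alpha x_m)(x_2-\beta x_m)x_{b_1}\cdots x_{b_{n-2}},\quad (x_2-\alpha x_m)x_{b_1}\cdots x_{b_{n-2}}x_s,\quad x_{b_1}\cdots x_{b_n}
$$
(with the index ranges of Lemma~\ref{lem:li}); the inequality $u\le m+n-2$ enters here, to guarantee that $\binom un$ does not exceed the number of atoms, so that such a distinct assignment is possible. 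Multiplying out, the leading forms of the three families of products in the statement become, up to units in $K[T]$, exactly the three blocks of polynomials of Lemma~\ref{lem:li}, each block being $x_1^n$ (resp. $x_1^{n-1}(x_2-\beta x_m)$, resp. $x_1^{n-2}(x_2-\alpha x_m)(x_2-\beta x_m)$) times an atom; the restrictions $\gamma\neq\delta$ in the second family and $\supp\gamma\not\supset\{1,\ldots,\hat k_2,\ldots,n\}$ in the third are precisely what makes no polynomial of Lemma~\ref{lem:li} occur twice and distinct products carry distinct parts in $\xxx$. Since the list of Lemma~\ref{lem:li} is linearly independent over $\LLL$, hence over $K[T]$, the leading forms are linearly independent over $K[T]$, and Lemma~\ref{lem:lib} finishes the proof.

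The main obstacle I anticipate is the explicit construction of the preorder together with the verification that \emph{every} relevant minor of $X$ has the predicted leading form, and the accompanying combinatorial bookkeeping that the assignment from the $\gamma$'s (and from the two modified minors) to atoms behaves as claimed and that the three families cover Lemma~\ref{lem:li} without repetition; it is at this counting step that I expect the hypotheses $u>n\ge m\ge u-n+2$ to be used. Pl\"ucker relations (Lemma~\ref{lem:Pluecker relation}) are the natural tool for the minor identities that surface in this verification, exactly as in the proof of Lemma~\ref{lem:lc}.
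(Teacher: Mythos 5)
Your overall architecture is exactly the paper's: establish the non-zerodivisor claim from integrality of $A$, then pick a monomial preorder, compute the leading forms of the products, match them (up to units of $K[T]$) with the list in Lemma~\ref{lem:li}, and conclude by Lemma~\ref{lem:lib}. The first assertion and the identification of the target leading forms are correct, and you correctly locate where the hypothesis $m\geq u-n+2$ enters (it forces $a_n-n+1\leq u-n+1\leq m-1$, so the ``atoms'' $x_{a_1}x_{a_2-1}\cdots x_{a_n-n+1}$ fall in the index ranges of Lemma~\ref{lem:li}).

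However, the one step you defer --- the construction of the preorder --- is sketched in a way that cannot work, and it is the heart of the proof. You place $x_1\succ x_2\sim x_m\succ x_3\sim\cdots\sim x_{m-1}$ \emph{strictly above} all the $t_{ijk}$ and hope that ordering the $t$'s will then shape the leading forms of the entries. But with the $x$'s on top and $x_1$ strictly largest, the leading form of every entry $X_{ij}=\sum_k t_{ijk}x_k$ is forced to be $t_{ij1}x_1$ (the coefficient $t_{ij1}$ is a nonzero indeterminate, so it cannot be killed), no ordering of the $t$'s can change which $x_k$ survives, and consequently $\lf(\gamma_X)=[\,\cdots]_{T_1}\,x_1^n$ is a $K[T]$-multiple of $x_1^n$ for \emph{every} $\gamma\in\Gamma$. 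All the leading forms in your three families then lie in a single $K[T]$-line each, so they are linearly dependent over $L=K[T]$ and Lemma~\ref{lem:lib} yields nothing. The paper does the opposite: every $t_{ijk}$ is placed strictly above all the $x_l$, and the $x_l$ are made mutually equivalent ($x_1\sim\cdots\sim x_m$). The staircase is then encoded entirely in the ordering of $t_{ij1},\ldots,t_{ijm}$ within each entry (roughly $t_{i,j,i-j+1}$ largest along the band $0\leq i-j\leq u-n$), and the binomials $\alpha_1x_2+\alpha_2x_m$ and $\beta_1x_2+\beta_2x_m$ are manufactured by deliberately tying $t_{i,j,2}\sim t_{i,j,m}$ at the two exceptional positions $(n,n-1)$ and $(n+1,n)$ (after reducing by symmetry to $k_1=n-1$, $k_2=n$). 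Two smaller points: no Pl\"ucker relations are needed in this verification (they belong to the proof of Lemma~\ref{lem:lc}), and the issue with the hypothesis is not that $\binom{u}{n}$ must not exceed the number of atoms but that the subscripts of the atoms must stay $\leq m-1$.
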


\begin{proof}
The first assertion is clear since $A$ is an integral domain and $\delta_X\neq 0$.
Next we prove the second assertion.
By symmetry, we may assume that $k_1=n-1$ and $k_2=n$.
Set $\delta_1=[1,2,\ldots, n-1,n+1]$ and $\delta_2=[1,2,\ldots,n-2,n,n+1]$.
We introduce the lexicographic monomial preorder induced by the preorder on the indeterminates 
defined as follows.

%
%
If one of the following is satisfied, we define $t_{ijk}\succ t_{i'j'k'}$.
\begin{itemize}
\item
$i<i'$.
\item
$i=i'$ and $j<j'$.
\item
$i=i'$, $j=j'$, $k<k'$ and ``$i<j$ or $i>j+u-n$''.
\end{itemize}
In case $0\leq i-j\leq u-n$ and $(i,j)\neq (n,n-1), (n+1,n)$, we define
$$
t_{i,j,i-j+1}\succ t_{i,j,i-j+2}\succ \cdots\succ t_{i,j,m}\succ t_{i,j,1}\succ\cdots\succ t_{i,j,i-j}.
$$
In case $(i,j)=(n,n-1)$ or $(n+1,n)$, we define
$$
t_{i,j,2}\sim t_{i,j,m}\succ t_{i,j,1}\succ t_{i,j,3}\succ\cdots\succ t_{i,j,m-1}.
$$
And
$$
t_{ijk}\succ x_1\sim x_2\sim\cdots \sim x_m
$$
for any $i$, $j$ and $k$.

Set
\begin{eqnarray*}
&&\Gamma_0=\{[a_1,\ldots,a_n]\in \Gamma\mid a_{n-1}=n-1\},\\
&&\Gamma_1=\{[a_1,\ldots,a_n]\in \Gamma\mid a_{n-1}=n\},\\
&&\Gamma_2=\{[a_1,\ldots,a_n]\in \Gamma\mid a_{n-1}\geq n+1\},\\
&&\Gamma_{00}=\{[a_1,\ldots,a_n]\in \Gamma_0\mid a_{n}=n\}=\{\delta_0\},\\
&&\Gamma_{01}=\{[a_1,\ldots,a_n]\in \Gamma_0\mid a_{n}=n+1\}=\{\delta_1\},\\
&&\Gamma_{02}=\{[a_1,\ldots,a_n]\in \Gamma_0\mid a_{n}\geq n+2\},\\
&&\Gamma_{11}=\{[a_1,\ldots,a_n]\in \Gamma_1\mid a_{n}=n+1\}=\{\delta_2\},\\
&&\Gamma_{12}=\{[a_1,\ldots,a_n]\in \Gamma_1\mid a_{n}\geq n+2\}.
\end{eqnarray*}
Then
\begin{eqnarray*}
&&\Gamma=\Gamma_0\sqcup \Gamma_1\sqcup\Gamma_2\\
&&\Gamma_0=\Gamma_{00}\sqcup\Gamma_{01}\sqcup\Gamma_{02}\\
&&\Gamma_1=\Gamma_{11}\sqcup\Gamma_{12}.
\end{eqnarray*}
Set
$\alpha_1=t_{n,n-1,2}$, $\alpha_2=t_{n,n-1,m}$,
$\beta_1=t_{n+1,n,2}$ and $\beta_2=t_{n+1,n,m}$.
Then for $\gamma=[a_1,\ldots, a_n]\in\Gamma$, $\lf(\gamma_X)$ is,
up to multiplication of nonzero element of $K[T]$, as follows.
\begin{eqnarray*}
&&x_1^n\quad\mbox{if $\gamma\in\Gamma_{00}$}\\
&&x_1^{n-1}(\beta_1x_2+\beta_2x_m)\quad\mbox{if $\gamma\in\Gamma_{01}$}\\
&&x_1^{n-1}x_{a_n-n+1}\quad\mbox{if $\gamma\in\Gamma_{02}$}\\
&&x_{a_1}x_{a_2-1}\cdots x_{a_{n-2}-n+3}(\alpha_1x_2+\alpha_2 x_m)(\beta_1 {x_2+}\beta_2x_m)
\quad\mbox{if $\gamma\in\Gamma_{11}$}\\
&&x_{a_1}x_{a_2-1}\cdots x_{a_{n-2}-n+3}(\alpha_1x_2+\alpha_2 x_m)x_{a_n-n+1}
\quad\mbox{if $\gamma\in\Gamma_{12}$}\\
&&x_{a_1}x_{a_2-1}\cdots x_{a_{n}-n+1}
\quad\mbox{if $\gamma\in\Gamma_{2}$}.
\end{eqnarray*}
Therefore, for $\gamma=[a_1, \ldots, a_n]\in\Gamma$, $\lf(\delta_X\gamma_X)$ is,
up to multiplication of nonzero element of $K[T]$,
\begin{eqnarray*}
&&x_1^{2n}\quad\mbox{if $\gamma\in\Gamma_{00}$}\\
&&x_1^{2n-1}(\beta_1x_2+\beta_2x_m)\quad\mbox{if $\gamma\in\Gamma_{01}$}\\
&&x_1^{2n-1}x_{a_n-n+1}\quad\mbox{if $\gamma\in\Gamma_{02}$}\\
&&x_1^nx_{a_1}x_{a_2-1}\cdots x_{a_{n-2}-n+3}(\alpha_1x_2+\alpha_2 x_m)(\beta_1 {x_2+}\beta_2x_m)
\quad\mbox{if $\gamma\in\Gamma_{11}$}\\
&&x_1^nx_{a_1}x_{a_2-1}\cdots x_{a_{n-2}-n+3}(\alpha_1x_2+\alpha_2 x_m)x_{a_n-n+1}
\quad\mbox{if $\gamma\in\Gamma_{12}$}\\
&&x_1^nx_{a_1}x_{a_2-1}\cdots x_{a_{n}-n+1}
\quad\mbox{if $\gamma\in\Gamma_{2}$}.
\end{eqnarray*}
For $\gamma=[a_1, \ldots, a_n]\in\Gamma\setminus\{\delta\}$, $\lf((\delta_1)_X\gamma_X)$ is,
up to multiplication of nonzero element of $K[T]$,
\begin{eqnarray*}
&&x_1^{2n-2}(\beta_1x_2+\beta_2x_m)^2\quad\mbox{if $\gamma\in\Gamma_{01}$}\\
&&x_1^{2n-2}(\beta_1x_2+\beta_2x_m)x_{a_n-n+1}\quad\mbox{if $\gamma\in\Gamma_{02}$}\\
&&x_1^{n-1}x_{a_1}x_{a_2-1}\cdots x_{a_{n-2}-n+3}(\alpha_1x_2+\alpha_2 x_m)(\beta_1 {x_2+}\beta_2x_m)^2
\quad\mbox{if $\gamma\in\Gamma_{11}$}\\
&&x_1^{n-1}x_{a_1}x_{a_2-1}\cdots x_{a_{n-2}-n+3}(\alpha_1x_2+\alpha_2 x_m)(\beta_1 {x_2+}\beta_2x_m)x_{a_n-n+1}
\quad\mbox{if $\gamma\in\Gamma_{12}$}\\
&&x_1^{n-1}(\beta_1 {x_2+}\beta_2x_m)x_{a_1}x_{a_2-1}\cdots x_{a_{n}-n+1}
\quad\mbox{if $\gamma\in\Gamma_{2}$}.
\end{eqnarray*}
Finally, consider the leading form of $(\delta_2)_X\gamma_X$, where
$\gamma=[a_1, \ldots, a_n]\in\Gamma$ and $\supp\gamma\not\supset\{1, \ldots, n-1\}$.
It is easily verified that $\supp\gamma\not\supset\{1,\ldots,n-1\}$ if
and only if $\gamma\in\Gamma_1\sqcup\Gamma_2$.
Thus the leading form of $(\delta_2)_X\gamma_X$ is, up to multiplication of 
nonzero element of $K[T]$,
\begin{eqnarray*}
&&x_1^{n-2}x_{a_1}x_{a_2-1}\cdots x_{a_{n-2}-n+3}(\alpha_1x_2+\alpha_2 x_m)^2
(\beta_1 {x_2+}\beta_2x_m)^2
\quad\mbox{if $\gamma\in\Gamma_{11}$}\\
&&x_1^{n-2}x_{a_1}x_{a_2-1}\cdots x_{a_{n-2}-n+3}(\alpha_1x_2+\alpha_2 x_m)^2(\beta_1 {x_2+}\beta_2x_m)x_{a_n-n+1}
\quad\mbox{if $\gamma\in\Gamma_{12}$}\\
&&x_1^{n-2}(\alpha_1x_2+\alpha_2 x_m)(\beta_1 {x_2+}\beta_2x_m)x_{a_1}x_{a_2-1}\cdots x_{a_{n}-n+1}
\quad\mbox{if $\gamma\in\Gamma_{2}$}.
\end{eqnarray*}

Since 
\begin{eqnarray*}
&&1\leq a_1\leq a_2-1\leq\cdots\leq a_n-n+1\leq u-n+1\leq m-1\\
&&a_{n-2}-n+3\leq 2\quad\mbox{if $\gamma\in\Gamma_1$}\\
&&a_{n-1}-n+2\geq 3\quad\mbox{if $\gamma\in\Gamma_2$}\\
\mbox{and}\\
&&a_n-n+1\geq 3\quad\mbox{if $\gamma\in\Gamma_{02}\sqcup\Gamma_{12}$},
\end{eqnarray*}
we see by Lemma \ref{lem:li} that
\begin{eqnarray*}
&&\lf(\delta_X\gamma_{{X}})\quad\mbox{($\gamma\in\Gamma$)}\\
&&\lf((\delta_1)_X\gamma_X)\quad\mbox{($\gamma\in\Gamma\setminus\{\delta\}$)}\\
&&\lf((\delta_2)_X\gamma_X)\quad\mbox{($\gamma\in\Gamma$,
$\supp\gamma\not\supset\{1,\ldots,n-1\}$)}
\end{eqnarray*}
are linearly independent over $K[T]$.
The assertion follows by Lemma \ref{lem:lib}.
\end{proof}

\begin{cor}
\mylabel{cor:cont kt}
Let $\KKK$ be a field, $T$ a $u\times n\times m$ tensor of indeterminates with
$u>n\geq m\geq u-n+2$,
$R$ a commutative ring containing $\KKK(T)$, 
$\xxx=(x_1, \ldots, x_m)$ a vector of indeterminates.
Set $M=M(\xxx,T)$ (c.f.\ Definition \ref{def:unmtv}) and 
$B=R[\xxx]$.
Then $\delta_M$ is a non-zerodivisor of $B$ and for any $k_1$, $k_2$ with
$1\leq k_1, k_2\leq n$, $k_1\neq k_2$, 
\begin{eqnarray*}
&&\delta_M\gamma_M\quad\mbox{($\gamma\in\Gamma$)}\\
&&[1,\ldots,\overset{n+1} {k_2}, \ldots, n]_M\gamma_M\quad\mbox{\rm ($\gamma\in\Gamma\setminus\{\delta\}$)}\\
&&[1,\ldots,\overset{n+1} {k_1}, \ldots, n]_M\gamma_M\quad\mbox{\rm ($\gamma\in\Gamma$,
$\supp\gamma\not\supset\{1,\ldots,\hat k_2, \ldots,n\}$)}
\end{eqnarray*}
are linearly independent over $R$.
\end{cor}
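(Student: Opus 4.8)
The plan is to obtain this as a flat base change of Lemma~\ref{lem:lct}. The hypotheses $u>n\geq m\geq u-n+2$ are precisely those of Lemma~\ref{lem:lct}, so applying that lemma with the field $K=\KKK$ and with $X=M(\xxx,T)$ (which is exactly the matrix $M$ of the present statement) yields: $\delta_M=\delta_X$ is a non-zerodivisor of $\KKK[T][\xxx]$, and for each fixed pair $k_1\neq k_2$ the family of products listed in the corollary is linearly independent over $\KKK[T]$ inside $\KKK[T][\xxx]$.

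Next I would observe that $R$ is flat over $\KKK[T]$. Indeed $\KKK(T)$ is the field of fractions of the domain $\KKK[T]$, hence a localization of it and in particular flat over $\KKK[T]$; and $R$, being by hypothesis a commutative ring containing $\KKK(T)$, is a $\KKK(T)$-vector space, hence free and in particular flat over $\KKK(T)$; composing, $R$ is flat over $\KKK[T]$. Since $B=R[\xxx]=R\otimes_{\KKK[T]}\KKK[T][\xxx]$, base change along the flat ring homomorphism $\KKK[T]\to R$ turns injective $\KKK[T]$-linear maps of $\KKK[T]$-modules into injective $R$-linear maps.

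Now I would apply this twice. Multiplication by $\delta_M$ is an injective $\KKK[T]$-linear endomorphism of $\KKK[T][\xxx]$ (the ring is a domain and $\delta_M\neq 0$); base changing, multiplication by $\delta_M$ on $B$ is injective, i.e.\ $\delta_M$ is a non-zerodivisor of $B$. Similarly, for a fixed pair $k_1\neq k_2$, writing $g_1,\dots,g_t$ for the products in question, Lemma~\ref{lem:lct} says the $\KKK[T]$-linear map $\KKK[T]^t\to\KKK[T][\xxx]$ sending $(c_1,\dots,c_t)$ to $\sum_i c_ig_i$ is injective; base changing, the map $R^t\to B$, $(c_1,\dots,c_t)\mapsto\sum_i c_ig_i$, is injective, which is exactly the asserted linear independence of $g_1,\dots,g_t$ over $R$.

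I do not expect any genuine obstacle: all the combinatorial and Gr\"obner-theoretic content is already in Lemma~\ref{lem:lct}, and the only point to get right is the (elementary) flatness of $R$ over $\KKK[T]$. If one prefers to avoid flatness, the same conclusion follows by hand: $\KKK[T]$-linear independence passes to $\KKK(T)$-linear independence by clearing denominators in a hypothetical relation, and then to $R$-linear independence by expanding the would-be coefficients with respect to a $\KKK(T)$-basis of $R$; and $\delta_M$ remains a non-zerodivisor of $B$ because $B$ is a free module over the domain $\KKK(T)[\xxx]$, in which $\delta_M\neq 0$.
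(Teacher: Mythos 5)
Your proposal is correct and follows essentially the same route as the paper: both deduce the corollary from Lemma~\ref{lem:lct} by flat base change along $\KKK[T]\to R$, reading the linear independence (and the non-zerodivisor property) as injectivity of $\KKK[T]$-linear maps preserved under tensoring with the flat module $R$. Your explicit justification that $R$ is flat over $\KKK[T]$ (localization to $\KKK(T)$ followed by a free extension) fills in a step the paper leaves implicit, but the argument is the same.
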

\begin{proof}
Set $A=\KKK[T][\xxx]$.
Then $B=A\otimes _{\KKK[T]}R$.

Since $R$ is flat over $\KKK[T]$, we see that $B$ is flat over $A$.
By Lemma \ref{lem:lct}, we see that
$\delta_M$ is a non-zerodivisor of $A$ and for any $k_1$, $k_2$ with
$1\leq k_1, k_2\leq n$, $k_1\neq k_2$, 
\begin{eqnarray*}
&&\delta_M\gamma_M\quad\mbox{($\gamma\in\Gamma$)}\\
&&[1,\ldots,\overset{n+1} {k_2}, \ldots, n]_M\gamma_M\quad\mbox{\rm ($\gamma\in\Gamma\setminus\{\delta\}$)}\\
&&[1,\ldots,\overset{n+1} {k_1}, \ldots, n]_M\gamma_M\quad\mbox{\rm ($\gamma\in\Gamma$,
$\supp\gamma\not\supset\{1,\ldots,\hat k_2, \ldots,n\}$)}
\end{eqnarray*}
are linearly independent over $\KKK[T]$.
Since $R$ (resp.\ $B$) is flat over $\KKK[T]$ (resp. $A$), we see that
$\delta_M$ is a non-zerodivisor of $B$ and for any $k_1$, $k_2$ with
$1\leq k_1, k_2\leq n$, $k_1\neq k_2$, 
\begin{eqnarray*}
&&\delta_M\gamma_M\quad\mbox{($\gamma\in\Gamma$)}\\
&&[1,\ldots,\overset{n+1} {k_2}, \ldots, n]_M\gamma_M\quad\mbox{\rm ($\gamma\in\Gamma\setminus\{\delta\}$)}\\
&&[1,\ldots,\overset{n+1} {k_1}, \ldots, n]_M\gamma_M\quad\mbox{\rm ($\gamma\in\Gamma$,
$\supp\gamma\not\supset\{1,\ldots,\hat k_2, \ldots,n\}$)}
\end{eqnarray*}
are linearly independent over $R$.
\end{proof}

\begin{cor}
\mylabel{cor:lin indep r}
Let $\xxx=(x_1, \ldots, x_m)$ be a vector of indeterminates.
Suppose that $u>n$ and $Y\in\Aind$ (c.f.\ Definition \ref{def:mu aind psi}) 
and set $M=M(\xxx,Y)$.
Then $\delta_M$ is a non-zerodivisor of $\RRR[\xxx]$ and for any $k_1$, $k_2$ with
$1\leq k_1, k_2\leq n$, $k_1\neq k_2$, 
\begin{eqnarray*}
&&\delta_M\gamma_M\quad\mbox{($\gamma\in\Gamma$)}\\
&&[1,\ldots,\overset{n+1} {k_2}, \ldots, n]_M\gamma_M\quad\mbox{\rm ($\gamma\in\Gamma\setminus\{\delta\}$)}\\
&&[1,\ldots,\overset{n+1} {k_1}, \ldots, n]_M\gamma_M\quad\mbox{\rm ($\gamma\in\Gamma$,
$\supp\gamma\not\supset\{1,\ldots,\hat k_2, \ldots,n\}$)}
\end{eqnarray*}
are linearly independent over $\RRR$.
\end{cor}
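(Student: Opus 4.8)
The plan is to deduce Corollary~\ref{cor:lin indep r} from Corollary~\ref{cor:cont kt} by a specialization argument, replacing the generic tensor of indeterminates $T$ by the given tensor $Y$; this is legitimate precisely because $Y\in\Aind$, i.e.\ the entries of $Y$ are algebraically independent over $\QQQ$.

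First I would let $T=(t_{ijk})$ be the $u\times n\times m$ tensor of indeterminates over $\QQQ$ and consider the $\QQQ$-algebra homomorphism $\QQQ[T]\to\RRR$ sending each $t_{ijk}$ to the corresponding entry of $Y$. Since the entries of $Y$ are algebraically independent over $\QQQ$, this homomorphism is injective, hence extends to a field embedding $\QQQ(T)\hookrightarrow\RRR$. Identifying $\QQQ(T)$ with its image $\QQQ(Y)$, we may regard $\RRR$ as a commutative ring containing $\KKK(T)$ with $\KKK=\QQQ$. The induced ring homomorphism $\QQQ(T)[\xxx]\to\RRR[\xxx]$ then carries the matrix $M(\xxx,T)$ to $M(\xxx,Y)=M$, and, because any ring homomorphism commutes with the formation of determinants (hence of minors), it carries $\delta_{M(\xxx,T)}$, every $\gamma_{M(\xxx,T)}$, every $[1,\ldots,\overset{n+1}{k_1},\ldots,n]_{M(\xxx,T)}$ and every $[1,\ldots,\overset{n+1}{k_2},\ldots,n]_{M(\xxx,T)}$ to $\delta_M$, $\gamma_M$, $[1,\ldots,\overset{n+1}{k_1},\ldots,n]_M$ and $[1,\ldots,\overset{n+1}{k_2},\ldots,n]_M$ respectively.

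Next I would simply invoke Corollary~\ref{cor:cont kt} with $\KKK=\QQQ$ and $R=\RRR$: it delivers at once that $\delta_M$ is a non-zerodivisor of $\RRR[\xxx]$ and that, for any $k_1\neq k_2$ in $\{1,\ldots,n\}$, the three families $\delta_M\gamma_M$ ($\gamma\in\Gamma$), $[1,\ldots,\overset{n+1}{k_2},\ldots,n]_M\gamma_M$ ($\gamma\in\Gamma\setminus\{\delta\}$) and $[1,\ldots,\overset{n+1}{k_1},\ldots,n]_M\gamma_M$ ($\gamma\in\Gamma$ with $\supp\gamma\not\supset\{1,\ldots,\hat k_2,\ldots,n\}$) are linearly independent over $\RRR$, which is exactly the assertion.

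The main obstacle — really the only nonformal point — is the verification carried out in the second step: one must check that $\RRR$ genuinely contains an isomorphic copy of the fraction field $\QQQ(T)$ (not merely of the polynomial ring $\QQQ[T]$) and that under this identification the symbols $M(\xxx,T)$ and $M(\xxx,Y)$ literally coincide, so that the conclusion of Corollary~\ref{cor:cont kt} transports without change; this is precisely where the hypothesis $Y\in\Aind$ is used. I would also note that the size condition $u>n\geq m\geq u-n+2$ required in order to apply Corollary~\ref{cor:cont kt} is understood to be in force here.
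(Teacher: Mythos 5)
Your proof is correct and is exactly the argument the paper intends: Corollary \ref{cor:lin indep r} is stated without proof as an immediate specialization of Corollary \ref{cor:cont kt}, obtained by using $Y\in\Aind$ to embed $\QQQ(T)$ into $\RRR$ via $t_{ijk}\mapsto y_{ijk}$ and taking $\KKK=\QQQ$, $R=\RRR$. Your remark that the size condition $u>n\geq m\geq u-n+2$ must be understood as in force (it holds in the setting of Sections \ref{sec:rank p}--\ref{sec:cor} where the corollary is applied) is also apt.
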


\begin{definition}
\rm
\mylabel{def:msqqq'}
Let $\xxx=(x_1, \ldots, x_m)$ be a vector of indeterminates.
We set
$\msQQQ'\define\{Y\in\RRR^{u\times n\times m}\mid
\delta_M\neq 0$ and
$\delta_M\gamma_M\quad\mbox{($\gamma\in\Gamma$)}$,
$[1,\ldots,\overset{n+1} {k_2}, \ldots, n]_M\gamma_M\quad\mbox{\rm ($\gamma\in\Gamma\setminus\{\delta\}$)}$,
$[1,\ldots,\overset{n+1} {k_1}, \ldots, n]_M\gamma_M\quad\mbox{\rm ($\gamma\in\Gamma$,
$\supp\gamma\not\supset\{1,\ldots,\hat k_2, \ldots,n\}$)}$
are linearly independent over $\RRR$ for any $k_1$, $k_2$ with $1\leq k_1<k_2\leq n$,
where $M=M(\xxx,Y)\}$.
\end{definition}

\begin{remark}
\rm
$\msQQQ'$ is a Zariski open set of $\RRR^{u\times n\times m}$ and
by Corollary \ref{cor:lin indep r}, we see that
$\msQQQ'\supset\Aind$.
In particular, $\msQQQ'$ is a Zariski dense open subset of 
$\RRR^{u\times n\times m}$.
\end{remark}

By Lemma \ref{lem:lc}, we see the following fact.

\begin{prop}
\mylabel{prop:lin comb}
Let $u$, $n$ and $m$ be integers with $u>n\geq m\geq u-n+2$
and let $\xxx=(x_1, \ldots, x_m)$ be a vector of indeterminates.
Suppose that $Y\in \msQQQ'$ and 
$\yyy\in\RRR^{1\times n\times m}$.
Set
$$
\tilde Y=\begin{pmatrix}Y\\\yyy\end{pmatrix}.
$$
If $I_n(M(\xxx,\tilde Y))=I_n(M(\xxx,Y))$, then
$\fl_1(\yyy)$ is an $\RRR$-linear combination of rows of $\fl_1(Y)$.
\end{prop}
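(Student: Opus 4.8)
The plan is to apply Lemma~\ref{lem:lc} essentially verbatim, after setting up the correct graded ring. Take $A=\RRR[\xxx]=\RRR[x_1,\ldots,x_m]$ with its standard grading by total degree, so $A_0=\RRR$ and $A_1=\bigoplus_{k=1}^m\RRR x_k$. Put $X=M(\xxx,Y)=\sum_{k=1}^m x_kY_k$; its entries are linear forms and hence lie in $A_1$, so $X$ is a $u\times n$ matrix over $A$ of the shape required by Lemma~\ref{lem:lc}. Since $\tilde Y=\binom{Y}{\yyy}$ means $\tilde Y_k=\binom{Y_k}{\yyy_k}$ for each $k$, writing $\zzz=M(\xxx,\yyy)=\sum_{k=1}^m x_k\yyy_k\in A_1^{1\times n}$ we get $M(\xxx,\tilde Y)=\binom{X}{\zzz}$, which is precisely the matrix called $\tilde X$ in Lemma~\ref{lem:lc}.

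Next I would verify the three hypotheses of Lemma~\ref{lem:lc}. First, $Y\in\msQQQ'$ gives $\delta_X=[1,2,\ldots,n]_X\neq 0$, and since $\RRR[\xxx]$ is an integral domain this element is a non-zerodivisor of $A$. Second, the linear independence over $A_0=\RRR$ of the three families of products of the $\delta_X\gamma_X$, $[1,\ldots,\overset{n+1}{k_2},\ldots,n]_X\gamma_X$, $[1,\ldots,\overset{n+1}{k_1},\ldots,n]_X\gamma_X$ (for all $1\le k_1<k_2\le n$) is exactly the membership condition defining $\msQQQ'$ in Definition~\ref{def:msqqq'}. Third, $I_n(\tilde X)=I_n(M(\xxx,\tilde Y))=I_n(M(\xxx,Y))=I_n(X)$ is the hypothesis of the proposition. (Only $u>n$ is needed from the numerical constraints here; the extra inequalities $n\ge m\ge u-n+2$ enter only later, to ensure via Lemma~\ref{lem:lct} that $\msQQQ'$ is dense.) Lemma~\ref{lem:lc} then yields scalars $c_1,\ldots,c_u\in\RRR$ with $\zzz=\sum_{i=1}^u c_iX^{=i}$.

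Finally I would unwind this identity. From $\sum_{k=1}^m x_k\yyy_k=\zzz=\sum_{i=1}^u c_i\sum_{k=1}^m x_k(Y_k)^{=i}=\sum_{k=1}^m x_k\bigl(\sum_{i=1}^u c_i(Y_k)^{=i}\bigr)$ and the fact that $x_1,\ldots,x_m$ are indeterminates over $\RRR$, comparing the coefficient of each $x_k$ gives $\yyy_k=\sum_{i=1}^u c_i(Y_k)^{=i}$ for every $k=1,\ldots,m$. Concatenating these $m$ equalities horizontally, $\fl_1(\yyy)=(\yyy_1,\ldots,\yyy_m)=\sum_{i=1}^u c_i\bigl((Y_1)^{=i},\ldots,(Y_m)^{=i}\bigr)=\sum_{i=1}^u c_i\,\fl_1(Y)^{=i}$, i.e.\ $\fl_1(\yyy)$ is the $\RRR$-linear combination $\sum_i c_i\,\fl_1(Y)^{=i}$ of the rows of $\fl_1(Y)$, as desired.

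There is no genuine obstacle here: the statement is a bookkeeping wrapper around Lemma~\ref{lem:lc}. The only points requiring attention are (a) choosing the grading on $\RRR[\xxx]$ so that the entries of $M(\xxx,Y)$ sit in degree one and the coefficients $c_i$ are forced into $A_0=\RRR$, and (b) the translation between "$\zzz$ is a row-combination of $X$" and the flattened conclusion about $\fl_1(\yyy)$ and $\fl_1(Y)$, which is just block-by-block comparison of coefficients of the independent indeterminates $x_k$.
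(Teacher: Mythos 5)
Your proof is correct and follows essentially the same route as the paper: both verify the hypotheses of Lemma~\ref{lem:lc} for $X=M(\xxx,Y)$ over $\RRR[\xxx]$ (non-zerodivisor from the domain property, linear independence from the definition of $\msQQQ'$, and the ideal equality from the hypothesis), then compare coefficients of the indeterminates $x_k$ to pass from the row-combination for $M(\xxx,\yyy)$ to the flattened statement. Your remarks on the grading and on which numerical constraints are actually used are accurate refinements of the paper's terser argument.
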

\begin{proof}
Set $M=M(\xxx,Y)$.
Since $\RRR[\xxx]$ is a domain and $\delta_M\neq 0$ by the definition
of $\msQQQ'$, we see that $\delta_M$ is a non-zerodivisor of $\RRR[\xxx]$.
Moreover,
\begin{eqnarray*}
&&\delta_M\gamma_M\quad\mbox{($\gamma\in\Gamma$)}\\
&&[1,\ldots,\overset{n+1} {k_2}, \ldots, n]_M\gamma_M\quad\mbox{\rm ($\gamma\in\Gamma\setminus\{\delta\}$)}\\
&&[1,\ldots,\overset{n+1} {k_1}, \ldots, n]_M\gamma_M\quad\mbox{\rm ($\gamma\in\Gamma$,
$\supp\gamma\not\supset\{1,\ldots,\hat k_2, \ldots,n\}$)}
\end{eqnarray*}
are linearly independent over $\RRR$ for any $k_1$, $k_2$ with $1\leq k_1<k_2\leq n$
by the definition of $\msQQQ'$.
Thus by Lemma \ref{lem:lc}, we see that 
$M(\xxx,\yyy)$ is an $\RRR$-linear combination of rows of $M=M(\xxx,Y)$.
Since $x_1$, \ldots, $x_m$ are indeterminates,
we see that $\fl_1(\yyy)$ is an $\RRR$-linear combination of rows
of $\fl_1(Y)$.
\end{proof}


\section{Tensor with rank $p$ \mylabel{sec:rank p}}

Let $3\leq m\leq n$, $(m-1)(n-1)+1\leq p\leq (m-1)n$ and set $l=(m-1)n-p$ and $u=n+l$. 
In the following of this paper, we use the results of the previous sections 
by setting $t=n$.
See Definition \ref{def:unmtv}.
Then $v=l+1$ and it follows that $v<m$ since $l\leq m-2$.
Note also $u+p=nm$.
We make bunch of definitions used in the sequel of this paper.

\begin{definition}\rm
\mylabel{def:msVVV sigma etc}
We put 
$$
\msVVV=\msVVV^{n\times p\times m}\define\{T\in\RRR^{n\times p\times m}\mid
\fl_2(T)^{\leq p} \text{ is nonsingular}
\}
$$
and define $\sigma\colon\msVVV\to\RRR^{u\times p}$ be a map defined as
$$\sigma(T)=({}^{p<}\fl_2(T))(\fl_2(T)^{\leq p})^{-1}.$$

We denote by $\msAAA^{u\times n\times m}$\index{$\msAAA^{u\times n\times m}$}  the set of all $u\times n\times m$ \afcr{} tensors and put $\msCCC^{u\times n\times m}=\RRR^{u\times n\times m}\setminus\msAAA^{u\times n\times m}$\index{$\msCCC$}.
Note that $\msAAA^{u\times n\times m}=\msAAA_n^{u\times n\times m}$ and $\msCCC^{u\times n\times m}=\msCCC_n^{u\times n\times m}$ in the notation of
 Definition~\ref{def:mu aind psi}.

\mylabel{def:mgc}
Let $\MMMMM$\index{$\MMMMM$} be the subset of $\RRR^{u\times nm}$ consisting of
all matrices $W=(W_1,\ldots, W_m)$ satisfying that there are
$A=(\aaa_1, \ldots, \aaa_p)\in\RRR^{n\times p}$ and $p\times p$ diagonal matrices $D_1, \ldots, D_m$ such that
$D_k=\diag(d_{1k}, \ldots, d_{pk})$ {for $1\leq k\leq m$, 
$(d_{j1}W_1+\cdots+d_{jm}W_m)\aaa_j=\zerovec$ for $1\leq j\leq p$}, and
\begin{equation}
\mylabel{eqn:def of m}
\begin{pmatrix}AD_1\\ \vdots \\ AD_{m-2}\\ A^{\leq n-l}D_{m-1}\end{pmatrix} 
\end{equation}
is nonsingular.
Let $\iota\colon \RRR^{u\times p}\to\RRR^{u\times nm}$\index{$\iota$} be a map which sends 
$A$ to $(A,-E_{u})$.
Moreover put
$$\begin{array}{l}
\CCCCC\define\{W\in\RRR^{u\times nm}\mid
W\notin\fl_1(\msAAA^{u\times n\times m})\}=
\{W\in\RRR^{u\times n\times m}\mid W\in\fl_1(\msCCC^{u\times n\times m})\}\index{$\CCCCC=\{W\in\RRR^{u\times nm}\mid
W\notin\fl_1(\msAAA^{u\times n\times m})\}$}. \\
\end{array}
$$

\mylabel{def:phi}
We define
$\phi\colon\RRR^{1\times m}\times \RRR^n\to \RRR^{p}$\index{$\phi\colon\RRR^{1\times m}\times \RRR^n\to \RRR^{p}$} a map defined as
$$\phi(\aaa,\bbb)=\begin{pmatrix}a_1\bbb\\ a_2\bbb\\ \vdots\\
a_{m-1}\bbb^{\leq n-l}\end{pmatrix}\in \RRR^{p}, \text{ where } 
\aaa=(a_1,\ldots,a_{m-1},a_m).$$
\end{definition}

\bigbreak

\noindent
Recall that the set $\msAAA^{u\times n\times m}$ is an open subset 
of $\RRR^{u\times n\times m}$ by Lemma \ref{lem:afr open} or Corollary~\ref{cor:msa open}.

\begin{prop} \mylabel{prop:sigma open}
$\sigma$ is an open, surjective {and continuous} map.
\end{prop}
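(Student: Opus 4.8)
The plan is to handle the three properties in turn, using the homeomorphism $\fl_2\colon\RRR^{n\times p\times m}\to\RRR^{nm\times p}$ to put everything in matrix form. Since $p+u=nm$, for $T\in\RRR^{n\times p\times m}$ I write $\fl_2(T)=\begin{pmatrix}P\\Q\end{pmatrix}$, where $P=\fl_2(T)^{\leq p}$ is the top $p\times p$ block and $Q={}^{p<}\fl_2(T)$ is the bottom $u\times p$ block; then $\msVVV$ corresponds to $\{(P,Q)\mid\det P\neq 0\}$ and $\sigma(T)=QP^{-1}$. Continuity is then immediate: $\msVVV$ is open because $\det(\fl_2(T)^{\leq p})$ is a polynomial in the entries of $T$, and on $\msVVV$ the map $\sigma$ is a matrix product composed with a matrix inverse, hence a rational, in particular continuous, function of the entries of $T$.

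For surjectivity, given $A\in\RRR^{u\times p}$ I would take the tensor $T=\fl_2^{-1}\!\begin{pmatrix}E_p\\A\end{pmatrix}$; then $\fl_2(T)^{\leq p}=E_p$ is nonsingular, so $T\in\msVVV$, and $\sigma(T)=AE_p^{-1}=A$.

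The one substantive point is openness, which I would prove by producing a continuous local slice of $\sigma$ through an arbitrary point. Let $U\subseteq\msVVV$ be open, fix $T_0\in U$, set $A_0=\sigma(T_0)$, and write $\fl_2(T_0)=\begin{pmatrix}P_0\\Q_0\end{pmatrix}$ with $\det P_0\neq0$ and $A_0=Q_0P_0^{-1}$. For $A\in\RRR^{u\times p}$ define $T_A\in\RRR^{n\times p\times m}$ by $\fl_2(T_A)=\begin{pmatrix}P_0\\AP_0\end{pmatrix}$ (this stack is genuinely $nm\times p$, since $P_0$ is $p\times p$ and $AP_0$ is $u\times p$). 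Then $\fl_2(T_A)^{\leq p}=P_0$ is nonsingular, so $T_A\in\msVVV$ and $\sigma(T_A)=(AP_0)P_0^{-1}=A$; moreover $T_{A_0}=T_0$ because $A_0P_0=Q_0P_0^{-1}P_0=Q_0$, and the assignment $A\mapsto T_A$ is continuous, being affine in $A$ followed by $\fl_2^{-1}$. Hence there is a neighborhood $W$ of $A_0$ in $\RRR^{u\times p}$ with $T_A\in U$ for every $A\in W$, and then $W\subseteq\sigma(U)$ since each $A\in W$ equals $\sigma(T_A)$ with $T_A\in U$. As $T_0$ ranges over $U$, this shows $\sigma(U)$ is a neighborhood of each of its points, so $\sigma(U)$ is open.

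The slice $A\mapsto T_A$ reflects the structural fact that the fibers of $\sigma$ are exactly the orbits of the (right) $\GL(p,\RRR)$-action $(P,Q)\mapsto(Pg,Qg)$ on $\{(P,Q)\mid\det P\neq0\}$, so one could alternatively deduce openness from the general principle that the quotient map by a continuous group action is open; I prefer the explicit slice argument here. I do not anticipate a genuine obstacle — the only care needed is in verifying that $\fl_2(T_A)$ has the prescribed block shape and that the base point $A_0$ indeed returns $T_0$, which is precisely where the relation $A_0=Q_0P_0^{-1}$ is used.
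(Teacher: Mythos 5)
Your proof is correct. Continuity and surjectivity are handled the same way the paper does (the paper simply declares them clear), but for openness you take a genuinely different route. The paper identifies $\fl_2(\msOOO)$ with an open subset of $\RRR^{p\times p}\times\RRR^{u\times p}$ via the homeomorphism $M\mapsto(M^{\leq p},{}^{p<}M)$, writes it as a union of open boxes $O_{1,\lambda}\times O_{2,\lambda}$, and computes the image directly as $\bigcup_\lambda\bigcup_{A\in O_{1,\lambda}}O_{2,\lambda}A^{-1}$, each piece being open because right multiplication by a fixed nonsingular matrix is a homeomorphism of $\RRR^{u\times p}$. You instead build, through each point $T_0\in U$, a continuous local section $A\mapsto T_A$ of $\sigma$ with $T_{A_0}=T_0$, and pull $U$ back along it to get an open neighborhood of $A_0=\sigma(T_0)$ inside $\sigma(U)$. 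Both arguments are elementary and short; the paper's has the minor virtue of exhibiting $\sigma(\msOOO)$ explicitly as a union of open sets, while yours localizes cleanly at a point, makes the underlying reason transparent (the fibers are orbits of the right $\GL(p,\RRR)$-action and $\sigma$ admits continuous local sections), and is the pattern the paper itself reuses verbatim for the counterpart map $\nu$ in Proposition \ref{prop:nu open}. Your verification that $T_{A_0}=T_0$ via $A_0P_0=Q_0$ is exactly the point that needs checking, and it is done correctly.
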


\begin{proof}
Clearly $\sigma$ is surjective {and continuous}.
Let $\msOOO$ be an open subset of $\msVVV$ 
and let $h\colon \RRR^{nm\times p} \to \RRR^{p \times p}\times\RRR^{u \times p}$ be a homeomorphism defined as $h(M)=(M^{\leq p}, {}^{p<}M)$.
Then $h(\fl_2(\msOOO))$ can be written by 
$$\bigcup_\lambda O_{1,\lambda}\times O_{2,\lambda}$$
for some open subsets $O_{1,\lambda}\subset \RRR^{p\times p}$ and
$O_{2,\lambda}\subset \RRR^{u\times p}$
and thus 
$$\sigma(\msOOO)=\bigcup_\lambda \bigcup_{A\in O_{1,\lambda}} O_{2,\lambda}A^{-1}.$$
The set $O_{2,\lambda}A^{-1}$ is open and then $\sigma(\msOOO)$ is open.
\end{proof}

The following fact follows from the definition.

\begin{lemma}
$\MMMMM$ is stable under the action of $\glin(u,\RRR)$.
\end{lemma}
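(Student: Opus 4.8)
The plan is to verify directly that the witnessing data occurring in the definition of $\MMMMM$ is insensitive to the $\glin(u,\RRR)$-action, so that the very same data certifies membership of $PW$ whenever it certifies membership of $W$. Recall that $\glin(u,\RRR)$ acts on $\RRR^{u\times nm}\cong\RRR^{u\times n\times m}$ by $P\cdot(W_1,\ldots,W_m)=(PW_1,\ldots,PW_m)$, i.e.\ by left multiplication on each $u\times n$ block.

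First I would unwind the definition. Let $W=(W_1,\ldots,W_m)\in\MMMMM$, and fix $A=(\aaa_1,\ldots,\aaa_p)\in\RRR^{n\times p}$ together with diagonal matrices $D_k=\diag(d_{1k},\ldots,d_{pk})$ for $1\leq k\leq m$, such that $(d_{j1}W_1+\cdots+d_{jm}W_m)\aaa_j=\zerovec$ for every $j$ with $1\leq j\leq p$ and such that the matrix in \eqref{eqn:def of m} is nonsingular. Let $P\in\glin(u,\RRR)$ be arbitrary; I claim $(A,D_1,\ldots,D_m)$ also witnesses $PW\in\MMMMM$.

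Next I would check the two clauses of the definition separately. For the vanishing clause, left multiplication by $P$ gives, for each $j$, $(d_{j1}PW_1+\cdots+d_{jm}PW_m)\aaa_j=P\bigl((d_{j1}W_1+\cdots+d_{jm}W_m)\aaa_j\bigr)=P\zerovec=\zerovec$, so the same $A$ and $D_k$ satisfy the required linear relations for $PW$. For the nonsingularity clause, the matrix in \eqref{eqn:def of m} is built only from $A$ and the $D_k$ and does not involve $W$ at all, hence it is literally unchanged when $W$ is replaced by $PW$. Combining the two observations, $(A,D_1,\ldots,D_m)$ witnesses $PW\in\MMMMM$, so $\MMMMM$ is stable under $\glin(u,\RRR)$.

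There is no genuine obstacle here: the statement is a bookkeeping consequence of the fact that the $\glin(u,\RRR)$-action only touches the $u$-indexed slot of the tensor, while the nonsingularity condition in the definition of $\MMMMM$ lives entirely in the $A$–$D_k$ variables and the relation $(\sum_k d_{jk}W_k)\aaa_j=\zerovec$ is linear, hence preserved under left multiplication by an invertible matrix. The only point requiring a little care is to state precisely how $\glin(u,\RRR)$ acts on $\RRR^{u\times nm}$ so that the interchange "$P(\sum_k d_{jk}W_k)\aaa_j=(\sum_k d_{jk}PW_k)\aaa_j$" is transparent; everything else is immediate.
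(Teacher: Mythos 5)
Your proof is correct and is precisely the routine verification the paper leaves implicit (the paper states only that the lemma ``follows from the definition'' and gives no further argument). The two observations you isolate --- that the relations $(\sum_k d_{jk}W_k)\aaa_j=\zerovec$ are preserved under left multiplication by $P\in\glin(u,\RRR)$, and that the nonsingularity condition \eqref{eqn:def of m} involves only $A$ and the $D_k$ --- are exactly what is needed.
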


\begin{lemma}
\mylabel{lem:m sub c}
$\MMMMM\subset\CCCCC$.
\end{lemma}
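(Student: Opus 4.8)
The plan is to prove the inclusion directly: I will show that every $W=(W_1,\ldots,W_m)\in\MMMMM$, regarded as the tensor $\fl_1^{-1}(W)=(W_1;\ldots;W_m)\in\RRR^{u\times n\times m}$, is not an \afcr{} tensor. Granting this, $(W_1;\ldots;W_m)\in\msCCC^{u\times n\times m}$, so $W=\fl_1((W_1;\ldots;W_m))\notin\fl_1(\msAAA^{u\times n\times m})$, which is exactly the condition $W\in\CCCCC$ in Definition~\ref{def:mgc}. So fix $W\in\MMMMM$ and take $A=(\aaa_1,\ldots,\aaa_p)\in\RRR^{n\times p}$ and diagonal matrices $D_k=\diag(d_{1k},\ldots,d_{pk})$, $1\le k\le m$, as in the definition of $\MMMMM$; for $1\le j\le p$ set $\ccc_j=(d_{j1},\ldots,d_{jm})\in\RRR^{1\times m}$.

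The crucial (and essentially formal) point is that the defining relation $(d_{j1}W_1+\cdots+d_{jm}W_m)\aaa_j=\zerovec$ is nothing but $M(\ccc_j,(W_1;\ldots;W_m))\aaa_j=\zerovec$ in the notation of Definition~\ref{def:unmtv}. Hence it suffices to find one index $j$ with $\ccc_j\neq\zerovec$ and $\aaa_j\neq\zerovec$: for such $j$ the $u\times n$ matrix $M(\ccc_j,(W_1;\ldots;W_m))$ has the nonzero kernel vector $\aaa_j$, so its column rank is at most $n-1$, and thus the defining condition of the \afcr{} property fails for the nonzero vector $\ccc_j$, i.e.\ $(W_1;\ldots;W_m)$ is not an \afcr{} tensor. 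To produce such an index I would use the nonsingularity of the block matrix \eqref{eqn:def of m}. That matrix is square of size $p$, since it has $(m-2)n+(n-l)=(m-1)n-l=p$ rows (using $l=(m-1)n-p$) and $p$ columns. Its $j$-th column is the vertical stacking of $d_{j1}\aaa_j,\ldots,d_{j,m-2}\aaa_j$ and $d_{j,m-1}\aaa_j^{\le n-l}$, where $\aaa_j^{\le n-l}$ denotes the first $n-l$ entries of $\aaa_j$; if $\aaa_j=\zerovec$ this column is zero, contradicting nonsingularity, so $\aaa_j\neq\zerovec$ for every $j$. Likewise the $D_k$ cannot all vanish, for then \eqref{eqn:def of m} would be the zero $p\times p$ matrix while $p\ge(m-1)(n-1)+1\ge1$; hence $d_{j_0 k}\neq0$ for some $j_0$ and $k$, i.e.\ $\ccc_{j_0}\neq\zerovec$, and $j=j_0$ does the job.

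I do not foresee a real obstacle. The argument is entirely a matter of unwinding the two definitions (of $\MMMMM$ and of the \afcr{} property) and keeping track of matrix sizes and of which column of \eqref{eqn:def of m} corresponds to which $\aaa_j$ and $d_{jk}$; once that is set up, the conclusion $W\in\CCCCC$ is immediate from the definitions of $\msCCC^{u\times n\times m}$ and $\CCCCC$.
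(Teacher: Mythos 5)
Your proof is correct and follows essentially the same route as the paper: both arguments extract from the nonsingularity of the block matrix \eqref{eqn:def of m} a column index $j$ with $\aaa_j\neq\zerovec$ and $(d_{j1},\ldots,d_{j,m-1})\neq\zerovec$, and conclude that $M(\ccc_j,W)$ is singular, violating the \afcr{} condition. The paper simply reads both facts off the first column at once (taking $j=1$), whereas you split the observation into two steps; the substance is identical.
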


\begin{proof}
By observing the first column of \eqref{eqn:def of m}, we see that $\aaa_1\neq\zerovec$ and
at least one of $d_{11}$, $d_{12}$, \ldots, $d_{1,m-1}$ is nonzero,
since 
$\begin{pmatrix}AD_1\\ \vdots \\ AD_{m-2}\\ A^{\leq n-l}D_{m-1}\end{pmatrix}$
is nonsingular, where $D_k=\diag(d_{1k},\ldots,d_{pk})$ for $1\leq k\leq m-1$.
Since $d_{11}W_1+\cdots+d_{1m}W_m$ is singular, $(W_1;\ldots;W_m)$ is not \afcr, 
i.e., $\MMMMM\subset\CCCCC$.
\end{proof}

\begin{thm} \mylabel{thm:equiv cond}
Let 
$X\in \msVVV\subset\RRR^{n\times p\times m}$.
Put $(W_1,\ldots,W_{m-1},W_m)=\iota(\sigma(X))$, where $W_1,\ldots,W_{m}\in\RRR^{u\times n}$.
The following four statements are equivalent.
\begin{enumerate}
\item \mylabel{eq:rank} $\rank X=p$.
\item \mylabel{eq:perfect} There are an $n\times p$ matrix $A$, and diagonal $p\times p$ matrices $D_1,\ldots,D_m$ such that
\begin{equation}\mylabel{eq:matrix}
W_1AD_1+W_2AD_2+\cdots+W_{m-1}AD_{m-1}+W_mAD_{m}=O
\end{equation}
and 
\begin{equation}\mylabel{eq:Qinv}
N=\begin{pmatrix} AD_1 \\ \vdots \\ AD_{m-2}\\ A^{\leq n-l}D_{m-1}\end{pmatrix}
\end{equation}
is nonsingular.
\item \mylabel{eq:inM} $\iota(\sigma(X))\in\MMMMM$.
\end{enumerate}
\end{thm}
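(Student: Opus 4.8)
The plan is to prove the substantive equivalence \ref{eq:rank}$\Leftrightarrow$\ref{eq:perfect} by passing between rank-one decompositions of $X$ and block factorizations of the flattening $\fl_2(X)$, and to dispose of \ref{eq:perfect}$\Leftrightarrow$\ref{eq:inM} by merely unwinding the definition of $\MMMMM$. First I would record the trivial lower bound: since $X\in\msVVV$, the matrix $\fl_2(X)^{\le p}$ is invertible, and for any decomposition $X=\sum\aaa_i\otimes\bbb_i\otimes\ccc_i$ the flattening $\fl_2(X)$ factors as a product of an $nm\times(\rank X)$ matrix with a $(\rank X)\times p$ matrix, so $p=\rank\fl_2(X)\le\rank X$. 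Hence in every part of the argument it suffices to decide whether $\rank X\le p$.

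The computational core is a single block identity. Write $A=(\aaa_1,\dots,\aaa_p)\in\RRR^{n\times p}$, $D_k=\diag(d_{1k},\dots,d_{pk})$, $G=\begin{pmatrix}AD_1\\ \vdots\\ AD_m\end{pmatrix}$, and $(W_1,\dots,W_m)=\iota(\sigma(X))=(\sigma(X),-E_u)$. The arithmetic relation $p=n(m-2)+(n-l)$ shows that $G^{\le p}$ equals the matrix $N$ of \eqref{eq:Qinv} and that ${}^{p<}G=\begin{pmatrix}{}^{n-l<}A\,D_{m-1}\\ AD_m\end{pmatrix}$; and, tracking how $\sigma(X)$ occupies the first $p$ columns of $\iota(\sigma(X))$ while $-E_u$ supplies the last $l$ columns of $W_{m-1}$ together with all of $W_m$, one verifies that for \emph{every} $A$ and diagonal $D_1,\dots,D_m$
$$
W_1AD_1+\cdots+W_mAD_m=\sigma(X)\,N-\begin{pmatrix}{}^{n-l<}A\,D_{m-1}\\ AD_m\end{pmatrix}.
$$

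With this identity in hand, for \ref{eq:rank}$\Rightarrow$\ref{eq:perfect} take a rank-$p$ decomposition $X=\sum_{i=1}^p\aaa_i\otimes\bbb_i\otimes\ccc_i$, so that $X_k=AD_kB^\top$ with $B=(\bbb_1,\dots,\bbb_p)$ and $D_k=\diag((\ccc_1)_k,\dots,(\ccc_p)_k)$, i.e.\ $\fl_2(X)=GB^\top$. Reading the top $p$ rows gives $\fl_2(X)^{\le p}=NB^\top$, which forces $N$ and $B$ invertible; then $\sigma(X)={}^{p<}\fl_2(X)\,(\fl_2(X)^{\le p})^{-1}={}^{p<}G\,N^{-1}$, and substituting into the block identity yields $W_1AD_1+\cdots+W_mAD_m=O$, which is \eqref{eq:matrix}, with $N$ nonsingular. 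Conversely, for \ref{eq:perfect}$\Rightarrow$\ref{eq:rank}, given $A,D_1,\dots,D_m$ satisfying \eqref{eq:matrix} with $N$ nonsingular, the block identity becomes $\sigma(X)N={}^{p<}G$; setting $B^\top:=N^{-1}\fl_2(X)^{\le p}$, which is invertible as a product of two invertible $p\times p$ matrices, one checks that $GB^\top$ agrees with $\fl_2(X)$ on the first $p$ rows (immediate) and on the last $u$ rows (precisely the identity $\sigma(X)N={}^{p<}G$, using $\sigma(X)={}^{p<}\fl_2(X)\,(\fl_2(X)^{\le p})^{-1}$). Hence $\fl_2(X)=GB^\top$, so $X_k=AD_kB^\top$, and reading off columns displays $X$ as a sum of $p$ rank-one tensors; thus $\rank X\le p$, and with the lower bound above $\rank X=p$.

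Finally, \ref{eq:perfect}$\Leftrightarrow$\ref{eq:inM} is a restatement: the $j$-th column of $W_1AD_1+\cdots+W_mAD_m$ is $(d_{j1}W_1+\cdots+d_{jm}W_m)\aaa_j$, so \eqref{eq:matrix} holds if and only if $(d_{j1}W_1+\cdots+d_{jm}W_m)\aaa_j=\zerovec$ for $1\le j\le p$, which together with nonsingularity of $N$ is exactly the membership $\iota(\sigma(X))\in\MMMMM$. The one genuinely delicate point — and the main obstacle — is the bookkeeping around the internal block boundary at position $n-l$ of the $(m-1)$-st width-$n$ block (forced by $p=n(m-2)+(n-l)$): one has to keep the splitting of $A$ into $A^{\le n-l}$ and ${}^{n-l<}A$, and the distribution of $-E_u$ between $W_{m-1}$ and $W_m$, consistent across the three matrices $\fl_2(X)$, $G$, and $\iota(\sigma(X))$. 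Once the displayed block identity is verified, both implications are routine.
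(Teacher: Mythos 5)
Your proof is correct and follows essentially the same route as the paper: the central block identity $\sum_k W_kAD_k=\sigma(X)N-\bigl(\begin{smallmatrix}{}^{n-l<}A\,D_{m-1}\\ AD_m\end{smallmatrix}\bigr)$ is exactly the computation the paper performs (the paper just phrases it via the normalized slices $S_k=X_k(\fl_2(X)^{\leq p})^{-1}$ and the matrix $Q=N^{-1}$ rather than via $G$ and $B$), and both directions then proceed identically, as does the definitional unwinding of \ref{eq:perfect}$\Leftrightarrow$\ref{eq:inM}.
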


\begin{proof}
It holds that $\rank X\geq p$, since $\fl_2(X)^{\leq p}$ has rank $p$.
Put $(S_1;\ldots;S_m)=X(\fl_2(X)^{\leq p})^{-1}$
Then $\rank X=\rank (S_1;\ldots;S_m)$ and
$$
\begin{pmatrix} {}^{n-l<}S_{m-1}\\ S_m\end{pmatrix}=
(W_1,W_2,\ldots,W_{m-2},(W_{m-1})_{\leq n-l}).
$$

\ref{eq:rank} $\Rightarrow$ \ref{eq:perfect}:
Since $\rank (S_1;\ldots;S_m)=p$, there are an $n\times p$-matrix $A$,
a $p\times p$-matrix $Q$, and $p\times p$ diagonal matrices
$D_1, \ldots, D_m$ such that
$$
AD_kQ=S_k\quad\mbox{for $k=1, \ldots, m$.}
$$

Since
$$
NQ=
\begin{pmatrix}
AD_1\\ \vdots \\ AD_{m-2} \\ A^{\leq n-l}D_{m-1}
\end{pmatrix}
Q=
\begin{pmatrix}
S_1\\\vdots\\ S_{m-2}\\ (S_{m-1})^{\leq n-l}
\end{pmatrix}
=E_p,
$$
we see that $N$ and $Q$ are nonsingular and $Q^{-1}=N$.
Since 
$$
\begin{pmatrix} {}^{n-l<}S_{m-1}\\ S_m\end{pmatrix}=
\sigma(X)=
(W_1,W_2,\ldots,W_{m-2},(W_{m-1})_{\leq n-l}),
$$
and $AD_k=S_kN$ for $k=m-1$, $m$, we see that

\begin{eqnarray*}
&&O_{u\times p}\\
&=&
\begin{pmatrix} {}^{n-l<}S_{m-1}\\ S_m\end{pmatrix}N
-\begin{pmatrix} {}^{n-l<}AD_{m-1}\\ AD_m\end{pmatrix} \\
&=& W_1AD_1+\cdots+W_{m-2}AD_{m-2} \\
&& \qquad +(W_{m-1})_{\leq n-l}A^{\leq n-l}D_{m-1}
+\begin{pmatrix}-E_l\\O\end{pmatrix}{}^{n-l<}AD_{m-1}
+\begin{pmatrix}O\\-E_n\end{pmatrix}AD_m\\
&=&W_1AD_1+\cdots +W_{m-2}AD_{m-2}+W_{m-1}AD_{m-1}+W_mAD_m.
\end{eqnarray*}
Therefore the equation \eqref{eq:matrix} holds.

\ref{eq:perfect} $\Rightarrow$ \ref{eq:rank}:
Set $Q=N^{-1}$.
Then, since $NQ=E_p$, we see that 
$$
AD_kQ=S_k\quad\mbox{$1\leq k\leq m-2$}
$$
and
$$
A^{\leq n-l}D_{m-1}Q=S_{m-1}^{\leq n-l}.
$$
Furthermore, since 
\begin{eqnarray*}
&& W_1AD_1+\cdots+W_{m-2}AD_{m-2} \\
&&\qquad +(W_{m-1})_{\leq n-l}A^{\leq n-l}D_{m-1}
+\begin{pmatrix}-E_l\\O\end{pmatrix}{}^{n-l<}AD_{m-1}
+\begin{pmatrix}O\\-E_n\end{pmatrix}AD_m\\
&=&W_1AD_1+\cdots+W_mAD_m\\
&=&O_{u\times p},
\end{eqnarray*}
we see that 
$$
\begin{pmatrix} {}^{n-l<}S_{m-1}\\ S_m\end{pmatrix}N
{=\begin{pmatrix}E_l\\ O\end{pmatrix}{}^{n-l<}AD_{m-1}+\begin{pmatrix}O\\ E_n\end{pmatrix}AD_m}.
$$
Thus
$$
\begin{pmatrix}{}^{n-l<}AD_{m-1}\\AD_{m}\end{pmatrix}Q=
\begin{pmatrix} {}^{n-l<}S_{m-1}\\ S_m\end{pmatrix}
$$
and we see that
$AD_kQ=S_k$ for $k=m-1$, $m$.
Therefore, $\rank X=\rank (S_1;\ldots;S_m)\leq p$
and we see \ref{eq:rank}.

Finally it is easy to see that \ref{eq:perfect} $\Leftrightarrow$ \ref{eq:inM}.
\end{proof}

Since $\MMMMM\subset\CCCCC$ by Lemma~\ref{lem:m sub c}, we see the following:

\begin{prop}\mylabel{prop:afr}
For $X\in \msVVV$, if $\rank X=p$, then $\iota(\sigma(X))\not\in \fl_1(\msAAA^{u\times n\times m})$.
\end{prop}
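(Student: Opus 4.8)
The plan is to read this off immediately from Theorem~\ref{thm:equiv cond} together with Lemma~\ref{lem:m sub c}. First I would apply the implication \ref{eq:rank}~$\Rightarrow$~\ref{eq:inM} of Theorem~\ref{thm:equiv cond}: since by hypothesis $X\in\msVVV$ and $\rank X=p$, we get $\iota(\sigma(X))\in\MMMMM$. Next, Lemma~\ref{lem:m sub c} gives $\MMMMM\subset\CCCCC$, so $\iota(\sigma(X))\in\CCCCC$. Finally, unwinding the definition $\CCCCC=\{W\in\RRR^{u\times nm}\mid W\notin\fl_1(\msAAA^{u\times n\times m})\}$ yields $\iota(\sigma(X))\notin\fl_1(\msAAA^{u\times n\times m})$, which is exactly the assertion.

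Thus the proposition is a formal corollary and presents no genuine obstacle of its own; the real content lies in the two cited results. In Theorem~\ref{thm:equiv cond} the condition $\rank X=p$ is rewritten, via the factorisation $X(\fl_2(X)^{\leq p})^{-1}$ and the normalisation built into $\sigma$, as the existence of an $n\times p$ matrix $A$ and diagonal matrices $D_1,\dots,D_m$ solving the matrix equation \eqref{eq:matrix} with the matrix $N$ of \eqref{eq:Qinv} nonsingular --- which is precisely membership of $\iota(\sigma(X))$ in $\MMMMM$. In Lemma~\ref{lem:m sub c} one observes that nonsingularity of \eqref{eqn:def of m} forces $\aaa_1\neq\zerovec$ and at least one $d_{1k}$ with $1\le k\le m-1$ nonzero, so that $d_{11}W_1+\cdots+d_{1m}W_m$ is a singular matrix annihilating the nonzero vector $\aaa_1$; hence $(W_1;\ldots;W_m)$ is not \afcr.

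Since both of these facts are already in hand, it suffices to cite them; if a self-contained proof were wanted, one would simply inline the two arguments above.
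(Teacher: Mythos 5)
Your argument is exactly the paper's: the proposition is stated immediately after Lemma~\ref{lem:m sub c} as a consequence of that lemma together with the implication \ref{eq:rank}~$\Rightarrow$~\ref{eq:inM} of Theorem~\ref{thm:equiv cond}, and unwinding the definition of $\CCCCC$ finishes it. Correct, and same route.
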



\section{Contribution of \afcr\ property}
\mylabel{sec:cor}

Let $m$, $n$ and $p$ be integers with 
$3\leq m\leq n$ and $(m-1)(n-1)+1\leq p\leq (m-1)n$.
We set $u=nm-p$
and $t=n$ and 
use the results of Sections \ref{sec:im} and \ref{sec:mpo}.
Note $v=u-n+1=(m-1)n-p+1$ in the notation of Definition \ref{def:unmtv}.

{It is known that the generic rank $\grank(n,p,m)$ of $n\times p\times m$ tensors over 
$\CCC$ is equal to $p$ (\cite[Theorem 3.1]{Catalisano-Geramita-Gimigliano:2002} 
or \cite[Theorem 2.4 and Remark 2.5]{Catalisano-Geramita-Gimigliano:2008}) 
and it is also equal to the minimal typical rank of $n\times p\times m$ tensors over $\RRR$.
Thus if we discuss the plurality of typical ranks, 
it is enough to consider whether there exists a typical rank that is 
greater than $p$ or not.}

\begin{definition}
\rm
\mylabel{def:a'}
We set $\AAAAA\define \iota^{-1}(\fl_1(\msAAA^{u\times n\times m}))\subset\RRR^{u\times p}$\index{$\AAAAA\define \iota^{-1}(\fl_1(\msAAA^{u\times n\times m}))$}, 
where $\iota$ and $\msAAA^{u\times n\times m}$ are defined in Definition \ref{def:msVVV sigma etc}.
\end{definition}

\begin{lemma}
\mylabel{lem:a' rank}
If $Y\in\msVVV^{n\times p\times m}$ and $\sigma(Y)\in \AAAAA$, then 
$\rank Y>p$.
\end{lemma}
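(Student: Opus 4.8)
The plan is to derive Lemma~\ref{lem:a' rank} as a formal consequence of Proposition~\ref{prop:afr} together with the trivial lower bound $\rank Y\geq p$ for tensors in $\msVVV^{n\times p\times m}$; essentially all the substantive content has already been packaged into Theorem~\ref{thm:equiv cond} (the equivalence $\rank X=p\iff\iota(\sigma(X))\in\MMMMM$) and Lemma~\ref{lem:m sub c} ($\MMMMM\subset\CCCCC$), so no new computation is needed here.

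First I would unwind the hypothesis. By Definition~\ref{def:a'}, $\AAAAA=\iota^{-1}(\fl_1(\msAAA^{u\times n\times m}))$, so $\sigma(Y)\in\AAAAA$ is by definition the same as $\iota(\sigma(Y))\in\fl_1(\msAAA^{u\times n\times m})$. Next I would record the lower bound: for any $Y\in\msVVV^{n\times p\times m}$, writing $Y=\sum_{i=1}^r\aaa_i\otimes\bbb_i\otimes\ccc_i$ with $r=\rank Y$ gives $\fl_2(Y)=\sum_{i=1}^r(\ccc_i\otimes\aaa_i)\bbb_i\transpose$, a sum of $r$ rank one matrices, whence $\rank\fl_2(Y)\leq r$; but $\fl_2(Y)^{\leq p}$ is nonsingular by the definition of $\msVVV$, so $\rank\fl_2(Y)=p$ and therefore $r\geq p$ (this is exactly the observation opening the proof of Theorem~\ref{thm:equiv cond}).

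Finally I would argue by contradiction: if $\rank Y=p$, then since $Y\in\msVVV$ Proposition~\ref{prop:afr} yields $\iota(\sigma(Y))\notin\fl_1(\msAAA^{u\times n\times m})$, contradicting the reformulated hypothesis from the first step. Hence $\rank Y\neq p$, and combined with $\rank Y\geq p$ this gives $\rank Y>p$. There is no real obstacle internal to this proof; the only thing to keep in mind is that it leans entirely on the upstream results (Theorem~\ref{thm:equiv cond}, Lemma~\ref{lem:m sub c}, Proposition~\ref{prop:afr}), so the lemma is effectively a corollary and should be presented as such.
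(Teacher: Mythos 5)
Your argument is exactly the paper's proof, just written out in full: the paper also derives the lemma from the lower bound $\rank Y\geq p$ for $Y\in\msVVV^{n\times p\times m}$ combined with the contrapositive of Proposition~\ref{prop:afr}. The proposal is correct and adds nothing beyond making the definition-unwinding of $\AAAAA$ and the flattening bound explicit.
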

\begin{proof}
This follows from the fact that 
$\rank Y\geq p$ if $Y\in\msVVV^{n\times p\times m}$ and
Proposition \ref{prop:afr}.
\end{proof}


\begin{thm}
\mylabel{thm:afr exist}
If $\msAAA^{u\times n\times m}\neq \emptyset$, then
there are plural typical ranks of $n\times p\times m$ tensors 
over $\RRR$.
\end{thm}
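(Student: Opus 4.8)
The plan is to produce a nonempty open subset of $\RRR^{n\times p\times m}$ on which every tensor has rank strictly greater than $p$, and then to deduce plurality from Theorem~\ref{thm:fri 7.1}. Recall that $\grank(n,p,m)=p$ is the minimal typical rank over $\RRR$, so $p$ itself is already a typical rank; thus it suffices to exhibit a second, larger one. If $\msUUU\subset\RRR^{n\times p\times m}$ is a nonempty open set with $\rank Y>p$ for all $Y\in\msUUU$, then $\msUUU$ has dimension $npm$ and hence is not contained in the lower-dimensional set $\RRR^{n\times p\times m}\setminus\bigcup_{i=1}^M\msOOO_i$ of Theorem~\ref{thm:fri 7.1}; consequently $\msUUU$ meets some $\msOOO_i$, and the common value $r_i$ of the rank on $\msOOO_i$ is a typical rank with $r_i>p$. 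Together with $p$ this yields at least two typical ranks.

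To build $\msUUU$, first I would show that $\AAAAA=\iota^{-1}(\fl_1(\msAAA^{u\times n\times m}))$ is a nonempty open subset of $\RRR^{u\times p}$. Openness is immediate: $\msAAA^{u\times n\times m}$ is open in $\RRR^{u\times n\times m}$ by Lemma~\ref{lem:afr open} (equivalently Corollary~\ref{cor:msa open}), $\fl_1$ is a homeomorphism, and $\iota$ is continuous, so $\AAAAA$ is open. For nonemptiness I would invoke the implication \ref{item:afr}~$\Rightarrow$~\ref{item:afr-sp} of Lemma~\ref{lem:equivalent condition for afr}: since $\msAAA^{u\times n\times m}\neq\emptyset$, there is a $u\times n\times m$ \afcr{} tensor $Y_0$ with ${}_{p<}\fl_1(Y_0)=-E_u$; putting $A_0=(\fl_1(Y_0))_{\leq p}$ we get $\fl_1(Y_0)=(A_0,-E_u)=\iota(A_0)$, whence $A_0\in\AAAAA$.

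Next I would set $\msUUU=\sigma^{-1}(\AAAAA)$, where $\sigma\colon\msVVV^{n\times p\times m}\to\RRR^{u\times p}$ is the map of Definition~\ref{def:msVVV sigma etc}. Since $\sigma$ is continuous and surjective by Proposition~\ref{prop:sigma open}, $\msUUU$ is nonempty and open in $\msVVV^{n\times p\times m}$; and since $\msVVV^{n\times p\times m}$, the locus where $\fl_2(T)^{\leq p}$ is nonsingular, is open in $\RRR^{n\times p\times m}$, the set $\msUUU$ is a nonempty open subset of $\RRR^{n\times p\times m}$. Finally, for $Y\in\msUUU$ we have $Y\in\msVVV^{n\times p\times m}$ and $\sigma(Y)\in\AAAAA$, so $\rank Y>p$ by Lemma~\ref{lem:a' rank}. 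This is the required open set, and the argument of the first paragraph then finishes the proof.

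There is no genuine obstacle here; the substance has already been carried by the preparatory results, above all Lemma~\ref{lem:a' rank} (which rests on Theorem~\ref{thm:equiv cond} and Lemma~\ref{lem:m sub c} via Proposition~\ref{prop:afr}) and Proposition~\ref{prop:sigma open}. The two points deserving a little care are: (i) the nonemptiness of $\AAAAA$ — an arbitrary \afcr{} tensor need not have its last $u$ columns equal to $-E_u$, so one must pass to the normalized representative provided by Lemma~\ref{lem:equivalent condition for afr} in order to land in the image of $\iota$; and (ii) the final step, where one must use the structure theorem~\ref{thm:fri 7.1} rather than the bare definition of a typical rank, since the rank function need not be constant on $\msUUU$.
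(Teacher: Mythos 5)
Your proof is correct and follows essentially the same route as the paper's: nonemptiness of $\AAAAA$ via the normalized representative from Lemma~\ref{lem:equivalent condition for afr}, openness from Lemma~\ref{lem:afr open}, pulling back along the open continuous surjection $\sigma$ to get a nonempty open set of tensors of rank $>p$ by Lemma~\ref{lem:a' rank}, and combining with the minimal typical rank $p$. Your explicit appeal to Theorem~\ref{thm:fri 7.1} to pass from ``nonempty open set on which the rank exceeds $p$'' to ``some typical rank exceeds $p$'' merely spells out a step the paper leaves implicit.
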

\begin{proof}
By Lemma \ref{lem:equivalent condition for afr},
we see that $\AAAAA\neq\emptyset$.
Since $\msAAA^{u\times n\times m}$ is an open subset of $\RRR^{u\times n\times m}$,
we see that $\AAAAA$ is an open subset of $\RRR^{u\times p}$.
{Moreover}, since $\sigma\colon\msVVV^{{n\times p\times m}}\to\RRR^{u\times p}$ is a surjective continuous map, we see that ${\sigma^{-1}(\AAAAA)}$ is a nonempty open subset of $\msVVV^{{n\times p\times m}}$.
Thus, there is a typical rank greater than $p$ by Lemma \ref{lem:a' rank}.

Since $p$ is a typical rank of $n\times p\times m$ tensors over $\RRR$,
we see that there are plural typical ranks of $n\times p\times m$ tensors
over $\RRR$.
\end{proof}

From now on until the end of this section, we assume
that $p\geq (m-1)(n-1)+2$.
Thus, $m\geq v+2$.

\begin{definition}\rm
Let $Y\in\RRR^{u\times n\times m}$
and let $\xxx=(x_1, \ldots, x_m)$ be a vector of indeterminates.
For $i_1$, \ldots, $i_{n-1}\in\{1,\ldots, u\}$, we set
$$
\psi_{i_1, \ldots, i_{n-1}}(\xxx,Y)\define
\begin{pmatrix}
(-1)^{n+1}[i_1, \ldots, i_{n-1}\mid 2, \ldots, n-1, n]_{M(\xxx,Y)}\\
(-1)^{n+2}[i_1, \ldots, i_{n-1}\mid 1,3, \ldots, n-1, n]_{M(\xxx,Y)}\\
\vdots\\
(-1)^{2n}[i_1, \ldots, i_{n-1}\mid 1, \ldots, n-2, n-1]_{M(\xxx,Y)}
\end{pmatrix}\in \RRR[\xxx]^n.
$$
For the definition
$[a_1, \ldots, a_t\mid b_1,\ldots,b_t]$, see Definition \ref{def:unmtv}.
We define $\hat\psi_{i_1,\ldots, i_{n-1}}\colon \RRR^{1\times m} \times 
\RRR^{u\times n\times m}\to \RRR[\xxx]^p$ 
by
$$
\hat\psi_{i_1,\ldots, i_{n-1}}(\xxx,Y)\define
\begin{pmatrix}x_1\psi_{i_1,\ldots, i_{n-1}}(\xxx,Y)\\ 
x_2\psi_{i_1,\ldots, i_{n-1}}(\xxx,Y)\\ \vdots\\
x_{m-1}\psi_{i_1,\ldots, i_{n-1}}(\xxx,Y)\end{pmatrix}^{\leq p}
\in\RRR[\xxx]^p.
$$
We also define the 
$\RRR$-vector space $U(Y)$ by
$$
U(Y)\define\langle\hat\psi_{i_1,\ldots, i_{n-1}}(\uuu,Y)\mid 
\begin{array}{l}
\uuu\in\VVV(I_n(M(\xxx,Y))), \\
i_1, \ldots, i_{n-1}\in\{1,\ldots, u\}
\end{array}
\rangle
\subset\RRR^p.$$

For 
$\ccc=(c_{11},\ldots,c_{n1},c_{12},\ldots,c_{n2},\ldots,c_{1m},\ldots,c_{nm})
\in\RRR^{1\times nm}$,
we set
$Z_k=\begin{pmatrix}Y_k\\ c_{1k}\ \cdots\ c_{nk}\end{pmatrix}$ for $1\leq k\leq m$,
$Z=(Z_1;\ldots;Z_m)\in\RRR^{(u+1)\times n\times m}$ and
$$
\begin{array}{l}
g_{i_1, \ldots, i_{n-1}}(\xxx,Y,\ccc)=[i_1, \ldots, i_{n-1},u+1]_{M(\xxx,Z)}
\end{array}$$
for any $i_1$, \ldots, $i_{n-1}\in\{1,\ldots, u\}$.
For the definition
$[i_1, \ldots, i_{n-1},u+1]_{M(\xxx,Z)}$, see
Definition \ref{def:unmtv}.
\end{definition}


\begin{lemma}
\mylabel{lem:span equiv}
Suppose that $Y\in \RRR^{u\times n\times m}$. Then
the following claims are equivalent.
\begin{enumerate}
\item
\mylabel{item:uy}
$\dim U(Y)=p$.
\item
\mylabel{item:guyc0}
If $\ccc\in\RRR^{1\times nm}$ satisfies 
the following conditions, then $\ccc=\zerovec$.
\begin{itemize}
\item[$(\ast)$]
${}_{p<}\ccc=\zerovec$ and
\item[$(\ast\ast)$]
$g_{i_1, \ldots i_{n-1}}(\uuu,Y,\ccc)=0$ for any $\uuu\in\VVV(I_n(M(\xxx,Y)))$
and any $i_1$, \ldots, $i_{n-1}\in\{1,\ldots, u\}$.
\end{itemize}
\end{enumerate}
\end{lemma}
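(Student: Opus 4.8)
The plan is to observe that, once $g_{i_1,\dots,i_{n-1}}$ is expanded as a determinant, both statements say exactly that the orthogonal complement of $U(Y)$ in $\RRR^p$ is trivial. The starting point is the Laplace expansion identity: for a vector of indeterminates $\xxx$ and $\ccc=(c_{11},\dots,c_{n1},\dots,c_{1m},\dots,c_{nm})\in\RRR^{1\times nm}$, write $\ccc_k=(c_{1k},\dots,c_{nk})$, so that $\ccc=(\ccc_1,\dots,\ccc_m)$ in blocks of length $n$. Expanding the $n\times n$ minor $[i_1,\dots,i_{n-1},u+1]_{M(\xxx,Z)}$ along its last row — the row of $M(\xxx,Z)$ indexed by $u+1$, whose $j$-th entry is $\sum_k x_k c_{jk}$ — the cofactor of the $(n,j)$-entry is $(-1)^{n+j}$ times the $(n-1)$-minor of $M(\xxx,Y)$ on rows $i_1,\dots,i_{n-1}$ with column $j$ omitted, which is precisely the $j$-th component of $\psi_{i_1,\dots,i_{n-1}}(\xxx,Y)$. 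Hence
\[
g_{i_1,\dots,i_{n-1}}(\xxx,Y,\ccc)=\sum_{k=1}^{m}x_k\,\bigl(\ccc_k\cdot\psi_{i_1,\dots,i_{n-1}}(\xxx,Y)\bigr),
\]
where $\cdot$ is the standard inner product on $\RRR^n$; this holds for all $i_1,\dots,i_{n-1}\in\{1,\dots,u\}$.

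Next I would decode condition $(\ast)$. Since $p=(m-1)n-l$, the first $p$ coordinates of $\ccc$ are exactly those of $\ccc_1,\dots,\ccc_{m-2}$ together with the first $n-l$ coordinates of $\ccc_{m-1}$; therefore ${}_{p<}\ccc=\zerovec$ holds if and only if $\ccc_m=\zerovec$ and the last $l$ coordinates of $\ccc_{m-1}$ vanish. Substituting a point $\uuu=(u_1,\dots,u_m)\in\VVV(I_n(M(\xxx,Y)))$ for $\xxx$ in the identity above and imposing $(\ast)$ kills the $k=m$ term and truncates the $k=m-1$ term, so that
\[
g_{i_1,\dots,i_{n-1}}(\uuu,Y,\ccc)=\ddd\cdot\hat\psi_{i_1,\dots,i_{n-1}}(\uuu,Y),
\]
where $\ddd\in\RRR^p$ is the vector consisting of the first $p$ coordinates of $\ccc$, i.e. $\ccc_1,\dots,\ccc_{m-2}$ followed by the length-$(n-l)$ truncation of $\ccc_{m-1}$. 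This matches the truncation $(\cdot)^{\le p}$ built into the definition of $\hat\psi$. The assignment $\ccc\mapsto\ddd$ is a linear isomorphism from $\{\ccc\in\RRR^{1\times nm}\mid{}_{p<}\ccc=\zerovec\}$ onto $\RRR^p$, and $\ccc=\zerovec$ if and only if $\ddd=\zerovec$.

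Finally I would conclude. Under this isomorphism, condition $(\ast\ast)$ for a $\ccc$ satisfying $(\ast)$ says precisely that $\ddd$ is orthogonal to every generator $\hat\psi_{i_1,\dots,i_{n-1}}(\uuu,Y)$ of $U(Y)$, that is, $\ddd\in U(Y)^{\perp}$. Thus statement \ref{item:guyc0} is equivalent to $U(Y)^{\perp}=\{\zerovec\}$; and since $U(Y)\subseteq\RRR^p$, this is equivalent to $\dim U(Y)=p$, which is statement \ref{item:uy}. The only place that needs genuine care is the bookkeeping in the middle step — keeping the signs straight in the cofactor expansion, and correctly lining up the block $\ccc_{m-1}^{\le n-l}$ picked out by ${}_{p<}\ccc=\zerovec$ with the truncation in $\hat\psi$ and with the first $n-l$ rows of $\psi_{i_1,\dots,i_{n-1}}(\uuu,Y)$; everything else is routine linear algebra.
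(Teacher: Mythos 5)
Your proposal is correct and follows essentially the same route as the paper: the paper's (much terser) proof likewise identifies $\ccc$ satisfying $(\ast)$ with a vector $\ddd\in\RRR^p$ via $\ccc=(\ddd\transpose,\zerovec)$, notes that the inner product of $\hat\psi_{i_1,\ldots,i_{n-1}}(\uuu,Y)$ with $\ddd$ equals $g_{i_1,\ldots,i_{n-1}}(\uuu,Y,(\ddd\transpose,\zerovec))$, and concludes that \ref{item:guyc0} says exactly $U(Y)^{\perp}=\{\zerovec\}$. Your Laplace-expansion computation and the block bookkeeping for ${}_{p<}\ccc=\zerovec$ simply make explicit the inner-product identity the paper asserts without proof.
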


\begin{proof}
The vector $\ddd\in\RRR^{p}$ is perpendicular to $U(Y)$ if and only if $\ddd$ is perpendicular to
$\hat\psi_{i_1, \ldots, i_{n-1}}(\uuu,Y)$ for any $\uuu\in\VVV(I_n(M(\xxx,Y)))$
and any $i_1, \ldots, i_{n-1}$.
Since the inner product of $\hat\psi_{i_1, \ldots, i_n}(\uuu,Y)$ with $\ddd$
is 
$ g_{i_1, \ldots, i_{n-1}}(\uuu,Y,(\ddd\transpose,\zerovec))$, 
the result follows.
\end{proof}

Next we show the following result.
For the definition of $\MMMMM$, see Definition~\ref{def:mgc}.

\begin{lemma}
\mylabel{lem:span suf}
If $\dim U(Y)=p$, then $\fl_1(Y)\in \MMMMM$.
\end{lemma}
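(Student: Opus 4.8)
The plan is to produce, from a basis of $U(Y)$, the data $A$, $D_1,\ldots,D_m$ that witnesses $\fl_1(Y)\in\MMMMM$ (Definition~\ref{def:mgc}). The whole argument rests on one algebraic identity, after which everything is bookkeeping.

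First I would establish the identity underlying the very definition of $U(Y)$: for every $\uuu=(u_1,\ldots,u_m)\in\VVV(I_n(M(\xxx,Y)))$ and all $i_1,\ldots,i_{n-1}\in\{1,\ldots,u\}$,
\[
M(\uuu,Y)\,\psi_{i_1,\ldots,i_{n-1}}(\uuu,Y)=\zerovec .
\]
Indeed, writing $M(\uuu,Y)=(m_{kj})$, the $k$-th entry of the left-hand side is $\sum_{j=1}^{n}m_{kj}(-1)^{n+j}[i_1,\ldots,i_{n-1}\mid 1,\ldots,\widehat{j},\ldots,n]_{M(\uuu,Y)}$, which by Laplace expansion along the last row equals $[i_1,\ldots,i_{n-1},k]_{M(\uuu,Y)}$; this is $\pm$ an $n\times n$ minor of $M(\uuu,Y)$, hence vanishes because $\uuu\in\VVV(I_n(M(\xxx,Y)))$. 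Next I would note, by directly matching the definitions of $\hat\psi_{i_1,\ldots,i_{n-1}}$ and of $\phi$ (Definition~\ref{def:phi}) together with the identity $p=(m-2)n+(n-l)$, that
\[
\hat\psi_{i_1,\ldots,i_{n-1}}(\uuu,Y)=\phi\bigl(\uuu,\ \psi_{i_1,\ldots,i_{n-1}}(\uuu,Y)\bigr)
\]
for every $\uuu\in\RRR^{1\times m}$.

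Finally, since $\dim U(Y)=p$ and $U(Y)\subseteq\RRR^{p}$ is spanned by the vectors $\hat\psi_{i_1,\ldots,i_{n-1}}(\uuu,Y)$ with $\uuu\in\VVV(I_n(M(\xxx,Y)))$, I would choose $p$ of these generators forming a basis of $\RRR^{p}$, say $\hat\psi_{i^{(j)}_1,\ldots,i^{(j)}_{n-1}}(\uuu^{(j)},Y)$ for $j=1,\ldots,p$, and set $\ddd_j=\uuu^{(j)}=(d_{j1},\ldots,d_{jm})$, $\aaa_j=\psi_{i^{(j)}_1,\ldots,i^{(j)}_{n-1}}(\uuu^{(j)},Y)\in\RRR^{n}$. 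By the first identity, $(d_{j1}Y_1+\cdots+d_{jm}Y_m)\aaa_j=M(\ddd_j,Y)\aaa_j=\zerovec$ for each $j$, and the vectors $\phi(\ddd_j,\aaa_j)=\hat\psi_{i^{(j)}_1,\ldots,i^{(j)}_{n-1}}(\uuu^{(j)},Y)$ are linearly independent. Putting $A=(\aaa_1,\ldots,\aaa_p)$ and $D_k=\diag(d_{1k},\ldots,d_{pk})$ for $1\le k\le m$, the $j$-th column of the $p\times p$ matrix~\eqref{eqn:def of m} equals $\phi(\ddd_j,\aaa_j)$, so that matrix is nonsingular; this, together with the relations $(d_{j1}Y_1+\cdots+d_{jm}Y_m)\aaa_j=\zerovec$, is exactly the condition defining $\MMMMM$, whence $\fl_1(Y)=(Y_1,\ldots,Y_m)\in\MMMMM$. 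The only slightly delicate point is keeping the sign conventions in $\psi_{i_1,\ldots,i_{n-1}}$ straight when identifying $M(\uuu,Y)\psi_{i_1,\ldots,i_{n-1}}(\uuu,Y)$ with the column of $n$-minors $\bigl([i_1,\ldots,i_{n-1},k]_{M(\uuu,Y)}\bigr)_{k=1}^{u}$; since only their vanishing is used, the precise signs are harmless, and I anticipate no real obstacle beyond this.
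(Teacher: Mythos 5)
Your proposal is correct and follows essentially the same route as the paper: pick $p$ of the generators $\hat\psi_{i_1,\ldots,i_{n-1}}(\uuu,Y)$ forming a basis of $\RRR^p$, assemble $A$ from the corresponding $\psi$-vectors and $D_k$ from the entries of the $\uuu$'s, observe that the matrix \eqref{eqn:def of m} is exactly the matrix of chosen $\hat\psi$-columns (hence nonsingular), and kill $M(\uuu_j,Y)\aaa_j$ because its entries are $n$-minors of $M(\uuu_j,Y)$, which vanish since $\uuu_j\in\VVV(I_n(M(\xxx,Y)))$. The only cosmetic difference is that you make the Laplace-expansion identity and the relation $\hat\psi=\phi(\uuu,\psi)$ explicit, which the paper leaves implicit.
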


\begin{proof}
Set $Y=(Y_1;\ldots;Y_m)$.
Suppose that $\dim U(Y)=p$.
Then there are $\uuu_1, \ldots, \uuu_p\in\VVV(I_n(M(\xxx,Y)))$
and $t_{11}$, \ldots, $t_{1,n-1}$, \ldots, $t_{p1}$, \ldots, $t_{p,n-1}$ 
such that
$$\hat\psi_{t_{11},\ldots,t_{1,n-1}}(\uuu_1,Y), 
\ldots, \hat\psi_{t_{p1}, \ldots, t_{p,n-1}}(\uuu_p,Y)$$
are linearly independent over $\RRR$.
Set
$\uuu_j=(u_{j1}, \ldots, u_{jm})$ for $1\leq j\leq p$,
$D_k=\diag(u_{1k}, \ldots, u_{pk})$ for $1\leq k\leq m$ and
$$A=(\psi_{t_{11}, \ldots, t_{1,n-1}}(\uuu_1,Y), \ldots, 
\psi_{t_{p1}, \ldots, t_{p,n-1}}(\uuu_p,Y)).$$
Then,
$$
\begin{pmatrix}
AD_1\\ \vdots\\ AD_{m-2}\\ A^{\leq n-l}D_{m-1}
\end{pmatrix}
=\begin{pmatrix}
AD_1\\ \vdots\\ AD_{m-2}\\ AD_{m-1}
\end{pmatrix}^{\leq p}
=(\hat\psi_{t_{11}, \ldots, t_{1,n-1}}(\uuu_1,Y),\ldots, \hat\psi_{t_{p1}, \ldots, t_{p,n-1}}(\uuu_p,Y))
$$
is a nonsingular matrix and
$$
(u_{j1}Y_1+\cdots+u_{jm}Y_m)\psi_{t_{j1},\ldots,t_{j,n-1}}(\uuu_{j},Y)
=\begin{pmatrix}
[1,t_{j1}, \ldots, t_{j,n-1}]_{M(\uuu_j,Y)}\\
[2,t_{j1}, \ldots, t_{j,n-1}]_{M(\uuu_j,Y)}\\
\vdots\\
[u,t_{j1}, \ldots, t_{j,n-1}]_{M(\uuu_j,Y)}
\end{pmatrix}
=\zerovec
$$
since $I_n(M(\uuu_j,Y))=(0)$ for $1\leq j\leq p$.
\end{proof}


\begin{definition}
\rm
Set
$\msUUU\define\{Y\in\RRR^{u\times n\times m}\mid
{}_{p<}\fl_1(Y)$ is nonsingular$\}$,
$\msOOO_3\define\msUUU\cap\msQQQ\cap\msQQQ'\cap
\tilde\msPPP_n
=\msOOO_1\cap\msUUU\cap\msQQQ'$\index{$\msOOO_3=\msUUU\cap\msQQQ\cap\msQQQ'\cap
\tilde\msPPP_n$}
and
$\msOOO_4\define\msUUU\cap\msQQQ\cap\msQQQ'\cap\msAAA^{u\times n\times m}
=\msOOO_2\cap\msUUU\cap\msQQQ'$\index{$\msOOO_4=\msUUU\cap\msQQQ\cap\msQQQ'\cap\msAAA^{u\times n\times m}$},
where $\msQQQ$, $\msQQQ'$ and 
$\tilde\msPPP_n$
are the ones defined in
Definitions \ref{def:msqqq}, \ref{def:msqqq'}, and \ref{def:tilde p}
and $\msOOO_1$ and $\msOOO_2$ are the ones in Theorem \ref{thm:it real} {under $t=n$}.
Define ${\nu}\colon\msUUU\to\RRR^{u\times p}$\index{$\nu\colon\msUUU\to\RRR^{u\times p}$} as
${\nu}(Y)\define -({}_{p<}\fl_1(Y))^{-1}\fl_1(Y)_{\leq p}$
for $i=1, 2$,
where $\sigma$ is the one defined in Definition \ref{def:msVVV sigma etc}.
Set $\OOOOO_i={\nu}(\msOOO_{i+2})\subset\RRR^{u\times p}$%
\index{$\OOOOO_i={\nu}(\msOOO_{i+2})$} 
{and} $\msTTT_i=\sigma^{-1}(\OOOOO_i){\subset\msVVV^{n\times p\times m}}$%
\index{$\msTTT_i=\sigma^{-1}(\OOOOO_i)$}
for $i=1, 2$. 
\end{definition}

The following fact is immediately verified.

\begin{lemma}
$\iota(\nu(Y))=\fl_1(-({}_{p<}\fl_1(Y))^{-1}Y)$.
\end{lemma}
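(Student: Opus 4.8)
The plan is simply to unwind the three definitions involved and to reduce the identity to a block-column computation. First I would set $P\define({}_{p<}\fl_1(Y))^{-1}$; this is a well-defined element of $\glin(u,\RRR)$ because $u+p=nm$ forces ${}_{p<}\fl_1(Y)$ to be a $u\times u$ matrix and the hypothesis $Y\in\msUUU$ says it is nonsingular. The one elementary fact I would record is that flattening along the first mode commutes with left multiplication by a $u\times u$ matrix: writing $Y=(Y_1;\ldots;Y_m)$, the tensor $-PY$ equals $(-PY_1;\ldots;-PY_m)$, hence
\[
\fl_1\bigl(-PY\bigr)=(-PY_1,\ldots,-PY_m)=-P\,(Y_1,\ldots,Y_m)=-P\,\fl_1(Y).
\]

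Next I would split the $u\times nm$ matrix $\fl_1(Y)$ into its first $p$ columns $\fl_1(Y)_{\leq p}$ and its last $nm-p=u$ columns ${}_{p<}\fl_1(Y)$, so that $-P\,\fl_1(Y)=\bigl(-P\,\fl_1(Y)_{\leq p},\ -P\,{}_{p<}\fl_1(Y)\bigr)$, using that matrix multiplication distributes over horizontal concatenation of column blocks. By the choice of $P$ the second block is $-P\,{}_{p<}\fl_1(Y)=-E_u$, while the first block is $-({}_{p<}\fl_1(Y))^{-1}\fl_1(Y)_{\leq p}$, which is exactly $\nu(Y)$ by its definition. Therefore $\fl_1(-PY)=(\nu(Y),\,-E_u)$, and since $\iota$ sends a matrix $A$ to $(A,-E_u)$, the right-hand side equals $\iota(\nu(Y))$, which is the assertion.

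There is essentially no genuine obstacle here: every step is a substitution of a definition together with the bilinearity of the tensor--matrix action and the distributivity of matrix multiplication over block columns. The only point that truly uses the hypothesis is the invertibility of ${}_{p<}\fl_1(Y)$, which is precisely the defining condition of $\msUUU$, so the statement is meaningful and the computation goes through verbatim.
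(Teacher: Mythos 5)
Your proof is correct and is exactly the routine verification the paper has in mind (the paper states the lemma as ``immediately verified'' and gives no proof): one unwinds the definitions of $\fl_1$, $\nu$ and $\iota$, uses that left multiplication by $P=({}_{p<}\fl_1(Y))^{-1}$ commutes with the flattening and with taking the first $p$ and last $u$ column blocks, and notes that $-P\,{}_{p<}\fl_1(Y)=-E_u$. Nothing further is needed.
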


By the same way as Theorem \ref{thm:it real}
\ref{item:msooo open},
\ref{item:msooo dense} and
\ref{item:closure o1},
we see the following fact.

\begin{lemma}
\mylabel{lem:o' property}
Then the followings hold.
\begin{enumerate}
\item
\mylabel{item:msooo' open}
$\msOOO_3$ and $\msOOO_4$ are disjoint open subsets of $\RRR^{u\times n\times m}$ and $\msOOO_3$ is nonempty.
\item
\mylabel{item:msooo' dense}
$\msOOO_3\cup\msOOO_4$ is a dense subset of $\RRR^{u\times n\times m}$.
\item
\mylabel{item:closure o'1}
$\overline{\msOOO_3}=\msCCC^{u\times n\times m}=\RRR^{u\times n\times m}\setminus\msAAA^{u\times n\times m}$.
\end{enumerate}
\end{lemma}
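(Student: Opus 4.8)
The plan is to deduce this directly from Theorem~\ref{thm:it real}, applied with $t=n$ (so that $\msCCC_n^{u\times n\times m}=\msCCC^{u\times n\times m}$ and $\msAAA_n^{u\times n\times m}=\msAAA^{u\times n\times m}$), by intersecting $\msOOO_1$ and $\msOOO_2$ with the two auxiliary sets $\msUUU$ and $\msQQQ'$. First I would record that each of $\msUUU$ and $\msQQQ'$ is a dense open subset of $\RRR^{u\times n\times m}$: the set $\msUUU$ is the complement of the zero locus of the polynomial $\det({}_{p<}\fl_1(Y))$, which is visibly nonzero, while $\msQQQ'$ is Zariski open by the remark after Definition~\ref{def:msqqq'}; both contain $\Aind$ (for $\msQQQ'$ by Corollary~\ref{cor:lin indep r}), which is dense by Lemma~\ref{lem:aind dense}. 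Hence $\msUUU\cap\msQQQ'$ is a dense open subset of $\RRR^{u\times n\times m}$, and by definition $\msOOO_3=\msOOO_1\cap(\msUUU\cap\msQQQ')$ and $\msOOO_4=\msOOO_2\cap(\msUUU\cap\msQQQ')$.

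For part~\ref{item:msooo' open} I would observe that $\msOOO_3$ and $\msOOO_4$ are open as finite intersections of open sets, and disjoint because $\msOOO_1\cap\msOOO_2=\emptyset$ by Theorem~\ref{thm:it real}~\ref{item:msooo open}; moreover $\msOOO_3$ is nonempty because $\msOOO_1$ is a nonempty open set (again by Theorem~\ref{thm:it real}~\ref{item:msooo open}) and a nonempty open set must meet the dense set $\msUUU\cap\msQQQ'$. For part~\ref{item:msooo' dense} I would write $\msOOO_3\cup\msOOO_4=(\msOOO_1\cup\msOOO_2)\cap(\msUUU\cap\msQQQ')$; since $\msOOO_1\cup\msOOO_2$ is open (a union of open sets) and dense (Theorem~\ref{thm:it real}~\ref{item:msooo dense}), this is an intersection of two dense open sets and hence dense. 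For part~\ref{item:closure o'1} I would use the elementary topological fact that $\overline{O\cap D}=\overline{O}$ whenever $O$ is open and $D$ is dense (the nontrivial inclusion: if $x\in\overline O$, any neighborhood of $x$ meets $O$ in a nonempty open set, which then meets $D$, so $x\in\overline{O\cap D}$); applying it with $O=\msOOO_1$ and $D=\msUUU\cap\msQQQ'$ gives $\overline{\msOOO_3}=\overline{\msOOO_1}=\msCCC^{u\times n\times m}=\RRR^{u\times n\times m}\setminus\msAAA^{u\times n\times m}$ by Theorem~\ref{thm:it real}~\ref{item:closure o1}.

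I do not expect any substantive obstacle here: all of the geometric content is already carried by Theorem~\ref{thm:it real}, and what remains is the routine bookkeeping of passing to a smaller dense open subset. The only points needing a brief check are that $\det({}_{p<}\fl_1(Y))$ is not the zero polynomial (clear, as it is a determinant in pairwise distinct coordinate functions) and the two standard topological facts invoked above; both are immediate.
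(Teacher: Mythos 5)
Your proof is correct and matches the paper's intent: the paper's own justification is simply ``by the same way as Theorem~\ref{thm:it real} \ref{item:msooo open}, \ref{item:msooo dense} and \ref{item:closure o1},'' and your argument is exactly the fleshed-out version of that, namely intersecting $\msOOO_1$ and $\msOOO_2$ with the dense open set $\msUUU\cap\msQQQ'$ and using the standard facts that a dense open set meets every nonempty open set and that $\overline{O\cap D}=\overline{O}$ for $O$ open and $D$ dense.
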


\begin{lemma}
\mylabel{lem:c 0}
$\fl_1(\msOOO_3)\subset\MMMMM$.
\end{lemma}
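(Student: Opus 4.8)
The plan is to reduce the statement to Lemmas~\ref{lem:span suf} and~\ref{lem:span equiv}. By Lemma~\ref{lem:span suf} it is enough to prove $\dim U(Y)=p$ for every $Y\in\msOOO_3$, so fix such a $Y$; by Lemma~\ref{lem:span equiv} it then suffices to show that any $\ccc\in\RRR^{1\times nm}$ satisfying conditions $(\ast)$ and $(\ast\ast)$ of that lemma is $\zerovec$. Write $\ccc=\fl_1(\yyy)$, where $\yyy$ is the $1\times n\times m$ tensor whose $k$-th slice is $(c_{1k},\ldots,c_{nk})$, and let $Z=\begin{pmatrix}Y\\ \yyy\end{pmatrix}$ be the $(u+1)\times n\times m$ tensor attached to $\ccc$ before Lemma~\ref{lem:span equiv}.

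First I would reinterpret $(\ast\ast)$ geometrically. For $\uuu\in\VVV(I_n(M(\xxx,Y)))$, the $n$-minors of $M(\uuu,Z)$ not involving its last row coincide with the $n$-minors of $M(\uuu,Y)$ and hence vanish, so $(\ast\ast)$ at $\uuu$ says exactly that the remaining $n$-minors of $M(\uuu,Z)$, namely the $g_{i_1,\ldots,i_{n-1}}(\uuu,Y,\ccc)=[i_1,\ldots,i_{n-1},u+1]_{M(\uuu,Z)}$, also vanish; that is, $\uuu\in\VVV(I_n(M(\xxx,Z)))$. Since conversely $I_n(M(\xxx,Y))\subseteq I_n(M(\xxx,Z))$ always holds, $(\ast\ast)$ is equivalent to $\VVV(I_n(M(\xxx,Y)))=\VVV(I_n(M(\xxx,Z)))$.

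The key point is to upgrade this equality of real zero sets to an equality of ideals, and this is where the construction of $\msOOO_1$ enters. As $Y\in\msOOO_3\subseteq\msOOO_1$, Theorem~\ref{thm:it real} \ref{item:real closed} (with $t=n$) gives $\III(\VVV(I_n(M(\xxx,Y))))=I_n(M(\xxx,Y))$; together with the equality of varieties and the trivial inclusions $I_n(M(\xxx,Z))\subseteq\III(\VVV(I_n(M(\xxx,Z))))=\III(\VVV(I_n(M(\xxx,Y))))=I_n(M(\xxx,Y))\subseteq I_n(M(\xxx,Z))$ this yields $I_n(M(\xxx,Z))=I_n(M(\xxx,Y))$. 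Since $Y\in\msQQQ'$ and the numerical hypotheses of Section~\ref{sec:cor} supply $n\geq m\geq u-n+2$ together with $u>n$ (the case $u=n$ being handled separately below), Proposition~\ref{prop:lin comb} applies with $\tilde Y=Z$ and shows that $\ccc=\fl_1(\yyy)$ is an $\RRR$-linear combination $\ccc=\lambda\transpose\fl_1(Y)$ of the rows of $\fl_1(Y)$ with $\lambda\in\RRR^u$. Finally, $(\ast)$ reads $\zerovec={}_{p<}\ccc=\lambda\transpose\bigl({}_{p<}\fl_1(Y)\bigr)$, and since $Y\in\msUUU$ the $u\times u$ matrix ${}_{p<}\fl_1(Y)$ is invertible, so $\lambda=\zerovec$ and $\ccc=\zerovec$. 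Hence $\dim U(Y)=p$, and Lemma~\ref{lem:span suf} gives $\fl_1(Y)\in\MMMMM$; as $Y\in\msOOO_3$ was arbitrary, $\fl_1(\msOOO_3)\subseteq\MMMMM$.

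The step I expect to be the main obstacle is the passage from ``same real zero set'' to ``same ideal'': over $\RRR$ this is false in general, and it works here only because $\msOOO_1$ was designed (Theorem~\ref{thm:it real}) so that $I_n(M(\xxx,Y))$ equals the ideal of its own real variety. The rest is bookkeeping, apart from the degenerate case $u=n$ (i.e.\ $p=(m-1)n$), where Proposition~\ref{prop:lin comb} is unavailable: there $I_n(M(\xxx,Y))=(\delta_M)$ with $\delta_M=\det M(\xxx,Y)\neq0$ for $Y\in\msQQQ'$, and $I_n(M(\xxx,Z))=I_n(M(\xxx,Y))$ forces each determinant obtained from $M(\xxx,Y)$ by replacing one row with $M(\xxx,\yyy)$ to be a scalar multiple of $\delta_M$ by comparison of degrees; a Cramer's-rule computation then gives $M(\xxx,\yyy)=\sum_i c_iM(\xxx,Y)^{=i}$ with constants $c_i$, hence $\fl_1(\yyy)=\sum_i c_i\fl_1(Y)^{=i}$, and one finishes as before.
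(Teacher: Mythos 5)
Your argument is correct and follows the paper's own proof essentially step for step: reduce via Lemmas~\ref{lem:span suf} and~\ref{lem:span equiv} to showing that $(\ast)$ and $(\ast\ast)$ force $\ccc=\zerovec$, use Theorem~\ref{thm:it real}~\ref{item:real closed} (valid since $\msOOO_3\subset\msOOO_1$) to upgrade the vanishing of the $g_{i_1,\ldots,i_{n-1}}$ on $\VVV(I_n(M(\xxx,Y)))$ to the ideal equality $I_n(M(\xxx,\tilde Y))=I_n(M(\xxx,Y))$, then apply Proposition~\ref{prop:lin comb} and finish with $(\ast)$ and the invertibility of ${}_{p<}\fl_1(Y)$. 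Your separate treatment of the boundary case $u=n$ (i.e.\ $p=(m-1)n$), where the hypothesis $u>n$ of Proposition~\ref{prop:lin comb} fails and one must argue directly from $I_n(M(\xxx,Y))=(\delta_M)$, addresses a point the paper's proof passes over silently, and your Cramer's-rule substitute there is sound.
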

\begin{proof}
Let $Y\in\msOOO_3$.
By Lemmas \ref{lem:span suf} and \ref{lem:span equiv}, it is enough to show that
if $\ccc\in\RRR^{1\times nm}$ satisfies $(\ast)$ and $(\ast\ast)$, then
$\ccc=\zerovec$.

Set $\ccc=(\ccc_1, \ldots, \ccc_m)$, where $\ccc_j\in\RRR^{1\times n}$,
$\ccc'=(\ccc_1;\cdots;\ccc_m)\in\RRR^{1\times n\times m}$
and $\tilde Y=\begin{pmatrix}Y\\ \ccc'\end{pmatrix}$.
Then by $(\ast\ast)$,
$g_{i_1, \ldots, i_{n-1}}(\xxx,Y,\ccc)\in\III(\VVV(I_n(M(\xxx,Y))))$
for any $i_1$, \ldots, $i_{n-1}\in\{1$, \ldots, $u\}$.
Therefore, by the definition of $\msOOO_3$ and Theorem \ref{thm:it real}
\ref{item:real closed}, we see that 
$g_{i_1, \ldots, i_{n-1}}(\xxx,Y,\ccc)\in I_n(M(\xxx,Y))$
for any $i_1$, \ldots, $i_{n-1}\in\{1$, \ldots, $u\}$.
Thus we see that $I_n(M(\xxx,\tilde Y))=I_n(M(\xxx,Y))$.
Thus, by Proposition \ref{prop:lin comb}, we see that $\fl_1(\ccc)$ is 
an $\RRR$-linear combination of rows of $\fl_1(Y)$.
Since ${}_{p<}\ccc=\zerovec$ and $Y\in\msUUU$, we see that $\ccc=\zerovec$.
\end{proof}

By the same way as Proposition \ref{prop:sigma open}, we see the following:

\begin{prop}
\mylabel{prop:nu open}
${\nu}$ is an open, {surjective and continuous} map.
\end{prop}

We see the following fact.

\begin{lemma}
\mylabel{lem:ooooo property}
Then the followings hold.
\begin{enumerate}
\item
\mylabel{item:msaaa aaaaa}
$Y\in\msAAA^{u\times n\times m}$ if and only if
$\nu(Y)\in\AAAAA$ for $Y\in\msUUU$.
\item
\mylabel{item:ooooo open}
$\OOOOO_1$ and $\OOOOO_2$ are disjoint open subsets of $\RRR^{u\times p}$ and $\OOOOO_1\neq\emptyset$.
\item
\mylabel{item:ooooo dense}
$\OOOOO_1\cup\OOOOO_2$ is a dense subset of $\RRR^{u\times p}$.
\item
\mylabel{item:closure ooooo1}
$\overline{\OOOOO_1}=\RRR^{u\times p}\setminus\AAAAA$. 
\item
\mylabel{item:ooooo2 subset a'}
$\OOOOO_2\subset\AAAAA$ and $\overline{\OOOOO_2}=\overline{\AAAAA}$.
\end{enumerate}
\end{lemma}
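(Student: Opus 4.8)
The plan is to reduce the whole statement to part~\ref{item:msaaa aaaaa} together with Lemma~\ref{lem:o' property}, Proposition~\ref{prop:nu open}, and one observation I would record at the outset: $\AAAAA$ is an \emph{open} subset of $\RRR^{u\times p}$, because $\msAAA^{u\times n\times m}$ is open, $\fl_1$ is a homeomorphism, and $\iota$ is continuous, so $\AAAAA=\iota^{-1}(\fl_1(\msAAA^{u\times n\times m}))$ is open. For part~\ref{item:msaaa aaaaa} I would simply unwind the definitions: given $Y\in\msUUU$, put $P=-({}_{p<}\fl_1(Y))^{-1}\in\glin(u,\RRR)$; by the lemma immediately preceding this one, $\iota(\nu(Y))=\fl_1(PY)$, hence, since $\fl_1$ is a bijection and $\msAAA^{u\times n\times m}$ is stable under the action of $\glin(u,\RRR)$ (Lemma~\ref{lem:afr stable}),
\[
\nu(Y)\in\AAAAA\iff\iota(\nu(Y))\in\fl_1(\msAAA^{u\times n\times m})\iff PY\in\msAAA^{u\times n\times m}\iff Y\in\msAAA^{u\times n\times m}.
\]

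Next I would read off the easy consequences. Since $\msOOO_3\subset\tilde\msPPP_n\subset\msCCC^{u\times n\times m}$ and $\msOOO_4\subset\msAAA^{u\times n\times m}$, part~\ref{item:msaaa aaaaa} gives $\OOOOO_1=\nu(\msOOO_3)\subset\RRR^{u\times p}\setminus\AAAAA$ and $\OOOOO_2=\nu(\msOOO_4)\subset\AAAAA$; this already yields the disjointness of $\OOOOO_1$ and $\OOOOO_2$ in~\ref{item:ooooo open}, the inclusion $\OOOOO_2\subset\AAAAA$ in~\ref{item:ooooo2 subset a'}, and (since $\RRR^{u\times p}\setminus\AAAAA$ is closed) the inclusion $\overline{\OOOOO_1}\subset\RRR^{u\times p}\setminus\AAAAA$ for~\ref{item:closure ooooo1}. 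Openness of $\OOOOO_1$ and $\OOOOO_2$ follows because $\msOOO_3,\msOOO_4$ are open and $\nu$ is an open map (Proposition~\ref{prop:nu open}), while $\OOOOO_1\neq\emptyset$ because $\msOOO_3\neq\emptyset$ by Lemma~\ref{lem:o' property}. For density~\ref{item:ooooo dense}, since $\msOOO_3\cup\msOOO_4$ is dense in $\RRR^{u\times n\times m}$ and $\nu$ is a continuous open surjection, for every nonempty open $V\subset\RRR^{u\times p}$ the set $\nu^{-1}(V)$ is nonempty and open, hence meets $\msOOO_3\cup\msOOO_4$, so $V$ meets $\OOOOO_1\cup\OOOOO_2$.

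What remains are the two reverse closure inclusions, and here is where the (mild) care is needed: $\nu$ is defined only on the open — not closed — set $\msUUU$, so every closure manipulation must be taken relative to $\msUUU$. For~\ref{item:closure ooooo1}, take $w\in\RRR^{u\times p}\setminus\AAAAA$ and, by surjectivity of $\nu$, choose $Y\in\msUUU$ with $\nu(Y)=w$; by part~\ref{item:msaaa aaaaa}, $Y\notin\msAAA^{u\times n\times m}$, i.e. $Y\in\msCCC^{u\times n\times m}=\overline{\msOOO_3}$ by Lemma~\ref{lem:o' property}\ref{item:closure o'1}, so $Y$ lies in the closure of $\msOOO_3$ relative to $\msUUU$ (legitimate because $Y\in\msUUU$), and continuity of $\nu|_{\msUUU}$ gives $w=\nu(Y)\in\overline{\nu(\msOOO_3)}=\overline{\OOOOO_1}$. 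For~\ref{item:ooooo2 subset a'} it suffices to show $\AAAAA\subset\overline{\OOOOO_2}$: given $w\in\AAAAA$, choose $Y\in\msUUU$ with $\nu(Y)=w$, so $Y\in\msUUU\cap\msAAA^{u\times n\times m}$ by part~\ref{item:msaaa aaaaa}; since $\msOOO_4=(\msUUU\cap\msAAA^{u\times n\times m})\cap(\msQQQ\cap\msQQQ')$ with $\msQQQ\cap\msQQQ'$ a (Zariski, hence Euclidean) dense open set and $\msUUU\cap\msAAA^{u\times n\times m}$ open, $\msOOO_4$ is dense in $\msUUU\cap\msAAA^{u\times n\times m}$, so $Y$ lies in the closure of $\msOOO_4$ relative to $\msUUU$, whence $w=\nu(Y)\in\overline{\OOOOO_2}$; combined with $\OOOOO_2\subset\AAAAA$ this gives $\overline{\OOOOO_2}=\overline{\AAAAA}$. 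I do not expect a real obstacle; the proof is bookkeeping over the cited results, and the only genuinely delicate point is the relative-closure issue just flagged, which is harmless exactly because the preimage point $Y$ is always produced inside $\msUUU$.
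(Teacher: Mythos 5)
Your proposal is correct and follows essentially the same route as the paper: part (1) by unwinding $\iota(\nu(Y))=\fl_1(-({}_{p<}\fl_1(Y))^{-1}Y)$ and $\glin(u,\RRR)$-stability, and the remaining parts by combining Lemma \ref{lem:o' property} with the openness/continuity/surjectivity of $\nu$, including the same relative-closure step $\overline{\OOOOO_1}\supset\nu(\overline{\msOOO_3}\cap\msUUU)$. The only (harmless) deviations are cosmetic: you get $\OOOOO_2\subset\AAAAA$ directly from (1) rather than from disjointness plus \ref{item:closure ooooo1}, and you prove $\AAAAA\subset\overline{\OOOOO_2}$ by pulling back along $\nu$ instead of writing $\RRR^{u\times p}=\overline{\OOOOO_1}\cup\overline{\OOOOO_2}$.
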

\begin{proof}
\ref{item:msaaa aaaaa}:
Suppose that $Y\in\msUUU$.
Since $\msAAA^{u\times n\times m}$ and $\msUUU$ are stable under the action 
of $\glin(u,\RRR)$, we see that $Y\in\msAAA^{u\times n\times m}$ if and only if 
$-({}_{p<}\fl_1(Y))^{-1}Y\in\msAAA^{u\times n\times m}$.
Since $\iota(\nu(Y))=\fl_1(-({}_{p<}\fl_1(Y))^{-1}Y)$
and $\fl_1$ is a bijection, we see
\ref{item:msaaa aaaaa}.

We see 
\ref{item:ooooo open} 
by the facts that $\tilde \msPPP_n$ and $\msAAA^{u\times n\times m}$ are
stable under the action of  $\glin(u,\RRR)$,
Lemma \ref{lem:o' property}, and Proposition \ref{prop:nu open}.
\ref{item:ooooo dense} also follows from Lemma \ref{lem:o' property}
 and Proposition \ref{prop:nu open}.
We see by \ref{item:msaaa aaaaa}
that if $Y\in\msOOO_3$, then $\nu(Y)\not\in\AAAAA$.
Thus $\OOOOO_1=\nu(\msOOO_3)\subset\RRR^{u\times p}\setminus\AAAAA$.
Since $\overline{\msOOO_3}=\RRR^{u\times n\times m}\setminus \msAAA^{u\times n\times m}$
by Lemmas \ref{lem:o' property} \ref{item:closure o'1},
we see that $\overline{\OOOOO_1}\supset\nu(\overline{\msOOO_3}\cap\msUUU)
=\nu((\RRR^{u\times n\times m}\setminus\msAAA^{u\times n\times m})\cap\msUUU)
=\RRR^{u\times p}\setminus\AAAAA$
by \ref{item:msaaa aaaaa} and the surjectivity of $\nu$.
Thus we see \ref{item:closure ooooo1}.
Therefore $\OOOOO_2\subset\AAAAA$ by \ref{item:ooooo open}.
Further, we see that $\overline\OOOOO_2\supset\AAAAA$ by \ref{item:ooooo dense}
and \ref{item:closure ooooo1}.
Thus we see \ref{item:ooooo2 subset a'}.
\end{proof}
%
%

\begin{lemma}
\mylabel{lem:inv closure}
Let $X$ and $Y$ be topological spaces, 
$f\colon X\to Y$ a mapping and $B$ a subset of $Y$.
\begin{enumerate}
\item
\mylabel{item:cont}
If $f$ is continuous, then
$f^{-1}(\overline B)\supset \overline{f^{-1}(B)}$.
\item
\mylabel{item:open}
If $f$ is an open map, then
$f^{-1}(\overline B)\subset \overline{f^{-1}(B)}$.
\end{enumerate}
\end{lemma}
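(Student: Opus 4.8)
The plan is to handle the two inclusions by the two standard characterizations of the closure: as the smallest closed superset, and via neighborhoods. For part \ref{item:cont}, I would argue as follows. Since $f$ is continuous and $\overline B$ is closed in $Y$, the set $f^{-1}(\overline B)$ is closed in $X$. Moreover $B\subset\overline B$ gives $f^{-1}(B)\subset f^{-1}(\overline B)$. Hence $f^{-1}(\overline B)$ is a closed set containing $f^{-1}(B)$, and therefore it contains the smallest such set, namely $\overline{f^{-1}(B)}$. This proves $f^{-1}(\overline B)\supset\overline{f^{-1}(B)}$.

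For part \ref{item:open}, I would take an arbitrary point $x\in f^{-1}(\overline B)$, so that $f(x)\in\overline B$, and show $x\in\overline{f^{-1}(B)}$ by checking that every open neighborhood of $x$ meets $f^{-1}(B)$. Given such a neighborhood $U$ of $x$, the set $f(U)$ is open in $Y$ because $f$ is an open map, and it contains the point $f(x)\in\overline B$; hence $f(U)\cap B\neq\emptyset$. Picking $y\in f(U)\cap B$, write $y=f(x')$ with $x'\in U$; then $x'\in f^{-1}(B)$, so $U\cap f^{-1}(B)\neq\emptyset$. Since $U$ was an arbitrary open neighborhood of $x$, we conclude $x\in\overline{f^{-1}(B)}$, which gives $f^{-1}(\overline B)\subset\overline{f^{-1}(B)}$.

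Both arguments are elementary and no step is a genuine obstacle; the only point requiring any care is that in part \ref{item:open} one must phrase the neighborhood argument correctly (it is precisely here that openness of $f$, rather than continuity, is used, and no assumption on $X$ or $Y$ beyond being topological spaces is needed). I would present the two parts in the order given, keeping each to a couple of lines.

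\endproof
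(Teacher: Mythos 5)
Your proposal is correct and follows essentially the same route as the paper: part (1) via the fact that $f^{-1}(\overline B)$ is a closed set containing $f^{-1}(B)$, and part (2) via the neighborhood argument using openness of $f(U)$. No gaps.
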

\begin{proof}
\ref{item:cont}:
Since $f^{-1}(\overline B)$ is a closed subset of $X$ containing $f^{-1}(B)$,
we see that $f^{-1}(\overline B)\supset \overline{f^{-1}(B)}$.

\ref{item:open}:
Suppose that $x\in f^{-1}(\overline B)$ and let $U$ be an open
neighborhood of $x$.
We show that $U\cap f^{-1}(B)\neq\emptyset$.

Since $f(x)\in \overline B$, $f(x)\in f(U)$ and $f(U)$ is an open subset of $Y$,
we see that $f(U)\cap B\neq\emptyset$.
Take $b\in f(U)\cap B$ and $a\in U$ such that $f(a)=b$.
Then, since $f(a)\in B$, we see that $a\in f^{-1}(B)$.
Thus, $a\in U\cap f^{-1}(B)$ and we see that $U\cap f^{-1}(B)\neq\emptyset$.
\end{proof}

\begin{thm}
\mylabel{thm:rank}
{Let $m$, $n$ and $p$ be integers with 
$3\leq m\leq n$ and $(m-1)(n-1)+2\leq p\leq (m-1)n$.}
The followings hold.
\begin{enumerate}
\item
\mylabel{item:msttt open}
$\msTTT_1$ and $\msTTT_2$ are disjoint open subsets of $\msVVV^{n\times p\times m}$ and $\msTTT_1$ is nonempty.
\item
\mylabel{item:msttt dense}
$\msTTT_1\cup\msTTT_2$ is a dense subset of $\RRR^{n\times p\times m}$.
\item
\mylabel{item:closure msttt1}
$\overline{\msTTT_1}\cap\msVVV^{n\times p\times m}=\msVVV^{n\times p\times m}\setminus{\sigma^{-1}(\AAAAA)}$ and $\overline{\msTTT_2}\cap\msVVV^{n\times p\times m}=\overline{{\sigma^{-1}(\AAAAA)}}\cap\msVVV^{n\times p\times m}$.
\item
\mylabel{item:msttt1 rank}
If $T\in\msTTT_1$, then $\rank T=p$.
\item
\mylabel{item:msttt2 rank}
If $T\in\msTTT_2$, then $\rank T>p$.
\end{enumerate}
\end{thm}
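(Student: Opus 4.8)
The plan is to deduce every clause of Theorem~\ref{thm:rank} from the structural facts already in hand: the properties of $\msOOO_3,\msOOO_4$ in Lemma~\ref{lem:o' property}, the fact that $\nu$ and $\sigma$ are open, surjective and continuous (Propositions~\ref{prop:nu open} and~\ref{prop:sigma open}), the properties of $\OOOOO_1=\nu(\msOOO_3)$ and $\OOOOO_2=\nu(\msOOO_4)$ collected in Lemma~\ref{lem:ooooo property}, together with the rank characterisation Theorem~\ref{thm:equiv cond}, Lemma~\ref{lem:a' rank}, and Lemma~\ref{lem:c 0}. For \ref{item:msttt open}: since $\msTTT_i=\sigma^{-1}(\OOOOO_i)$ and $\OOOOO_1,\OOOOO_2$ are disjoint open subsets of $\RRR^{u\times p}$ with $\OOOOO_1\neq\emptyset$ (Lemma~\ref{lem:ooooo property}~\ref{item:ooooo open}), continuity and surjectivity of $\sigma$ give at once that $\msTTT_1,\msTTT_2$ are disjoint open subsets of $\msVVV^{n\times p\times m}$ and $\msTTT_1\neq\emptyset$. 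For \ref{item:msttt dense}: $\OOOOO_1\cup\OOOOO_2$ is dense in $\RRR^{u\times p}$ (Lemma~\ref{lem:ooooo property}~\ref{item:ooooo dense}); since $\sigma$ is open and surjective, the preimage of a dense set is dense, so $\msTTT_1\cup\msTTT_2=\sigma^{-1}(\OOOOO_1\cup\OOOOO_2)$ is dense in $\msVVV^{n\times p\times m}$, and as $\msVVV^{n\times p\times m}$ is a nonempty Zariski-open, hence Euclidean dense, subset of $\RRR^{n\times p\times m}$, transitivity of density yields the claim.

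For \ref{item:closure msttt1} I would work with closures relative to $\msVVV^{n\times p\times m}$, using that the relative closure of a subset $S\subseteq\msVVV^{n\times p\times m}$ equals $\overline S\cap\msVVV^{n\times p\times m}$. Because $\sigma$ is both continuous and open, Lemma~\ref{lem:inv closure} gives, for any $B\subseteq\RRR^{u\times p}$, that the relative closure of $\sigma^{-1}(B)$ in $\msVVV^{n\times p\times m}$ equals $\sigma^{-1}(\overline B)$. Applying this with $B=\OOOOO_1$ and invoking $\overline{\OOOOO_1}=\RRR^{u\times p}\setminus\AAAAA$ (Lemma~\ref{lem:ooooo property}~\ref{item:closure ooooo1}) yields $\overline{\msTTT_1}\cap\msVVV^{n\times p\times m}=\sigma^{-1}(\RRR^{u\times p}\setminus\AAAAA)=\msVVV^{n\times p\times m}\setminus\sigma^{-1}(\AAAAA)$. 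Applying it with $B=\OOOOO_2$, and separately with $B=\AAAAA$, and using $\overline{\OOOOO_2}=\overline{\AAAAA}$ (Lemma~\ref{lem:ooooo property}~\ref{item:ooooo2 subset a'}), yields $\overline{\msTTT_2}\cap\msVVV^{n\times p\times m}=\sigma^{-1}(\overline{\AAAAA})=\overline{\sigma^{-1}(\AAAAA)}\cap\msVVV^{n\times p\times m}$.

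For \ref{item:msttt1 rank}: if $T\in\msTTT_1$ then $\sigma(T)\in\OOOOO_1=\nu(\msOOO_3)$, so $\sigma(T)=\nu(Y)$ for some $Y\in\msOOO_3$. By Lemma~\ref{lem:c 0}, $\fl_1(Y)\in\MMMMM$; since $\MMMMM$ is stable under the $\glin(u,\RRR)$-action and $\iota(\nu(Y))=\fl_1\bigl(-({}_{p<}\fl_1(Y))^{-1}Y\bigr)$, we conclude $\iota(\sigma(T))=\iota(\nu(Y))\in\MMMMM$, whence $\rank T=p$ by Theorem~\ref{thm:equiv cond} (\ref{eq:inM}$\Rightarrow$\ref{eq:rank}). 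For \ref{item:msttt2 rank}: if $T\in\msTTT_2$ then $\sigma(T)\in\OOOOO_2\subseteq\AAAAA$ (Lemma~\ref{lem:ooooo property}~\ref{item:ooooo2 subset a'}), and since $T\in\msVVV^{n\times p\times m}$, Lemma~\ref{lem:a' rank} gives $\rank T>p$.

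The genuine content has all been pushed into the earlier sections, so what remains is careful bookkeeping. The steps I expect to need the most attention are \ref{item:closure msttt1}, where one must consistently distinguish closures taken in $\msVVV^{n\times p\times m}$ from those in $\RRR^{n\times p\times m}$ and apply both directions of Lemma~\ref{lem:inv closure} (which is precisely why $\sigma$ was shown to be open as well as continuous), and the identification in \ref{item:msttt1 rank} of $\iota(\sigma(T))$ as an element of $\MMMMM$, relying on the $\glin(u,\RRR)$-stability of $\MMMMM$ and the formula $\iota(\nu(Y))=\fl_1\bigl(-({}_{p<}\fl_1(Y))^{-1}Y\bigr)$.
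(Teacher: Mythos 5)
Your proposal is correct and follows essentially the same route as the paper: both reduce every clause to Lemma~\ref{lem:ooooo property} together with the openness, continuity and surjectivity of $\sigma$ (via Lemma~\ref{lem:inv closure} for the closure computations), and both prove \ref{item:msttt1 rank} by lifting $\sigma(T)$ to some $Y\in\msOOO_3$, invoking Lemma~\ref{lem:c 0} and the $\glin(u,\RRR)$-stability of $\MMMMM$, then applying Theorem~\ref{thm:equiv cond}. No gaps; the bookkeeping you flag (relative versus ambient closures, and the identity $\iota(\nu(Y))=\fl_1(-({}_{p<}\fl_1(Y))^{-1}Y)$) is exactly what the paper's proof records.
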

\begin{proof}
First note that 
$\overline{\sigma^{-1}(\XXXXX)}\cap\msVVV^{n\times p\times m}=\sigma^{-1}(\overline \XXXXX)$ 
for any subset $\XXXXX$ of $\RRR^{u\times p}$
by Lemma \ref{lem:inv closure},
since $\sigma$ is an open continuous map.

\ref{item:msttt open} and \ref{item:msttt dense} follow from Lemma \ref{lem:ooooo property}
and the facts that $\sigma$ is surjective and $\msVVV^{n\times p\times m}$ is a dense subset
of $\RRR^{n\times p\times m}$.

\bgroup

\ref{item:closure msttt1}:
We see by Lemma \ref{lem:ooooo property} that
$\overline\msTTT_1\cap\msVVV^{n\times p\times m}=\sigma^{-1}(\overline{\OOOOO_1})=
\sigma^{-1}(\RRR^{u\times p}\setminus\AAAAA)=\msVVV^{n\times p\times m}
\setminus\sigma^{-1}(\AAAAA)$
and
$\overline\msTTT_2\cap\msVVV^{n\times p\times m}
=\sigma^{-1}(\overline{\OOOOO_2})=\overline{\sigma^{-1}(\AAAAA)}
\cap\msVVV^{n\times p\times m}$.

\ref{item:msttt1 rank}:
Suppose that $T\in\msTTT_1$.
Then $\sigma(T)\in\OOOOO_1$. Thus there exists $Y\in\msOOO_3$ such that ${\nu}(Y)=\sigma(T)$.
By Lemma \ref{lem:c 0}, we see that $\fl_1(Y)\in\MMMMM$.
{Hence} $\iota(\sigma(T))=\iota({\nu}(Y))=-({}_{p<}\fl_1(Y))^{-1}\fl_1(Y)\in\MMMMM$,
since $\MMMMM$ is stable under the action of $\glin(u,\RRR)$.
Therefore $\rank T=p$ by Theorem \ref{thm:equiv cond}.

\ref{item:msttt2 rank}:
If $T\in\msTTT_2$, then $\sigma(T)\in\OOOOO_2\subset\AAAAA$ by Lemma \ref{lem:ooooo property}.
Thus $\rank T>p$ by Lemma \ref{lem:a' rank}.
\egroup
\end{proof}

\section{Upper bound for typical ranks\mylabel{sec:upperbound}}

In Lemma \ref{lem:a' rank},
we see a class of tensors with rank greater than $p$.
To complete the proof of Theorem~\ref{thm:main1}, we give an upper bound of the set of typical ranks of $\RRR^{n\times p\times m}${:}

\begin{thm} \mylabel{thm:upperbound}
Let $3\leq m\leq n$ {and $(m-1)(n-1)+1\leq p\leq (m-1)n$}.
Any typical rank of $\RRR^{n\times p\times m}$ is less than or equal to
$p+1$.
\end{thm}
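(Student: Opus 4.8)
\emph{Plan.} The bound will be deduced from a density statement, which in turn will be proved by producing, for a generic tensor of rank larger than $p$, an explicit expansion of length $p+1$. By Theorem~\ref{thm:fri 7.1} the set $\trank(n,p,m)$ is an interval $[\grank(n,p,m),R]$, and $\grank(n,p,m)=p$ (recalled at the beginning of Section~\ref{sec:cor}); hence it is enough to prove that $\{T\in\RRR^{n\times p\times m}\mid\rank T\le p+1\}$ is dense, for then no nonempty open set consists of tensors of rank $\ge p+2$, so each connected open set supplied by Theorem~\ref{thm:fri 7.1} (on which the rank is constant) carries rank $\le p+1$, i.e.\ $R\le p+1$. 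Assume first $p\ge(m-1)(n-1)+2$. By Theorem~\ref{thm:rank} the set $\msTTT_1\cup\msTTT_2$ is dense, $\rank T=p$ on $\msTTT_1$, and $\msTTT_2\subset\msVVV^{n\times p\times m}$ with $\sigma(\msTTT_2)\subset\AAAAA$, so that $\iota(\sigma(T))\in\fl_1(\msAAA^{u\times n\times m})$ for $T\in\msTTT_2$ (cf.\ Lemma~\ref{lem:a' rank} and Definition~\ref{def:a'}). It therefore suffices to prove $\rank T\le p+1$ for every $T\in\msVVV^{n\times p\times m}$ with $\iota(\sigma(T))\in\fl_1(\msAAA^{u\times n\times m})$. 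The extreme values $p=(m-1)(n-1)+1$ (where $m<v+2$, so Section~\ref{sec:cor} is unavailable) and $p=(m-1)n$ (where $u=n$, so the results of Section~\ref{sec:mpo} degenerate) are handled separately by establishing the corresponding facts directly.

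\emph{A length-$(p+1)$ criterion.} Fix $T\in\msVVV^{n\times p\times m}$ and write $(W_1,\ldots,W_m)=\iota(\sigma(T))$; as a $u\times nm$ matrix this is $(\sigma(T),-E_u)$, with $W_m=\begin{pmatrix}O\\-E_n\end{pmatrix}$ and the last $l$ columns of $W_{m-1}$ equal to $\begin{pmatrix}-E_l\\O\end{pmatrix}$. Repeating the proof of Theorem~\ref{thm:equiv cond} with a one-sided inverse in place of $Q=N^{-1}$ gives the sufficient condition: $\rank T\le p+1$ provided there exist an $n\times(p+1)$ matrix $A=(\aaa_1,\ldots,\aaa_{p+1})$ and diagonal $(p+1)\times(p+1)$ matrices $D_1,\ldots,D_m$, with $\ddd_j$ the $j$-th diagonal vector, such that $W_1AD_1+\cdots+W_mAD_m$ has rank at most one, say it equals $\zzz\www\transpose$ (equivalently $M(\ddd_j,W)\aaa_j=w_j\zzz$ for all $j$), the $p\times(p+1)$ matrix $\hat R$ with $j$-th column $\phi(\ddd_j,\aaa_j)$ (Definition~\ref{def:phi}) has rank $p$, and $\www$ is not orthogonal to $\ker\hat R$. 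To verify this, put $S=(S_1;\ldots;S_m)=T(\fl_2(T)^{\le p})^{-1}$; since $\fl_2(S)^{\le p}=E_p$ and ${}^{p<}\fl_2(S)=\sigma(T)$, one has $\sum_kW_kS_k=(\sigma(T),-E_u)\fl_2(S)=\sigma(T)E_p-\sigma(T)=O$. Let $B\transpose$ be the unique right inverse of $\hat R$ with $B\www=\zerovec$ and set $S'_k=AD_kB\transpose$. Then $\hat RB\transpose=E_p$ yields $S'_k=S_k$ for $k\le m-2$ and agreement of the first $n-l$ rows of $S'_{m-1}$ and $S_{m-1}$; and $\sum_kW_kS'_k=(\sum_kW_kAD_k)B\transpose=\zzz(B\www)\transpose=O$, combined with $\sum_kW_kS_k=O$ and the block form of $W_{m-1},W_m$, forces $S'_k=S_k$ also for $k=m-1,m$. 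Hence $\rank T=\rank S\le\rank A\le p+1$.

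\emph{Realising the criterion.} Pick $\zzz\in\RRR^u$ generic. For each $j$ one needs $\ddd_j\ne\zerovec$ with $\zzz$ in the column space of $M(\ddd_j,W)$; since $\iota(\sigma(T))$ is \afcr, $M(\ddd,W)$ has full column rank $n$ for every $\ddd\ne\zerovec$, so $\aaa_j$ is then determined up to a scalar. Because $l\le m-2$ we have $n+(m-1)>u$, and because the matrix $(\sigma(T),-E_u)$ has full row rank $u$, the map $\RP^{m-1}\times\RRR^n\to\RRR^u$ sending $([\ddd],\aaa)$ to $M(\ddd,W)\aaa$ is dominant; hence for generic $\zzz$ the set of admissible $\ddd$ is a nonempty variety of dimension $m-1-l\ge 1$. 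Choose $\ddd_1,\ldots,\ddd_{p+1}$ in it so that the columns $\phi(\ddd_j,\aaa_j)$ span $\RRR^p$ and rescale the $\aaa_j$ so that $\www\not\perp\ker\hat R$; both are generic, hence open, conditions along the admissible variety. By the criterion above, $\rank T\le p+1$.

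\emph{The main obstacle.} The heart of the matter is the spanning assertion just used: that the vectors $\phi(\ddd,\aaa)$, with $\aaa$ the solution of $M(\ddd,W)\aaa=\zzz$, are not confined to a hyperplane of $\RRR^p$ as $\ddd$ ranges over the admissible variety. After clearing a common denominator the coordinates of $\phi(\ddd,\aaa)$ become products of $n$-minors of $M(\ddd,W)$, so this is a Pl\"ucker-type linear-independence statement; I would prove it by the monomial-preorder method of Section~\ref{sec:mpo} (Lemmas~\ref{lem:li} and \ref{lem:lct} and Corollary~\ref{cor:cont kt}) together with Theorem~\ref{thm:it real}, which is precisely where the hypotheses $3\le m\le n$ and $l\le m-2$ enter. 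This, together with the separate treatments of $p=(m-1)(n-1)+1$ and $p=(m-1)n$, forms the technical core. Granting it, $\{T\mid\rank T\le p+1\}$ is dense, hence $p+2\notin\trank(n,p,m)$ and every typical rank of $\RRR^{n\times p\times m}$ is at most $p+1$.
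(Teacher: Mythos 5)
Your reduction of the theorem to the density of $\{T\mid\rank T\le p+1\}$ is sound, and your length-$(p+1)$ criterion (the one-sided-inverse variant of the argument in Theorem~\ref{thm:equiv cond}) is essentially correct. But the proof has a genuine gap exactly where you flag it. For $T\in\msTTT_2$ the tensor $\iota(\sigma(T))$ is \afcr, so the homogeneous system $M(\ddd,W)\aaa=\zerovec$ has only the trivial solution and all of the paper's determinantal machinery for producing decompositions --- Theorem~\ref{thm:it real} \ref{item:real closed}, Proposition~\ref{prop:lin comb}, Lemma~\ref{lem:c 0} --- lives on the \emph{non}-\afcr\ locus $\msOOO_1=\msQQQ\cap\tilde\msPPP_t$, where $\VVV(I_n(M(\xxx,Y)))$ is large. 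Your inhomogeneous problem $M(\ddd,W)\aaa=\zzz$ with $\zzz\neq\zerovec$ is not covered by any of these statements: the dominance of $([\ddd],\aaa)\mapsto M(\ddd,W)\aaa$ and, above all, the claim that the resulting vectors $\phi(\ddd_j,\aaa_j)$ span $\RRR^p$ are precisely the analogues of Theorem~\ref{thm:it real} \ref{item:real closed} and Lemma~\ref{lem:c 0} in a new (inhomogeneous) setting, and you only assert that they ``would'' follow from Section~\ref{sec:mpo}. That is the entire technical content of the theorem, not a routine verification. In addition, the boundary cases $p=(m-1)(n-1)+1$ and $p=(m-1)n$, both inside the hypotheses, are left to an unspecified ``separate treatment''.

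The paper avoids all of this with a projection trick that converts your inhomogeneous problem back into a homogeneous one where the hard work has already been done. For $p<(m-1)n$ it considers the $(p+1)$-slice setting, where $u'=u-1$ and $(m-1)(n-1)+2\le p+1\le(m-1)n$, so Theorem~\ref{thm:rank} applies verbatim and yields the set $\msTTT'_1$ of tensors of rank exactly $p+1$ with $\overline{\msTTT'_1}\cap\msVVV^{n\times(p+1)\times m}=\msVVV^{n\times(p+1)\times m}\setminus\sigma'^{-1}(\AAAAA')$. The canonical projection $\pi\colon\RRR^{n\times(p+1)\times m}\to\RRR^{n\times p\times m}$ cannot increase rank, and the explicit section $f(X_1;\ldots;X_m)=((X_1,\zerovec);\ldots;(X_{m-1},\eee);(X_m,\zerovec))$ has $\sigma'(f(X))$ with a zero last column, hence $f(X)\notin\sigma'^{-1}(\AAAAA')$ and $f(X)\in\overline{\msTTT'_1}$; this gives $\pi(\msTTT'_1)$ open and dense (Lemma~\ref{lem:pi ttt1' open dense}), so rank $\le p+1$ holds on a dense set, covering in particular $p=(m-1)(n-1)+1$. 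The remaining case $p=(m-1)n$ is quoted from \cite{Sumi-etal:2015}. To repair your argument you would either have to carry out the inhomogeneous spanning analysis from scratch, or, much more economically, adopt this projection step.
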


We prepare the proof.

Let $3\leq m\leq n$, $(m-1)(n-1)+1\leq p<(m-1)n$ and $u=mn-p$.
Let 
$\sigma^\prime\colon\msVVV^{n\times (p+1)\times m}\to\RRR^{(u-1)\times (p+1)}$ be the 
counterpart of 
$\sigma\colon\msVVV^{n\times p\times m}\to\RRR^{u\times p}$. 
Also, let $\AAAAA'\subset\RRR^{(u-1)\times (p+1)}$ and 
$\msTTT'_1\subset\msVVV^{n\times (p+1)\times m}$ be the counterparts of
$\AAAAA\subset\RRR^{u\times p}$ and $\msTTT_1\subset\msVVV^{n\times p\times m}$ respectively.
Let $\pi\colon\RRR^{n\times (p+1)\times m}\to\RRR^{n\times p\times m}$ be a canonical projection defined as
$\pi(Y_1;\ldots;Y_m)=((Y_1)_{\leq p};\ldots;(Y_m)_{\leq p})$.
Clearly $\pi$ is a continuous, surjective and open map.

\begin{lemma}
\mylabel{lem:pi ttt1' open dense}
$\pi(\msTTT'_1)$ is an open dense subset of $\RRR^{n\times p\times m}$.
\end{lemma}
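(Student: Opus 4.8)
By the counterpart of Theorem~\ref{thm:rank}\ref{item:msttt open} for the size $n\times(p+1)\times m$ (legitimate, since $(m-1)(n-1)+2\le p+1\le(m-1)n$), $\msTTT'_1$ is an open subset of $\msVVV^{n\times(p+1)\times m}$, and $\msVVV^{n\times(p+1)\times m}=\{T\mid \fl_2(T)^{\le p+1}\text{ nonsingular}\}$ is itself (Zariski) open in $\RRR^{n\times(p+1)\times m}$; hence $\msTTT'_1$ is open in $\RRR^{n\times(p+1)\times m}$. Since $\pi$ is an open map, $\pi(\msTTT'_1)$ is open in $\RRR^{n\times p\times m}$.

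\textbf{Reduction of density to one concrete statement.} Let $\OOOOO'_1$ be the counterpart of $\OOOOO_1$ for the size $n\times(p+1)\times m$, so $\msTTT'_1=\sigma'^{-1}(\OOOOO'_1)$ and, by the counterparts of Lemma~\ref{lem:ooooo property}\ref{item:ooooo open} and \ref{item:closure ooooo1}, $\OOOOO'_1$ is a nonempty open subset of $\RRR^{(u-1)\times(p+1)}$ with $\overline{\OOOOO'_1}=\RRR^{(u-1)\times(p+1)}\setminus\AAAAA'$. In particular $\OOOOO'_1\subseteq\RRR^{(u-1)\times(p+1)}\setminus\overline{\AAAAA'}$, so the latter is a nonempty open set in which $\OOOOO'_1$ is dense. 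Fix a nonempty open $W\subseteq\RRR^{n\times p\times m}$. Since $\sigma'\colon\msVVV^{n\times(p+1)\times m}\to\RRR^{(u-1)\times(p+1)}$ is an open map (counterpart of Proposition~\ref{prop:sigma open}) and $\pi^{-1}(W)\cap\msVVV^{n\times(p+1)\times m}$ is nonempty open (open intersected with a dense open set), the set $G:=\sigma'(\pi^{-1}(W)\cap\msVVV^{n\times(p+1)\times m})$ is nonempty open in $\RRR^{(u-1)\times(p+1)}$. If $G\not\subseteq\overline{\AAAAA'}$, then $G\cap(\RRR^{(u-1)\times(p+1)}\setminus\overline{\AAAAA'})$ is nonempty open, hence meets $\OOOOO'_1$; picking $T\in\pi^{-1}(W)\cap\msVVV^{n\times(p+1)\times m}$ with $\sigma'(T)\in\OOOOO'_1$ gives $T\in\msTTT'_1$ with $\pi(T)\in W$. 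Thus it suffices to prove, for every nonempty open $W$, the \emph{Claim}: there are $S=(S_1;\ldots;S_m)\in W$ and $c_1,\ldots,c_m\in\RRR^n$ with $T:=(S_1,c_1;\ldots;S_m,c_m)\in\msVVV^{n\times(p+1)\times m}$ and $\sigma'(T)\notin\overline{\AAAAA'}$.

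\textbf{Approach to the Claim, and the main obstacle.} Writing $\fl_2(T)=(\fl_2(S),c)$ with $c=(c_1;\ldots;c_m)\in\RRR^{nm}$, one expands $\sigma'(T)={}^{(p+1)<}\fl_2(T)\,(\fl_2(T)^{\le p+1})^{-1}$ by block column operations: for $S$ whose leading $p\times p$ block of $\fl_2(S)$ is invertible (a Zariski-dense condition on $S$, hence satisfied by some $S\in W$), there are a row vector $\ell(S)\in\RRR^{1\times(p+1)}$ (a left annihilator of $\fl_2(S)^{\le p+1}$) and a matrix $A_0(S)\in\RRR^{(u-1)\times(p+1)}$, both depending only on $S$, such that as $c$ varies over the set $\{c\mid \ell(S)\,c^{\le p+1}\neq 0\}$ the value $\sigma'(T)$ runs exactly through the $(u-1)$-dimensional affine subspace $\mathcal L_S:=A_0(S)+\RRR^{u-1}\otimes\ell(S)$ of $\RRR^{(u-1)\times(p+1)}$ (and each such $T$ lies in $\msVVV^{n\times(p+1)\times m}$). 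Hence $G\supseteq\bigcup_{S}\mathcal L_S$, the union over all $S\in W$ with invertible leading block. The main obstacle — the one point that is genuine computation rather than formal topology — is to verify that this union is \emph{not} contained in $\overline{\AAAAA'}$: concretely, that as $S$ ranges over $W$ the subspaces $\mathcal L_S$ together contain a nonempty Euclidean-open subset of $\RRR^{(u-1)\times(p+1)}$ that is not swallowed by $\overline{\AAAAA'}$ (for instance by exploiting that the $(p+1)$st column of $\sigma'(T)$ is a free parameter in $\RRR^{u-1}$, so the affine families $\mathcal L_S$ sweep in every direction of that last column while the first $p$ columns vary with $S$). Once this is in hand, $G$ meets $\RRR^{(u-1)\times(p+1)}\setminus\overline{\AAAAA'}$, hence meets $\OOOOO'_1$, and the Claim — and with it the density of $\pi(\msTTT'_1)$ — follows.
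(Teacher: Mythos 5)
Your openness argument and your reduction of density to the statement ``$G=\sigma'(\pi^{-1}(W)\cap\msVVV^{n\times(p+1)\times m})$ is not contained in $\overline{\AAAAA'}$'' are both sound, but the proof stops exactly at the point where the real content lies: you declare the verification of that statement to be ``the main obstacle'' and only gesture at how the affine families $\mathcal L_S$ might sweep out enough of $\RRR^{(u-1)\times(p+1)}$. That verification is never carried out, so the density claim is not proved. (A small remark on your reduction: you are working harder than necessary. Since $G$ is open and $\overline{\OOOOO'_1}=\RRR^{(u-1)\times(p+1)}\setminus\AAAAA'$ by the counterpart of Lemma~\ref{lem:ooooo property}~\ref{item:closure ooooo1}, it already suffices to show $G\not\subseteq\AAAAA'$ --- any open set meeting $\overline{\OOOOO'_1}$ meets $\OOOOO'_1$ --- so there is no need to control the closure $\overline{\AAAAA'}$ at all.)

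The paper closes this gap with an explicit construction that you should compare with your set-up. It defines a continuous section $f$ of $\pi$ on $\msVVV^{n\times p\times m}$ by appending one extra column to the slices: $f(X_1;\ldots;X_m)=((X_1,\zerovec);\ldots;(X_{m-2},\zerovec);(X_{m-1},\eee);(X_m,\zerovec))$ with $\eee$ the $(2n-u+1)$-th column of $E_n$. This choice forces the $(p+1)$-th column of $\sigma'(f(X))$ to be zero; consequently the tensor $\iota(\sigma'(f(X)))$ has a slice with a zero column, hence is not \afcr, i.e.\ $\sigma'(f(X))\notin\AAAAA'$. By Theorem~\ref{thm:rank}~\ref{item:closure msttt1} this gives $f(X)\in\overline{\msTTT'_1}$, and since $\pi\circ f=\mathrm{id}$ and $\pi$ is continuous, $X\in\pi(\overline{\msTTT'_1})\subseteq\overline{\pi(\msTTT'_1)}$ for every $X\in\msVVV^{n\times p\times m}$, which is dense. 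In your language, this amounts to exhibiting, for each $S\in W$ with the relevant nondegeneracy, one explicit point of $\mathcal L_S$ lying outside $\AAAAA'$ (namely the one whose last column vanishes), rather than arguing that the union $\bigcup_S\mathcal L_S$ has nonempty interior escaping $\overline{\AAAAA'}$. To complete your proof you would need to supply an argument of comparable concreteness.
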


\begin{proof}
Since $\msTTT'_1$ is an open set and $\pi$ is an open map, $\pi(\msTTT'_1)$ is an open subset of $\RRR^{n\times p\times m}$.
We show that $\pi(\msTTT'_1)$ is dense.
Let $X\in\msVVV^{n\times p\times m}$.
Consider the map $f\colon \msVVV^{n\times p\times m}\to \msVVV^{n\times (p+1)\times m}$ defined as 
$$f(X_1;\ldots;X_{m-2};X_{m-1};X_m)
=((X_1,\zerovec);\ldots;(X_{m-2},\zerovec);(X_{m-1},\eee);(X_m,\zerovec))$$
where $\eee$ is the $(2n-u+1)$th column vector of the identity matrix $E_n$.
Since the $(p+1)$th column vector of the matrix $\sigma'(f(X))$ is zero, $f(X)\notin \sigma'^{-1}(\AAAAA')$ holds and by Theorem~\ref{thm:rank} \ref{item:closure msttt1}, $f(X)\in \overline{\msTTT'_1}$.
Since $\pi\circ f$ is the identity map and $\pi$ is continuous, $X\in \pi(\overline{\msTTT'_1})\subset\overline{\pi(\msTTT'_1)}$ holds.
Therefore $\msVVV^{n\times p\times m}\subset \overline{\pi(\msTTT'_1)}$ and thus $\RRR^{n\times p\times m}=\overline{\pi(\msTTT'_1)}$.
\end{proof}

By Theorem~\ref{thm:rank} \ref{item:msttt2 rank},
and Lemma~\ref{lem:pi ttt1' open dense}, 
we have immediately the following corollary.

\begin{cor} \mylabel{cor:class p+1}
Let $3\leq m\leq n$ and $(m-1)(n-1)+1\leq p<(m-1)n$.
$\msTTT_2\neq\emptyset$ if and only if $\msTTT_2\cap \pi(\msTTT'_1)\neq\emptyset$, and $\rank T=p+1$ for any $T\in\msTTT_2\cap \pi(\msTTT'_1)$.
\end{cor}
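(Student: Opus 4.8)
The plan is to handle the equivalence and the rank identity separately, both essentially by invoking Theorem~\ref{thm:rank} and Lemma~\ref{lem:pi ttt1' open dense}. For the equivalence, the implication $\msTTT_2\cap\pi(\msTTT'_1)\neq\emptyset\Rightarrow\msTTT_2\neq\emptyset$ is immediate from the inclusion $\msTTT_2\cap\pi(\msTTT'_1)\subseteq\msTTT_2$. For the converse I would first note that $\msTTT_2$ is not merely open in $\msVVV^{n\times p\times m}$ but open in all of $\RRR^{n\times p\times m}$: by Theorem~\ref{thm:rank} \ref{item:msttt open} it is open in $\msVVV^{n\times p\times m}$, and $\msVVV^{n\times p\times m}$ is open in $\RRR^{n\times p\times m}$ since it is the non-vanishing locus of the polynomial $\det(\fl_2(T)^{\leq p})$. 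Since $\pi(\msTTT'_1)$ is dense in $\RRR^{n\times p\times m}$ by Lemma~\ref{lem:pi ttt1' open dense}, a nonempty open set $\msTTT_2$ must meet it, giving $\msTTT_2\cap\pi(\msTTT'_1)\neq\emptyset$.

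For the rank identity, I would fix $T\in\msTTT_2\cap\pi(\msTTT'_1)$ and establish $p<\rank T\leq p+1$. The lower bound is Theorem~\ref{thm:rank} \ref{item:msttt2 rank} applied to $T\in\msTTT_2$. For the upper bound, choose $\tilde T\in\msTTT'_1\subseteq\msVVV^{n\times(p+1)\times m}$ with $\pi(\tilde T)=T$; the hypotheses $3\leq m\leq n$, $(m-1)(n-1)+1\leq p<(m-1)n$ are exactly the hypotheses $3\leq m\leq n$, $(m-1)(n-1)+2\leq p+1\leq (m-1)n$ needed to apply Theorem~\ref{thm:rank} with $p$ replaced by $p+1$, so part \ref{item:msttt1 rank} of that theorem gives $\rank\tilde T=p+1$. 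Truncating each second-mode vector of a rank-$(p+1)$ decomposition of $\tilde T$ to its first $p$ coordinates yields a decomposition of $\pi(\tilde T)=T$ into $p+1$ rank-one terms, so $\rank T\leq p+1$; combined with the lower bound this forces $\rank T=p+1$.

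I do not expect a genuine obstacle, since Theorem~\ref{thm:rank} and Lemma~\ref{lem:pi ttt1' open dense} already carry the analytic and rank-theoretic content. The only points needing care are the bookkeeping observations: that $\msVVV^{n\times p\times m}$ is Euclidean open in $\RRR^{n\times p\times m}$, so that density of $\pi(\msTTT'_1)$ can be played against the nonempty open set $\msTTT_2$; that the stated constraints on $p$ translate exactly into the constraints on $p+1$ under which Theorem~\ref{thm:rank} applies to $\msTTT'_1$; and the elementary monotonicity of tensor rank under deleting one coordinate hyperplane in a single mode.
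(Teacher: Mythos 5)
Your proposal is correct and follows essentially the same route as the paper, which derives the corollary "immediately" from Theorem~\ref{thm:rank} and Lemma~\ref{lem:pi ttt1' open dense} and then notes that every tensor in $\pi(\msTTT'_1)$ has rank at most $p+1$ by Theorem~\ref{thm:rank}~\ref{item:msttt1 rank}. You have merely spelled out the implicit steps (openness of $\msTTT_2$ in $\RRR^{n\times p\times m}$, the shift $p\mapsto p+1$ in the hypotheses, and monotonicity of rank under the truncation $\pi$), all of which are sound.
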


Note that arbitrary tensor of $\pi(\msTTT'_1)$ has rank less than or equal to $p+1$ by Theorem~\ref{thm:rank} \ref{item:msttt1 rank}. 

\medskip
\begin{proofof}{Theorem~\ref{thm:upperbound}}
The assertion for $p=(m-1)n$ holds by \cite{Sumi-etal:2015}.
Suppose that $(m-1)(n-1)+1\leq p<(m-1)n$.
Then $\rank(T)\leq p+1$ for $T\in \pi(\msTTT'_1)$.  Since $\pi(\msTTT'_1)$ is dense, arbitrary integer greater than $p+1$ is not a typical rank.
\end{proofof}

Recall that $\trank(m,n,p)=\trank(n,p,m)$.
We are ready to prove main theorems.

\medskip
\begin{proofof}{Theorem~\ref{thm:main0}}
\ref{item:01} follows from Theorem \ref{thm:afr exist} and 
Corollary \ref{cor:afr}.

\ref{item:02}:
We may assume that $3\leq m\leq n$ without the loss of generality.
Ten Berge \cite{tenBerge:2000} showed that $\RRR^{m\times n\times p}$
has a unique typical rank for $p\geq (m-1)n+1$.
Therefore, we see that $p\leq (m-1)n$.
Set $u=mn-p$.
By Theorem~\ref{thm:rank} \ref{item:msttt dense} and \ref{item:msttt1 rank},
we see that $\msTTT_2\neq\emptyset$.
Furthermore, $\msTTT_2\neq\emptyset \Rightarrow \msOOO_4\neq\emptyset$ by definitions 
and the surjectivity of 
$\sigma$.
Since $\msOOO_4\subset\msAAA^{u\times n\times m}$, we see that there exists an
\afcr\ $u\times n\times m$ tensor.
The result follows from Corollary \ref{cor:afr}.
\end{proofof}

\begin{proofof}{Theorem~\ref{thm:main1}}
We may assume that $3\leq m\leq n$.
Note that 
$$\typicalrankR(m,n,p)=\{{\min\{p,mn\}}\}$$
for {$k\geq m$} \cite{tenBerge:2000}.
Suppose that $2\leq k\leq m-1$.  
By Theorem~\ref{thm:upperbound}, the maximal typical rank of $\RRR^{m\times n\times p}$ is less than or equal to $p+1$.  Since $p$ is the minimal typical rank of $\RRR^{m\times n\times p}$, $\typicalrankR(m,n,p)$ is $\{p\}$ or $\{p,p+1\}$.
By Theorem~\ref{thm:main0}, 
$\RRR^{m\times n\times p}$ has a unique typical rank if and only if $m\# n{\geq mn-p+1}$, equivalently, $k{\geq m+n-(m\# n)}$.
This completes the proof.
\end{proofof}

We immediately have Theorem~\ref{thm:(m-1)(n-1)+1} by 
Proposition~\ref{prop:bit-disjoint} and {Theorem~\ref{thm:afr exist}}.
In the case where $p=(m-1)(n-1)+1$, we have many examples for having plural typical ranks.

\begin{cor}
Let $m,n\geq 3$ and $a\geq 1$.
If {$m\equiv 2^{a-1}+s \pmod{2^a}$ and $n\equiv 2^{a-1}+t \pmod{2^a}$ for some integers $s$ and $t$ with $1\leq s,t\leq 2^{a-1}$} then $\RRR^{m\times n\times ((m-1)(n-1)+1)}$ has plural typical ranks.
\end{cor}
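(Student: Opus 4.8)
The plan is to deduce this corollary from Theorem~\ref{thm:(m-1)(n-1)+1} applied with $p=(m-1)(n-1)+1$; the only thing that needs to be verified is that the congruence hypotheses force $m-1$ and $n-1$ to fail to be bit-disjoint, i.e.\ that there is an index $j$ with $\alpha_j(m-1)=\alpha_j(n-1)=1$. By the symmetry of the statement we may assume $m\leq n$.

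First I would examine the residue of $m-1$ modulo $2^a$. From $m\equiv 2^{a-1}+s\pmod{2^a}$ with $1\leq s\leq 2^{a-1}$ one gets $m-1\equiv 2^{a-1}+(s-1)\pmod{2^a}$; since $0\leq s-1\leq 2^{a-1}-1$, the integer $2^{a-1}+(s-1)$ already lies in the interval $[2^{a-1},2^a-1]$ and is therefore equal to $(m-1)\bmod 2^a$. Every integer in $[2^{a-1},2^a-1]$ has its $(a-1)$-st binary digit equal to $1$, and the binary digits $\alpha_0(m-1),\ldots,\alpha_{a-1}(m-1)$ of $m-1$ coincide with those of $(m-1)\bmod 2^a$; hence $\alpha_{a-1}(m-1)=1$. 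The identical computation applied to $n$ gives $\alpha_{a-1}(n-1)=1$.

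Consequently $a-1$ lies in both $\{j\mid\alpha_j(m-1)=1\}$ and $\{j\mid\alpha_j(n-1)=1\}$, so these sets are not disjoint; that is, $m-1$ and $n-1$ are not bit-disjoint, and Theorem~\ref{thm:(m-1)(n-1)+1} then yields that $\RRR^{m\times n\times((m-1)(n-1)+1)}$ has plural typical ranks. Equivalently, one could bypass Theorem~\ref{thm:(m-1)(n-1)+1} and argue directly: by Proposition~\ref{prop:bit-disjoint} the failure of bit-disjointness gives $m\#n\leq m+n-2=mn-p$, so by Corollary~\ref{cor:afr} there is a $u\times n\times m$ absolutely full column rank tensor with $u=mn-p$, and Theorem~\ref{thm:afr exist} then finishes the argument.

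No serious obstacle arises; the only point requiring care is to check that $2^{a-1}+(s-1)$ does not overflow past $2^a$, which is exactly where the hypothesis $s\leq 2^{a-1}$ is used — it guarantees that the $(a-1)$-st bit of $m-1$ is genuinely present rather than being cancelled by a carry.
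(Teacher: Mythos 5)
Your proof is correct and matches the paper's intended argument: the paper states this corollary as an immediate consequence of Theorem~\ref{thm:(m-1)(n-1)+1}, and the content is precisely your computation that the congruence hypotheses force $\alpha_{a-1}(m-1)=\alpha_{a-1}(n-1)=1$, so $m-1$ and $n-1$ are not bit-disjoint. The alternative route you sketch (Proposition~\ref{prop:bit-disjoint}, Corollary~\ref{cor:afr}, Theorem~\ref{thm:afr exist}) is exactly how the paper derives Theorem~\ref{thm:(m-1)(n-1)+1} itself, so it is not genuinely different.
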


\begin{prop} 
Let ${a}=4,8$.  If $m$ and $n$ are divisible by ${a}$, then for each $1\leq {k<a}$,
$\RRR^{m\times n\times ((m-1)(n-1)+{k})}$ has plural typical ranks.
\end{prop}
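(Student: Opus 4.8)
The plan is to reduce the assertion to the existence of a nonsingular bilinear map of the right size and then quote Proposition~\ref{prop:4m-4n}. Write $p=(m-1)(n-1)+k$ with $1\le k\le a-1$, so that
$$
mn-p=mn-(m-1)(n-1)-k=m+n-1-k.
$$
Since $a\mid m$ and $a\mid n$ with $m,n>0$, we have $m\ge a\ge 4$ and $n\ge a\ge 4$, so in particular $m,n\ge 3$; also $p=(m-1)(n-1)+k\ge(m-1)(n-1)+1$ because $k\ge 1$, and $p=mn-m-n+1+k\le mn-m-n+a\le mn$ because $m\ge a$ forces $m+n\ge a$. Hence the hypotheses $(m-1)(n-1)+1\le p\le mn$ and $m,n\ge 3$ of Theorem~\ref{thm:main0} are met, and by \ref{item:01} of that theorem it suffices to produce a nonsingular bilinear map $\RRR^m\times\RRR^n\to\RRR^{mn-p}$; equivalently, by Definition~\ref{def:nonsing bilin}, to show $m\#n\le m+n-1-k$.

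Next I would write $m=am'$ and $n=an'$ for positive integers $m',n'$, which is possible by divisibility. Applying Proposition~\ref{prop:4m-4n} with its index ``$k$'' taken to be $a\in\{4,8\}$ and its ``$m,n$'' taken to be $m',n'$ gives
$$
m\#n=(am')\#(an')\le a(m'+n'-1)=am'+an'-a=m+n-a.
$$
Because $k\le a-1$, we have $m+n-a\le m+n-1-k$, and therefore $m\#n\le m+n-1-k=mn-p$. Consequently a nonsingular bilinear map $\RRR^m\times\RRR^n\to\RRR^{mn-p}$ exists, and Theorem~\ref{thm:main0}\,\ref{item:01} yields that $\RRR^{m\times n\times p}$ has plural typical ranks.

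There is essentially no hard step here: the substantive content is already packaged in Proposition~\ref{prop:4m-4n} (which in turn rests on Lemma~\ref{lem:nonsing comp} applied to the multiplications of the quaternions and octonions) and in the ``only if'' direction of the plurality criterion Theorem~\ref{thm:main0}. The one thing to watch is the numerology: the bound $m\#n\le m+n-a$ beats the required $m\#n\le m+n-1-k$ precisely when $k\le a-1$, which is exactly the stated range of $k$, so the result is sharp for this method and no case analysis on $k$ is needed beyond this single inequality.
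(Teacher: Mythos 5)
Your proof is correct and follows essentially the same route as the paper: both arguments hinge on Proposition~\ref{prop:4m-4n} giving $m\#n=(am')\#(an')\le a(m'+n'-1)=m+n-a$, hence $m+n-1-(m\#n)\ge a-1\ge k$. The only cosmetic difference is that you conclude via Theorem~\ref{thm:main0}\,\ref{item:01} while the paper invokes Theorem~\ref{thm:main1}; your choice actually handles $k=1$ more cleanly, since Theorem~\ref{thm:main1} is stated only for $k\ge 2$ (and note that the implication you use is the ``if'' direction of the criterion, not the ``only if'' direction as you label it).
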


\begin{proof}
For ${a}=4,8$, if $m$ and $n$ are divisible by ${a}$, then $m\#n\leq m+n-{a}$ 
by Proposition~\ref{prop:4m-4n} and thus $m+n-1-(m\#n)\geq {a}-1$.
Then the assertion follows by Theorem~\ref{thm:main1}.
\end{proof}

\begin{cor}
\begin{enumerate}
\item $\RRR^{4\times 4\times k}$ has plural typical rank whenever
$10\leq k\leq 12$.
\item $\RRR^{8\times 8\times k}$ has plural typical rank whenever
$50\leq k\leq 56$.
\end{enumerate}
\end{cor}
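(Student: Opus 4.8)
The plan is to read off both parts as special cases of the Proposition just proved. For part (1), apply it with $m=n=4$ and $a=4$: since $4$ is divisible by $4$, the Proposition gives that $\RRR^{4\times4\times((4-1)(4-1)+j)}=\RRR^{4\times4\times(9+j)}$ has plural typical ranks for every $j$ with $1\le j<4$; as $j$ runs over $\{1,2,3\}$ the third-mode size $9+j$ runs over $\{10,11,12\}$, which is exactly the range $10\le k\le12$. For part (2), apply the Proposition with $m=n=8$ and $a=8$: since $8$ is divisible by $8$, it gives that $\RRR^{8\times8\times(49+j)}$ has plural typical ranks for $1\le j<8$, and as $j$ runs over $\{1,\ldots,7\}$ the value $49+j$ runs over $\{50,\ldots,56\}$, i.e.\ $50\le k\le56$. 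This establishes both parts.

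For completeness I would recall why the Proposition actually applies in these two instances. By Proposition~\ref{prop:4m-4n} one has $4\#4\le4$ and $8\#8\le8$, while the bound $\max\{r,s\}\le r\#s$ gives the reverse inequalities, so $4\#4=4$ and $8\#8=8$; hence $m\#n\le m+n-a$ in both cases, $m+n-1-(m\#n)\ge a-1$, and Theorem~\ref{thm:main1} yields $\typicalrankR(m,n,(m-1)(n-1)+j)=\{p,p+1\}$ for $2\le j\le a-1$. The remaining value $j=1$, giving the third-mode sizes $10$ and $50$, is covered by \ref{item:01} of Theorem~\ref{thm:main0}, since there $mn-p=m+n-2\ge m\#n$, so a nonsingular bilinear map $\RRR^m\times\RRR^n\to\RRR^{mn-p}$ exists.

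There is no genuine obstacle here; the argument is pure bookkeeping. The only point to watch is the off-by-one between the excess parameter $j$ of the Proposition, which ranges over $1\le j<a$, and the literal third-mode size $k$ in the Corollary, together with verifying that $(m-1)(n-1)+j$ sweeps out precisely $\{10,11,12\}$ (resp.\ $\{50,\ldots,56\}$) as $j$ runs from $1$ to $a-1$.
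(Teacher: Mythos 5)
Your proposal is correct and is essentially the paper's own argument: the Corollary is stated as an immediate specialization of the preceding Proposition, obtained by taking $m=n=4$, $a=4$ (so $(m-1)(n-1)+k$ sweeps $\{10,11,12\}$) and $m=n=8$, $a=8$ (sweeping $\{50,\ldots,56\}$). Your extra remarks — that $m\#n\leq m+n-a$ via Proposition~\ref{prop:4m-4n} feeds into Theorem~\ref{thm:main1} for $k\geq 2$, with the $k=1$ case covered by Theorem~\ref{thm:main0}~\ref{item:01} — match the justification the paper gives for the Proposition itself.
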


\bgroup
\begin{prop}\label{prop:(m-1)(n-1)}
Let $m,n\geq 3$.
If $\RRR^{m\times n\times ((m-1)(n-1)+1)}$ has a unique typical rank, i.e., 
$m\#n=m+n-1$, then
$\typicalrankR(m,n,(m-1)(n-1)-k)=\{(m-1)(n-1)+1\}$ holds whenever $0\leq k<\frac{(m-1)(n-1)}{m+n-1}$.
\end{prop}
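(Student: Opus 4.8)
Write $P=(m-1)(n-1)$ and $q=P-k$; since typical ranks are invariant under permuting the three slot sizes we may assume $3\le m\le n$, so the format in question is $\RRR^{m\times n\times q}$ and we vary its last index. Recall from Theorem~\ref{thm:fri 7.1} that $\typicalrankR(m,n,q)$ is the full set of integers between its least element $r_{\min}(m,n,q)$ and its greatest element $r_{\max}(m,n,q)$, and that $r_{\min}(m,n,q)$ equals the generic rank $\grank(m,n,q)$ over $\CCC$. The plan is to squeeze: a parameter count will give $r_{\min}(m,n,q)\ge P+1$, and propagating the hypothesis downward along coordinate projections will give $r_{\max}(m,n,q)\le P+1$; together with $r_{\min}\le r_{\max}$ this forces $\typicalrankR(m,n,q)=\{P+1\}=\{(m-1)(n-1)+1\}$.

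\emph{Lower bound.} It is standard that the semialgebraic set $\{T\in\RRR^{m\times n\times q}\mid\rank T\le r\}$ has dimension at most $r(m+n+q-2)$ (the parameter count underlying the dimension bound for secant varieties of Segre varieties: a summand $\aaa\otimes\bbb\otimes\ccc$ carries $m+n+q$ parameters modulo a $2$-dimensional scaling ambiguity). Since $m+n+q-2=mn-1-k$, $\dim\RRR^{m\times n\times q}=mnq=mn(P-k)$, and $mn-(m-1)(n-1)=m+n-1$, one checks that $P(m+n+q-2)<mnq$ is equivalent to $(m+n-1)k<(m-1)(n-1)$, i.e.\ to $k<\frac{(m-1)(n-1)}{m+n-1}$. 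Hence in the stated range $\{\rank T\le P\}$ has dimension strictly less than $mnq$, so it has empty interior, so no integer $\le P$ is a typical rank; thus $r_{\min}(m,n,q)\ge P+1$. (The same inequality shows the expected generic rank $\lceil mnq/(m+n+q-2)\rceil$ equals $P+1$ exactly in this range of $k$, which is why this is the natural hypothesis.)

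\emph{Upper bound.} First I would record the monotonicity $r_{\max}(m,n,q')\le r_{\max}(m,n,q'+1)$ for every $q'\ge 1$. Let $\pi\colon\RRR^{m\times n\times(q'+1)}\to\RRR^{m\times n\times q'}$ delete the last slice; it is continuous and surjective, and $\rank\pi(S)\le\rank S$ because a rank decomposition of $S$ restricts to one of $\pi(S)$. By Theorem~\ref{thm:fri 7.1} the tensors in $\RRR^{m\times n\times(q'+1)}$ of rank at most $r_{\max}(m,n,q'+1)$ are dense, so for any nonempty open $U\subset\RRR^{m\times n\times q'}$ the nonempty open set $\pi^{-1}(U)$ contains such a tensor $S$, and then $\pi(S)\in U$ has rank at most $r_{\max}(m,n,q'+1)$. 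Thus the tensors of rank at most $r_{\max}(m,n,q'+1)$ are dense in $\RRR^{m\times n\times q'}$, and a second use of Theorem~\ref{thm:fri 7.1} forces $r_{\max}(m,n,q')\le r_{\max}(m,n,q'+1)$ (an open stratum of strictly larger constant rank would be disjoint from a dense set). Now the hypothesis says $\RRR^{m\times n\times(P+1)}$ has a single typical rank; since $(m-1)(n-1)+1\le P+1\le(m-1)n$, that rank is $\grank(m,n,P+1)=P+1$ by the generic-rank computation of Catalisano, Geramita and Gimigliano (\cite{Catalisano-Geramita-Gimigliano:2002}) recalled in the Introduction, so $r_{\max}(m,n,P+1)=P+1$. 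Iterating the monotonicity $k+1$ times down from $P+1$ to $q=P-k$ gives $r_{\max}(m,n,q)\le P+1$.

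\emph{Conclusion; main obstacle.} Combining the two bounds with $r_{\min}\le r_{\max}$ gives $\typicalrankR(m,n,q)=\{(m-1)(n-1)+1\}$, as desired. There is no serious obstacle beyond Theorem~\ref{thm:fri 7.1}: the only external fact used is $\grank(m,n,(m-1)(n-1)+1)=(m-1)(n-1)+1$, which is precisely the Catalisano--Geramita--Gimigliano input already invoked in the Introduction, and the constraint $k<\frac{(m-1)(n-1)}{m+n-1}$ is exactly the numerical condition making the parameter count effective; the only care needed is the harmless observation $P-k\le P+1$, so that the downward iteration is legitimate.
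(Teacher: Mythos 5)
Your proposal is correct and follows essentially the same route as the paper: the identical parameter count $(m-1)(n-1)(m+n+q-2)<mnq \iff k<\frac{(m-1)(n-1)}{m+n-1}$ for the lower bound, and the projection-of-a-dense-set-of-rank-$\le p$-tensors argument for the upper bound (you iterate the slice-deleting projection one step at a time, while the paper projects from $p=(m-1)(n-1)+1$ down to $q$ in a single step).
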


\begin{proof}
Let $0\leq k<\frac{(m-1)(n-1)}{m+n-1}$, $q=(m-1)(n-1)-k$ and $p=(m-1)(n-1)+1$.
Suppose that $\RRR^{m\times n\times p}$ has a unique typical rank.
Then $\trank(n,p,m)=\{p\}$.
Since the set of all $n\times p\times m$ tensors with rank $p$ is a dense subset of $\RRR^{n\times p\times m}$, the image of this set by a canonical projection $\RRR^{n\times p\times m}\to\RRR^{n\times q\times m}$ is also a dense subset of $\RRR^{n\times q\times m}$.  Thus any typical rank of $\RRR^{n\times q\times m}$ is less than or equal to $p$.
On the other hand, by elementary calculation, we see that $(m-1)(n-1)<\frac{mnq}{m+n+q-2} \Leftrightarrow k<\frac{(m-1)(n-1)}{m+n-1}$.
Thus the minimal typical rank of $\RRR^{n\times q\times m}$ is greater than or equal to $p$.
Therefore $\RRR^{n\times q\times m}$ has a unique typical rank $p$.
\end{proof}

\begin{cor} \label{cor:fromThmC}
Let $3\leq m\leq n$. 
Suppose that $\RRR^{m\times n\times ((m-1)(n-1)+1)}$ has a unique typical rank,
i.e., $m\#n=m+n-1$.
If $0\leq k\leq \lfloor \frac{m}{2}\rfloor-1$ then 
$\typicalrankR(m,n,(m-1)(n-1)-k)=\{(m-1)(n-1)+1\}$.
\end{cor}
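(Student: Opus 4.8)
The plan is to deduce the statement directly from Proposition~\ref{prop:(m-1)(n-1)}, so that the only real content is the verification of an elementary numerical inequality. Proposition~\ref{prop:(m-1)(n-1)} tells us that, under the hypothesis $m\#n=m+n-1$, one has $\typicalrankR(m,n,(m-1)(n-1)-k)=\{(m-1)(n-1)+1\}$ whenever $0\le k<\frac{(m-1)(n-1)}{m+n-1}$. Hence it suffices to show that every integer $k$ with $0\le k\le\lfloor m/2\rfloor-1$ satisfies $k<\frac{(m-1)(n-1)}{m+n-1}$; since the left-hand side is largest when $k=\lfloor m/2\rfloor-1$, I only need $\lfloor m/2\rfloor-1<\frac{(m-1)(n-1)}{m+n-1}$.

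For this, I will use the trivial bound $\lfloor m/2\rfloor-1\le\frac{m}{2}-1=\frac{m-2}{2}$, reducing the claim to $\frac{m-2}{2}<\frac{(m-1)(n-1)}{m+n-1}$, i.e.\ (clearing the positive denominators) to $(m-2)(m+n-1)<2(m-1)(n-1)$. Expanding both sides, this inequality is equivalent to $m(n+1-m)>0$, which holds because $m\ge 1$ and $n\ge m$ (so $n+1-m\ge 1>0$). Thus $\lfloor m/2\rfloor-1<\frac{(m-1)(n-1)}{m+n-1}$, and a fortiori every $k\in\{0,1,\dots,\lfloor m/2\rfloor-1\}$ lies strictly below $\frac{(m-1)(n-1)}{m+n-1}$.

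Applying Proposition~\ref{prop:(m-1)(n-1)} for each such $k$ then yields $\typicalrankR(m,n,(m-1)(n-1)-k)=\{(m-1)(n-1)+1\}$, which is the assertion. There is no genuine obstacle here: the argument is a one-line reduction plus a routine inequality, the only point requiring a moment's care being the bookkeeping that $\frac{m-2}{2}<\frac{(m-1)(n-1)}{m+n-1}$ is \emph{strict}, which is guaranteed by $n\ge m$ rather than merely $n\ge m-1$.
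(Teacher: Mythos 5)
Your proof is correct and takes essentially the same route as the paper: both reduce the corollary to the hypothesis $k<\frac{(m-1)(n-1)}{m+n-1}$ of Proposition~\ref{prop:(m-1)(n-1)} and verify it by an elementary inequality using $k+1\leq\lfloor m/2\rfloor\leq m/2$ and $m\leq n$. The algebra is organized slightly differently (you clear denominators to get $m(n+1-m)>0$, the paper bounds $(m+n-1)(k+1)<mn$), but the argument is the same.
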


\begin{proof}
Let $0\leq k\leq \lfloor \frac{m}{2}\rfloor-1$.
Then $(m+n-1)(k+1)\leq (m+n-1)\frac{m}{2}\leq(n+n-1)\frac{m}{2}<mn$ and thus
$(m+n-1)k<(m-1)(n-1)$.
Therefore the assertion follows from Proposition~\ref{prop:(m-1)(n-1)}.
\end{proof}

\egroup


\iffalse
\nocite{Catalisano-Geramita-Gimigliano:2002}
\bibliography{../bibs/tensor}
\bibliographystyle{abbrv}
\else

\fi



\end{document}